\newcommand{\cv}{\vec c}
\newcommand{\im}{\operatorname{Im}}
\newtheorem*{theorem}{Theorem}
\newtheorem{maintheorem}{Theorem}
\newtheorem{T}{Theorem}[section]
\newtheorem{Corollary}[T]{Corollary}
\newtheorem{Proposition}[T]{Proposition}
\newtheorem{Lemma}[T]{Lemma}
\newtheorem{Remark}[T]{Remark}
\newtheorem{Definition}[T]{Definition}
\newtheorem{Example}[T]{Example}
\newtheorem{Claim}{Claim}
\newtheorem{Conjecture}{Conjecture}
\newcommand{\mo}{\operatorname{mod}}
\def \AA {{\mathbb A}}
\def \BB {{\mathbb B}}
\def \dd {{\mathbb{d}}}
\def \RR {{\mathbb R}}
\def \ZZ {{\mathbb Z}}
\def \NN {{\mathbb N}}
\def \TT {{\mathbb T}}
\def \EE {{\mathbb E}}
\def \XX {{\mathbb X}}
\def \YY {{\mathbb Y}}
\def \cl {\mathcal{L}}
\def \cm {\mathcal{M}}
\def \cq {\mathcal{Q}}
\def \cp {\mathcal{P}}
\def \cc {\mathcal{C}}
\def \ch {\mathcal{H}}
\def \cu {\mathcal{U}}
\def \co {\mathcal{O}}
\def \cn {\mathcal{N}}
\def \ca {\mathcal{A}}
\def \cv {\mathcal{V}}
\def \cw {\mathcal{W}}
\def \ii {{\mathbb i}}
\newcommand{\length}{\operatorname{length}}
\newcommand{\leb}{\operatorname{Leb}}
\newcommand{\dist}{\operatorname{dist}}
\newcommand{\supp}{\operatorname{supp}}
\newcommand{\per}{\operatorname{Per}}
\newcommand{\diam}{\operatorname{diameter}}
\newcommand{\cy}{\operatorname{Cyl}}
\newcommand{\Res}{\operatorname{Res}}
\begin{document}

\thanks{Work carried out at National Institute for Pure and Applied Mathematics (IMPA) and Federal University of
Bahia (UFBA). Partially supported by CNPq-Brazil (PDS 137389/2016-7 and BPP 304897/2015-9)}

\author{V Pinheiro}
\address{Instituto de Matematica - UFBA, Av. Ademar de Barros, s/n,
40170-110 Salvador Bahia}
\email{viltonj@ufba.br}

\date{\today}

\title{Lift and Synchronization}

\begin{abstract}
We study the problem of lifting a measure to an induced map $F(x)=f^{R(x)}(x)$. In particular, we give a necessary and sufficient condition for an ergodic $f$ invariant probability $\mu$ to be $F$-liftable as well as a condition for the lift to be an ergodic measure. Moreover, we show that every lift of $\mu$ is a weighted average of the restriction of $\mu$ to a countable number of $F$-ergodic components. We introduce the concept of a coherent schedule of events and relate it to the lift problem.
As a consequence, we prove that we can always synchronize coherent schedules at almost every point with respect to a given invariant probability $\mu$, showing that we can synchronize ``Pliss times'' $\mu$ almost everywhere. We also provide a version of this synchronization to non-invariant measures and, from that, we obtain some results related to Viana's conjecture \cite{Vi98} on the existence of Sinai-Ruelle-Bowen (SRB) measures for maps with non-zero Lyapunov exponents for Lebesgue almost every point.
\end{abstract}

\maketitle

\tableofcontents



\section{Introduction and statement of main results}\label{IntroductionAndStatementsOfMainResults}

Let $f:\XX\to\XX$ be a measurable map defined on a measurable space $(\XX,\mathfrak{A})$.
In the study of the forward evolution of the orbit of a point $p\in\XX$, it is a common strategy to analyze its orbit at special moments $\cu(p)\subset\NN$.
These moments can be selected in order to condense  information (for instance, when one is using a {\em Poincaré map} on a cross section of a flow) or to emphasize some particular property that we are interested in.
A concrete example of the last situation are the {\em hyperbolic times} in the study of non-uniformly hyperbolic dynamics.
Hyperbolic times are a particular case of Pliss times defined as follows. Consider an additive $f$-cocycle $\varphi:\NN\times\XX\to\RR$, i.e., $\varphi$ is measurable and $\varphi(n+m,p)=\varphi(n,f^m(p))+\varphi(m,p)$. 
Given $\gamma\in\RR$, we say that $n\ge1$ is a {\bf\em $(\gamma,\varphi)$-Pliss time} for $p\in\XX$, with respect to $f$, if
$$
  \frac{1}{n-k}\varphi(n-k,f^k(p))\ge\gamma\text{ for every }0\le k<n.
$$
According to Pliss Lemma (see Lemma~\ref{PlissLemma}), if $\limsup_{n\to\infty}\frac{1}{n}\varphi(n,p)>\lambda$ then $f$ has positive frequency of $(\lambda,\varphi)$-Pliss times. That is, if $\limsup_{n\to\infty}\frac{1}{n}\varphi(n,p)>\lambda$ then the upper natural density of moments with Pliss time is positive, where the {\bf\em upper natural density} of $U\subset\NN$ is $$\dd_{\NN}^+(U):=\limsup_{n\to\infty}\frac{1}{n}\#\big(\{1,2,3,\cdots,n\}\cap U\big).$$

To formalize the idea of selected times, define a {\bf\em schedule of events} as a measurable map $\cu:\XX\to2^{\NN}$, where $2^{\NN}$ is the power set of the natural numbers $\NN=\{1,2,3,\cdots\}$ with the metric $\dist(A,B)=2^{-\min(A\triangle B)}$ (see Section~\ref{SecPlissESy} for more details).
Suppose that one has two collections of selected moments, that is, two schedule of events $\cu$ and $\cv$, each one representing good moments of different properties that we need to analyze simultaneously.
Given a point $p\in\XX$, the central problem of the present paper is to know if the intersection of the two schedules at $p$ is a statistical significant subset of $\NN$, that is, if the upper natural density of $\cu(p)\cap\cv(p)$ is nonzero.

In many problems, it is possible to have good information about the state of a point $p$ at time $t$ by knowing the state of $p$ at a time $t+\ell$ for a finite fixed $\ell\ge0$.
That is, a fixed displacement $\ell$ on one of the schedules may be acceptable.
Thus, we can consider the problem of finding $\ell\ge0$ such that
\begin{equation}\label{Equationgbnj567}
  \dd_{\NN}^+(\{j\in\NN\,;\,(j,j+\ell)\in\cu(p)\times\cv(p)\})>0.
\end{equation}

We say that $\cu(p)$ and $\cv(p)$ are {\bf\em synchronizable} if (\ref{Equationgbnj567}) is true for some $\ell\ge0$.
Clearly, each schedule to have positive upper natural density is a necessary condition to (\ref{Equationgbnj567}), but it is not a sufficient one, as can it be seen in Example~\ref{ExampleNonSyn}. 
Because of that, we introduce the idea of {\em coherence}.
A {\bf\em coherent schedule of events} is a measurable map $\cu:\XX\to 2^{\NN}$ such that
\begin{enumerate}
	\item $n\in\cu(x)\implies n-j\in\cu(f^j(x))$ for every $x\in\XX$ and $n>j\ge0$;
	\item $n\in\cu(x)$ and $m\in\cu(f^n(x))$ $\implies$  $n+m\in\cu(x)$ for every $x\in\XX$ and $n,m\ge1$.
\end{enumerate}

Note that if we define $\cu(x)$ as the set of  $(\lambda,\varphi)$-Pliss times for $x$, then $\cu(x)$ satisfies both conditions above. Furthermore, by Pliss Lemma, $\dd_{\NN}^+(\cu(x))>0$ whenever $\limsup\frac{1}{n}\varphi(n,x)>\lambda$.
That is, Pliss times generates coherent schedule of events with positive upper natural density. In Section~\ref{SecPlissESy},
we show that it is easy to produce coherent schedule of events, even without using Pliss times.

A coherent schedule of events $\cu$ yields  to an induced map $F(x)=f^{\min\cu(x)}(x)$ with special properties. In particular, $F$ is {\em orbit-coherent} (see Definition~\ref{DefinitionORBCOR}).  
Hence, in order to obtain the synchronization, in the presence of an invariant probability, we study induced maps (Section~\ref{IndCoheInvSets} and \ref{MeIndMaEr}) and obtain a suitable characterization of liftable measures (Section~\ref{LiftRes}).

A $f$ invariant probability $\mu$ is liftable by a induced map $F(x)=f^{R(x)}(x)$ if there is a $F$ invariant probability $\nu\ll\mu$ such that $\mu$ is the normalized push-forward of $\nu$, i.e., $\mu=\frac{1}{\int R d\nu}\pi_*\nu$, where $\pi_*\nu=\sum_{j=0}^{+\infty}f^j_*(\nu|_{\{R>j\}})$ (see Section~\ref{LiftRes} for details).
If $\nu$ is a $F$-lift of $\mu$, then we can use $\nu$ and $F$ to study the statistical behavior of the orbits of $\mu$-almost every point with respect to the original map $f$.
In Theorem~\ref{TheoremLift} below we obtain three equivalent conditions for an invariant probability to be liftable.
Furthermore, the condition (2) in Theorem~\ref{TheoremLift} is automatically satisfied by most hyperbolic measures and induced maps generated in the context of non-uniformly hyperbolic dynamics.

\begin{maintheorem}
\label{TheoremLift}
Let $(\XX,\mathfrak{A})$ be a measurable space, $f:\XX\to\XX$ a bimeasurable map and $F:A\to\XX$ a measurable $f$-induced map with induced time $R:A\to\NN$.
Let $A_0:=\bigcap_{j\ge0}F^{-j}(\XX)$.
If $\mu$ is an ergodic $f$-invariant probability, then the four condition below are equivalent.
\begin{enumerate}
	\item $\mu$ is $F$-liftable.
	\item $\mu\big(\big\{x\in A_0\,;\,\dd_{\NN}^+(\{j\ge0\,;\,f^j(x)\in\co_F^+(x)\})>0\big\}\big)>0$.
	\item $\mu\big(\big\{x\in A_0\,;\, \liminf_{n}\frac{1}{n}\sum_{j=0}^{n-1}R\circ F^{j}(x)<+\infty\big\}\big)>0$.
	\item There is a $F$-invariant probability $\nu$ and $1\le C<+\infty$ such that $\nu\le C\mu$ and $\int R d\nu<+\infty$.
\end{enumerate}
Furthermore, if $F$ is orbit-coherent then $\mu$ admits one and only one $F$-lift (and this $F$-lift is $F$-ergodic).
\end{maintheorem}

We point out that one of the applications of induced maps is on the study of Thermodynamical Formalism of systems with some hyperbolicity (see, for instance, \cite{CP,CLP,PSZ,PS}) and it has been common to use the integrability of the induced time by the $f$-invariant probability $\mu$ to assure that $\mu$ is liftable (see Theorem 4.10 at \cite{Zw}).
Nevertheless, the integrability  criterium may leave out a relevant set of liftable invariant measures. 
Indeed, in Example~\ref{Examplejg6f7tv} below we show a very simple induced map $F(x)=\sigma^{R(x)}(x)$ of the shift $\sigma:\Sigma_2^+\to\Sigma_2^+$ having an uncountable set $M$ of liftable measures $\mu$ with full support, big entropy and non integrable induced time. 

\begin{Example}\label{Examplejg6f7tv}
	Consider the lateral shift of two symbols $\sigma:\Sigma_2^+\to\Sigma_2^+$,
where $\Sigma_2^+=\{0,1\}^\NN$ with the product topology and the usual metric $d(x,y)=2^{-\min\{j\ge1\,;\,x(j)\ne y(j)\}}$,
 $x=(x(1),$ $x(2),$ $x(3),$ $\cdots)$, $y=(y(1),$ $y(2),$ $y(3),$ $\cdots)$, $x(j)$ and $y(j)\in\{0,1\}$ $\forall\,j\ge0$.
 Given $(a_1,\cdots,a_{n})\in\{0,1\}^n$, define the cylinder  
 $$C_+(a_1,\cdots,a_{n})=\big\{(x(1),x(2),x(3),\cdots)\in\Sigma_2^+\,;\,x(1)=a_1,\cdots,x(n)=a_{n}\big\}.$$
Let $F:\Sigma_2^+\setminus\{\overline{0}\}\to\Sigma_2^+$ be the induced map $F(x)=\sigma^{R(x)}(x)$ with induced time  
\begin{equation}\label{EquatioInduchgf}
   R(x)= n \text{ for any  }x\in C_+(\underbrace{0,\cdots,0}_{\;n-1\text{\, times}},1)\text{ and }n\ge1.
\end{equation}

\begin{Lemma}[See proof in Appendix~\ref{AppendixAuxResAndPro}]\label{Lemma222jhf76h3} Let $F$ be the induced map above.
Then, there is an uncountable set $M$ of ergodic $\sigma$ invariant probabilities $\mu$ such that $\mu $ is $F$-liftable, $h_{\mu}(\sigma)>0$, $\supp\mu=\Sigma_2^+$ and $\int R d\mu=+\infty$. Furthermore, $\sup\{h_{\mu}(\sigma)\,;\,\mu\in M\}=\log 2=h_{top}(\sigma)$.
\end{Lemma}
\end{Example}

Notice that the condition of $f$ to be bimeasurable (not only measurable) that appears in most of the results of this paper is not a big restriction. Indeed, by Purves's result\footnote{\begin{theorem}[Purves \cite{Pu}, see also \cite{M81}]\label{TheoremPurves}
	Let $\XX$ and $\YY$ be complete separable metric spaces. Let $A\subset\XX$ be a Borel set of $\XX$ and $T:A\to\YY$ be a (Borel) measurable map. Then, $T$ is bimeasurable if and only if $\{y\in\YY\,;\,T^{-1}(y)$ is uncountable$\}$ is a countable set.
\end{theorem}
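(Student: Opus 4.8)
Recall that $\XX$ and $\YY$ are Polish and $A$ is Borel, and that here "$T$ bimeasurable" means "$T$ is Borel measurable (which is assumed) and carries Borel subsets of $A$ to Borel subsets of $\YY$". The plan is to treat the two implications separately with classical descriptive set theory. Write $U:=\{y\in\YY:T^{-1}(y)\text{ is uncountable}\}$. A preliminary fact I would record is that $U$ is analytic: the graph $G:=\{(x,y)\in A\times\YY:T(x)=y\}$ is Borel, each fibre $T^{-1}(y)=\{x:(x,y)\in G\}$ is a Borel (hence analytic) subset of $\XX$, and by the perfect set property an analytic set is uncountable if and only if it contains a homeomorphic copy of $2^{\NN}$; expressing the existence of such a copy by a single existential quantifier over a Polish space of embeddings exhibits $U$ as a $\boldsymbol{\Sigma}^1_1$ set.

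\textbf{Sufficiency ($U$ countable $\Rightarrow$ $T$ bimeasurable).} Since $U$ is countable it is Borel, so $A_0:=T^{-1}(\YY\setminus U)$ is a Borel subset of $A$ on which every fibre of $T$ is countable. Let $E\subseteq A$ be Borel. Splitting $E=(E\cap A_0)\sqcup(E\cap T^{-1}(U))$ gives $T(E)=T(E\cap A_0)\cup\{y\in U:E\cap T^{-1}(y)\neq\emptyset\}$, and the second set is a subset of the countable set $U$, hence Borel; so it suffices to show $T(E\cap A_0)$ is Borel. As $T|_{E\cap A_0}$ has countable fibres, the Lusin–Novikov theorem writes $E\cap A_0=\bigcup_n E_n$ with each $E_n$ Borel and $T|_{E_n}$ injective (the partial sections produced by the theorem are automatically injective because $T$ is a function). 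By the Lusin–Souslin theorem each $T(E_n)$ is Borel, hence $T(E\cap A_0)=\bigcup_n T(E_n)$ is Borel.

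\textbf{Necessity ($U$ uncountable $\Rightarrow$ $T$ not bimeasurable).} Being analytic and uncountable, $U$ contains a Cantor set $C$ (perfect set property); $C$ is a Borel subset of $\YY$, so $T^{-1}(C)$ is a Borel subset of $A$ all of whose fibres over points of $C$ are uncountable. The key step is to find inside $T^{-1}(C)$ a Borel set on which $T$ is a trivial projection: by the theorem on Borel sets with uncountable sections, a Borel set whose sections are all uncountable contains a Cantor set in each section chosen in a Borel way, which yields a Borel set $E_0\subseteq T^{-1}(C)$ and a Borel isomorphism $\phi:C\times 2^{\NN}\to E_0$ with $T(\phi(y,z))=y$ for all $(y,z)$. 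Now fix a Borel set $P\subseteq 2^{\NN}\times 2^{\NN}$ whose projection to the first coordinate is analytic but not Borel (a complete analytic set), transport $P$ through a Borel isomorphism $2^{\NN}\cong C$ to a Borel set $P'\subseteq C\times 2^{\NN}$, and set $E:=\phi(P')\subseteq A$. Then $E$ is Borel, while $T(E)=\{y\in C:\exists z,\ (y,z)\in P'\}$ is not Borel, so $T$ is not bimeasurable.

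\textbf{Main obstacle.} The delicate point is the "uncountable sections give a Borel box" step used in the necessity part: one must upgrade the fibrewise existence of perfect sets to a choice of fibrewise Cantor set depending Borel-measurably on $y\in C$, which rests on measurable selection (Jankov–von Neumann / Kuratowski–Ryll-Nardzewski). Once that structural statement is available, producing the non-Borel analytic image is routine, and the analyticity of $U$ together with the Lusin–Novikov and Lusin–Souslin theorems are entirely standard and only need to be quoted.
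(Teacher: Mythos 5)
The paper does not supply a proof of this statement: it is quoted in a footnote and attributed to Purves \cite{Pu} and Mauldin \cite{M81}, so there is no internal argument to compare against, and your sketch must be judged on its own terms. As such, it follows the standard descriptive--set--theoretic proof of the Lusin--Purves theorem; the Lusin--Novikov plus Lusin--Souslin sufficiency step and the ``parameterized Cantor scheme plus universality of $\boldsymbol{\Sigma}^1_1$'' necessity step are the right ideas, and the technical prerequisite you flag (a Borel-parameterized choice of a Cantor set in each uncountable fibre over $C$, giving a Borel section-preserving copy of $C\times 2^{\NN}$ inside the graph) is indeed the crux and is a genuine theorem, not merely a routine appeal to measurable selection; it deserves an explicit citation.

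There is one gap that needs repair. You justify the analyticity of $U=\{y:T^{-1}(y)\text{ uncountable}\}$ by existentially quantifying over continuous injections $f:2^{\NN}\to\XX$ with $f(2^{\NN})\subseteq T^{-1}(y)$, and assert this shows $U\in\boldsymbol{\Sigma}^1_1$. But $T^{-1}(y)$ is only Borel, so the inner condition ``$\forall z\in 2^{\NN}\ (f(z),y)\in\operatorname{graph}(T)$'' is a priori $\boldsymbol{\Pi}^1_1$ in $(f,y)$, and your quantifier count lands you in $\boldsymbol{\Sigma}^1_2$, a pointclass for which the perfect set property is independent of ZFC. Since you invoke the perfect set property for $U$ at the start of the necessity argument, the class matters. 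The standard fix: pass through a Lusin representation of $\operatorname{graph}(T)$ as a continuous injective image of a closed subset of $\NN^{\NN}\times\YY$, so the relevant fibres become closed sets and ``$f(2^{\NN})\subseteq(\text{closed fibre})$'' is Borel; alternatively, quote directly the classical theorem that for Borel $A\subseteq X\times Y$ the set $\{x : A_x\text{ is uncountable}\}$ is $\boldsymbol{\Sigma}^1_1$. With that and the parameterized perfect set theorem cited, the rest of your argument is correct.
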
},
if $\XX$ is a complete separable metric space and $f:\XX\to\XX$ is a countable-to-one measurable map, then $f$ is bimeasurable.

Theorem~\ref{TheoremFiftErgDec} below shows that any $F$-lift of $\mu$ is a weighted average of the restriction of $\mu$ to (at most) a countable number of $F$-ergodic components.

\begin{maintheorem}[Lift ergodic decomposition]\label{TheoremFiftErgDec}
Let $(\XX,\mathfrak{A})$ be a measurable space, $f:\XX\to\XX$ a bimeasurable map and $F:A\to\XX$ a measurable $f$-induced map with induced time $R$.
If $\mu$ is an ergodic $f$-invariant probability that is $F$-liftable, then there exist a countable collection $\{\nu_n\}_n$ of ergodic $F$-invariant probabilities and constants  $0<c_n<+\infty$ such that $\nu_n\le c_n\mu$. Furthermore, each $F$-lift $\nu$ of $\mu$ is uniquely written as $\nu=\sum_{n}a_j\nu_n$, with $\sum_{n} a_n=1$ and $0\le a_n\le1$ $\forall n$.	
In addition,   
if $\int R d\mu<+\infty$ then  $\{\nu_n\}_n$ is a finite collection.
\end{maintheorem}

Using induced maps, we were able to show that  synchronization of coherence schedules of events is always possible in the presence of an invariable measure.
Indeed, Theorem~\ref{TheoremSynINV} shows that the synchronization (up to a fixed displacement) of coherent schedules of events with positive upper natural density is always possible at $\mu$ almost every point and every ergodic invariant probability $\mu$.

\begin{maintheorem}[Synchronization for invariant measures]\label{TheoremSynINV}
Let $(\XX,\mathfrak{A})$ be a measurable space and $f:\XX\to\XX$ a bimeasurable map.
If $\mu$ is an ergodic $f$-invariant probability and $\cu_0,\cdots,\cu_m:\XX\to2^{\NN}$ is a finite collection of coherent schedules of events for $f$ such that  $\dd_{\NN}^+(\cu_j(x))>0$ for $\mu$ almost every $x\in\XX$ and every $0\le j\le m$, then there are $\ell_1,\cdots, \ell_{m}\ge0$ and $\theta>0$ such that 
\begin{equation}\label{Equatiojhu76r4}
  \lim_{n\to+\infty}\frac{1}{n}\#\big\{1\le j\le n\,;\,(j,j+\ell_1,\cdots,j+\ell_{m})\in\cu_0(x)\times\cdots\times\cu_m(x)\big\}=\theta,
\end{equation}
for $\mu$ almost every $x\in\XX$.
\end{maintheorem}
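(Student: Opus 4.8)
The plan is to reduce the statement to Theorem~\ref{TheoremPlissBlock} — more precisely its non-injective version Corollary~\ref{CorollaryPlissBlock} — followed by a soft ergodic-alignment step. Write $\ell_0:=0$ and, for a tuple $\vec\ell=(\ell_1,\dots,\ell_m)$ of nonnegative integers, abbreviate
$$
  \cu^{\vec\ell}(x):=\{\,j\ge 1\,;\,j+\ell_i\in\cu_i(x)\ \text{for every}\ 0\le i\le m\,\}.
$$
Since $\cu_i(x)\subseteq\NN$, the set counted in \eqref{Equatiojhu76r4} is exactly $\cu^{\vec\ell}(x)\cap\{1,\dots,n-1\}$, so its normalized cardinality converges iff $\dd_\NN(\cu^{\vec\ell}(x))$ exists, and to the same value. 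I would prove that for a suitable $\vec\ell$ the set $\cu^{\vec\ell}(x)$ coincides, up to a set of zero natural density, with the set of instants at which the $f$-orbit of $x$ visits one fixed set of positive $\mu$-measure; the existence and positivity of the limit then follow from the Birkhoff ergodic theorem.

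First I would treat each schedule separately. Fix $0\le i\le m$. Because $\cu_i$ is a coherent schedule of events, Corollary~\ref{CorollaryPlissBlock} gives, for $\mu$-a.e.\ $x$, that $\dd_\NN(\cu_i(x))$ exists and equals $\mu(\ca_{\cu_i})$ and that $\frac1n\#\{1\le j\le n\,;\,f^j(x)\in\ca_{\cu_i}\ \text{and}\ j\notin\cu_i(x)\}\to0$. Intersecting with the full-measure set where $\dd_\NN^+(\cu_i(x))>0$ (the hypothesis) yields $\mu(\ca_{\cu_i})=\dd_\NN(\cu_i(x))>0$. Put $E_i(x):=\{j\ge1\,;\,f^j(x)\in\ca_{\cu_i}\}$; the Birkhoff ergodic theorem gives that $E_i(x)$ has natural density $\mu(\ca_{\cu_i})$ for $\mu$-a.e.\ $x$. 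Since $E_i(x)\setminus\cu_i(x)$ has zero density while $E_i(x)$ and $\cu_i(x)$ have the same density, $\cu_i(x)\setminus E_i(x)$ — and hence $\cu_i(x)\triangle E_i(x)$ — has zero natural density for $\mu$-a.e.\ $x$.

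Next, for a tuple $\vec\ell$ put $W_{\vec\ell}:=\ca_{\cu_0}\cap\bigcap_{i=1}^m f^{-\ell_i}(\ca_{\cu_i})$. For $j\ge1$ one has $f^j(x)\in W_{\vec\ell}$ exactly when $j+\ell_i\in E_i(x)$ for every $i$. Using the previous step, the fact that translating a zero-density subset of $\NN$ by a fixed integer preserves zero density, and the inclusion $(\bigcap_iA_i)\triangle(\bigcap_iB_i)\subseteq\bigcup_i(A_i\triangle B_i)$, I obtain that $\cu^{\vec\ell}(x)$ differs from $\{j\ge1\,;\,f^j(x)\in W_{\vec\ell}\}$ by a zero-density set, for $\mu$-a.e.\ $x$ and \emph{every} $\vec\ell$. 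The Birkhoff ergodic theorem then shows the limit in \eqref{Equatiojhu76r4} exists and equals $\theta:=\mu(W_{\vec\ell})$ for $\mu$-a.e.\ $x$. It remains to choose $\vec\ell$ with $\mu(W_{\vec\ell})>0$. For this I would use: if $\mu(V)>0$ and $\mu(U)>0$, then $\mu(V\cap f^{-\ell}(U))>0$ for some $\ell\ge0$ — indeed $U^\ast:=\bigcup_{\ell\ge0}f^{-\ell}(U)$ satisfies $f^{-1}(U^\ast)\subseteq U^\ast$ with equal measure, hence is invariant mod $\mu$, hence has measure $1$ by ergodicity (it contains $U$), so $0<\mu(V)=\mu(V\cap U^\ast)\le\sum_{\ell\ge0}\mu(V\cap f^{-\ell}(U))$. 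Applying this starting from $V=\ca_{\cu_0}$ (positive measure by the previous step) with $U=\ca_{\cu_1}$, then to the resulting intersection with $U=\ca_{\cu_2}$, and so on through $\ca_{\cu_m}$, produces $\ell_1,\dots,\ell_m\ge0$ with $\mu(W_{\vec\ell})>0$, which together with the preceding completes the argument.

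The real content is imported from Corollary~\ref{CorollaryPlissBlock} (equivalently Theorem~\ref{TheoremPlissBlock}), which turns the combinatorially defined schedules $\cu_i(x)$ into honest orbit-visit sets $E_i(x)$; granting that, everything else is elementary. The one point demanding care is the bookkeeping of the first two steps: combining the two conclusions of the Corollary to see that $\cu_i(x)\triangle E_i(x)$ is negligible, then verifying that this negligibility survives the fixed translations $j\mapsto j+\ell_i$ and the finite intersection that defines $\cu^{\vec\ell}$, and finally arranging that all the invoked ``$\mu$-a.e.'' statements hold on a single full-measure set. I do not expect any obstruction in the choice of $\vec\ell$, since $\bigcup_{\ell\ge0}f^{-\ell}(\ca_{\cu_i})$ has full $\mu$-measure for each $i$.
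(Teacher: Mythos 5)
Your argument is correct and follows the same basic route as the paper (coherent/absorbing blocks, Birkhoff, and an ergodic positioning of the offsets), but with two worthwhile differences. First, you invoke Corollary~\ref{CorollaryPlissBlock} directly, so the non-injective case is handled from the outset instead of via a separate natural-extension reduction, which is how the paper proceeds (injective case through Theorem~\ref{TheoremPlissBlock}, then lift through $\overline f$). Second, and more substantively, your proof actually pins down the \emph{existence and value} of the limit in the conclusion. The paper picks $\ell_1,\dots,\ell_m$ so that $\BB:=B_{\cu_0}\cap f^{-\ell_1}(B_{\cu_1})\cap\cdots\cap f^{-\ell_m}(B_{\cu_m})$ has $\mu(\BB)>0$ and then uses the one-sided implication $f^j(x)\in B_{\cu_i}\Rightarrow j\in\cu_i(x)$ together with Birkhoff; as written that only yields the lower bound $\liminf_n\frac{1}{n}\#\{0\le j<n;\cdots\}\ge\mu(\BB)$, not an equality. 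You close the converse direction by also using the second conclusion of Corollary~\ref{CorollaryPlissBlock} --- that $\{j\,;\,f^j(x)\in\ca_{\cu_i},\ j\notin\cu_i(x)\}$ has zero density --- so that $\cu_i(x)\triangle E_i(x)$ is density--null, hence $\cu^{\vec\ell}(x)$ agrees with the visit set of $W_{\vec\ell}$ up to density zero, and Birkhoff gives the limit equal to $\mu(W_{\vec\ell})$. One step to spell out when you write this up: the passage from $\dd_{\NN}(E_i(x)\setminus\cu_i(x))=0$ and $\dd_{\NN}(E_i(x))=\dd_{\NN}(\cu_i(x))$ to $\dd_{\NN}(\cu_i(x)\setminus E_i(x))=0$ uses that for a disjoint union $A\sqcup B=C$, if two of the three normalized counting sequences $\frac{1}{n}\#(A\cap[1,n])$, $\frac{1}{n}\#(B\cap[1,n])$, $\frac{1}{n}\#(C\cap[1,n])$ converge then so does the third, with the expected value; this deserves a sentence because natural density is not finitely additive in general. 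Your softer choice of $\ell_1,\dots,\ell_m$ (existence from $\mu(\bigcup_{\ell\ge0}f^{-\ell}(\ca_{\cu_i}))=1$ rather than the paper's minimality) is also fine.
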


In Appendix~\ref{SecHYpTIm} we use Theorem~\ref{TheoremSynINV} to prove the existence of SRB measures for a class of examples of skew-products with critical region.

This paper has two goals. The first one is to synchronize coherent schedules for typical points with respect to an invariant probability.
The second goal is to obtain some advance on the synchronization of Pliss times associated to Lyapunov exponents for set of points with positive Lebesgue measure, without assuming the existence of a SRB measure.
The first goal was motivated by problems in ergodic theory and  thermodynamical formalism associated to non-uniformly hyperbolic dynamics.
In Appendix~\ref{SectionExamplesOfTheoApplications} we give some theoretical applications of the results above.
The second goal mentioned above was motivated by Viana's conjecture about non-zero Lyapunov exponents (see \cite{Vi98} and Conjecture 12.37 of \cite{BDV}).
\begin{Conjecture}[Viana]
	If a smooth map $f$ has only non-zero Lyapunov exponents at Lebesgue almost every point, then it admits some SRB measure.
\end{Conjecture}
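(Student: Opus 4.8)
Since Viana's conjecture is an open problem, what follows is necessarily a conditional strategy: I will reduce it, using the machinery of this paper, to a clean dichotomy, and the genuine obstacle is precisely the part of that dichotomy that current techniques cannot dispose of. For concreteness I treat the non-uniformly expanding case (an endomorphism of a compact manifold with all Lyapunov exponents positive Lebesgue-a.e.); the general case requires only the obvious modifications along a dominated/unstable direction.

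The plan is to run the classical ``induce, find an absolutely continuous invariant measure for the induced map, then lift it'' scheme, replacing the usual hypotheses phrased in terms of an invariant reference measure by the raw hypothesis stated in terms of Lebesgue measure. First I would encode the hypothesis as a cocycle statement: put $\varphi(n,x)=\log\|Df^n(x)^{-1}\|^{-1}$ (or one cocycle per block of the Lyapunov spectrum), so that positivity of the exponents Lebesgue-a.e. reads $\limsup_n\frac1n\varphi(n,x)>\lambda>0$ for Lebesgue-a.e. $x$. By the Pliss Lemma this gives, for Lebesgue-a.e. $x$, a coherent schedule $\cu_1$ of $(\lambda,\varphi)$-Pliss (hyperbolic) times with $\dd_{\NN}^+(\cu_1(x))>0$; I would adjoin a second coherent schedule $\cu_0$ recording bounded distortion along such times. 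Applying the non-invariant version of Theorem~\ref{TheoremSynINV} to the family $\cu_0,\cu_1$ produces displacements $\ell_j$ and a single coherent schedule $\cu$ of \emph{simultaneous} good times with $\dd_{\NN}^+(\cu(x))>0$ Lebesgue-a.e.

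Next I would pass to the $\cu$-coherent block $B_{\cu}$ (Definition~\ref{CoerBlo}), respectively the $\cu$-absorbing set $\ca_{\cu}$, and form the first-$\cu$-time induced map $F(x)=f^{R_{\cu}(x)}(x)$, $R_{\cu}(x)=\min\cu(x)$. The defining property $f^j(x)\in B_{\cu}\Rightarrow j\in\cu(x)$ forces every iterate of $F$ to land on a simultaneous hyperbolic time, so $F$ is uniformly expanding with uniformly bounded distortion on its domain; a Folklore-type theorem then yields an ergodic $F$-invariant probability $\nu\ll\leb$. Finally I would lift $\nu$ back to $f$ via $\mu=c\sum_{k\ge0}f^k_*(\nu|_{\{R_{\cu}>k\}})$: by Theorem~\ref{TheoremFiftErgDec} and Corollary~\ref{CorollaryPlissBlock} this is a well-defined ergodic $f$-invariant probability \emph{exactly when} $\int R_{\cu}\,d\nu<+\infty$, and in that case $\mu\ll\leb$ with positive Lyapunov exponents inherited from the Pliss construction, i.e. $\mu$ is an SRB measure.

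The hard part—and the reason the conjecture is still open—is that the argument is conditional in two places that cannot, with present tools, be made unconditional. (i) Theorem~\ref{TheoremPlissBlock} gives the identity $\mu(B_{\cu})=\dd_{\NN}(\cu(x))$ \emph{for an invariant probability}; for the non-invariant Lebesgue measure one must first rule out that a definite fraction of Lebesgue mass escapes under the normalized push-forwards $\frac1n\sum_{k<n}f^k_*\leb$, i.e. show $\leb(\ca_{\cu})>0$, and this can fail without an extra recurrence hypothesis on the complement of the hyperbolic region. (ii) Even granting $\leb(B_{\cu})>0$ and the a.c.\ measure $\nu$, the induced time $R_{\cu}$ need not be $\nu$-integrable: this is precisely the possible failure of condition (3)$=$(4) in Theorem~\ref{TheoremLift} (finiteness of $\liminf_n\frac1n\sum_{j<n}R\circ F^j$), and when $\int R_{\cu}\,d\nu=+\infty$ the lift is only an infinite $\sigma$-finite measure, producing no SRB \emph{probability}. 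Thus the machinery above proves Viana's conjecture under any additional assumption that excludes (i) and (ii)—uniform non-uniform expansion, slow recurrence to the bad set, integrability of the first hyperbolic time—which is exactly the family of partial results announced in Section~\ref{SectionExamplesOfApplications}; the unconditional conjecture awaits a way to rule out escaping mass and non-integrable induced times in full generality.
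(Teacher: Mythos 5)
The statement you were asked about is labelled as a \emph{Conjecture} in the paper, and the paper does not prove it; it remains open. You correctly recognize this, so there is no question of comparing your argument with ``the paper's proof'' --- none exists. What the paper actually does is prove conditional versions: Theorem~\ref{MainTHEOVianaConjDiffeo} (and Theorem~\ref{MainTheoremPartial} for the partially hyperbolic case) establishes existence of an SRB/absolutely continuous measure under the additional hypothesis that the \emph{Lyapunov residue} $\Res(x)=\lim_n\dd_{\NN}^+(\{j\,;\,R\circ f^j(x)\ge n\})$ vanishes on a set of positive Lebesgue measure, and shows this condition is also necessary. Your proposed strategy is essentially the paper's: encode positivity of exponents as a sup-additive cocycle, extract Pliss times as a coherent schedule, induce at first Pliss times, and lift. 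The two obstructions you isolate --- (i) escape of Lebesgue mass from the coherent block/absorbing set for a non-invariant reference measure, and (ii) non-integrability of the induced time --- are exactly what the residue hypothesis is engineered to exclude: the paper's Theorem~\ref{TheoremPointWiseSyn} converts $\Res_f(R,p)=0$ into the synchronized NUE condition $\liminf_n\frac{1}{n}\sum_{j<n}\varphi(2^\ell,f^j(p))\ge\frac{\lambda}{5}2^\ell$, after which the existing machinery of \cite{ABV} and \cite{Pi11} applies.

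One point worth flagging in your write-up: Theorem~\ref{TheoremSynINV} is proved only \emph{for invariant measures}; the paper does not have a non-invariant synchronization theorem of that form, and your plan invokes ``the non-invariant version of Theorem~\ref{TheoremSynINV}'' as if it were available. What the paper offers instead for non-invariant measures is the pointwise synchronization of Section~\ref{SectionPointwiseSynchronization}, which requires the residue hypothesis as input. So the gap in your plan is not only in the two places you name at the end, but already at the synchronization step for Lebesgue-typical (non-invariant-measure-typical) points. With that caveat, your assessment of why the conjecture resists the paper's techniques is accurate and consistent with the partial results the paper actually establishes.
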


Here we are focused on the case when $f$ has only positive Lyapunov exponents at Lebesgue almost every point.
Assume also that $f:M\to M$ is a $C^1$ local diffeomorphism on a compact Riemannian manifold $M$. 

According to Oseledets Theorem, there is a set $U\subset M$ with total probability (i.e., $\mu(U)=1$ for every invariant probability $\mu$) such that $\lim_n\frac{1}{n}\log|Df^n(x)v|$ exists for every $v\in T_xM\setminus\{0\}$ and $x\in U$.
The limit $\lim_n\frac{1}{n}\log|Df^n(x)v|$ is called the {\bf\em Lyapunov exponent of $x$ on the direction $v$}. 

If $x\notin U$ and $\lim_n\frac{1}{n}\log|Df^n(x)v|$ does not exist, then one can use $\limsup$ or $\liminf$ to define the Lyapunov exponents.
As we are interested in positive Lyapunov exponents, the  convenient assumption is $\liminf_n\frac{1}{n}\log|Df^n(x)v|>0$.
It follows from the continuity of the map $T_x^1M:=\{v\in T_xM\,;\,|v|=1\}\ni v\mapsto\frac{1}{n}\log|Df^n(x)v|$  and the compactness of $T_x^1M$ that $$\liminf_{n\to+\infty}\frac{1}{n}\log|Df^n(x)v|>0\;\;\forall\,v\in T_x^1M\implies\limsup_{n\to+\infty}\frac{1}{n}\log\|(Df^n(x))^{-1}\|^{-1}>0.$$
Therefore, we say that $x$ {\bf\em has only positive Lyapunov exponents} when
\begin{equation}\label{EquationLyaExpDef}
  \limsup_{n\to+\infty}\frac{1}{n}\log\|(Df^n(x))^{-1}\|^{-1}>0.
\end{equation}

Hence, a version of Viana's conjecture for a local diffeomorphism on a compact manifold having only positive Lyapunov exponents is the following.

\begin{Conjecture}[Viana]\label{ConjectureVianExpanding}
Let $M$ be a compact Riemannian manifold and $f:M\to M$ a $C^{1+}$ local diffeomorphism. If (\ref{EquationLyaExpDef}) holds for Lebesgue almost every $x\in M$ then 
$f$ admits some absolutely continuous invariant measure (in particular, a SRB measure).
\end{Conjecture}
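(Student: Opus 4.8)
The plan is to manufacture, out of hypothesis (\ref{EquationLyaExpDef}), an $f$-coherent schedule of \emph{hyperbolic times}, to use the synchronization results of the paper (in their version for non-invariant measures) to force Lebesgue-typical orbits to meet the associated coherent block with positive density, and then to run the classical bounded-distortion construction of an absolutely continuous invariant measure on the induced map. Since $M$ is compact and $f$ is a $C^{1}$ local diffeomorphism we have $\sup_{x\in M}\log\|(Df(x))^{-1}\|^{-1}<\infty$, and by (\ref{EquationLyaExpDef}) there is $c>0$ for which $H_{c}:=\{x\in M:\limsup_{n}\frac1n\log\|(Df^{n}(x))^{-1}\|^{-1}>c\}$ has positive Lebesgue measure. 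Set
\[
\cu(x):=\Big\{\,n\ge1\ :\ \|(Df^{n-k}(f^{k}(x)))^{-1}\|\le e^{-c(n-k)}\ \text{ for all }0\le k<n\,\Big\}.
\]
Submultiplicativity of $\|(Df^{m}(\cdot))^{-1}\|$ along orbits, namely $\|(Df^{m+\ell}(x))^{-1}\|\le\|(Df^{\ell}(f^{m}(x)))^{-1}\|\,\|(Df^{m}(x))^{-1}\|$, shows at once that $\cu$ satisfies conditions (1) and (2) of an $f$-coherent schedule of events; one adjoins, if convenient, companion coherent schedules monitoring the distortion scale required by the $C^{1+}$ Pesin estimates.

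Consider next the $\cu$-absorbing set $\ca_{\cu}$ and the first-$\cu$-time map $F(x)=f^{R_{\cu}(x)}(x)$ with $R_{\cu}(x)=\min\cu(x)$; working with $\ca_{\cu}$ rather than with the coherent block $B_{\cu}$ makes injectivity of $f$ unnecessary, exactly as in Corollary~\ref{CorollaryPlissBlock}. On $\ca_{\cu}$ each return time $R_{\cu}(x)$ is a genuine hyperbolic time, so the standard $C^{1+}$ estimates (uniform backward contraction plus H\"older control of $\log\|Df^{-1}\|$) provide a neighbourhood of $x$ mapped by $f^{R_{\cu}(x)}$ diffeomorphically onto a ball of a fixed radius $\delta_{0}>0$ about $F(x)$ with distortion bounded by a universal constant. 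Hence, after a Markov/Rokhlin decomposition of $\ca_{\cu}$, $F$ is a uniformly expanding map with bounded distortion and carries an ergodic $F$-invariant probability $\nu\ll\leb$. Provided $\int R_{\cu}\,d\nu<\infty$, the saturation $\mu:=\sum_{j\ge0}f^{j}_{*}\big(\nu|_{\{R_{\cu}>j\}}\big)$ is a finite $f$-invariant measure, absolutely continuous with respect to Lebesgue because each $f^{j}_{*}$ preserves absolute continuity; normalizing $\mu$ then gives the desired a.c.i.m. Writing $t_{i}=\sum_{j<i}R\circ F^{j}(x)$ one checks, from conditions (1)--(2) on $\cu$, that $\cu(x)=\{t_{1}<t_{2}<\cdots\}$ for $x$ in the domain of all iterates of $F$, and therefore $\dd_{\NN}^{+}(\cu(x))>0\iff\liminf_{i}\tfrac1i t_{i}<\infty$; so — in the spirit of Theorem~\ref{TheoremLift}(3) — the finiteness of $\int R_{\cu}\,d\nu$ amounts to $\cu$ having positive upper density along $\nu$-typical orbits.

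Since no $f$-invariant probability is given a priori, the role of the paper's machinery is to supply this density from the reference (Lebesgue) measure: one uses the synchronization theorem for non-invariant measures to control $\frac1n\#\{0\le j<n:f^{j}(x)\in\ca_{\cu}\}$ along Lebesgue-typical orbits, and Theorem~\ref{TheoremPlissBlock} together with Corollary~\ref{CorollaryPlissBlock} to identify that frequency with $\dd_{\NN}(\cu(x))$ once an invariant measure is at hand, transferring the positivity to $\nu$. The step I expect to be the real obstacle, however, is the very first one: showing that $\dd_{\NN}^{+}(\cu(x))>0$ on a set of positive Lebesgue measure. Unlike in the classical non-uniformly expanding setting, hypothesis (\ref{EquationLyaExpDef}) controls only $\log\|(Df^{n}(x))^{-1}\|^{-1}$, a \emph{superadditive} (``endpoint'') quantity, and it does \emph{not} follow from the Pliss Lemma that the additive cocycle $\sum_{j<n}\log\|(Df(f^{j}(x)))^{-1}\|^{-1}$ has positive average along a subsequence, because of possible cancellation between $\|(Df^{n}(x))^{-1}\|$ and the product $\prod_{j<n}\|(Df(f^{j}(x)))^{-1}\|$. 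Converting the $\limsup$ information about $\|(Df^{n}(x))^{-1}\|^{-1}$ into an $f$-coherent schedule of genuine hyperbolic times with positive upper density — plausibly by synchronizing finitely many coherent schedules built from directional cocycles over the projectivized bundle, precisely the sort of task the synchronization results of this paper are designed for — is the crux, and is the reason (\ref{EquationLyaExpDef}) appears here as a conjecture rather than a theorem.
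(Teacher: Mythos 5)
You are right that this statement is a genuine conjecture, not a theorem; the paper records it precisely because it remains open, and nothing in the paper (or elsewhere) proves it. Your diagnosis of the obstruction is the correct one, and it is exactly why the paper must add an extra hypothesis before getting a theorem. The schedule $\cu$ you define — the $(c,\varphi)$-Pliss times for the sup-additive cocycle $\varphi(n,x)=\log\|(Df^n(x))^{-1}\|^{-1}$ — is indeed $f$-coherent by Lemma~\ref{Lemma67ygtgh}, but the density step genuinely fails: the subadditive Pliss Lemma (Lemma~\ref{PlissLemma}, Lemma~\ref{Lemma67ygtghXXXX}) requires a subadditive cocycle bounded above, whereas $\varphi$ is sup-additive and unbounded in $n$, so the $\limsup$ in hypothesis~(\ref{EquationLyaExpDef}) does not yield $\dd_{\NN}^{+}(\cu(x))>0$ for Lebesgue-typical $x$ without additional input.

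What the paper actually proves in this direction is Theorem~\ref{MainTHEOVianaConjDiffeo} (via Theorem~\ref{Theoremkjgjbkljhl}), and the extra hypothesis that closes your gap is the vanishing of the \emph{Lyapunov residue}, $\Res(x)=0$. Concretely, one considers the first $(c,\varphi)$-Pliss time $R$ and observes that $\Res(x)=0$ forces $\lim_{k}\dd_{\NN}^{+}(\{j\ge0\,;\,R\circ f^{j}(x)\ge k\})=0$; this vanishing-tail condition is exactly what the pointwise synchronization result (Theorem~\ref{TheoremPointWiseSyn}) needs to convert the $\limsup$ endpoint expansion along the orbit of a single Lebesgue-typical point $x$ into a $\liminf$ for the \emph{additive} $f^{2^{\ell}}$-cocycle $i\mapsto\sum_{j<i}\varphi(2^{\ell},f^{j2^{\ell}}(\cdot))$ for all large $\ell$; that is condition~(\ref{EquationSynNUEDEForte}) at the level of $f^{m}$ with $m=2^{\ell}$, to which Theorem~A of \cite{Pi11} (or Theorem~C of \cite{ABV}) applies to produce the absolutely continuous invariant measure. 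The residue hypothesis is precisely the pointwise-synchronization input your plan flags as the crux; without it the conjecture remains open, and the paper only establishes that zero residue is both necessary and sufficient.
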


One year after publishing his conjecture, Viana, in a joined work with J. Alves and C. Bonatti \cite{ABV}, instead of using the Lyapunov exponents condition (\ref{EquationLyaExpDef}),  assumed that
\begin{equation}\label{EquationSynNUEDEForte}
\liminf_{n\to+\infty}\frac{1}{n}\sum_{j=0}^{n-1}\log\|(Df(f^j(x)))^{-1}\|^{-1}>0
\end{equation}
for Lebesgue almost every point and they proved that Conjecture~\ref{ConjectureVianExpanding} is true when (\ref{EquationLyaExpDef}) is replaced by (\ref{EquationSynNUEDEForte}).  
In \cite{Pi06}, the author was able to weaken condition (\ref{EquationSynNUEDEForte}) to 
\begin{equation}\label{EquationSynNUEDEF}
\limsup_{n\to+\infty}\frac{1}{n}\sum_{j=0}^{n-1}\log\|(Df(f^j(x)))^{-1}\|^{-1}>0
\end{equation} and still obtain the existence of an absolutely continuous invariant measure.
Notice that (\ref{EquationSynNUEDEForte}) $\implies$ (\ref{EquationSynNUEDEF}) $\implies$ (\ref{EquationLyaExpDef}), as $\log\|(Df^n(x))^{-1}\|^{-1}\ge\sum_{j=0}^{n-1}\log\|(Df(f^j(x)))^{-1}\|^{-1}$.
The condition (\ref{EquationSynNUEDEForte}), and a posteriori (\ref{EquationSynNUEDEF}), came to be known as {\bf NUE} (non-uniformly expanding) condition and the dynamical systems satisfying (\ref{EquationSynNUEDEForte}) or (\ref{EquationSynNUEDEF}) on a set of points with positive Lebesgue measure came to be known as Non-uniformly Expanding Dynamics.
Nevertheless, there are many authors that refers to non-uniformly expanding dynamics when there is a set of points with positive Lebesgue measure and only positive Lyapunov exponents.

We observe that {\bf\em synchronized} {\bf NUE} would be a more appropriate name to conditions (\ref{EquationSynNUEDEForte}) or  (\ref{EquationSynNUEDEF}), letting  ``NUE'' for the condition (\ref{EquationLyaExpDef}).
To see that, let
 $T^1M=\bigcup_{x\in M}T_x^1M\subset TM$ be the {\em unit tangent bundle}. Note that $\log|Df^n(x)v|$ is an additive cocycle for the auxiliary skew-product 
 $F:T^1M\to T^1M$ given by  $$F(x,v)=\bigg(f(x),\frac{Df(x)v}{|Df(x)v|}\bigg).$$
Indeed, as $F^n(x,v)=\big(f^n(x),\frac{Df^n(x)v}{|Df^n(x)v|}\big)$, taking $h(x,v)=\log|Df(x)v|$, we get that  
$$h(F^j(x,v))=h\bigg(f^j(x),\frac{Df^j(x)v}{|Df^j(x)v|}\bigg)=\log\bigg|Df(f^j(x))\frac{Df^j(x)v}{|Df^j(x)v|}\bigg|=$$
$$=\log|Df^{j+1}(x)v|-\log|Df^j(x)v|$$
and so, we conclude that $\psi:\NN\times T^1M\to\RR$, defined by
\begin{equation}\label{Equationug488uh}
  \psi(n,(x,v)):=\log|Df^n(x)v|=\sum_{j=0}^{n-1}h\circ F^j(x,v),
\end{equation}
is an additive $F$-cocycle.
Thus, if $\gamma= \limsup_{n}\frac{1}{n}\log\|(Df^n(x))^{-1}\|^{-1}>0$, define $\cl(x,v)$ as the set of all $(\gamma/2,\psi)$-Pliss times for $(x,v)\in T^1M$.
Note that, for $v\ne u\in T_x^1M$, we don't know if there exists some $\ell\ge0$ such that $$\dd_{\NN}^+(\{j\in\NN\,;\,(j,j+\ell)\in\cl(x,v)\times\cl(x,u)\})>0.$$ Nevertheless, assuming that 
$\gamma=\limsup_n\frac{1}{n}\sum_{j=0}^{n-1}\log\|(Df(f^j(x)))^{-1}\|^{-1}>0$, it follows from Pliss Lemma that there exists $\theta>0$ such that $\dd_{\NN}^+(\cq(x))\ge\theta$, where $\cq(x)$ is the set of all $(\gamma/2,\Psi)$-Pliss times with $\Psi:\NN\times\XX\to\RR$ being the $f$ additive cocycle \begin{equation}\label{Equationfgyuygh}
  \Psi(n,x)=\sum_{j=0}^{n-1}\log\|(Df(f^j(x)))^{-1}\|^{-1}.
\end{equation}
 Thus, it follows from the fact that $\psi(n,(x,v))\ge\Psi(n,x)$ for every $(x,v)\in T^1M$ that $\cq(x)\subset\bigcap_{v\in T_x^1M}\cl(x,v)$. In particular,  
$$\dd_{\NN}^+(\cl(x,v)\cap\cl(x,u))\ge\theta>0$$
for every $v,u\in T_x^1M$.
This means that condition (\ref{EquationSynNUEDEForte}) or (\ref{EquationSynNUEDEF}) implies the synchronization (with $\ell=0$) of the Pliss times associated to the Lyapunov exponents of a point $x\in M$ on any given pair of directions $v,u\in T_x^1M$, as we had claimed.

Although specific, the examples in Appendix~\ref{SecHYpTIm} give a flavor of how to use synchronization to produce SRB measures (see Theorem~\ref{TheoremExemploSyEnd}).
In those examples, the existence of an invariant measure at the base of the skew-product allows us to use Theorem~\ref{TheoremSynINV} to assure the syntonization of the Pliss times associated to the Lyapunov exponents and, as a consequence, obtain a SRB measure.

In Section~\ref{SectionPointwiseSynchronization}, we study conditions to obtain a pointwise synchronization of  sup-additive cocycles.
In particular, letting $\psi, \cl, \Psi$ and $\cq$ be as above, we study conditions on the orbit of a point $p$, $\co_f^+(p)$, to assure that we can synchronize all the $\cl(p,v)$ with $v\in T_x^1M$.
That is, a condition on $\co_f^+(p)$ to obtain $\dd_{\NN}^+(\bigcap_{v\in T_x^1 M}\cl(p,v))>0$, without assuming $\dd_{\NN}^+(\cq(p))>0$.
With the results of Section~\ref{SectionPointwiseSynchronization}, for a map having only positive Lyapunov exponents almost everywhere, we can give a necessary and sufficient condition to the existence of SRB measures (see Theorem~\ref{MainTHEOVianaConjDiffeo} below).

As in Conjecture~\ref{ConjectureVianExpanding}, suppose that $f:M\to M$ is a $C^{1+}$ local diffeomorphism, $M$ is a compact Riemannian manifold and that  (\ref{EquationLyaExpDef}) holds for every point $x$ in a set $U$ with full Lebesgue measure, i.e., $\leb(M\setminus U)=0$.
Define {\bf\em Lyapunov residue} of $x\in U$ as $$\Res(x)=
	\lim_{n\to\infty}\dd_{\NN}^+(\{j\ge 0\,;\, R\circ f^j(x)\ge n\}),
$$ where
\begin{equation}\label{Equationnyhiygf}
  R(x)=
\min\bigg\{j\ge1\,;\,\frac{1}{j}\log\|(Df^j(x))^{-1}\|^{-1}>\frac{1}{2}\limsup_{n\to+\infty}\frac{1}{n}\log\|(Df^n(x))^{-1}\|^{-1}\bigg\}
\end{equation}

is the first time that $x$ ``reaches half of its limit (\ref{EquationLyaExpDef})''.

Note that $\Res(x)=0$ for every ergodic invariant probability $\mu$ with $\mu(U)>0$.
Indeed, as $\mu$ is ergodic, $\mu(U)>0$ implies that (\ref{EquationLyaExpDef}) holds for almost every $x$.
Hence, $\sum_{j=0}^{+\infty}\mu(\{R=j\})=1$. By Birkhoff,
$$\dd_{\NN}^+(\{j\ge 0\,;\, R\circ f^j(x)\ge n\})=\mu(\{R\ge n\})$$ for almost every $x$ and so,
$$\Res(x)=\lim_n\mu(\{R\ge n\})=\lim_n\sum_{j=n}^{+\infty}\mu(\{R=j\})=0.$$

Therefore, the Lyapunov residue being zero on a set of positive Lebesgue measure is a necessary condition to the existence of a SRB measure for $f$. In Theorem~\ref{MainTHEOVianaConjDiffeo}, we show that Lyapunov residue being zero on a set of positive Lebesgue measure is also a sufficient condition to the existence of a SRB measure.

\begin{maintheorem}
	\label{MainTHEOVianaConjDiffeo}
	Let $M$ be a compact Riemannian manifold and $f:M\to M$ be a $C^{1+}$ local diffeomorphism such that Lebesgue almost every point of $M$ has only positive Lyapunov exponents.
	Then, there exists an ergodic absolute continuous invariant measure if and only if the Lyapunov residue is zero on a set of positive measure.
\end{maintheorem}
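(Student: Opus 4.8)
The necessity of the condition was verified just before the statement, so the plan is to establish sufficiency: assuming that the Lyapunov residue vanishes on a set $H\subset U$ with $\leb(H)>0$, produce an ergodic absolutely continuous invariant measure. I would split the argument in two. \emph{Part (i):} upgrade the hypothesis ``$\Res=0$ on $H$'' into the statement that there are $\gamma_0>0$, $\theta>0$ and a Borel set $G$ with $\leb(G)>0$ such that, writing $\ch(x)$ for the set of $n\ge1$ with $\frac1{n-k}\log\|(Df^{\,n-k}(f^{k}(x)))^{-1}\|^{-1}\ge\gamma_0$ for all $0\le k<n$ (the \emph{simultaneous $\gamma_0$-hyperbolic times} of $x$, i.e.\ hyperbolic times that are valid in every tangent direction at once), one has $\dd_{\NN}^+(\ch(x))\ge\theta$ for every $x\in G$. \emph{Part (ii):} from a positive frequency of simultaneous hyperbolic times on a set of positive Lebesgue measure, build the absolutely continuous invariant measure by the now-classical mechanism of \cite{ABV} and \cite{Pi06}.

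For Part (i), observe first that both $\gamma(x):=\limsup_n\frac1n\log\|(Df^n(x))^{-1}\|^{-1}$ and the set $\{\Res=0\}$ are $f$-invariant: peeling off one iterate alters the sup-additive cocycle $\varphi(n,x):=\log\|(Df^n(x))^{-1}\|^{-1}$ by an amount bounded uniformly in $n$ (as $f$ is a $C^1$ local diffeomorphism on a compact manifold) and changes the upper density of a subset of $\NN$ by $0$. Hence we may fix $\varepsilon>0$ with $\leb(\{x\in H\,;\,\gamma(x)\ge\varepsilon\})>0$ and take $\gamma_0=\varepsilon/2$. Since $\|(Df^{m}(y))^{-1}\|^{-1}=\inf_{|w|=1}|Df^{m}(y)w|$, the set $\ch(x)$ is exactly the set of $(\gamma_0,\varphi)$-Pliss times of $x$, and because $\psi(m,(y,w))=\log|Df^m(y)w|\ge\varphi(m,y)$ for every unit $w$, each $n\in\ch(x)$ is a $(\gamma_0,\psi)$-Pliss time of $(x,v)$ simultaneously for every $v\in T^1_xM$. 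So the claim $\dd_{\NN}^+(\ch(x))>0$ is precisely a pointwise synchronization, at $x$, of the Pliss times of the cocycle $\psi$ over all base directions $v$ at once. This is the kind of statement the results of Section~\ref{SectionPointwiseSynchronization} are designed to prove for sup-additive cocycles, and the orbital hypothesis they require is exactly $\Res(x)=0$ — which, unwound, asserts that the upper density of the instants $j\ge0$ at which the ``first half-limit time'' $R\circ f^j(x)$ exceeds $n$ tends to $0$ as $n\to\infty$. Applying that result on $\{x\in H\,;\,\gamma(x)\ge\varepsilon\}$ gives $\dd_{\NN}^+(\ch(x))>0$ there, and measurability of $x\mapsto\dd_{\NN}^+(\ch(x))$ together with inner regularity of $\leb$ then yields the uniform $\theta>0$ and the set $G$.

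For Part (ii) I would follow \cite{ABV} and \cite{Pi06}. The $C^{1+}$ hypothesis — H\"older continuity of $Df$ — provides, uniformly over all pairs $(x,n)$ with $n\in\ch(x)$, a scale $\delta_1>0$ and a \emph{hyperbolic pre-ball} $V_n(x)\ni x$ mapped by $f^n$ diffeomorphically onto $B(f^n(x),\delta_1)$, with the inverse branch contracting by a factor $e^{-n\gamma_0/2}$ and with distortion $\sup_{V_n(x)}|\det Df^n|/\inf_{V_n(x)}|\det Df^n|$ bounded by a universal constant. Using the positive frequency $\theta$ of hyperbolic times on $G$, the Ces\`aro averages $\mu_N=\frac1N\sum_{n=0}^{N-1}f^n_*(\leb|_G)$ are seen to retain a definite fraction of their mass on images of hyperbolic pre-balls, carrying there densities bounded on $\delta_1$-balls by a universal constant; consequently any weak-$*$ accumulation point $\mu$ of $(\mu_N)_N$ is an $f$-invariant probability with a nontrivial absolutely continuous part. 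Since the absolutely continuous part of an $f$-invariant measure (for a local diffeomorphism) is again $f$-invariant, normalizing it and passing, as in \cite{ABV}, to one of its (finitely many) ergodic absolutely continuous components produces the desired ergodic absolutely continuous — hence SRB — invariant probability.

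I expect Part (i) to be the main obstacle: turning the a priori weak hypothesis (positive Lyapunov exponents together with vanishing Lyapunov residue, strictly weaker than the NUE conditions~(\ref{EquationSynNUEDEForte})--(\ref{EquationSynNUEDEF})) into a positive frequency of \emph{simultaneous} hyperbolic times. The difficulty is real: hyperbolicity of $Df^n(x)$ along one direction at a given time says nothing a priori about another direction, and the naive remedy of concatenating ``first half-limit'' blocks $x,\,f^{R(x)}(x),\,f^{R(x)+R(f^{R(x)}(x))}(x),\dots$ only yields, by sup-additivity of $\varphi$, times at which the Pliss inequality holds at block endpoints but not at interior instants. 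Overcoming this is precisely what the sup-additive pointwise-synchronization machinery of Section~\ref{SectionPointwiseSynchronization} accomplishes, with $\Res=0$ as the exact orbital input it needs. Part (ii) is, by contrast, routine in this context; its one delicate ingredient is the bounded-distortion estimate at hyperbolic times, which is where $C^{1+}$ — rather than merely $C^1$ — regularity enters.
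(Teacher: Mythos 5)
Your two-step strategy (Part~(i): positive frequency of simultaneous hyperbolic times on a set of positive Lebesgue measure; Part~(ii): the \cite{ABV}/\cite{Pi06}-style construction of the acip) is the right overall shape, and Part~(ii) is essentially what the cited results deliver. The paper's proof of Theorem~\ref{Theoremkjgjbkljhl}, which is what Theorem~\ref{MainTHEOVianaConjDiffeo} is reduced to, indeed splits similarly. But there is a concrete gap in Part~(i), and it is exactly at the point you flag as the main obstacle and then declare resolved.

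Theorem~\ref{TheoremPointWiseSyn} does not produce a positive upper density of $(\gamma_0,\varphi)$-Pliss times for $f$. What it gives, under $\Res(x)=0$, is a Ces\`aro lower bound
$\liminf_{i}\frac1i\sum_{j=0}^{i-1}\varphi(2^\ell,f^j(x))\ge\frac{\lambda}{5}2^\ell$ for all $\ell\ge\ell_0(x)$.
Averaging over residue classes modulo $m=2^\ell$ (this is the paper's choice of $\alpha(x)$, a step you omit) extracts a point $f^{\alpha(x)}(x)$ whose \emph{$f^m$-orbit} satisfies the strong NUE condition
$\liminf_i\frac1i\sum_{j=0}^{i-1}\log\|(Df^m(f^{jm}(f^{\alpha(x)}(x))))^{-1}\|^{-1}>0$.
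From this one gets, via Pliss's lemma, a positive density of hyperbolic times for the \emph{$f^m$}-cocycle $i\mapsto\varphi(im,\cdot)$, not for $f$. The problem you yourself identify — that sup-additivity guarantees the Pliss inequality only at block endpoints $km$, not at the interior instants $km+s$, $0<s<m$ — is not overcome by the Section~\ref{SectionPointwiseSynchronization} machinery; it is circumvented. Indeed for a Pliss time $n$ of the $f^m$-cocycle one only gets
$\frac{1}{(n-k)m-s}\varphi((n-k)m-s,f^{km+s}(y))\ge\gamma_0-\frac{C}{\,n-k-1\,}$
with $C=\sup|\log\|(Df)^{-1}\||$, which fails for $n-k$ small, so $\ch(x)$ as you have defined it need not have positive density. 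Consequently your Part~(ii), built around Ces\`aro averages $\frac1N\sum_{n<N}f^n_*(\leb|_G)$ for $f$ and an $f$-invariant limit, is aimed at the wrong map.

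The fix is the one the paper uses: settle for the strong NUE condition for $f^m$ on the sets $H_n(\ell,j)$, apply Theorem~A of \cite{Pi11} (or Theorem~C of \cite{ABV} in the $C^2$ case) \emph{to $f^m$} to obtain that Lebesgue-a.e.\ point of $H_n(\ell,j)$ lies in the basin of an ergodic $f^m$-acip $\mu$, and then pass to $\widetilde\mu=\frac1m\sum_{i=0}^{m-1}\mu\circ f^{-i}$, which is an ergodic $f$-acip. Your Part~(ii) becomes correct once it is run for $f^m$ in place of $f$; as written for $f$, it rests on the unestablished Part~(i).
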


\begin{Corollary}
		Let $M$ be a compact Riemannian manifold and $f:M\to M$ be a $C^{1+}$ local diffeomorphism such that Lebesgue almost every point of $M$ has only positive Lyapunov exponents.
	If $\limsup\frac{1}{n}\sum_{j=0}^{n-1}R\circ f^j(x)<+\infty$ for a positive measures set of points $x\in M$, where $R(x)$ is given by (\ref{Equationnyhiygf}), then $f$ admits an ergodic absolute continuous invariant probability.
\end{Corollary}
\begin{proof}
	Note that, if $\Res(x)>0$ then $\limsup_n\frac{1}{n}\sum_{j=0}^{n-1}R\circ f^j(x)=+\infty$. Indeed, if $\delta=\Res(x)>0$ then  for any given $m\in\NN$ there is a sequence $n_j\to+\infty$ such that $$\frac{1}{n_j}\sum_{i=0}^{n_j-1}R\circ f^i(x)\ge\frac{m}{n_j}\#\{i<n_j\,;\,R\circ f^i(x)>m\}\ge m\frac{\delta}{2}.$$
Thus, $\limsup\frac{1}{n}\sum_{j=0}^{n-1}R\circ f^j(x)\ge m\delta/2$ for every $m\in\NN$. That is, $\limsup\frac{1}{n}\sum_{j=0}^{n-1}R\circ f^j(x)=+\infty$.
\end{proof}

In Theorem~\ref{MainTheoremPartial} below, we have a version of the result of Theorem~\ref{MainTHEOVianaConjDiffeo} for partially hyperbolic systems. Nevertheless, we note that in Theorem~\ref{MainTHEOVianaConjDiffeo} we ask a stronger hypothesis over the Lyapunov exponents along the unstable direction (as well as on the stable direction). That is, we ask  $\liminf_{n}\frac{1}{n}\log\|(Df^n|_{\EE^{cu}(x)})^{-1}\|^{-1}$, instead of $\limsup_{n}\frac{1}{n}\log\|(Df^n|_{\EE^{cu}(x)})^{-1}\|^{-1}$, to be bigger than zero. 

Given a $C^1$ diffeomorphism $f:M\to M$,
we say that a forward invariant set $U$ (i.e.,$f(U)\subset U$) is  {\bf\em partially hyperbolic} if there exist a $Df$-invariant splitting $T_U M=\EE^{cu}\oplus\EE^{cs}$ and a constant $\sigma\in(0,1)$ such that the following three conditions holds for every $x\in U$:
\begin{enumerate}
	\item $\|Df|_{\EE^{cs}(x)}\|\|Df^{-1}|_{\EE^{cu}(x)}\|\le\sigma$ (dominated splitting);
	\item $\liminf_{n\to+\infty}\frac{1}{n}\log\|(Df^n|_{\EE^{cu}(x)})^{-1}\|^{-1}>0$ (positive Lyapunov exponents along the unstable direction);
	\item $\limsup_{n\to+\infty}\frac{1}{n}\log\|Df^n|_{\EE^{cs}(x)}\|<0$ (negative Lyapunov exponents along the stable direction).
\end{enumerate}

Given a point $x\in U$, we define the {\bf\em unstable Lyapunov residue} and the {\bf\em stable Lyapunov residue} as, respectively, \begin{equation}\label{EquationUnstLyRe}
  \Res^{u}(x)=\lim_{n\to\infty}\dd_{\NN}^+(\{j\ge 0\,;\, R^u\circ f^j(x)\ge n\})
  \end{equation}
and
\begin{equation}\label{EquationStLyRe}
  \Res^{s}(x)=\lim_{n\to\infty}\dd_{\NN}^+(\{j\ge 0\,;\, R^s\circ f^j(x)\ge n\}),
\end{equation}
where $$R^{u}(x)=\min\bigg\{n\ge1\,;\,\frac{1}{j}\log\|(Df^j|_{\EE^{cu}(x)})^{-1}\|^{-1}>\frac{1}{2}\liminf_{k\to+\infty}\frac{1}{k}\log\|(Df^k|_{\EE^{cu}(x)})^{-1}\|^{-1}\;\forall\,j\ge n\bigg\}$$
 and
 $$R^{s}(x)=\min\bigg\{n\ge1\,;\,\frac{1}{j}\log\|Df^j|_{\EE^{cs}(x)}\|<\frac{1}{2}\limsup_{k\to+\infty}\frac{1}{k}\log\|Df^k|_{\EE^{cs}(x)}\|\;\forall\,j\ge n\bigg\}.$$

Recall that the {\bf\em basin of attraction of a measure $\mu$} is the set
\begin{equation}\label{DefBasinOfAttraction}
  \beta_f(\mu)=\bigg\{x\,;\,\lim_n\frac{1}{n}\sum_{j=0}^{n-1}\delta_{f^j(x)}\to\mu\text{ on the weak topology}\bigg\}
\end{equation}

\begin{maintheorem}\label{MainTheoremPartial}
	Let $f:M\to M$ be a $C^2$ diffeomorphism having a partially hyperbolic set $U$. If $\Res^{u}(x)=\Res^s(x)=0$ for almost every $x\in U$, then almost every point in $U$ belongs to the basin of attraction of some SRB supported on $\bigcap_{j=0}^{+\infty}f^j\big(\overline{U}\big)$.
\end{maintheorem}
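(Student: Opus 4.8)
The plan is to reduce the partially hyperbolic case to the expanding case already understood in Theorem~\ref{MainTHEOVianaConjDiffeo} by passing to a quotient along stable leaves. First I would exploit the dominated splitting together with condition (3) to build a stable lamination $\mathcal{W}^s$ on $\Lambda:=\bigcap_{j\ge0}f^j(\overline{U})$: by the classical graph-transform argument (using only $C^1$ regularity and the domination constant $\sigma$), through each $x\in\Lambda$ there is a locally invariant $C^1$ disk $W^s_{loc}(x)$ tangent to $\EE^s(x)$, along which forward orbits contract exponentially because of (3). Since $f$ is $C^2$, these stable leaves form an absolutely continuous lamination, so holonomy along $\mathcal{W}^s$ preserves the class of Lebesgue measure on unstable-like transversals. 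I would then consider the quotient dynamics $\hat f$ on the (local) leaf space $\hat\Lambda=\Lambda/\mathcal{W}^s$, which inherits from (1)–(2) the structure of a map for which Lebesgue-a.e.\ point has only positive Lyapunov exponents in the sense of (\ref{EquationLyaExpDef}), with $\|(D\hat f^n(\hat x))^{-1}\|^{-1}$ comparable to $\|(Df^n|_{\EE^u(x)})^{-1}\|^{-1}$.

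Next I would check that the unstable and stable residue hypotheses transfer correctly. The stable residue $\mathrm{Res}^s(x)=0$ a.e.\ is exactly what is needed to guarantee, via Theorem~\ref{TheoremPlissBlock} / Corollary~\ref{CorollaryPlissBlock} applied to the coherent schedule of stable-contraction (Pliss) times defined by $R^s$, that the stable-time induced scheme has a lift, hence that the stable foliation is ``uniformly enough'' controlled to pull back an a.c.\ measure from the quotient to $\Lambda$ (this is where the $C^2$ hypothesis enters, guaranteeing bounded distortion of the stable holonomies along the induced times). The unstable residue $\mathrm{Res}^u(x)=0$ a.e.\ on $U$ descends to $\mathrm{Res}(\hat x)=0$ on a positive-Lebesgue-measure subset of $\hat\Lambda$: indeed $R^u$ is constant along stable leaves up to a bounded error (by domination), so the schedule of unstable hyperbolic times projects, and the $\dd_{\NN}^+$ computations defining the residue are preserved under the a.c.\ projection. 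Applying Theorem~\ref{MainTHEOVianaConjDiffeo} to $\hat f$ then yields an ergodic a.c.\ invariant measure $\hat\mu$ for the quotient.

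Finally I would lift $\hat\mu$ to an $f$-invariant measure $\mu$ on $\Lambda$ supported on $\bigcap_{j\ge0}f^j(\overline U)$ by disintegrating along the stable leaves with the conditional measures prescribed by the (absolutely continuous) stable holonomy; forward invariance of $\hat\mu$ together with the invariance of $\mathcal{W}^s$ makes $\mu$ $f$-invariant, and the standard argument identifying such a measure as an SRB measure (its conditionals on unstable plaques are absolutely continuous, since they come from $\hat\mu$ which is a.c.\ on the quotient) shows $\mu$ is SRB. The basin statement then follows from: (a) the exponential contraction along $W^s_{loc}$ on $\Lambda$, which forces the forward orbit of a.e.\ point of $U$ to shadow that of a point on $\Lambda$, and (b) the fact that $\hat\mu$-a.e.\ point of the quotient is in the basin of $\hat\mu$ (part of the conclusion of Theorem~\ref{MainTHEOVianaConjDiffeo}), so its preimage under the projection, a union of stable leaves of full measure, lies in the basin of $\mu$; combining with the absolute continuity of the stable lamination, this union has full Lebesgue measure in $U$.

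The main obstacle I expect is the construction and regularity of the stable lamination on the \emph{non-invariant} set $U$ when only $C^1$ (for the splitting) / $C^2$ (for distortion) is assumed and the contraction in (3) is only asymptotic (a $\limsup<0$), not uniform: one does not get a genuine invariant foliation on an open neighborhood, only stable \emph{times} and leaves through points of the attractor $\Lambda=\bigcap_j f^j(\overline U)$, so the passage to the quotient has to be done in the language of induced schemes and coherent schedules (this is precisely why $\mathrm{Res}^s$, rather than a uniform rate, is the right hypothesis), and matching the distortion bounds needed for absolute continuity of the holonomy with the non-uniform stable times is the delicate technical heart of the argument.
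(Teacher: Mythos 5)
Your proposal takes a genuinely different route from the paper, and it contains a gap that is not merely technical. You want to build a stable lamination $\mathcal{W}^s$ through $\Lambda = \bigcap_j f^j(\overline U)$, pass to the leaf space $\hat\Lambda = \Lambda/\mathcal{W}^s$ with quotient dynamics $\hat f$, and apply Theorem~\ref{MainTHEOVianaConjDiffeo} to $\hat f$. But Theorem~\ref{MainTHEOVianaConjDiffeo} is a statement about $C^{1+}$ local diffeomorphisms of a compact Riemannian manifold, and the leaf space $\hat\Lambda$ is not a manifold: the stable family is not a genuine invariant foliation on an open set because hypothesis~(3) gives contraction only as a $\limsup$, so stable disks exist only through Pesin-regular points at their non-uniform stable times. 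You acknowledge this at the end and say the quotient construction ``has to be done in the language of induced schemes and coherent schedules'', but that sentence is a placeholder for the entire missing argument, and it is not clear that such a reformulation produces an object satisfying the hypotheses of Theorem~\ref{MainTHEOVianaConjDiffeo}. The absolute continuity of the stable holonomy that you use twice (to push Lebesgue to the quotient and to pull $\hat\mu$ back) is itself a non-uniform statement whose proof under the asymptotic hypothesis~(3) would essentially require rebuilding the machinery you are trying to bypass.

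The paper avoids the quotient entirely. It sets $\varphi^u(i,x)=\log\|(Df^i|_{\EE^u(x)})^{-1}\|^{-1}$ and $\varphi^s(i,x)=-\log\|Df^i|_{\EE^s(x)}\|$, extends the dominated splitting continuously to $\overline U$, and forms the continuous sup-additive cocycle $\Phi(i,x)=\min\{\varphi^u(i,x),\varphi^s(i,x)\}$. The two hypotheses $\Res^u(x)=\Res^s(x)=0$ combine to show that the residue of the first $\Phi$-Pliss time vanishes, so Theorem~\ref{TheoremPointWiseSyn} (pointwise synchronization) produces, on pieces $H_n(\ell,j)$ of $U$, a power $m=2^\ell$ with $\liminf_i\frac{1}{i}\sum_{j<i}\Phi(m,f^{jm}(x))>0$; Pliss's lemma then gives a positive lower density of \emph{simultaneous} hyperbolic times for $f^m$, meaning moments at which uniform expansion along $\EE^u$ and uniform contraction along $\EE^s$ hold at once. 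That is precisely the hypothesis of Proposition~6.4 of \cite{ABV}, which directly yields an SRB measure for $f^m$ supported in $\bigcap_j f^{j}(\overline U)$; the $C^2$ assumption enters only through this citation, and averaging over $0\le i<m$ concludes. In short, the disintegration along stable leaves, the absolute continuity of holonomy, and the distortion control that form the heart of your proposal are all delegated to the cited proposition, while the paper's own contribution is the synchronization argument that turns the two separate residue hypotheses into a single coherent schedule of simultaneous hyperbolic times.
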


\section{Induced maps and coherence}\label{IndCoheInvSets}

Consider a map $f:X\to X$ defined in a set $X$.
The $f$-induced map defined on $A\subset X$ with an induced time $R:A\to\NN:=\{1,2,3,\cdots\}$ is the map $F:A\to X$ defined by $F(x)=f^{R(x)}(x)$. 

The {\bf\em forward orbit} of a point $x$ (for instance, with respect to $f$) is $\co_f^+(x)=\{f^j(x)\,;\,j\ge0\}$, the {\bf\em backward orbit (or pre-orbit)} of $x$ is $\co_f^-(x)=\{y\in\XX\,;\,x\in\co_f^+(y)\}$ and the orbit is $\co_f(x)=\co_f^+(x)\cup\co_f^-(x)$.
The {\bf\em omega limit set} of $x$, $\omega_f(x)$, is the set of accumulating points of $\co_f^+(x)$, that is, the set of $y\in \XX$ such that $y=\lim_{j\to\infty} f^{n_j}(x)$ for some sequence $n_j\to+\infty$.
The {\bf\em alpha limit set} of $x$, $\alpha_f(x)$, is the set of accumulating points of $\co_f^-(x)$, i.e., the set of $y\in \XX$ such that $y=\lim_{j\to\infty} y_j$ for some sequence $y_j$ such that $f^{n_j}(y_j)=x$ and $n_j\to+\infty$.

	Given a set $U\subset A$, we define the {\bf\em $(f,R)$-spreading of $U$} (for short, the {\bf\em  spreading of $U$}) as $$\widetilde{U}=\bigcup_{x\in U}\bigcup_{j=0}^{R(x)-1}f^j(x)= \bigcup_{n\ge1}\bigcup_{j=0}^{n-1}f^j\big(U\cap \{R=n\}\big)=\bigcup_{j\ge0}f^j\big(U\cap \{R>j\}\big).$$

\begin{Lemma}\label{LemmaGenInd1}
	If $F(U)\subset U\subset A$ then  $f(\widetilde{U})\subset\widetilde{U}$. Also, if $F(U)= U\subset A$, then  $f(\widetilde{U})=\widetilde{U}$.
\end{Lemma}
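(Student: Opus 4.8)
The plan is to argue purely by unwinding the definition of the spreading and doing a short case analysis. Recall that $\widetilde{U}=\bigcup_{x\in U}\bigcup_{j=0}^{R(x)-1}f^j(x)$, so a point $z$ lies in $\widetilde{U}$ exactly when there exist $x\in U$ and an integer $j$ with $0\le j<R(x)$ and $z=f^j(x)$. I will call such a pair $(x,j)$ a \emph{representation} of $z$. The whole proof is then: given a point in $f(\widetilde U)$, produce a representation of it in $\widetilde U$ (for the first claim), and conversely (for the second).

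For the inclusion $f(\widetilde U)\subset\widetilde U$, take $y\in f(\widetilde U)$, so $y=f(z)$ with $z\in\widetilde U$, and fix a representation $z=f^j(x)$, $x\in U$, $0\le j<R(x)$. Then $y=f^{j+1}(x)$. If $j+1<R(x)$, the pair $(x,j+1)$ is already a representation of $y$, so $y\in\widetilde U$. If instead $j+1=R(x)$, then $y=f^{R(x)}(x)=F(x)$, and the hypothesis $F(U)\subset U$ gives $y\in U$; since $R(y)\ge1$, the pair $(y,0)$ is a representation of $y$, hence $y\in\widetilde U$. This exhausts the cases (note $j+1\le R(x)$ always), proving the first assertion; the forward invariance $f(\widetilde U)\subset\widetilde U$ follows.

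For the equality under $F(U)=U$, it remains to check $\widetilde U\subset f(\widetilde U)$. Take $y\in\widetilde U$ with representation $y=f^j(x)$, $x\in U$, $0\le j<R(x)$. If $j\ge1$, then $y=f(f^{j-1}(x))$ and $(x,j-1)$ represents $f^{j-1}(x)\in\widetilde U$, so $y\in f(\widetilde U)$. If $j=0$, then $y=x\in U$; since $F(U)=U$ is onto $U$, choose $x'\in U$ with $F(x')=x$, i.e. $f^{R(x')}(x')=x$. Then $y=x=f\big(f^{R(x')-1}(x')\big)$, and since $0\le R(x')-1<R(x')$ the pair $(x',R(x')-1)$ represents the point $f^{R(x')-1}(x')\in\widetilde U$; hence $y\in f(\widetilde U)$. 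Combining with the first part gives $f(\widetilde U)=\widetilde U$.

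There is no serious obstacle here: the argument is a routine case chase, and the only point requiring a moment's attention is the ``wrap-around'' at the return time — the case $j+1=R(x)$ in the forward inclusion (where one must invoke $F(U)\subset U$) and the symmetric case $j=0$ in the reverse inclusion (where one must invoke surjectivity of $F$ on $U$). Everything else is just reindexing within a single $f$-block $\{x,f(x),\dots,f^{R(x)-1}(x)\}$. One should also remark that the two other descriptions of $\widetilde U$ given in the text, e.g. $\widetilde U=\bigcup_{j\ge0}f^j(U\cap\{R>j\})$, are interchangeable with the one used above, so the case analysis can equivalently be phrased in terms of the index $j$ and the level set $\{R>j\}$.
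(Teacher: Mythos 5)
Your proof is correct and takes essentially the same route as the paper: both split $\widetilde U$ by whether the exponent $j$ is at the top ($j+1=R(x)$) or interior of an $f$-block, with the paper phrasing this as a set decomposition and you phrasing it pointwise via representations. You also helpfully spell out the reverse inclusion $\widetilde U\subset f(\widetilde U)$ that the paper dismisses with "similarly."
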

\begin{proof}
Given a set $U\subset A$, it is easy to see that
\begin{equation}\label{Eqcannel76rtf}f\bigg(\bigcup_{n\ge1}\bigcup_{j=0}^{n-2}f^j\big(U\cap \{R=n\}\big)\bigg)=\bigcup_{n\ge1}\bigcup_{j=1}^{n-1}f^j\big(U\cap\{R=n\}\big)\subset\widetilde{U}.
\end{equation}

If $F(U)\subset U$ then $$f\bigg(\bigcup_{n\ge1}f^{n-1}\big(U\cap\{R=n\}\big)\bigg)=\bigcup_{n\ge1}f^{n}\big(U\cap\{R=n\}\big)=F(U)\subset U\subset\widetilde{U}.$$
And so, $f(\widetilde{U})\subset\widetilde{U}$.
Similarly, $f(\widetilde{U})=\widetilde{U}$ whenever $F(U)=U$.
\end{proof}

In the example below one can see that, in general, $F$-invariance (i.e., backward invariance) is not preserved by the spreading.

\begin{Example}[$F^{-1}(U)=U\subset A \nRightarrow(f|_{\widetilde{A}})^{-1}(\widetilde{U})=\widetilde{U}$]\label{Exampleuyhj34}
Consider $\XX=\{1,2,3,4\}$ with $f:\XX\to\XX$ given by $f(1)=f(4)=2$, $f(2)=3$ and $f(3)=1$.
Let $A=\XX$ and $R:A\to\NN$ given by $R(1)=2$, $R(2)=3$ and $R(3)=R(4)=1$.
\begin{figure}
  \begin{center}\includegraphics[scale=.19]{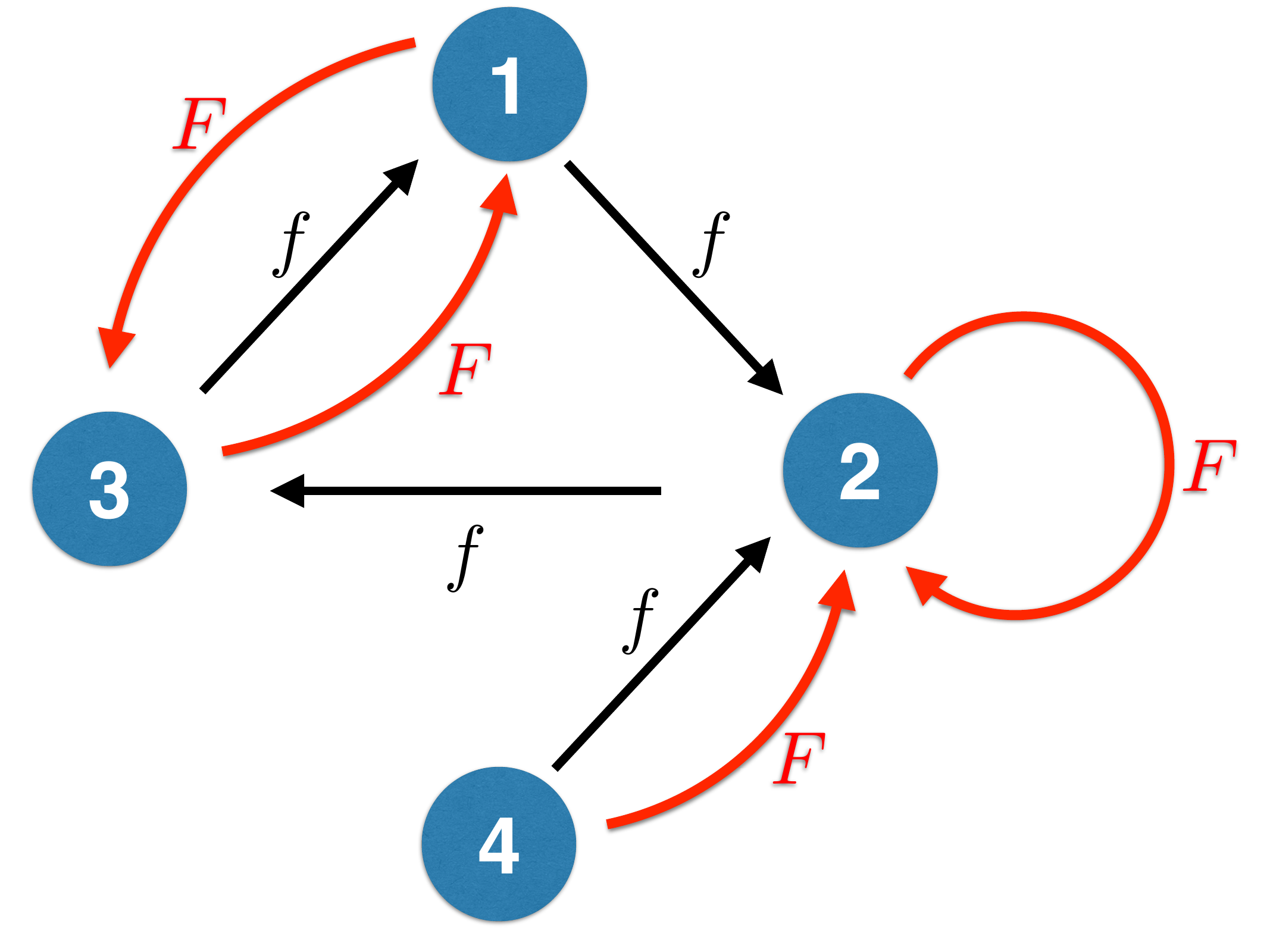}\\
 \caption{The picture shows the diagram of the maps $f$ and $F$ of Example~\ref{Exampleuyhj34}.}\label{FigurasLifSych1}
  \end{center}
\end{figure}
Thus, 
$F:A\to X$ is given by $F(1)=f^2(1)=3$, $F(2)=f^3(2)=2$, $F(3)=f(3)=1$ and $F(4)=f(4)=2$, see Figure~\ref{FigurasLifSych1}.

In this example, $\widetilde{A}=A=\XX$ and so, $(f|_{\widetilde{A}})^{-1}(x)=f^{-1}(x)$ $\forall\,x$.
As $F^{-1}(1)=\{3\}$ and $F^{-1}(3)=\{1\}$, $U=\{1,3\}$ is  a $F$-invariant.
As $\widetilde{U}=\{1,2,3\}$, we get that $(f|_{\widetilde{A}})^{-1}(\widetilde{U})=f^{-1}(\widetilde{U})=\{1,2,3,4\}\ne \widetilde{U}$.
\end{Example} 

\begin{Definition}[Coherent and exact induced times]\label{DefPlissTime}
	We say that an induced time $R$ is {\bf\em coherent} if $R(x)\ge R\circ f^{j}(x)+ j$ whenever 
$x\in A$, $0\le j<R(x)$ and $f^j(x)\in A$. The induced time $R$ is called {\bf\em exact} if $R(x)=R\circ f^j(x)+j$ for every  $0\le j<R(x)$ and every $x\in A$.
\end{Definition}
 Note that the class of the exact induced times contains all the {\em first entry times}.
 That is, given a set $U\subset X$ such that $\co_f^+(x)\cap U\ne\emptyset$ for every $x\in A$, the map $R(x)=\min\{j\ge1\,;\,f^j(x)\in U\}$ is called the {\bf\em first entry time} to $U$ and $F(x)=f^{R(x)}(x)$ is called the {\bf\em first entry map} to $U$.
 A particular case of a first entry map is when $A=U$, in this case $F$ and $R$ are called, respectively, the {\bf\em first return map} and the {\bf\em first return time} to $A$.
 
In Section~\ref{SecPlissESy} we show that coherent induced times appear quite naturally associated to the existence of $(\gamma,\varphi)$-Pliss times (Definition~\ref{DefinitionPlissTime}) and it has many applications in the theory of non-uniformly hyperbolic dynamics.

\begin{Lemma}\label{LemmaCanCoh1}
	Suppose that $R$ is a coherent induced time and let $x\in A_0:=\bigcap_{n\ge0}F^{-n}(\XX)$. If $0\le a<R(x)$ and $f^a(x)\in  A_0$
	then there exists $1\le b\le \#\{a\le  j< R(x)\,;\,f^j(x)\in A_0\}$ such that $$R(x)=a+\sum_{j=0}^b R(F^j(f^a(x))).$$
In particular, $F(x)=F^b(f^a(x))$.
\end{Lemma}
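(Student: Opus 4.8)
The plan is to extract, from the orbit segment $x, f(x), \dots, f^{R(x)-1}(x)$, the subsequence of times $a = j_0 < j_1 < \cdots < j_r < R(x)$ at which the orbit lands in $A_0$ (this set is finite and its cardinality is exactly $\#\{a\le j< R(x)\,;\,f^j(x)\in A_0\}$), and to show that these are precisely the times visited by the $F$-orbit of $f^a(x)$ up to and including the moment it reaches $f^{R(x)}(x)=F(x)$. First I would check that $f^a(x)\in A_0$ together with the definition $A_0 = \bigcap_{n\ge 0}F^{-n}(\XX)$ guarantees that $F$ is defined on all forward $F$-iterates of $f^a(x)$, so $F^j(f^a(x))$ makes sense for all $j\ge 0$; moreover $A_0$ is $F$-forward-invariant, so each $F^j(f^a(x))\in A_0$. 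The key local step is: if $y\in A_0$, $0\le a < R(x)$, $f^a(x)\in A_0$, and $a + R(f^a(x)) \le R(x)$, then by coherence of $R$ (Definition~\ref{DefPlissTime}) applied at the point $x$ with displacement $a$ we have $R(x)\ge R(f^a(x)) + a$, so the next "$F$-jump" from $f^a(x)$, namely to $F(f^a(x)) = f^{a+R(f^a(x))}(x)$, lands at a time $a_1 := a + R(f^a(x))$ that still satisfies $a_1 \le R(x)$.

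Then I would run an induction on $b$. Define $a_0 = a$ and $a_{k+1} = a_k + R(F^k(f^a(x)))$ as long as $a_k < R(x)$; equivalently $F^{k}(f^a(x)) = f^{a_k}(x)$, which one verifies inductively from $F(f^{a_k}(x)) = f^{a_k + R(f^{a_k}(x))}(x)$ and $F^k(f^a(x))\in A_0$. At each stage with $a_k < R(x)$, coherence of $R$ at $x$ with displacement $a_k$ gives $R(x)\ge a_k + R(f^{a_k}(x)) = a_{k+1}$, so the sequence $a_0 < a_1 < a_2 < \cdots$ is strictly increasing and bounded by $R(x)$; hence it must reach $R(x)$ after finitely many steps. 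Let $b$ be the first index with $a_b = R(x)$ — I need to argue the sequence hits $R(x)$ exactly rather than overshooting, and this is exactly where coherence (the inequality $R(x)\ge R\circ f^{a_{b-1}}(x) + a_{b-1}$ forcing $a_b\le R(x)$) combined with strict monotonicity does the work: the sequence can never jump past $R(x)$. Then $a_b = R(x)$ gives $R(x) = a + \sum_{j=0}^{b-1} R(F^j(f^a(x)))$, and $F^b(f^a(x)) = f^{a_b}(x) = f^{R(x)}(x) = F(x)$. (I note the statement writes $\sum_{j=0}^b$; I would reconcile the indexing — with the convention that the last summand $R(F^b(f^a(x)))$ corresponds to reaching $F(x)$, or else read $b$ as one less; this is a cosmetic off-by-one to be settled against the paper's conventions.)

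Finally, for the cardinality bound: every $a_k$ with $0\le k\le b$ (or $k<b$) is a time in $[a, R(x))$ — except the last one, $a_b = R(x)$, which is the endpoint — at which $f^{a_k}(x) = F^k(f^a(x)) \in A_0$, so the number of $F$-steps $b$ is at most the number of such visit times, i.e. $b \le \#\{a\le j < R(x)\,;\, f^j(x)\in A_0\}$. The lower bound $b\ge 1$ is immediate since $f^a(x)\in A_0$ means at least one $F$-step is available and $a < R(x)$ forces $a_1 > a$.

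The main obstacle I anticipate is the bookkeeping around whether the increasing sequence $a_k$ lands on $R(x)$ exactly: one must be careful that coherence gives $a_{k+1}\le R(x)$ (not just $a_{k+1}$ finite), and that $f^{R(x)}(x) = F(x)\in\XX$ is the relevant stopping condition; a subtle point is that $f^{a_k}(x)$ being in $A_0$ for intermediate $k$ must be deduced from $F$-forward-invariance of $A_0$ rather than assumed, and one must confirm that the displacement hypothesis "$f^j(x)\in A$" needed to invoke Definition~\ref{DefPlissTime} is supplied by $A_0\subset A$. Everything else is a routine telescoping sum.
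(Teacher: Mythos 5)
Your plan is correct and follows the paper's own proof essentially verbatim: you define the same sequence of visit times $a_0<a_1<\cdots$, invoke coherence at each step to show the cumulative time cannot exceed $R(x)$, and conclude termination from $R\ge 1$ plus the bound $a_k\le R(x)$; the paper likewise phrases this as "if $a+R(p_0)+\cdots=R(x)$ the proof is done, if not repeat." You also correctly spotted that the statement's indexing ($\sum_{j=0}^{b}$ together with $F^b$) is inconsistent with the proof (which produces $R(x)=a+\sum_{j=0}^{b-1}R(F^j(f^a x))$ and $F(x)=F^b(f^a x)$); this is a genuine off-by-one typo in the lemma statement, and your resolution is the right one.
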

\begin{proof}
As $f^a(x)\in A_0$, it follows from the coherence that $a<a+R(p_0)\le R(x)$, where $p_0:=f^a(x)$.
If $a+R(p_0)= R(x)$ then the proof is done.
If not, as $p_1:=f^{a+R(p_0)}(x)=F(f^a(x))\in A_0$, by coherence we get that $a+R(p_0)+R(p_1)\le R(x)$.
Again, if $a+R(p_0)+R(p_1)=R(x)$, the proof is done.
If not, we take $p_2:=f^{a+R(p_0)+R(p_1)}(x)=F^2(f^a(x))$ and repeat the process. As $\sum_{j=0}^{n-1} R(p_j)\ge n$ and $R(x)<+\infty$ the process will stop.
That is, there exists $b\ge 0$ such that $R(x)=a+\sum_{j=0}^{b-1} R(p_j)=a+\sum_{j=0}^{b-1} R(F^j(f^a(x)))$.
As $f^{a+\sum_{j=0}^{n-1} R(p_j)}(x)\in  A_0$ $\forall\,0\le n\le b$, we get that $b\le\#\{a\le j<R(x)\,;\,f^j(x)\in  A_0\}$. 
\end{proof}

\begin{Definition}[Orbit-coherence]\label{DefinitionORBCOR}
	The induced map $F$ is called {\bf\em orbit-coherent}
	if 
	\begin{equation}
		\co_f^+(x)\cap\co_f^+(y)\ne\emptyset\Longleftrightarrow\co_F^+(x)\cap\co_F^+(y)\ne\emptyset
	\end{equation}
	for every $x,y\in\bigcap_{j\ge0}F^{-j}(\XX)$.
\end{Definition}

\begin{Lemma}\label{LemmaOrbCoh1}
	If $R$ is a coherent induced time then $F$ is orbit-coherent. \end{Lemma}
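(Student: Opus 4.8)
The plan is to prove the two implications constituting orbit-coherence separately, noting that only the forward one uses coherence of $R$. Write $A_0=\bigcap_{j\ge0}F^{-j}(\XX)$, and for $x\in A_0$ let $t_k(x)$ be defined by $F^k(x)=f^{t_k(x)}(x)$. Since every value of the induced time is at least $1$, the sequence $k\mapsto t_k(x)$ is strictly increasing, so $t_k(x)\to\infty$; in particular $\co_F^+(x)\subset\co_f^+(x)$, and also $F(A_0)\subset A_0$. The implication $\co_F^+(x)\cap\co_F^+(y)\ne\emptyset\Rightarrow\co_f^+(x)\cap\co_f^+(y)\ne\emptyset$ is then immediate from $\co_F^+\subset\co_f^+$ and needs no hypothesis on $R$, so the content is in the converse.

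For the converse I would first isolate the auxiliary claim: if $p\in A_0$, $c\ge0$ and $f^c(p)\in A_0$, then $\co_F^+(p)\cap\co_F^+(f^c(p))\ne\emptyset$. This is proved by strong induction on $c$. The case $c=0$ is trivial. If $c\ge R(p)$, write $f^c(p)=f^{\,c-R(p)}(F(p))$; then $F(p)\in A_0$, $f^{\,c-R(p)}(F(p))=f^c(p)\in A_0$, and $c-R(p)<c$, so the inductive hypothesis applied with base point $F(p)$ gives a common point of $\co_F^+(F(p))$ and $\co_F^+(f^c(p))$, which also lies in $\co_F^+(p)\supset\co_F^+(F(p))$. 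If $1\le c<R(p)$, then Lemma~\ref{LemmaCanCoh1} applied with $a=c$ produces $b\ge1$ with $F(p)=F^b(f^c(p))$, so $F(p)\in\co_F^+(p)\cap\co_F^+(f^c(p))$. This settles the claim.

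To conclude, suppose $x,y\in A_0$ with $\co_f^+(x)\cap\co_f^+(y)\ne\emptyset$, say $f^a(x)=f^b(y)$ with $a,b\ge0$. Choose $k$ with $t_k(x)\ge a$ and set $q:=F^k(x)\in A_0$; then $q=f^{\,t_k(x)-a}(f^a(x))=f^{\,t_k(x)-a+b}(y)$, so $q=f^c(y)$ with $c=t_k(x)-a+b\ge0$ and $y\in A_0$. The claim gives $\co_F^+(y)\cap\co_F^+(q)\ne\emptyset$, and since $\co_F^+(q)=\co_F^+(F^k(x))\subset\co_F^+(x)$, we obtain $\co_F^+(x)\cap\co_F^+(y)\ne\emptyset$. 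The crux — and the place where coherence actually enters — is that the common orbit point $z=f^a(x)=f^b(y)$ need not itself lie in $A_0$, so one cannot feed $z$ into the claim directly; the remedy is to push $x$ forward under $F$ until it lands on a point $q$ lying simultaneously on $\co_F^+(x)$ (hence in $A_0$) and on $\co_f^+(y)$, and then apply the claim with $y$ as base point. The only bookkeeping to watch is that the induction in the claim genuinely decreases $c$, which holds because the induced time is everywhere $\ge1$ — this is also exactly what makes the hypothesis $0\le a<R(x)$ of Lemma~\ref{LemmaCanCoh1} available in the $c<R(p)$ case.
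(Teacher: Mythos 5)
Your proof is correct and uses the same key tool as the paper's, namely Lemma~\ref{LemmaCanCoh1}, but is structured more carefully. The paper's argument fixes a common $f$-orbit point $\alpha=f^{n_x}(x)=f^{n_y}(y)$, locates the last $F$-iterates $p=F^{m_x}(x)$ and $q=F^{m_y}(y)$ not beyond $\alpha$ on their respective $f$-orbits, and invokes Lemma~\ref{LemmaCanCoh1} at $p$ and at $q$ to write $F(p)=F^b(\alpha)$ and $F(q)=F^{b'}(\alpha)$. That invocation, however, requires $\alpha=f^a(p)\in A_0$, and this is not automatic: $\alpha$ can sit strictly between $F$-iterates of $x$ and lie outside $A$ altogether, in which case the hypothesis of Lemma~\ref{LemmaCanCoh1} fails. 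You identified exactly this issue (your remark that the common orbit point ``need not itself lie in $A_0$'') and resolved it by pushing $x$ forward under $F$ past the common point, so that $q=F^k(x)$ is simultaneously in $A_0$ and on $\co_f^+(y)$, and then applying your auxiliary claim with base point $y\in A_0$ and target $q\in A_0$. The claim itself is a clean strong induction on $c$: the step $c\ge R(p)$ replaces $p$ by $F(p)$ and decreases $c$ by $R(p)\ge1$, while the step $1\le c<R(p)$ is precisely where Lemma~\ref{LemmaCanCoh1} enters, its hypothesis $f^c(p)\in A_0$ now guaranteed by construction. In short, same engine as the paper's, but your version closes a subtle but genuine gap left open in the published argument.
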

\begin{proof} Set $R_n(p)=\sum_{j=0}^{n-1}R\circ F^j(p)$ any $p\in\bigcap_{j\ge0}F^{-j}(\XX)$ and $n\ge0$. Note that $F^n(p)=f^{R_n(p)}(p)$.
Let $n_x,n_y\ge0$ be such that $\alpha:=f^{n_x}(x)=f^{n_y}(y)$ and $m_x,m_y\ge0$ be the integers satisfying $R_{m_x}(x)\le n_x<R_{m_x+1}(x)$ and $R_{m_y}(y)\le n_y<R_{m_y+1}(y)$.
Letting $p=F^{n_x}(x)=f^{R_{n_x}(x)}(x)$ and $a=R_{n_x}(x)-n_x$, it follows from Lemma~\ref{LemmaCanCoh1} that there exists $b\ge1$ such that $F^{n_x+1}(x)=F(p)=F^{b}(f^{a}(p))=F^{b}(\alpha)$.
Similarly, there exists $b'\ge1$ so that $F^{n_y+1}(y)=F(q)=F^{b'}(f^{a'}(q))=F^{b'}(\alpha)$, where $q=F^{n_y}(y)=f^{R_{n_y}(y)}(y)$ and $a'=R_{n_y}(y)-n_y$.
Thus, $\co_F^+(x)\cap\co_F^+(y)\supset\co_F^+(F^{\ell}(\alpha))$, where $\ell=\max\{a,a'\}$ .
\end{proof}

\begin{Example}[Orbit-coherence $\nRightarrow$ coherence]\label{Examplejgduyt7}
	Let $A=X=\{1,2,3\}$, $f:X\to X$ given by $f(1)=2$, $f(2)=3$ and $f(3)=1$. Let $R(x)=2$ $\forall\,x$, that is, $F=f^2$, see Figure~\ref{FigurasLifSych2}.
\begin{figure}
  \begin{center}\includegraphics[scale=.17]{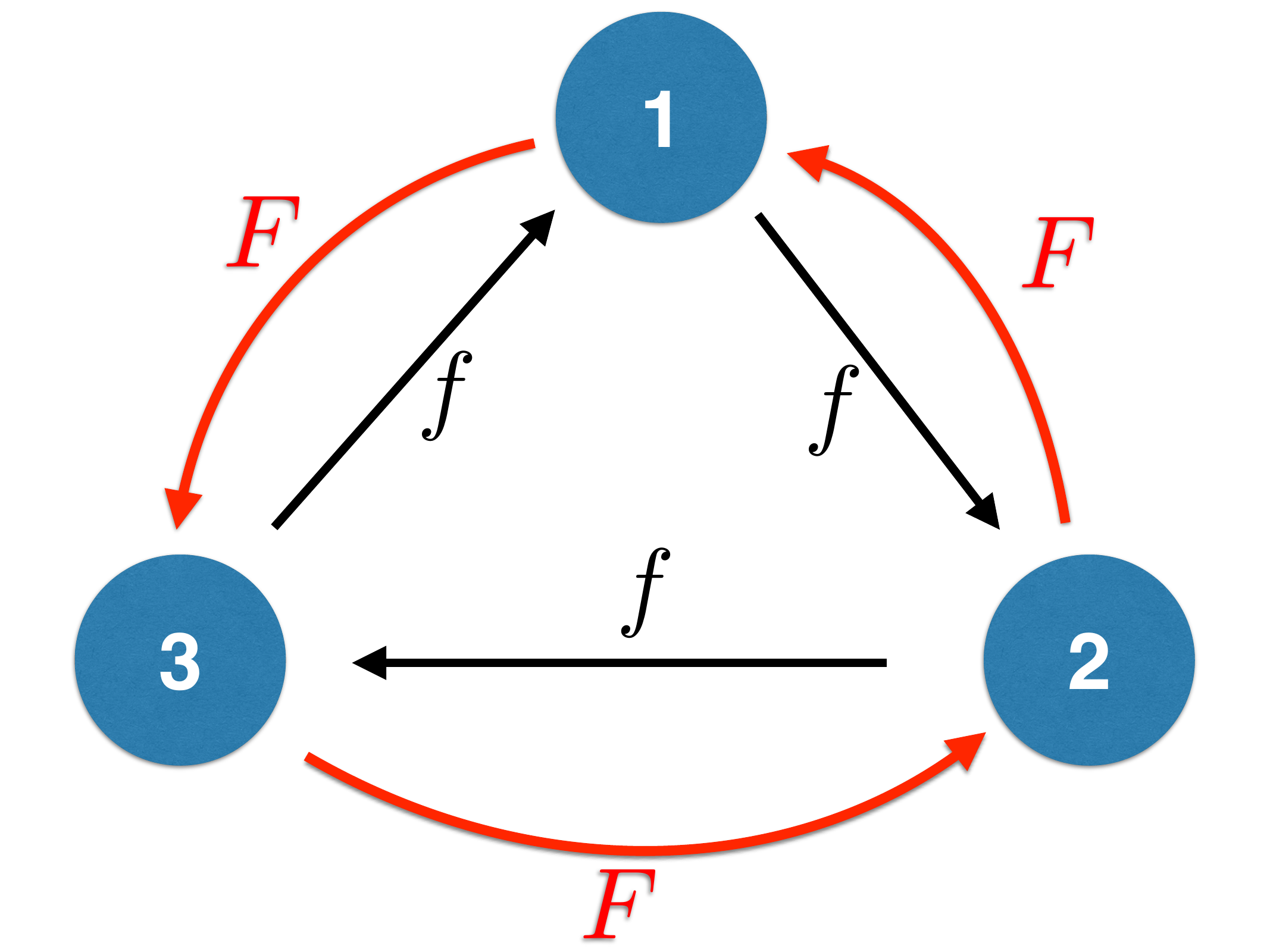}\\
 \caption{The picture shows the diagram of the maps $f$ and $F$ of Example~\ref{Examplejgduyt7}.}\label{FigurasLifSych2}
  \end{center}
\end{figure}
	Note that $\co_f^+(x)=\co_F^+(x)=X$ for every $x$ and so, $F$ is orbit-coherent. Nevertheless, as $R(x)=2<3=R(f^1(x))+1$ $\forall\,x$, $R$ is not coherent.
\end{Example}
 
\section{Measurable induced maps and ergodicity}\label{MeIndMaEr}

In this section $(X,\mathfrak{A},\mu)$ is a finite measure space, that is, $\mathfrak{A}$ is a $\sigma$-algebra of subsets of $X$ and $\mu$ is a measure on $\mathfrak{A}$ with $\mu(X)<+\infty$. Consider a measurable map $f:X\to X$,  $A\in\mathfrak{A}$ and a $f$-induced map $F:A\to\XX$ given by a measurable induced time $R:A\to\{1,2,3,\cdots\}$.

\begin{Definition}
	A map $g:X\to X$ is called {\bf\em $\mu$-ergodic} if $g$ is measurable and $\mu(V)$ or $\mu(\XX\setminus V)=0$ for every $g$-invariant measurable set $V\subset X$. Conversely, we say that $\mu$ is {\bf\em $f$-ergodic} whenever $f$ is $\mu$-ergodic.
\end{Definition}
Note that we are not assuming in the definition above that $f$ preserves $\mu$. That is, $\mu$ does not need to be $f$-invariant to be $f$-ergodic. 

A measurable map $g:U\to X$, $U\in\mathfrak{A}$, is called {\bf\em non-singular} (with respect to $\mu$) if $g_*\mu\ll\mu$, that is, if $\mu\circ g^{-1}\ll\mu$.

Is this section we study the connection between ergodicity and coherence. In particular, we show in Proposition~\ref{PropositionErGOS} that if $f$ is a non-singular ergodic map then $F$ is also non-singular and ergodic, whenever $F$ is orbit coherent.
Although orbit-coherent induced maps have a well behavior with respect to the ergodicity, this is not true for the transitivity, as one can see in Example~\ref{ExampleBadTras} below.

\begin{Example}[$f$ transitivity and coherence $\nRightarrow$ $F$ transitivity]\label{ExampleBadTras}
	Let $A=X=\{1,2,3\}$, $f:X\to X$ given by $f(1)=2$, $f(2)=3$ and $f(3)=1$. 
	Let $R(1)=2$ and $R(2)=R(3)=1$. Thus, $F(1)=F(2)=3$ and $F(3)=1$, see Figure~\ref{FigurasLifSych3} .
\begin{figure}
  \begin{center}\includegraphics[scale=.17]{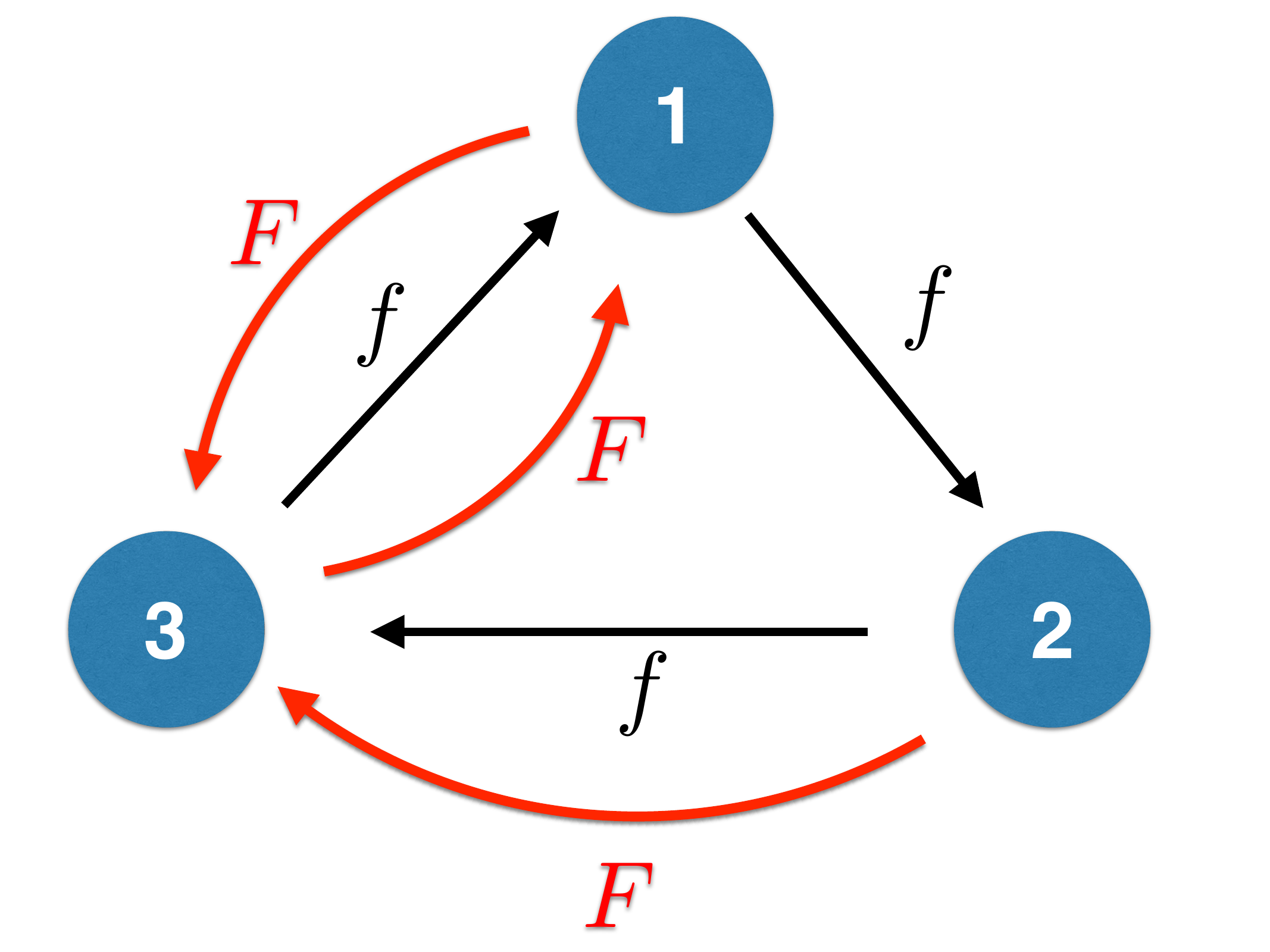}\\
 \caption{The picture shows the diagram of the maps $f$ and $F$ of Example~\ref{ExampleBadTras}.}\label{FigurasLifSych3}
  \end{center}
\end{figure}
Note that $f$ is transitive and $R$ is coherent (in particular $F$ is orbit-coherent), but $F$ is not transitive as $\bigcup_{j\ge0}F^j(\{1\})=\{1,3\}$ and so, $\bigcup_{j\ge0}F^j(\{1\})\cap\{2\}=\emptyset$.
\end{Example}

\begin{Lemma}\label{LemmaCoe}
	If $F$ is orbit-coherent and $F^{-1}(U)=U\subset A_0$, where $A_0=\bigcap_{n\ge0}F^{-n}(\XX)$, then $\widetilde{U}\cap A_0=U$ and $(f|_{\widetilde{A_0}})^{-1}(\widetilde{U})=\widetilde{U}$.
\end{Lemma}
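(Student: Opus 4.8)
The plan is to prove the two equalities $\widetilde U\cap A_0=U$ and $(f|_{\widetilde{A_0}})^{-1}(\widetilde U)=\widetilde U$ separately, using orbit-coherence to control how points of $\widetilde U$ sit relative to the $F$-orbits of points of $U$. First I would set up notation: write $R_n(p)=\sum_{j=0}^{n-1}R\circ F^j(p)$ for $p\in A_0$, so that $F^n(p)=f^{R_n(p)}(p)$, and recall from the definition of the spreading that $\widetilde U=\bigcup_{x\in U}\bigcup_{j=0}^{R(x)-1}f^j(x)$. Since $U\subset A$ and $R(x)\ge1$, we have $U\subset\widetilde U$ trivially (take $j=0$); and since $U\subset A_0$ we get $U\subset\widetilde U\cap A_0$.

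For the reverse inclusion $\widetilde U\cap A_0\subset U$, I would take $y\in\widetilde U\cap A_0$ and write $y=f^j(x)$ with $x\in U$ and $0\le j<R(x)$. If $j=0$ then $y=x\in U$ and we are done, so suppose $j\ge1$. The idea is that $y\in A_0$ and $y\in\co_f^+(x)$, so by orbit-coherence (Definition~\ref{DefinitionORBCOR}, applied to the pair $x,y\in A_0$) we have $\co_F^+(x)\cap\co_F^+(y)\ne\emptyset$; more concretely, since $y=f^j(x)$ lies strictly between $x=F^0(x)$ and $F(x)=f^{R(x)}(x)$ and $y\in A_0$, Lemma~\ref{LemmaCanCoh1} (with $a=j$, $f^a(x)=y\in A_0$) gives $R(x)=j+\sum_{i=0}^{b}R(F^i(y))$ for some $b\ge1$. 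But $\sum_{i=0}^b R(F^i(y))\ge b+1\ge2$, while the constraint $j<R(x)=j+\sum_{i=0}^b R(F^i(y))$ is automatic; the contradiction I am after comes instead from the $F$-invariance of $U$: I would argue that $y$ must then equal some $F^k(x)$ for $k\ge1$ — indeed Lemma~\ref{LemmaCanCoh1} with $a=j$ forces $f^j(x)$ to be an iterate $F^k(x)$ only when $j=R_k(x)$, but here $j<R(x)=R_1(x)$ forces $k=0$ hence $j=0$, contradicting $j\ge1$. Wait — more carefully: Lemma~\ref{LemmaCanCoh1} as stated requires $f^a(x)\in A_0$ and produces $F(x)=F^b(f^a(x))$; since $0<j<R_1(x)$, applying it with $a=j$ shows $f^j(x)\in A_0$ would give $j+R(f^j(x))\le R(x)$, and iterating, $f^j(x)=f^j(x)$ sits on the $F$-orbit of $f^j(x)$ itself; the actual contradiction is that then $f^j(x)\in A_0$ together with $F^{-1}(U)=U$ and coherence would put $x$ on the $F$-orbit of $f^j(x)$ backwards, forcing $f^j(x)\in U$ only if... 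Let me instead use: if $y=f^j(x)\in A_0$ with $0<j<R(x)$, then by Lemma~\ref{LemmaCanCoh1} we may write $R(x)=j+R_b(y)$ with $b\ge1$, so $F(x)=F^b(y)$; since $U=F^{-1}(U)$ is both forward and backward invariant and $x\in U$, we get $F(x)\in U$, hence $F^b(y)\in U$, hence (backward invariance, $b$ times) $y\in U$. This is exactly the inclusion wanted, completing $\widetilde U\cap A_0=U$.

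For the second equality, I would use Lemma~\ref{LemmaGenInd1}: since $F(U)=U$ (which follows from $F^{-1}(U)=U\subset A_0$, because on $A_0$ the map $F$ is surjective onto... more precisely $F(U)\subset U$ from forward invariance and $U=F^{-1}(U)\subset F^{-1}(F(U))$ needs care, but $F|_{A_0}$ maps $A_0$ into $A_0$ and $F^{-1}(U)=U$ gives $F(U)\supset$... ) — granting $F(U)=U$, Lemma~\ref{LemmaGenInd1} yields $f(\widetilde U)=\widetilde U$, so $\widetilde U\subset (f|_{\widetilde{A_0}})^{-1}(\widetilde U)$ once we know $\widetilde U\subset\widetilde{A_0}$ and $f(\widetilde U)\subset\widetilde U$. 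For the reverse inclusion $(f|_{\widetilde{A_0}})^{-1}(\widetilde U)\subset\widetilde U$, take $z\in\widetilde{A_0}$ with $f(z)\in\widetilde U$; write $z=f^i(w)$, $w\in A_0$, $0\le i<R(w)$, and $f(z)=f^{i+1}(w)$. If $i+1<R(w)$ then $z=f^i(w)$ and $f(z)=f^{i+1}(w)$ lie in the same spreading block $\bigcup_{\ell<R(w)}f^\ell(w)$, and I must show $f(z)\in\widetilde U\Rightarrow w\in U$: this is where I use that $f(z)=f^{i+1}(w)\in\widetilde U$ means $f^{i+1}(w)=f^{j}(x)$ for some $x\in U$, $0\le j<R(x)$; orbit-coherence applied to $w,x\in A_0$ (they share the forward point $f^{i+1}(w)=f^j(x)$) gives $\co_F^+(w)\cap\co_F^+(x)\ne\emptyset$, and since $U=\co_F^+$-saturated inside $A_0$ and meets this common $F$-orbit tail (via $x$), the first part $\widetilde U\cap A_0=U$ forces the relevant iterate, hence $w$, into $U$; then $z=f^i(w)\in\widetilde U$. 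If instead $i+1=R(w)$ then $f(z)=F(w)\in A_0$, so $f(z)\in\widetilde U\cap A_0=U$ by the first part; since $U=F^{-1}(U)$, $F(w)\in U\Rightarrow w\in U\Rightarrow z=f^i(w)\in\widetilde U$. Either way $z\in\widetilde U$, finishing the proof.

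The main obstacle I anticipate is the bookkeeping in the reverse inclusion of the second equality — specifically, converting "$f(z)=f^{i+1}(w)$ lands in the spreading of $U$" into "$w\in U$" when $f(z)$ is an interior point of $w$'s spreading block rather than an $F$-iterate. The clean way around it is to observe that orbit-coherence plus the first equality $\widetilde U\cap A_0=U$ together say that $U$ is exactly the set of points of $A_0$ whose $f$-forward orbit hits $\widetilde U$-inside-$A_0$; so I would first reformulate $U=\{x\in A_0:\co_f^+(x)\cap\widetilde U\cap A_0\ne\emptyset\text{ appropriately}\}$ and then both inclusions become almost formal. I expect roughly a page once the reformulation is in place.
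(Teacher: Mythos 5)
Your overall architecture matches the paper's: prove $\widetilde U\cap A_0=U$ first, then deduce $(f|_{\widetilde{A_0}})^{-1}(\widetilde U)=\widetilde U$ using that equality, Lemma~\ref{LemmaGenInd1} for the forward inclusion $f(\widetilde U)\subset\widetilde U$, and orbit-coherence plus $F$-invariance of $U$ for the reverse. Your reverse-inclusion argument for the second equality (exhibit a common $f$-forward point for $w$ and some $x\in U$, invoke orbit-coherence to get $F^n(w)=F^m(x)$, then push through $F^{-m}(U)=U$) is essentially the paper's proof, and the slight case split on whether $i+1=R(w)$ is harmless.

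However, the argument you settle on for $\widetilde U\cap A_0\subset U$ has a genuine gap. You invoke Lemma~\ref{LemmaCanCoh1} to conclude $F(x)=F^b(y)$ with $b\ge1$, but that lemma requires $R$ to be a \emph{coherent induced time} (Definition~\ref{DefPlissTime}), whereas Lemma~\ref{LemmaCoe} only assumes $F$ is \emph{orbit-coherent} (Definition~\ref{DefinitionORBCOR}). These are not equivalent: Example~\ref{Examplejgduyt7} exhibits an orbit-coherent $F$ whose induced time $R$ is not coherent, so Lemma~\ref{LemmaCanCoh1} is not available under the present hypotheses, and the precise synchronization $F(x)=F^b(y)$ need not hold. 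The correct move --- which you flag early (``by orbit-coherence \ldots $\co_F^+(x)\cap\co_F^+(y)\ne\emptyset$'') but then abandon --- is to apply Definition~\ref{DefinitionORBCOR} directly: since $y=f^j(x)\in\co_f^+(x)$ and $x,y\in A_0$, orbit-coherence gives $\co_F^+(x)\cap\co_F^+(y)\ne\emptyset$, hence $F^n(y)=F^m(x)$ for some $n,m\ge0$; then $F^m(x)\in U$ by forward $F$-invariance and $y\in F^{-n}(U)=U$. This is exactly the paper's argument, and with this replacement the rest of your proof goes through. (A minor point: $F^{-1}(U)=U$ gives $F(U)\subset U$ but not $F(U)=U$ in general; for Lemma~\ref{LemmaGenInd1} you only need the inclusion, so nothing is lost.)
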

\begin{proof}

If $p\in\widetilde{U}\cap A_0$ then $\exists\,x_p\in U$ and $0\le a<R(x_p)$ such that $p=f^a(x_p)\in A_0$. As $\co_f^+(x_p)\supset\co_f^+(p)$, it follows from the orbit-coherence that $\exists\,n,m\ge0$ such that $F^n(p)=F^m(x_p)$. 
As $U$ is $F$-invariant, we get that $F^m(x_p)\in U$ and so, $p\in F^{-n}(U)=U$, proving that $\widetilde{U}\cap A_0=U$.

Suppose that $F^{-1}(U)=U$. As $f(\widetilde{U})\subset\widetilde{U}$  (see Lemma~\ref{LemmaGenInd1}), we only need to show that $(f|_{\widetilde{A_0}})^{-1}(\widetilde{U})\subset\widetilde{U}$. 
So, consider $p\in\widetilde{U}$ and $x_p\in U$ and $0\le j<R(x_p)$ be such that $p=f^j(x_p)$.
Let $q$ be any pre-image of $p$ by $(f|_{\widetilde{A_0}})^{-1}$.
That is, $q\in (f|_{\widetilde{A_0}})^{-1}(p)=f^{-1}(p)\cap\widetilde{A_0}$.
Let $x_q\in A_0$ and $0\le\ell<R(x_q)$ such that $f^{\ell}(x_q)=q$.
As $p\in\co_f^+(x_p)\cap\co_f^+(x_q)$, it follows from Lemma~\ref{LemmaOrbCoh1} that there exist $n,m\ge1$ so that $F^n(x_p)=F^m(x_q)$.
As $x_p\in U$ and $U$ is $F$-invariant, we get that $x_q\in F^{-m}(U)=U$, proving the lemma. 
\end{proof}

\begin{Lemma}\label{LemmaSingSing}
	If $f$ is non-singular with respect to $\mu$, then 
	\begin{enumerate}
		\item $F$ is non-singular with respect to $\mu$;
		\item $f$ is non-singular with respect to $\mu|_U$, for every forward invariant set $U\in\mathfrak{A}$  with $\mu(U)>0$.
	\end{enumerate}
\end{Lemma}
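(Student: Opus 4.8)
The plan is to treat the two assertions separately, each time reducing to the defining property that a measurable map $g$ is non-singular with respect to a measure $\nu$ precisely when $\nu(N)=0$ implies $\nu(g^{-1}(N))=0$.

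For item (1), I would first record that non-singularity is preserved under composition: if $g$ and $h$ are non-singular with respect to $\mu$ and $\mu(N)=0$, then $\mu(h^{-1}(N))=0$, hence $\mu(g^{-1}(h^{-1}(N)))=0$, so $h\circ g$ is non-singular with respect to $\mu$. By induction, every iterate $f^n$ is non-singular with respect to $\mu$. Then I would use the measurable partition $A=\bigcup_{n\ge1}\{R=n\}$, on each piece of which $F$ agrees with $f^n$. Given a $\mu$-null set $N$, one has $F^{-1}(N)=\bigcup_{n\ge1}\big((f^n)^{-1}(N)\cap\{R=n\}\big)$; each term lies in the $\mu$-null set $(f^n)^{-1}(N)$, so the countable union is $\mu$-null, i.e. $F_*\mu\ll\mu$.

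For item (2), the crucial observation is the elementary identity that comes from forward invariance $f(U)\subset U$ (equivalently $U\subset f^{-1}(U)$): for any measurable $N$, $f^{-1}(N)\cap U=f^{-1}(N\cap U)\cap U$, because $x\in U$ forces $f(x)\in U$. Hence, if $\mu|_U(N)=\mu(N\cap U)=0$, non-singularity of $f$ with respect to $\mu$ gives $\mu(f^{-1}(N\cap U))=0$, and therefore $\mu|_U(f^{-1}(N))=\mu(f^{-1}(N)\cap U)\le\mu(f^{-1}(N\cap U))=0$. This is exactly non-singularity of $f$ with respect to $\mu|_U$.

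Neither step is genuinely delicate; the only points requiring care are the measurability of all sets involved — which is immediate since $f$ and $R$ are measurable and $U\in\mathfrak{A}$ — and using \emph{forward invariant} in the precise sense $f(U)\subset U$ when establishing the set identity in (2). If there is any obstacle at all, it is merely the bookkeeping with the countable decomposition $\{R=n\}$ in part (1).
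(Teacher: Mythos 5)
Your proof is correct and follows essentially the same route as the paper: part (1) uses the identical decomposition $F^{-1}(N)=\bigcup_{n\ge1}\big((f^n)^{-1}(N)\cap\{R=n\}\big)$ together with non-singularity of the iterates $f^n$, and part (2) uses the same inclusion $U\subset f^{-1}(U)$ coming from forward invariance to reduce to non-singularity of $f$ with respect to $\mu$. The only cosmetic difference is that you phrase the key step in (2) as a set identity rather than the inclusion, but the argument is the same.
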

\begin{proof}
 As $F^{-1}(V)=\bigcup_n (f^n|_{\{R=n\}})^{-1}(V)$, if $\mu(V)=0$, it follows from $\mu\circ f^{-1}\ll\mu$ that $\mu(f^{-n}(V))=\mu(V)=0$ $\forall\,n\ge1$ and so, $\mu(F^{-1}(V))=0$, proving that $F$ is non-singular with respect to $\mu$. 

As $\mu\circ f^{-1}\ll\mu$, if $\mu|_U(V)=\mu(V\cap U)=0$, then $\mu(f^{-1}(V\cap U))=0$.
By the forward invariance of $U$, we get that $U\subset f^{-1}(f(U))\subset f^{-1}(U)$ and so $\mu(f^{-1}(V)\cap U)\le\mu(f^{-1}(V)\cap f^{-1}(U))=\mu(f^{-1}(V\cap U))=0$, proving that $f$ is non-singular with respect to $\mu|_U$.
\end{proof}

\begin{Lemma}\label{LemmaEhErgodica}
	If  $f$ is $\mu$-ergodic and $U\subset X$ a measurable set with positive measure, then $\nu:=\frac{1}{\mu(U)}\mu|_U$ is a $f$-ergodic probability.
\end{Lemma}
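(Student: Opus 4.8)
The plan is to unwind the definitions and use the trivial fact that $\nu\ll\mu$. Since $\nu=\frac{1}{\mu(U)}\mu|_U$, it is a measure on $\mathfrak{A}$ with $\nu(X)=\frac{1}{\mu(U)}\mu(U)=1$, so $\nu$ is indeed a probability; it then remains only to check that $f$ is $\nu$-ergodic.

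Let $V\in\mathfrak{A}$ be an $f$-invariant set. The notion of $f$-invariance of a set does not involve any measure, so $V$ is simultaneously an admissible test set for the $\mu$-ergodicity of $f$ and for the $\nu$-ergodicity of $f$. By hypothesis $f$ is $\mu$-ergodic, hence $\mu(V)=0$ or $\mu(X\setminus V)=0$. Now observe that for any $W\in\mathfrak{A}$ one has $\nu(W)=\frac{1}{\mu(U)}\mu(W\cap U)\le\frac{1}{\mu(U)}\mu(W)$, so $\mu(W)=0$ implies $\nu(W)=0$. Applying this with $W=V$ in the first case and $W=X\setminus V$ in the second case yields $\nu(V)=0$ or $\nu(X\setminus V)=0$. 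As $V$ was an arbitrary $f$-invariant measurable set, this is precisely the statement that $f$ is $\nu$-ergodic, that is, $\nu$ is $f$-ergodic.

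There is essentially no obstacle in this lemma: the only point worth emphasizing is that ergodicity, as defined in this section, is insensitive to passage to an absolutely continuous measure, since every $f$-invariant set that is $\mu$-null (resp.\ $\mu$-conull) remains $\nu$-null (resp.\ $\nu$-conull) because $\nu\ll\mu$. Note also that one does not need $\mu$ or $\nu$ to be $f$-invariant anywhere in the argument, in accordance with the remark made right after the definition of $\mu$-ergodicity.
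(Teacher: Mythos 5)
Your proof is correct and takes essentially the same route as the paper's: both rest on $\nu\ll\mu$ together with the $\mu$-ergodicity of $f$. Your version is in fact slightly cleaner, treating the two cases $\mu(V)=0$ and $\mu(X\setminus V)=0$ symmetrically via absolute continuity, whereas the paper fixes the case $\nu(V)>0$ and argues $\nu(V)=1$ (and in doing so writes the not-quite-accurate inequality $\mu(V)\ge\nu(V)$, which you avoid by instead using $\mu(V)\ge\mu(V\cap U)=\nu(V)\mu(U)>0$ implicitly through the contrapositive of $\nu\ll\mu$).
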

\begin{proof}
		Consider a $f$ invariant measurable set $V$ with $\nu(V)>0$.
	Thus, $\mu(V)\ge\nu(V)>0$.
	As a consequence, it follows from the ergodicity of $\mu$ that $\mu(V)=1$.
	So, $\nu(V)=\frac{1}{\mu(U)}\mu(V\cap U)=\frac{1}{\mu(U)}\mu(U)=1$, proving that $\nu$ is ergodic.	
\end{proof}

\begin{Lemma}
	Suppose that  $f$ is non-singular with respect to $\mu$. If $U\subset\XX$ is a $\mu$-almost $f$-invariant measurable set, i.e., $\mu(f^{-1}(U)\triangle U)=0$, then there is a $f$-invariant measurable set $U'\subset\XX$ such that   $\mu(U\triangle U')=0$.
\end{Lemma}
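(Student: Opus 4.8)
The plan is to produce an honest invariant set by the standard $\limsup$ device. Define
$$U':=\bigcap_{N\ge 0}\ \bigcup_{n\ge N} f^{-n}(U),$$
i.e.\ the set of points whose forward orbit meets $U$ infinitely often. First I would check that $U'$ is exactly $f$-invariant, with no measure considerations needed: since $f^{-1}$ commutes with countable unions and intersections, $f^{-1}(U')=\bigcap_{N\ge 0}\bigcup_{m\ge N+1}f^{-m}(U)$, and since the sets $\bigcup_{m\ge k}f^{-m}(U)$ are decreasing in $k$, shifting the starting index by one leaves the intersection unchanged; hence $f^{-1}(U')=U'$.

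The substantive step is showing $\mu(U\triangle U')=0$, and this is precisely where non-singularity is used. From the identity $f^{-1}(A)\triangle f^{-1}(B)=f^{-1}(A\triangle B)$ together with $f_*\mu\ll\mu$, an induction on $n$ starting from the hypothesis $\mu(f^{-1}(U)\triangle U)=0$ gives $\mu\big(f^{-n}(U)\triangle f^{-(n-1)}(U)\big)=0$ for every $n\ge 1$; then the triangle inequality $\mu(A\triangle C)\le\mu(A\triangle B)+\mu(B\triangle C)$ yields $\mu\big(f^{-n}(U)\triangle U\big)=0$ for all $n\ge 0$. From the elementary inclusion $\big(\bigcup_{n\ge N}f^{-n}(U)\big)\triangle U\subseteq\bigcup_{n\ge N}\big(f^{-n}(U)\triangle U\big)$ and countable subadditivity, $\mu\big(\big(\bigcup_{n\ge N}f^{-n}(U)\big)\triangle U\big)=0$ for each $N$. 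Finally, writing $W_N=\bigcup_{n\ge N}f^{-n}(U)$, so that $W_N\downarrow U'$, we get $\mu(U'\setminus U)\le\mu(W_0\setminus U)=0$ and $\mu(U\setminus U')\le\sum_{N\ge 0}\mu(U\setminus W_N)=0$, hence $\mu(U\triangle U')=0$, as required.

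I do not expect a serious obstacle: this is a soft measure-theoretic fact, and the only point requiring care is the propagation of nullity along the orbit, namely $\mu(f^{-n}(U)\triangle U)=0$ for every $n$, which is exactly the step that fails without the non-singularity hypothesis; everything else is monotonicity and countable subadditivity of $\mu$ applied to nested unions and intersections. An equivalent route would be to take instead $U''=\bigcup_{N\ge 0}\bigcap_{n\ge N}f^{-n}(U)$, the $\liminf$ set; the same argument shows it is $f$-invariant and coincides with $U$ modulo $\mu$.
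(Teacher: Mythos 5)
Your proof is correct, and the load-bearing step is identical to the paper's: from $\mu(f^{-1}(U)\triangle U)=0$ and the identity $f^{-1}(A)\triangle f^{-1}(B)=f^{-1}(A\triangle B)$, non-singularity propagates nullity to $\mu(f^{-n}(U)\triangle f^{-(n-1)}(U))=0$, and the triangle inequality for $\triangle$ then gives $\mu(f^{-n}(U)\triangle U)=0$ for all $n$. Where you diverge is in the choice of the invariant modification: you take the $\limsup$ set $U'=\bigcap_{N}\bigcup_{n\ge N}f^{-n}(U)$ in a single step, using monotonicity of $W_N=\bigcup_{n\ge N}f^{-n}(U)$ to verify exact invariance, whereas the paper builds it in two stages, first forming the forward-invariant core $U_0=\bigcap_{j\ge 0}f^{-j}(U)$ (with $\mu(U_0)=\mu(U)$) and then saturating it backward as $U'=\bigcup_{j\ge 0}f^{-j}(U_0)$. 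The two routes buy slightly different things: your one-shot $\limsup$ construction is arguably more transparent about why invariance is exact, while the paper's core-and-saturate construction makes the forward invariance of the intermediate set explicit, which is the pattern it reuses elsewhere. Your subsidiary claims check out: $W_N\triangle U\subseteq\bigcup_{n\ge N}(f^{-n}(U)\triangle U)$ is a straightforward inclusion, and the conclusion $\mu(U\triangle U')=0$ follows from countable subadditivity exactly as you describe. The alternate $\liminf$ variant $U''=\bigcup_{N}\bigcap_{n\ge N}f^{-n}(U)$ you mention works equally well, by the dual monotonicity argument.
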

\begin{proof}
Recall that  $\mu(A\triangle C)\le\mu(A\triangle B)+\mu(B\triangle C)$, for every measurable set $A,B$ and $C$.
Thus, as $\mu(f^{-1}(U)\triangle U)=0$ and $\mu\circ f^{-1}\ll\mu$, we get that $\mu(f^{-j}(U)\triangle f^{-(j-1)}(U))$ $=$ $\mu\big(f^{-(j-1)}\big(f^{-1}(U)\triangle U\big)\big)=0$ for every $j\ge1$.
So, $$\mu(f^{-j}(U)\triangle U)\le\mu(f^{-j}(U)\triangle f^{-(j-1)}(U))+\cdots+\mu(f^{-1}(U)\triangle U)=0.$$
As a consequence, $\mu(f^{-j}(U)\cap U)\le\mu(U)\le\mu(f^{-j}(U)\cap U)+\mu(f^{-j}(U)\triangle U)=\mu(f^{-j}(U)\cap U)$. That is,  $\mu(f^{-j}(U)\cap U)=\mu(U)$ $\forall\,j\ge0$. So,  $\mu(U_0)=\mu(U)$, where $U_0:=\bigcap_{j\ge0}f^{-j}(U)$.
As $f(U_0)\subset U_0$, we get that $f^{-1}(U')=U'$, where $U'=\bigcup_{j\ge0}f^{-j}(U_0)$.
Note that $$\mu(U'\triangle U)\le\mu(U'\triangle U_0)\le\sum_{j\ge0}\mu(f^{-j}(U_0)\triangle U_0)=0,$$ as it is easy to see that $\mu(f^{-j}(U_0)\triangle U_0)=0$.
\end{proof}

\begin{Corollary}\label{Corollaryoiuy89}
	If $f$ is non-singular with respect to $\mu$, then $f$ is $\mu$-ergodic if and only if $\mu(U)\mu(\XX\setminus U)=0$ for every measurable set $U$ such that $\mu(f^{-1}(U)\triangle U)=0$.
\end{Corollary}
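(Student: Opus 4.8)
The plan is to deduce the corollary directly from the preceding lemma, which already performs the substantive step of upgrading a $\mu$-almost $f$-invariant set to a genuinely $f$-invariant set of the same measure class. Both implications are then essentially immediate.

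For the ``only if'' direction, I would assume $f$ is $\mu$-ergodic and take any measurable $U$ with $\mu(f^{-1}(U)\triangle U)=0$. The preceding lemma (applicable precisely because $f$ is non-singular with respect to $\mu$) yields an $f$-invariant measurable set $U'$, i.e.\ $f^{-1}(U')=U'$, with $\mu(U\triangle U')=0$. Applying $\mu$-ergodicity to $U'$ gives $\mu(U')=0$ or $\mu(\XX\setminus U')=0$; since $\mu(U\triangle U')=0$ forces $\mu(U)=\mu(U')$ and $\mu(\XX\setminus U)=\mu(\XX\setminus U')$, we conclude $\mu(U)\mu(\XX\setminus U)=0$. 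For the ``if'' direction, I would observe that any $f$-invariant measurable set $V$ (meaning $f^{-1}(V)=V$) is in particular $\mu$-almost $f$-invariant, since $f^{-1}(V)\triangle V=\emptyset$; hence the hypothesis applied to $V$ gives $\mu(V)\mu(\XX\setminus V)=0$, which is exactly the defining condition for $f$ to be $\mu$-ergodic.

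There is no genuine obstacle here, as the corollary is a formal consequence of the lemma just proved. The only points meriting a word of care are that the non-singularity hypothesis enters only through its use in that lemma (where it is needed to propagate $\mu(f^{-1}(U)\triangle U)=0$ to every iterate $f^{-j}(U)$), and that ``$f$-invariant'' is read as $f^{-1}(V)=V$, matching the set quantified over in the definition of $\mu$-ergodicity; with these conventions fixed, the two displayed chains of equalities above complete the argument.
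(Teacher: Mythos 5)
Your proof is correct and follows the route the paper clearly intends: the corollary is an immediate consequence of the preceding lemma (which upgrades a $\mu$-almost $f$-invariant set to a genuinely $f$-invariant one of the same measure class), and the paper accordingly leaves its proof implicit. You have also correctly identified where the non-singularity hypothesis is actually used — only in the ``only if'' direction, through the lemma — and correctly read ``$f$-invariant'' as $f^{-1}(V)=V$ to match the paper's definition of $\mu$-ergodicity.
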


\begin{Proposition}\label{PropositionErGOS}
Suppose that $f$ is non-singular with respect to $\mu$ and $\mu(A_0)>0$, where $A_0:=\bigcap_{n\ge0}F^{-n}(\XX)$. If $F$ is orbit-coherent and $\mu$ is $f$-ergodic then $\mu|_{A_0}$ is $F$-ergodic.
\end{Proposition}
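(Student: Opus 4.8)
The plan is to reduce $F$-invariant sets inside $A_0$ to $f$-invariant sets in $\XX$ (up to measure zero) via the spreading operation, and then invoke the $f$-ergodicity of $\mu$. Concretely, let $U\subset A_0$ be a measurable $F$-invariant set, $F^{-1}(U)\cap A_0=U$, with $\mu(U)>0$; I want to show $\mu(A_0\setminus U)=0$. The first step is to pass to the spreading $\widetilde U$. Since $F$ is orbit-coherent, Lemma~\ref{LemmaCoe} gives $\widetilde U\cap A_0=U$ and $(f|_{\widetilde{A_0}})^{-1}(\widetilde U)=\widetilde U$, i.e. $\widetilde U$ is $f|_{\widetilde{A_0}}$-invariant (backward invariant), and by Lemma~\ref{LemmaGenInd1} it is also forward invariant. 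The second step is to extend this to an honest $f$-invariant subset of $\XX$: one checks $\mu(\widetilde{A_0}\setminus A_0)=0$ is \emph{not} automatic, so instead I would argue directly that $f^{-1}(\widetilde U)\triangle \widetilde U$ is $\mu$-null. Indeed $f^{-1}(\widetilde U)\cap\widetilde{A_0}=\widetilde U$, so $f^{-1}(\widetilde U)\setminus\widetilde U\subset \XX\setminus\widetilde{A_0}$; and since $\widetilde{A_0}$ is forward invariant (Lemma~\ref{LemmaGenInd1} applied to $A_0$, noting $F(A_0)\subset A_0$) with $\mu(\widetilde{A_0})\ge\mu(A_0)>0$, non-singularity of $f$ with respect to $\mu$ forces $\mu(f^{-1}(\widetilde{A_0})\setminus\widetilde{A_0})$-type terms to behave well. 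More carefully: $\XX\setminus\widetilde{A_0}$ is backward invariant up to the issue that points can leave, so I would instead use Corollary~\ref{Corollaryoiuy89}: it suffices to show $\mu(f^{-1}(\widetilde U)\triangle\widetilde U)=0$, and then the $f$-invariant set $U'$ produced by the preceding lemma satisfies $\mu(U'\triangle\widetilde U)=0$, whence by $f$-ergodicity $\mu(\widetilde U)\in\{0,\mu(\XX)\}$; since $\mu(\widetilde U)\ge\mu(U)>0$ we get $\mu(\widetilde U)=\mu(\XX)$, and therefore $\mu(A_0\setminus U)=\mu(A_0\cap(\widetilde{A_0}\setminus\widetilde U))\le\mu(\XX\setminus\widetilde U)=0$, using $\widetilde U\cap A_0=U$.

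For the remaining gap — that $f^{-1}(\widetilde U)\triangle\widetilde U$ is $\mu$-null — I would combine forward invariance $f(\widetilde U)\subset\widetilde U$ with $(f|_{\widetilde{A_0}})^{-1}(\widetilde U)=\widetilde U$. Forward invariance gives $\widetilde U\subset f^{-1}(\widetilde U)$, so $f^{-1}(\widetilde U)\triangle\widetilde U=f^{-1}(\widetilde U)\setminus\widetilde U\subset f^{-1}(\widetilde U)\setminus\widetilde{A_0}$ (because on $\widetilde{A_0}$ the preimage of $\widetilde U$ is exactly $\widetilde U$). Now $f^{-1}(\widetilde U)\setminus\widetilde{A_0}\subset f^{-1}(\widetilde{A_0})\setminus\widetilde{A_0}$, and since $\widetilde{A_0}$ is forward $f$-invariant we have $\widetilde{A_0}\subset f^{-1}(\widetilde{A_0})$; applying the argument of Lemma~\ref{LemmaSingSing}(2)/the almost-invariance lemma to the complement, or more simply iterating, $\XX\setminus\widetilde{A_0}$ is forward invariant under $f^{-1}$ in the sense that its $f$-images stay outside, so $\mu(f^{-1}(\widetilde{A_0})\setminus\widetilde{A_0})$ need not vanish in general — so here I must be more careful, and this is where I expect the main obstacle to lie: one cannot assume $\widetilde{A_0}=\XX\mod\mu$.

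The cleanest fix, which I would adopt, is to restrict attention to $A_0$ directly without globalizing: apply Lemma~\ref{LemmaSingSing}(1) to see $F$ is non-singular with respect to $\mu$, hence with respect to $\mu|_{A_0}$ since $A_0$ is $F$-forward-invariant (Lemma~\ref{LemmaSingSing}(2)), and observe $\frac{1}{\mu(A_0)}\mu|_{A_0}$ is $f$-ergodic in a suitable restricted sense only if $A_0$ is $f$-saturated, which it is not. So instead the genuinely correct route is: take $U\subset A_0$ $F$-invariant with $\mu(U)>0$; form $\widetilde U$; by Lemma~\ref{LemmaCoe} it is $f|_{\widetilde{A_0}}$-invariant and forward $f$-invariant; the key point is that a \emph{forward} $f$-invariant set $V$ with $f^{-1}(V)\cap\widetilde{A_0}=V$ has the property that $\widehat V:=\bigcap_{j\ge0}f^{-j}(V\cup(\XX\setminus\widetilde{A_0}))\cap(\text{stuff})$... — rather than belabor this, I would simply cite that $\widetilde U$ differs from the $f$-invariant set $\bigcup_{j\ge 0} f^{-j}(\widetilde U)$ by points whose entire forward orbit eventually leaves $\widetilde{A_0}$, and such points have orbits leaving $A_0$, a set of points that is itself forward-invariant-complement; by ergodicity and non-singularity applied to $\XX\setminus A_0$ one shows either $\mu(A_0)=0$ (excluded) or the escaping set is null. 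The main obstacle, then, is precisely bookkeeping the difference between $A_0$, $\widetilde{A_0}$, and $\XX$ modulo $\mu$: once one shows the $f$-saturation of $\widetilde U$ agrees with $\widetilde U$ up to a $\mu$-null set, $f$-ergodicity of $\mu$ finishes the argument as in the first paragraph.
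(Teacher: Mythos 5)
You correctly identify the genuine obstacle — that $\widetilde{A_0}$ need not have full $\mu$-measure, so $\mu(f^{-1}(\widetilde{U})\triangle\widetilde{U})=0$ is not immediate from Lemma~\ref{LemmaCoe} — but your proposed repair does not work. The characterization of $\bigcup_{j\ge0}f^{-j}(\widetilde{U})\setminus\widetilde{U}$ as ``points whose entire forward orbit eventually leaves $\widetilde{A_0}$'' is backwards: such a point has $f^j(x)\in\widetilde{U}\subset\widetilde{A_0}$ for some $j\ge1$, and since $\widetilde{A_0}$ is forward invariant its orbit \emph{stays} in $\widetilde{A_0}$ from then on. Consequently ``ergodicity applied to $\XX\setminus A_0$'' is not a usable step: $\XX\setminus A_0$ is not an $f$-invariant set, and there is no hypothesis forcing $\mu(\XX\setminus A_0)=0$ (we only assume $\mu(A_0)>0$).

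The missing idea is to \emph{change the reference measure} rather than to saturate. The paper sets $\nu:=\frac{1}{\mu(\widetilde{A_0})}\mu|_{\widetilde{A_0}}$ and invokes Lemma~\ref{LemmaEhErgodica} (which you never use) to see that $\nu$ is still $f$-ergodic, together with Lemma~\ref{LemmaSingSing}(2) for non-singularity of $f$ with respect to $\nu$. With respect to $\nu$, the set $\widetilde{U}$ is almost invariant for free: since $\widetilde{U}\subset f^{-1}(\widetilde{U})$ (forward invariance) and $(f|_{\widetilde{A_0}})^{-1}(\widetilde{U})=\widetilde{U}$ (Lemma~\ref{LemmaCoe}), one gets $(f^{-1}(\widetilde{U})\triangle\widetilde{U})\cap\widetilde{A_0}=\emptyset$, hence $\nu(f^{-1}(\widetilde{U})\triangle\widetilde{U})=0$. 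Corollary~\ref{Corollaryoiuy89} then gives $\nu(\widetilde{U})=1$, i.e.\ $\mu(\widetilde{A_0}\setminus\widetilde{U})=0$, and $\mu(A_0\setminus U)=\mu(A_0\setminus\widetilde{U})\le\mu(\widetilde{A_0}\setminus\widetilde{U})=0$. (Your saturation idea can be salvaged by a different argument: if $x\in A_0$ and $f^j(x)\in\widetilde{U}$, then for $n$ large $F^n(x)\in\widetilde{U}\cap A_0=U$, so $x\in F^{-n}(U)=U$; hence $\big(\bigcup_j f^{-j}(\widetilde{U})\big)\cap A_0\subset\widetilde{U}$ and $f$-ergodicity applied to the genuine $f$-invariant set $\bigcup_j f^{-j}(\widetilde{U})$ finishes the proof. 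But this is not what you wrote.)
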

\begin{proof}
As $\mu(\widetilde{A_0})>0$ and $f$ is $\mu$-ergodic and non-singular, it follows from Lemma~\ref{LemmaSingSing} and \ref{LemmaEhErgodica} that $f$ is ergodic and non-singular with respect to the probability $\nu:=\frac{1}{\mu(\widetilde{A_0})}\mu|_{\widetilde{A_0}}$.
Let $U\subset A_0$ be a measurable set that is $F$-invariant and such that $\mu(U)>0$.
Thus, $\nu(\widetilde{U})=\mu(\widetilde{U})/\mu(\widetilde{A_0})>0$.
It follows by Lemma~\ref{LemmaCoe} that $$\nu(f^{-1}(\widetilde{U})\triangle\widetilde{U})=\frac{1}{\mu(\widetilde{A_0})}\mu((f^{-1}(\widetilde{U})\triangle\widetilde{U})\cap \widetilde{A_0})=\frac{1}{\mu(\widetilde{A_0})}\mu((f^{-1}(\widetilde{U})\cap \widetilde{A_0})\triangle\widetilde{U})=$$
$$=\frac{1}{\mu(\widetilde{A_0})}\mu\big(f|_{\widetilde{A_0}}^{-1}(\widetilde{U})\triangle\widetilde{U}\big)=\frac{1}{\mu(\widetilde{A_0})}\mu(\widetilde{U}\triangle\widetilde{U})=0.$$
As $f$ is ergodic and non-singular with respect to $\nu$, it follows from Corollary~\ref{Corollaryoiuy89} that $\nu(\widetilde{U})=1$. That is, $\mu(\widetilde{U})=\mu(\widetilde{A_0})$.
Thus, using again Lemma~\ref{LemmaCoe}, we have that $\mu(U)=\mu(\widetilde{U}\cap A_0)=\mu(A_0)$, proving the $F$-ergodicity of $\mu|_{A_0}$.
\end{proof}

\section{Lift results}\label{LiftRes}

In this section, unless otherwise noted, $(\XX,\mathfrak{A})$ is a measurable space and $f:\XX\to\XX$ a measurable map. 
The Lemma~\ref{FolkloreResultA} below is a well-known result, and a proof of it can be found in the Appendix.

\begin{Lemma}[Folklore result I]\label{FolkloreResultA}
Let $\mu$ be an ergodic $f$ invariant probability and $F:A\to U$ the first return map to a set $U\subset\XX$ by $f$, where $A=\{x\in B\,;\,\co_f^+(f(x))\cap U\ne\emptyset\}$. If $\mu(U)>0$ then $\frac{1}{\mu(U)}\mu|_{U}$ is an ergodic $F$-invariant probability.
\end{Lemma}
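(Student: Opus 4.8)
The plan is to establish the "Folklore result I" (Lemma~\ref{FolkloreResultA}), which is the classical statement that the first return map to a positive-measure set preserves the normalized restriction of an ergodic invariant measure, and that this restriction is again ergodic. First I would verify $F$-invariance of $\nu:=\frac{1}{\mu(U)}\mu|_U$. The key tool is the Kac/Poincar\'e recurrence structure: for $n\ge1$ set $U_n=\{x\in A\,;\,R(x)=n\}=\{x\in U\,;\,f^j(x)\notin U\text{ for }1\le j<n,\ f^n(x)\in U\}$, where $R$ is the first return time, so that $A=\bigsqcup_{n\ge1}U_n$ up to a $\mu$-null set (by ergodicity and invariance, $\mu$-a.e.\ point of $U$ returns to $U$). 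For a measurable $V\subset U$ one has $F^{-1}(V)=\bigsqcup_{n\ge1}(f^{-n}(V)\cap U_n)$, and the task is to show $\mu(F^{-1}(V))=\mu(V)$. The standard argument introduces the "towers" $E_{n,j}=f^j(U_n)$ for $0\le j<n$; invariance of $\mu$ under $f$ gives $\mu(E_{n,j})=\mu(U_n)$, and one checks these sets tile $\XX$ (mod $\mu$) using that $\mu$-a.e.\ point eventually enters $U$ and that the first-entry time is well-defined. Chasing how $f^{-1}$ moves between the floors of the towers yields $\mu(F^{-1}(V))=\mu(V)$ for $V\subset U$; applying this with $V=U$ confirms $\nu$ is a probability and in general that $\nu$ is $F$-invariant.

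Next I would prove ergodicity of $\nu$ with respect to $F$. Suppose $W\subset U$ is $F$-invariant, i.e.\ $F^{-1}(W)=W$ mod $\nu$ (equivalently mod $\mu|_U$). The idea is to spread $W$ out along $f$-orbits: let $\widehat W=\bigcup_{n\ge1}\bigcup_{j=0}^{n-1}f^j(W\cap U_n)$ be the saturation of $W$ inside the tower over $W$ (this is the "spreading" in the sense of the paper's $\widetilde{U}$ construction, restricted to the return map). One shows that $\widehat W$ is $f$-invariant mod $\mu$: forward invariance $f(\widehat W)\subset\widehat W$ mod $\mu$ follows because a point at level $j<n-1$ of a tower over $W$ maps to level $j+1$, while a point at the top level $n-1$ maps to $F(W\cap U_n)\subset W$ mod $\mu$ by $F$-invariance of $W$; and since $\mu$ is $f$-invariant, $\mu(f^{-1}(\widehat W)\triangle \widehat W)=0$, hence $\widehat W$ is a genuine $f$-invariant set mod $\mu$ after the usual correction (as in the Corollary~\ref{Corollaryoiuy89} style argument). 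By ergodicity of $\mu$, $\mu(\widehat W)\in\{0,1\}$. Since $\widehat W\cap U = W$ mod $\mu$ (a return-time tower meets $U$ exactly at its base), this forces $\mu(W)=0$ or $\mu(W)=\mu(U)$, i.e.\ $\nu(W)\in\{0,1\}$. This is precisely $F$-ergodicity of $\nu$.

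The main obstacle I anticipate is the careful bookkeeping for the a.e.\ partition of $\XX$ into the towers $\{E_{n,j}\}$ and the verification that the sets $W\cap U_n$ and their iterates behave as claimed mod $\mu$ — in particular that $\mu$-a.e.\ point of $A$ actually returns to $U$ (needed so that $A=\bigsqcup U_n$ mod $\mu$) and that the spread-out set $\widehat W$ really is $f$-invariant and not merely forward invariant. The recurrence itself is immediate from ergodicity plus invariance (the set of points in $U$ never returning to $U$ is wandering, hence $\mu$-null), but translating "$F$-invariant mod $\nu$" into honest set identities mod $\mu$ and then back requires the non-singularity / almost-invariance cleanup already packaged in the preceding lemmas. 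I would lean on those (Lemma on $\mu$-almost $f$-invariant sets and Corollary~\ref{Corollaryoiuy89}) to keep this step short rather than redoing the $\triangle$-estimates by hand. Everything else is routine measure-theoretic tower manipulation.
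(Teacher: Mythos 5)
Your proposal is correct, and the core mechanism — build a Kac tower over $U$, spread an $F$-invariant set to an $f$-invariant set, and use $f$-ergodicity — is the same engine the paper uses. The packaging differs in two places worth noting. For $F$-invariance, you decompose $\XX$ into forward-image towers $E_{n,j}=f^{j}(U_n)$ and chase $f^{-1}$ between floors; the paper instead runs a preimage telescope, defining $V_0=V$, $V_n=f^{-1}(V_{n-1})\setminus U$, proving $\mu(V)=\sum_{j=1}^{n}\mu\bigl(f^{-j}(V)\cap\{R=j\}\bigr)+\mu(V_n)$ and letting $\mu(V_n)\to0$. These are dual renderings of the same identity, but the preimage version avoids committing to forward images being measurable or measure-preserving — a point that matters since $f$ is only assumed measurable in this section, not bimeasurable or injective, so $\mu\bigl(f^j(U_n)\bigr)=\mu(U_n)$ is not automatic and you would need to route through preimages anyway to make the tiling rigorous. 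For ergodicity, your direct spreading argument ($\widehat W$ is forward invariant, hence invariant mod $\mu$ by Corollary~\ref{Corollaryoiuy89}, hence has measure $0$ or $1$, and $\widehat W\cap U=W$) reproves in this special case exactly what the paper obtains by invoking Proposition~\ref{PropositionErGOS}: a first-return time is exact, hence coherent, hence $F$ is orbit-coherent (Lemma~\ref{LemmaOrbCoh1}), and Proposition~\ref{PropositionErGOS} (whose own proof is the spreading argument you describe, packaged via Lemma~\ref{LemmaCoe}) then gives $F$-ergodicity of $\mu|_{A_0}$, followed by Lemma~\ref{LemmaSingSing} to pass to $\mu|_U$. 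Your route is more self-contained and elementary; the paper's leans on its general orbit-coherence machinery, which it will reuse elsewhere, at the cost of a longer dependency chain for a folklore fact.
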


The {\bf\em tower} associated to the induced map $F$ is the set 
$$\widehat{A}=A\sqcup\{(x,n)\,;\,x\in A\text{ and }1\le n<R(x)\}.$$
The {\bf\em tower map} associated to $F$ is the map 
$\widehat{F}:\widehat{A}\to\widehat{\XX}$, where $\widehat{\XX}:=\XX\sqcup \XX\times\{1,2,3,\cdots\}\supset\widehat{A}$  and $\widehat{F}$ is defined by $$\widehat{F}(x)=\begin{cases}
	F(x) & \text{ if }x\in A\text{ and }R(x)=1\\
	(x,1) & \text{ if }x\in A\text{ and }R(x)>1
\end{cases}$$
and, for $x\in A$ and $1\ne n<R(x)$,
$$\widehat{F}(x,n)=
\begin{cases}
	(x,n+1) & \text{ if }n<R(x)-1\\
	F(x) & \text{ if }n=R(x)-1
\end{cases}.
$$

Let $\pi:\widehat{\XX}\to\XX$ be the {\bf\em tower projection} given by $\pi(x)=x$, if  $x\in\XX$, and $\pi(x,n)=f^n(x)$ when $(x,n)\in\XX\times\{1,2,3,\cdots\}$.
Considering on $\widehat{\XX}$ the topology induced by $\XX$, the projection is a measurable map.
Furthermore, as $\pi\circ\widehat{F}=f\circ\pi$, if $\eta$ is a $\widehat{F}$-invariant probability then the push-forward $\pi_*\eta:=\eta\circ\pi^{-1}$ is a $f$ invariant probability.
Thus, a $f$-invariant probability $\mu$ is called {\bf\em liftable by $\widehat{F}$} if $\mu=\pi_*\eta$ for some $\widehat{F}$-invariant probability $\eta$ ($\eta$ is called the {\bf\em $\widehat{F}$-lift of $\mu$} and $\mu$ is the {\bf\em tower projection of $\mu$}).

Observing that $F$ is the first return map to $A$ by $\widehat{F}$ and as $\co_{\widehat{F}}^+(p)\cap A\ne\emptyset$ for every $p\in\bigcap_{j\ge0}\widehat{F}^{-j}(\widehat{\XX})$,
it follows from  Lemma~\ref{FolkloreResultA} that $\nu:=\frac{1}{\eta(A)}\eta|_{A}$ if a $F$-invariant probability, whenever $\eta$ is a $\widehat{F}$-invariant ergodic probability.
Note that, by the tower construction, $\eta(A)>0$ for every $\widehat{F}$ invariant probability.
Furthermore, we can conclude, using the Ergodic Decomposition Theorem, that $\frac{1}{\eta(A)}\eta|_A$ is $F$-invariant even when $\eta$ is not $\widehat{F}$-ergodic.

Conversely, given a probability $\eta$ on $\XX$, define the {\bf\em tower measure} associated to $\eta$ as $\widehat{\eta}(U)=\sum_{n\ge0}\eta(U_n)$, where $U_0=U\cap A$ and $U_n\subset A$ is defined by $U_n\times\{n\}=U\cap\big(A\times\{n\}\big)$ for $n\ge1$.
It is not difficult to check that, if $\nu$ is a $F$-invariant probability, then $\widehat{\nu}$ is a $\widehat{F}$-invariant measure.
Nevertheless $\widehat{\nu}$ is a finite measure only if $\widehat{\nu}(\widehat{\XX})=\int R d\nu<\infty$.
So, if $\nu$ is a $F$ invariant probability with $\int Rd \nu<+\infty$ then
\begin{equation}\label{Eq78987yh}\small
  \widetilde{\nu}:=\pi_*\bigg(\frac{1}{\widehat{\nu}(\widehat{\XX})}\widehat{\nu}\bigg)=\frac{1}{\int R d\nu}\pi_*\widehat{\nu}=\frac{1}{\int R d\nu}\sum_{n\ge1}\sum_{j=0}^{n-1}f^j_*(\nu|_{\{R=n\}})=\frac{1}{\int R d\nu}\sum_{j\ge0}f^j_*(\nu|_{\{R>j\}})
\end{equation}
 is a $f$-invariant probability. As a consequence, we get the following well-known result.

\begin{Lemma}[Folklore result II]\label{FolkloreResultB}
If $F:A\to\XX$ is a measurable $f$ induced map with induced time $R$ and $\nu$ is a $F$ invariant probability then $$\mu:=\sum_{n\ge1}\sum_{j=0}^{n-1}f^j_*(\nu|_{\{R=n\}})=\sum_{j\ge0}f^j_*(\nu|_{\{R>j\}})
$$ is a $f$ invariant measure with $\mu(\XX)=\int R d\nu$.
\end{Lemma}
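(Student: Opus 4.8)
The plan is to verify directly that $\mu$ is $f$-invariant and to compute its total mass, following the tower construction laid out just above the statement. First I would fix a $F$-invariant probability $\nu$ on $A$ and work with the tower measure $\widehat\nu$ on $\widehat\XX$ defined by $\widehat\nu(U)=\sum_{n\ge 0}\nu(U_n)$, where $U_0=U\cap A$ and $U_n\times\{n\}=U\cap(A\times\{n\})$. The two expressions claimed for $\mu$ agree by a reindexing: $\sum_{n\ge 1}\sum_{j=0}^{n-1}f^j_*(\nu|_{\{R=n\}})=\sum_{j\ge 0}f^j_*\big(\sum_{n>j}\nu|_{\{R=n\}}\big)=\sum_{j\ge0}f^j_*(\nu|_{\{R>j\}})$, which is exactly $\pi_*\widehat\nu$ for the tower projection $\pi$. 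So it suffices to show $\pi_*\widehat\nu$ is $f$-invariant and has mass $\int R\,d\nu$.

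For the mass, $\widehat\nu(\widehat\XX)=\nu(A)+\sum_{n\ge1}\nu(\{R>n\}\text{-part})=\sum_{j\ge0}\nu(\{R>j\})=\sum_{n\ge1} n\,\nu(\{R=n\})=\int R\,d\nu$, and $\pi_*$ preserves total mass; this is the routine part. For $f$-invariance the key identity is $\pi\circ\widehat F=f\circ\pi$, already recorded in the excerpt, together with the fact that $\widehat F$ preserves $\widehat\nu$. The latter is the one genuinely substantive point: one checks level by level that $\widehat F_*\widehat\nu=\widehat\nu$, using that on the levels $A\times\{n\}$ with $n<R-1$ the map $\widehat F$ is essentially the identity shift $(x,n)\mapsto(x,n+1)$ (so those levels are just carried forward and the count $\nu(\{R>j\})$ matches $\nu(\{R>j-1\}\cap\{R>j\})\ldots$ telescopes correctly), while the contribution landing back in $A$ from the top levels $\{R=n,\ n-1\}$ recombines into $F_*\nu=\nu$. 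Once $\widehat F_*\widehat\nu=\widehat\nu$ is established, $f_*(\pi_*\widehat\nu)=\pi_*(\widehat F_*\widehat\nu)=\pi_*\widehat\nu=\mu$ follows immediately.

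I expect the main obstacle to be purely bookkeeping: carefully justifying $\widehat F_*\widehat\nu=\widehat\nu$ when $\widehat\nu$ is only $\sigma$-finite (not a probability), so one must check the identity on a generating family of measurable sets — say sets contained in a single level — and then invoke countable additivity, being careful that no mass escapes to infinity. A clean alternative, avoiding the tower measure altogether, is to compute $f_*\mu$ termwise from $\mu=\sum_{j\ge0}f^j_*(\nu|_{\{R>j\}})$: then $f_*\mu=\sum_{j\ge0}f^{j+1}_*(\nu|_{\{R>j\}})=\sum_{k\ge1}f^k_*(\nu|_{\{R>k-1\}})=\sum_{k\ge1}f^k_*(\nu|_{\{R\ge k\}})$, and since $\{R\ge k\}=\{R>k\}\sqcup\{R=k\}$ and $\sum_{k\ge1}f^k_*(\nu|_{\{R=k\}})=\sum_{k\ge1}(f^k|_{\{R=k\}})_*\nu|_{\{R=k\}}=F_*\nu=\nu=\nu|_{\{R>0\}}$, everything reassembles into $\mu$; this reduces the whole lemma to the reindexing plus the defining property $F_*\nu=\nu$ and the mass computation above, with no $\sigma$-finiteness subtleties. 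I would present this second route as the proof, since it is shortest and self-contained given the material already in the excerpt.
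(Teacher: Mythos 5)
Your proposal is correct, and the second route you ultimately adopt is genuinely different from what the paper does. The paper derives Lemma~\ref{FolkloreResultB} as an immediate consequence of the tower construction: it defines $\widehat\nu$, asserts (without detail, ``it is not difficult to check'') that $\widehat F_*\widehat\nu=\widehat\nu$, and then pushes forward via $\pi_*$ using $\pi\circ\widehat F=f\circ\pi$ and the mass computation $\widehat\nu(\widehat\XX)=\int R\,d\nu$. Your alternative bypasses the tower entirely: from $\mu=\sum_{j\ge0}f^j_*(\nu|_{\{R>j\}})$ you shift indices, split $\{R\ge k\}=\{R>k\}\sqcup\{R=k\}$, observe that $\sum_{k\ge1}f^k_*(\nu|_{\{R=k\}})=F_*\nu=\nu=\nu|_{\{R>0\}}$ (which uses $\nu(A)=1$, automatic from $F$-invariance), and reassemble $\mu$; the mass identity follows from $\sum_{j\ge0}\nu(\{R>j\})=\int R\,d\nu$ by Tonelli. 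This direct computation is shorter and, as you note, sidesteps the $\sigma$-finiteness bookkeeping involved in proving $\widehat F_*\widehat\nu=\widehat\nu$; the trade-off is that the paper's tower route makes the equivalence between $F$-lifts and $\widehat F$-lifts structurally transparent, which the paper exploits in the surrounding discussion. Both arguments are valid and establish exactly the statement of the lemma.
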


Because of (\ref{Eq78987yh}), we say that a $f$-invariant probability $\mu$ is {\bf\em $F$-liftable} if there is a $F$-invariant probability $\nu$, with $\int Rd \nu<+\infty$, such that $\widetilde{\nu}=\mu$.
The probability $\nu$ is called a {\bf\em $F$-lift} of $\mu$.
Hence, $\mu$ is $F$-liftable if and only if $\mu$ is $\widehat{F}$-liftable. Note that, if $\mu$ is a $f$-invariant probability and $\nu\ll\mu$ is a $F$ invariant probability then $\widetilde{\nu}\ll\mu$. Therefore, if $\mu$ is also $f$ ergodic then it follows from the ergodicity that $\mu=\widetilde{\nu}$ which proves the Corollary~\ref{FolkloreResultC} below.

\begin{Corollary}[Folklore result III]\label{FolkloreResultC}
If $\mu$ is an ergodic $f$ invariant probability and $F:A\to\XX$ is a measurable $f$ induced map with induced time $R$ then, $\mu$ is $F$-liftble if and only if there exists a $F$ invariant probability $\nu\ll\mu$ such that $\int R d\nu<+\infty$.
\end{Corollary}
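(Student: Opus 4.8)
The plan is to split the equivalence into its two implications and exploit the explicit formula (\ref{Eq78987yh}) for $\widetilde\nu$ together with the ergodicity of $\mu$; in fact the bulk of the argument is already contained in the paragraph preceding the statement.

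$(\Rightarrow)$ Assume $\mu$ is $F$-liftable and let $\nu$ be an $F$-lift, so that $\nu$ is an $F$-invariant probability with $\int R\,d\nu<+\infty$ and $\widetilde\nu=\mu$; note $\nu(A)=1$ and $R\ge1$, hence $\nu|_{\{R>0\}}=\nu$. The only thing to verify is $\nu\ll\mu$: the $j=0$ term of the sum $\sum_{j\ge0}f^j_*(\nu|_{\{R>j\}})$ appearing in (\ref{Eq78987yh}) is precisely $\nu$, so $\nu\le\big(\int R\,d\nu\big)\widetilde\nu=\big(\int R\,d\nu\big)\mu$ as measures, whence $\nu\ll\mu$.

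$(\Leftarrow)$ Conversely, let $\nu\ll\mu$ be an $F$-invariant probability with $\int R\,d\nu<+\infty$. By Lemma~\ref{FolkloreResultB} and (\ref{Eq78987yh}) (the finiteness $\int R\,d\nu<+\infty$ being used only to normalize), $\widetilde\nu$ is a well-defined $f$-invariant probability. First I check $\widetilde\nu\ll\mu$: since $\mu$ is $f$-invariant, $\mu(E)=0$ implies $\mu(f^{-j}(E))=0$, hence $\nu(f^{-j}(E))=0$ by $\nu\ll\mu$, i.e.\ $f^j_*\nu\ll\mu$; as $\nu|_{\{R>j\}}\le\nu$, each summand $f^j_*(\nu|_{\{R>j\}})$ is absolutely continuous with respect to $\mu$, and therefore so is the countable sum defining $\widetilde\nu$. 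It then remains to upgrade $\widetilde\nu\ll\mu$ to $\widetilde\nu=\mu$, after which $\nu$ exhibits $\mu$ as $F$-liftable.

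This last upgrade is the only step that is not pure bookkeeping, and it is the ``folklore'' content the corollary alludes to: an $f$-invariant probability $\lambda$ with $\lambda\ll\mu$, $\mu$ ergodic $f$-invariant, must equal $\mu$. I would prove it via Birkhoff's ergodic theorem: fixing a measurable set $E$, ergodicity of $\mu$ gives $\frac1n\sum_{j=0}^{n-1}\mathbf 1_E\circ f^j\to\mu(E)$ $\mu$-a.e., hence also $\lambda$-a.e.\ since $\lambda\ll\mu$; on the other hand Birkhoff applied to the $f$-invariant probability $\lambda$ gives that the same averages converge $\lambda$-a.e.\ to some $f$-invariant $\psi$ with $\int\psi\,d\lambda=\lambda(E)$, so $\psi=\mu(E)$ $\lambda$-a.e.\ and therefore $\lambda(E)=\mu(E)$; as $E$ is arbitrary, $\lambda=\mu$. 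I do not expect a genuine obstacle here — the only care required is the measure-theoretic bookkeeping just indicated (and, if one prefers, that last paragraph may instead invoke the transfer-operator fact that a fixed density of $\mathcal L_f$ over an ergodic base measure is $\mu$-a.e.\ constant).
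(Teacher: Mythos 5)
Your proof is correct and follows essentially the same route as the paper's: the paper's argument (given in the paragraph immediately preceding the statement) is precisely that $\nu\ll\mu$ forces $\widetilde\nu\ll\mu$ via the formula~(\ref{Eq78987yh}) and then ergodicity of $\mu$ forces $\widetilde\nu=\mu$. You simply spell out the $(\Rightarrow)$ direction (reading $\nu\le\big(\int R\,d\nu\big)\widetilde\nu$ off the $j=0$ term) and the elementary fact that an $f$-invariant probability absolutely continuous with respect to an ergodic one must coincide with it, both of which the paper leaves implicit.
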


In Lemma~\ref{LemmaCalculoBOM} below, let $F:A\to\XX$ be a measurable $f$ induced map with induced time $R$. Furthermore, for any given $x\in A_0:=\bigcap_{j\ge0}F^{-j}(A)$ let $$\theta_F(x):=\limsup_{n\to\infty}\frac{1}{n}\#\{0\le j<n\,;\,f^j(x)\in\co_F^+(x)\}.$$

\begin{Lemma}\label{LemmaCalculoBOM} If $i_x(n)>0$ then $\frac{1}{i_x(n)}\sum_{j=0}^{i_x(n)-1}R\circ F^j(x)< \frac{n}{i_x(n)}$ for every $x\in A_0$, where $i_x(n)=\#\{0\le j<n\,;\,f^j(x)\in\co_F^+(x)\}$. In particular,  
$\liminf_{n}\frac{1}{n}\sum_{j=0}^{n-1}R\circ F^j(x)\le\frac{1}{\theta_F(x)}$ for every $x\in A_0$.
Moreover, if $\lim_{n}\frac{1}{n}\sum_{j=0}^{n-1}R\circ F^j(x)$ exists, then $$\lim_{n\to\infty}\frac{1}{n}\#\{0\le j<n\,;\,f^j(x)\in\co_F^+(x)\}=\frac{1}{\lim_{n}\frac{1}{n}\sum_{j=0}^{n-1}R\circ F^j(x)}.$$
\end{Lemma}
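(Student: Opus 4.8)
The plan is to exploit the elementary combinatorial relation between the sequence of $F$-times $R_n(x) := \sum_{j=0}^{n-1} R\circ F^j(x)$ and the counting function $i_x(n)$. The key observation is that $R_n(x)$ is exactly the value $m$ of the $n$-th visit of the $f$-orbit of $x$ to the set $\co_F^+(x)$, in the sense that $f^{R_n(x)}(x) = F^n(x)$, and moreover the times $0 = R_0(x) < R_1(x) < R_2(x) < \cdots$ enumerate (a subset of) $\{j\ge 0\,;\,f^j(x)\in\co_F^+(x)\}$. Consequently, for each $n$ we have $i_x(R_n(x)) \ge n$: among $\{0,1,\dots,R_n(x)-1\}$ the indices $R_0(x),\dots,R_{n-1}(x)$ all land in $\co_F^+(x)$. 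This gives $\frac{i_x(R_n(x))}{R_n(x)} \ge \frac{n}{R_n(x)}$, i.e., along the subsequence $\{R_n(x)\}_n$ the averaged counting function is at least $n/R_n(x)$.

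First I would make precise that $i_x(n)\to\infty$ (since $R\circ F^j(x)\ge 1$, the $F$-iterates produce infinitely many distinct hitting times), so $i_x$ is an unbounded nondecreasing function and $\theta_F(x) = \limsup_n \frac{i_x(n)}{n}$. Then I would pass to a subsequence $n_k$ realizing the limsup, $\frac{i_x(n_k)}{n_k}\to\theta_F(x)$, and for each $k$ set $\ell_k = i_x(n_k)$. The point is that $R_{\ell_k}(x) \le n_k$ whenever the $\ell_k$-th hitting time is at most $n_k-1$ — more carefully, if $i_x(n_k) = \ell_k$ then the $\ell_k$-th element of $\{j\ge 0\,;\,f^j(x)\in\co_F^+(x)\}$ (counting $j=0$ as the first) is $\le n_k - 1 < n_k$, and since the times $R_0(x),\dots$ form a subsequence of that hitting set with $R_0(x)=0$, one gets $R_{\ell_k - 1}(x) \le n_k$. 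Hence
\[
\frac{1}{\ell_k - 1}\sum_{j=0}^{\ell_k - 2} R\circ F^j(x) = \frac{R_{\ell_k-1}(x)}{\ell_k - 1} \le \frac{n_k}{\ell_k - 1} = \frac{n_k}{i_x(n_k)-1} \longrightarrow \frac{1}{\theta_F(x)}.
\]
Since $\ell_k\to\infty$, this exhibits a subsequence of $\frac{1}{m}\sum_{j=0}^{m-1} R\circ F^j(x)$ converging to $1/\theta_F(x)$, which bounds the $\liminf$ by $1/\theta_F(x)$; the statement with $i_x(n)$ in place of $n$ in the denominator follows the same way by reading the inequality at the indices $m = i_x(n)$. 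The "in particular" clause is immediate since $i_x(n)\le n$ forces $\liminf_m \frac1m R_m(x) \le \liminf_n \frac{1}{i_x(n)} R_{i_x(n)}(x)$ — wait, that direction needs care, so instead I would simply note that $\{R_n(x)\}$ is a subsequence of all $m$, giving $\liminf_m \frac1m\sum_{j<m} R\circ F^j(x) \le \liminf_n \frac{R_n(x)}{n}$, and the latter is $\le 1/\theta_F(x)$ by the displayed computation with $\ell_k$ replaced by $n$ directly when one uses $i_x(R_n(x))\ge n$.

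For the last assertion, suppose $L := \lim_n \frac1n R_n(x)$ exists (possibly $+\infty$). If $L<\infty$ then, using that $i_x(R_n(x)) \ge n$ and $i_x(R_n(x)-1)+1 \le n+1$ (the $(n+1)$-th hitting time $R_{n}(x)$ is $\ge R_n(x)$, so strictly before it there are at most... ) — the cleanest route is: for any $m$, let $n = n(m)$ be the unique index with $R_n(x) \le m < R_{n+1}(x)$; then $i_x(m) = n$ (the hitting times in $\{0,\dots,m-1\}$ coming from $F$-iterates are exactly $R_0(x),\dots,R_{n-1}(x)$, **provided** every hitting time is an $R_j(x)$, which holds because $y\in\co_F^+(x)$ with $y = f^j(x)$ forces $j = R_k(x)$ for some $k$ by definition of $\co_F^+(x)$ and injectivity of the orbit indexing is not needed — one uses that $f^j(x)\in\co_F^+(x)$ means $f^j(x) = F^k(x) = f^{R_k(x)}(x)$, and on the orbit this pins $j$ among the $R_k(x)$ up to the period, a subtlety I will address). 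Granting $i_x(m) = n(m)$, we get $\frac{i_x(m)}{m} = \frac{n}{m}$ with $R_n(x)\le m < R_{n+1}(x)$, so $\frac{n}{R_{n+1}(x)} < \frac{n}{m} \le \frac{n}{R_n(x)}$, and both outer terms tend to $1/L$ as $m\to\infty$ (since $n\to\infty$ and $R_n(x)/n\to L$, $R_{n+1}(x)/n \to L$). Hence $\lim_m i_x(m)/m = 1/L$. If $L = +\infty$ the same squeeze gives the limit $0 = 1/L$.

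The main obstacle I anticipate is the bookkeeping identity $i_x(m) = n(m)$, i.e., that \emph{every} index $j$ with $f^j(x)\in\co_F^+(x)$ is of the form $R_k(x)$. This can fail if the $f$-orbit of $x$ is eventually periodic or if $F^k(x) = f^j(x)$ for some $j \ne R_k(x)$ (orbit returns). The fix is to work not with the geometric set $\co_F^+(x)$ but with the \emph{marked} hitting times $\{R_k(x)\,;\,k\ge 0\}$: one first checks $\#\{0\le j<n\,;\,f^j(x)\in\co_F^+(x)\} = \#\{k\ge 0\,;\,R_k(x)<n\}$ up to the finitely many corrections caused by coincidences, which do not affect the $\limsup$/$\liminf$; alternatively, since all three quantities in the lemma are stated with $\limsup$ or $\liminf$ or under an existence hypothesis, and corrections are $o(n)$, they wash out. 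I would isolate this as a short preliminary claim and otherwise the proof is the elementary squeeze above.
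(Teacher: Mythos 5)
Your approach is essentially the paper's: identify $i_x(n)$ with the number of partial sums $R_k(x):=\sum_{j<k}R\circ F^j(x)$ lying below $n$, and squeeze $n/i_x(n)$ between two consecutive Ces\`aro averages of $R\circ F^j$. The paper does this by setting $E_x(n)=\{R_k(x)\,;\,R_k(x)<n\}$ and asserting $E_x(n)=\{0\le j<n\,;\,f^j(x)\in\co_F^+(x)\}$, exactly the identification you flag as the ``main obstacle''. You are right that this step needs care. However, your proposed dismissal --- that the discrepancy between the geometric count $i_x(n)$ and the count of marked hitting times amounts to ``finitely many corrections'' or is ``$o(n)$'' --- is false, and consequently the lemma as stated (with no injectivity hypothesis on $\co_f^+(x)$) fails. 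Consider $f$ with a period-two orbit $\{x,\,y=f(x)\}$, $A=\{x,y\}$, $R(x)=1$, $R(y)=2$. Then $F(x)=y$ and $F(y)=y$, so $\co_F^+(x)=\{x,y\}=\co_f^+(x)$ and $i_x(n)=n$ for all $n$, giving $\theta_F(x)=1$; but $\frac{1}{n}\sum_{j=0}^{n-1}R\circ F^j(x)=\frac{2n-1}{n}\to 2>1=1/\theta_F(x)$. Here $\#\{k\,;\,R_k(x)<n\}\sim n/2$, so the discrepancy is of order $n$ and does not wash out.

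The correct fix is the first of your two suggestions: either work throughout with the marked hitting times $\{R_k(x)\}$ rather than the geometric set $\co_F^+(x)$ (redefining $\theta_F$ accordingly, after which the squeeze is clean), or add the hypothesis that $f|_{\co_f^+(x)}$ is injective, which forces the two index sets to coincide; the paper records precisely this injectivity condition in Lemma~\ref{LemmaCoerenteToCoerente} but omits it here. The second option is what is implicitly used in the applications: in Theorem~\ref{TheoremLift} and Theorem~\ref{KacResult} the statement is about $\mu$-a.e.\ $x$ for an ergodic $\mu$, where periodic orbits can be handled separately, and in the proof of Theorem~\ref{TheoremPlissBlock} injectivity of $f$ is an explicit hypothesis. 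So your squeeze argument is sound and is the paper's argument, once the injectivity caveat is made explicit; the ``$o(n)$'' claim must be discarded, not repaired.
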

\begin{proof} Let $x\in A_0$ be such that $\theta_F(x)>0$ and set $$E_x(n):=\bigg\{\sum_{k=0}^{j-1}R\circ F^k(x)\,;\,j\ge0\text{ and }\sum_{k=0}^{j-1}R\circ F^k(x)<n\bigg\}.$$
	As $E_x(n)=\{0\le j<n\,;\,f^j(x)\in\co_F^+(x)\}$ and $\# E_x(n)$ $=$ $\#\{j\ge0\,;\,\sum_{k=0}^{j-1}R\circ F^k(x)<n\}$ $=$ $\max\{j\ge0\,;\,\sum_{k=0}^{j-1}R\circ F^k(x)<n\}$, we get that $$i_x(n)=\#\{0\le j<n\,;\,f^j(x)\in\co_F^+(x)\}=\max\bigg\{j\ge0\,;\,\sum_{k=0}^{j-1} R\circ F^k(x)<n\bigg\}.$$
Hence, 
$\sum_{j=0}^{i_x(n)-1}R\circ F^j(x)<n\le\sum_{j=0}^{i_x(n)}R\circ F^j(x)$ for every $x\in A_0$ and $n\ge1$.
As a consequence,
$$\frac{1}{i_x(n)}\sum_{j=0}^{i_x(n)-1}R\circ F^j(x)<\frac{n}{i_x(n)}=\bigg(\frac{1}{n}\#\{0\le j<n\,;\,f^j(x)\in\co_F^+(x)\}\bigg)^{-1}\le$$
$$\le\underbrace{\frac{i_x(n)+1}{i_x(n)}}_{\alpha(n)}\bigg(\frac{1}{i_x(n)+1}\sum_{j=0}^{i_x(n)}R\circ F^j(x)\bigg),$$
for every $n>R(x)$ (note that $n>R(x)$ $\implies$ $i_x(n)\ge1$).
That is, if $x\in A_0$ and $n>R(x)$,
\begin{equation}\label{Equation6545jg}
  \frac{1}{i_x(n)}\sum_{j=0}^{i_x(n)-1}R\circ F^j(x)<\frac{1}{\frac{1}{n}\#\{0\le j<n;f^j(x)\in\co_F^+(x)\}}\le\frac{\alpha(n)}{i_x(n)+1}\sum_{j=0}^{i_x(n)}R\circ F^j(x),
\end{equation}
with $\lim_n\alpha(x)=1$.
So, taking ``$\liminf$'' in the first inequality of (\ref{Equation6545jg}), we get that
$$\liminf_{n\to\infty}\frac{1}{n}\sum_{j=0}^{n-1}R\circ F^j(x)\le\liminf_{n\to+\infty}\;\,\frac{1}{i_x(n)}\sum_{j=0}^{i_x(n)-1}R\circ F^j(x)\le$$
$$\le\liminf_{n\to+\infty}\frac{1}{\frac{1}{n}\#\{0\le j<n;f^j(x)\in\co_F^+(x)\}}=\frac{1}{\theta_F(x)}.$$
If $\lim_{n}\frac{1}{n}\sum_{j=0}^{n-1}R\circ F^j(x)$ exists, then (\ref{Equation6545jg}) implies that 
$$\lim_{n\to+\infty}\frac{1}{\frac{1}{n}\#\{0\le j<n;f^j(x)\in\co_F^+(x)\}}=\lim_{n\to+\infty}\frac{1}{n}\sum_{j=0}^{n-1}R\circ F^j(x)
$$
and so, $\frac{1}{\theta_F(x)}=\lim_{n\to+\infty}\frac{1}{n}\sum_{j=0}^{n-1}R\circ F^j(x)$, concluding the proof.
\end{proof}

Before going further, let we recall the ergodic version of Kac's theorem and use Lemma~\ref{FolkloreResultA}  and \ref{LemmaCalculoBOM} above to prove it. 

\begin{T}[Kac]\label{KacResult} 
Let $\mu$ be an ergodic $f$ invariant probability and $R:A\to\NN$ the first return time to $U\subset\XX$ by $f$, where $A=\{x\in U\,;\,\co_f^+(f(x))\cap U\ne\emptyset\}$. If $\mu(U)>0$ then $\int_{U} R d\mu=1$.
\end{T}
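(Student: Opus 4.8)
The plan is to deduce Kac's theorem from the two folklore results already established, combined with the elementary counting identity of Lemma~\ref{LemmaCalculoBOM}. The first step is to set $\nu:=\frac{1}{\mu(U)}\mu|_U$. By Lemma~\ref{FolkloreResultA}, since $\mu(U)>0$, the measure $\nu$ is an ergodic $F$-invariant probability, where $F$ is the first return map to $U$. In particular $F$ is a measurable $f$-induced map with induced time $R$, so Lemma~\ref{FolkloreResultB} applies to $\nu$: the measure $\mu':=\sum_{j\ge0}f^j_*(\nu|_{\{R>j\}})$ is $f$-invariant with total mass $\mu'(\XX)=\int R\,d\nu$.

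The second step is to show $\int R\,d\nu<+\infty$, equivalently that $\mu'$ is a finite measure. For this I would invoke Lemma~\ref{LemmaCalculoBOM}: for $\nu$-almost every $x\in U$ (these points lie in $A_0=\bigcap_{j\ge0}F^{-j}(A)$ since $\nu$ is $F$-invariant), Birkhoff's ergodic theorem applied to $F$ and the function $R$ gives that $\lim_n\frac1n\sum_{j=0}^{n-1}R\circ F^j(x)$ exists and equals $\int R\,d\nu$ (possibly $+\infty$ a priori). Meanwhile $\co_F^+(x)\subset U$ and the points $f^j(x)$, $0\le j<n$, that lie in $U$ are exactly the return-time partial sums, so $\frac1n\#\{0\le j<n\,;\,f^j(x)\in U\}$ is controlled by Birkhoff for $f$ and the indicator $\mathbb{1}_U$, giving frequency $\mu(U)>0$. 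Then the ``moreover'' clause of Lemma~\ref{LemmaCalculoBOM} forces $\int R\,d\nu=\frac{1}{\theta_F(x)}=\frac{1}{\mu(U)}<+\infty$.

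The third step closes the argument. Since $\int R\,d\nu<+\infty$, the measure $\widetilde\nu=\frac{1}{\int R\,d\nu}\mu'$ of equation~(\ref{Eq78987yh}) is an $f$-invariant probability, and $\widetilde\nu\ll\mu$ because each summand $f^j_*(\nu|_{\{R>j\}})$ is absolutely continuous with respect to $f^j_*\mu=\mu$ (as $\nu\ll\mu$ and $\mu$ is $f$-invariant). By ergodicity of $\mu$ we conclude $\widetilde\nu=\mu$, hence $\mu(\XX)=1=\frac{1}{\int R\,d\nu}\mu'(\XX)=\frac{\int R\,d\nu}{\int R\,d\nu}$ — which is not yet the statement; rather, comparing masses, $\mu=\widetilde\nu$ means $\mu'=\big(\int R\,d\nu\big)\mu$, and restricting to $U$: $\mu'|_U=\nu|_{\{R>0\}}=\nu=\frac{1}{\mu(U)}\mu|_U$ (only the $j=0$ term of $\mu'$ charges $U$, provided return points do not re-enter before their return time, which holds for the first return map). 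Thus $\frac{1}{\mu(U)}\mu|_U=\big(\int R\,d\nu\big)\mu|_U$, giving $\int R\,d\nu=\frac{1}{\mu(U)}$, i.e. $\int_U R\,d\mu=\mu(U)\int R\,d\nu=1$.

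The main obstacle is the second step: ruling out $\int R\,d\nu=+\infty$ a priori. One must argue carefully that the return points of a $\nu$-typical orbit have positive frequency $\mu(U)$ inside the ambient $f$-orbit — this is where $\mu$-invariance and the structure of the first return map (return points cannot lie in $U$ strictly between a point and its return) are essential — and then feed this into the ``moreover'' branch of Lemma~\ref{LemmaCalculoBOM}. An alternative to the third step, avoiding the delicate mass-restriction bookkeeping, is to note directly that $\widetilde\nu=\mu$ together with $\widetilde\nu(\{R\ge 1\}\cap U\text{-base})=\frac{\nu(U)}{\int R\,d\nu}=\frac{1}{\int R\,d\nu}$ and $\mu(U)$ computed from the base of the tower coincide, yielding $\int R\,d\nu=1/\mu(U)$ immediately.
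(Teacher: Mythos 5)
Your step 2 is essentially the paper's entire proof: Lemma~\ref{FolkloreResultA} gives the ergodic $F$-invariant probability $\nu=\frac{1}{\mu(U)}\mu|_U$, the first-return identity $f^j(x)\in\co_F^+(x)\iff f^j(x)\in U$ combined with Birkhoff (for $f$ and for $F$) and the ``moreover'' clause of Lemma~\ref{LemmaCalculoBOM} yields $\int R\,d\nu=1/\mu(U)$, hence $\int_U R\,d\mu=1$. Steps 1 and 3 --- Lemma~\ref{FolkloreResultB}, the tower measure $\mu'$, and the ergodicity comparison $\widetilde\nu=\mu$ --- are correct but superfluous, since step 2 already completes the argument.
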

\begin{proof}
As $F$ is the first return map to $U$, $f^j(x)\in\co_F^+(x)$ $\iff$ $f^j(x)\in U$ for every $x\in A$. Hence, by Birkhoff, $\lim_{n\to\infty}\frac{1}{n}\#\{0\le j<n\,;\,f^j(x)\in\co_F^+(x)\}$ $=$ $\lim_n\frac{1}{n}\#\{0\le j<n\,;\,f^j(x)\in U\}=\mu(U)$ for $\mu$ almost every $x\in A$.

On the other hand, it follows from Lemma~\ref{FolkloreResultA} that $\nu:=\frac{1}{\mu(U)}\mu|_U$ is an ergodic $F$-invariant probability. Thus, it follows from Birkhoff and Lemma~\ref{LemmaCalculoBOM} that $$\frac{1}{\mu(U)}\int_U Rd\mu=\int Rd\nu=\frac{1}{\lim_{n\to\infty}\frac{1}{n}\#\{0\le j<n\,;\,f^j(x)\in\co_F^+(x)\}}=\frac{1}{\mu(U)}$$
for $\nu$ almost every $x\in U$, which concludes the proof.\end{proof}

\subsection{Half lifting}

Theorem~\ref{HalfLifting} below is crucial in the proof of many results of this paper. 

\begin{T}[Half lifting]\label{HalfLifting} 
Let $(\XX,\mathfrak{A})$ be a measurable space, $f:\XX\to\XX$ a bimeasurable injective map and $F:A\to\XX$ a measurable $f$ induced map with induced time $R$.

If $\mu$ is a $f$-invariant probability, then $\ca_F:=\bigcap_{n\ge0}F^n\big(\bigcap_{j\ge0}F^{-j}(\XX)\big)$ and $F(\ca_F)$ are measurable sets such that  $F(\ca_{F})$ $\subset$ $\ca_F$ and $\mu(\ca_F\setminus F(\ca_F))=0$. Furthermore, the following statements are equivalent.
	\begin{enumerate}
		\item $\mu(\ca_{F})>0$.
		\item $\mu(\ca_{F})>0$ and $\nu:=\frac{1}{\mu(\ca_{F})}\mu|_{\ca_{F}}$ is a $F$-invariant probability.
		\item There exists a $F$-invariant probability $\nu\ll\mu$.
	\end{enumerate} 
\end{T}
\begin{proof} Let $U\subset A_0:=\bigcap_{j\ge0}F^{-j}(\XX)$ be a measurable set and $n\ge1$.
As $f$ is bimeasurable and injective, we get that $f^n(U\cap\{R=n\})$ is measurable and so, $F(U)=\bigcup_{n\ge1}f^n(U\cap\{R=n\})$ is a measurable set. 
That is, also $F$ is bimeasurable.
In particular $\ca_F=\bigcap_{n\ge0}F^n(A_0)$ is measurable, as $A_0$ is a measurable set.
Moreover, $$F(\ca_F)=F\bigg(\bigcap_{n\ge0}F^n(A_0)\bigg)\subset\bigcap_{n\ge1}F^n(A_0)=A_0\cap \bigcap_{n\ge1}F^n(A_0)=\bigcap_{n\ge0}F^n(A_0)=\ca_F.$$

As $f$ is injective, it follows from the invariance of $\mu$, $\mu(F(U\cap\{R=n\}))=\mu(f^n(U\cap\{R=n\}))=\mu(U\cap\{R=n\})$. Hence, 
$$\mu(F(U))=\mu\bigg(\bigcup_{n\ge1}f^n(U\cap\{R=n\})\bigg)\le$$
$$\le\sum_{n\ge1}\mu(f^n(U\cap\{R=n\}))=\sum_{n\ge1}\mu(U\cap\{R=n\})=\mu(U)$$
for every measurable set $U\subset A_0$.

As $A_0\supset F(A_0)\supset\cdots\supset F^n(A_0)\nearrow \ca_F=\bigcap_{n\ge0}F^{n}(A_0)\subset A_0$, it follows from Lemma~\ref{LemmadeMEDIDA} of Appendix that $\mu(\ca_F)=\lim_n(F^n(A_0))$.
Furthermore, as $F^n(A_0)=(F^n(A_0)\setminus\ca_F)\cup\ca_F$, we get that $$\mu(\ca_F)\ge\mu(F(\ca_F))\ge\mu(F(F^n(A_0)))-\mu(F(F^n(A_0)\setminus\ca_F))\ge$$
$$\ge\mu(F^{n+1}(A_0))-\mu(F^n(A_0)\setminus\ca_F))\to\mu(\ca_F),$$
proving that $\mu(F(\ca_F))=\mu(\ca_F)$.
	
{\em (1)$\implies$(2).} Suppose that $\mu(\ca_F)>0$.
	As $\mu(\ca_F\triangle F(\ca_f))=0$, given $U\subset \ca_F$ consider a measurable set $V\subset\ca_F$ such that $U=F(V)\mod\mu$, i.e., $V\subset\ca_F\cap F^{-1}(U)\mod\mu$.
Thus, $\mu(\ca_F\cap U)=\mu(U)=\mu(F(V))\le\mu(V)\le\mu(\ca_F\cap F^{-1}(U))$.
As a consequence, $$\mu|_{\ca_F}(U)\le\mu|_{\ca_F}(F^{-1}(U))$$ for every measurable set $U\subset\XX$ and this implies that $\nu:=\mu|_{\ca_F}$ is a $F$ invariant finite measure.
Indeed, we already have that $\nu(F^{-1}(U))\ge\nu(U)$ for every measurable set $U\subset\XX$ and so, we only need to show the reverse inequality. For that, given a measurable set $U\subset\XX$, note that $$\nu(\XX)-\nu(F^{-1}(U))=\nu(F^{-1}(\XX\setminus U))\ge\mu(\XX\setminus U)=\nu(\XX)-\nu(U)$$
and so, $\nu(U)\ge\nu(F^{-1}(U))$, proving that $\nu(U)=\nu(F^{-1}(U))$ for every measurable set $U\subset\XX$.

{\em (2)$\implies$(3).} There is nothing to prove. 

{\em (3)$\implies$(1).} Let $\nu\ll\mu$ be a $F$-invariant probability. Then, it follows from Lemma~\ref{LemmaTeoErg876y7ui} of Appendix that $\nu(\ca_F)=1$. Hence, as $\nu\ll\mu$, we get that $\mu(\ca_F)>0$.
\end{proof}

\begin{Corollary}\label{CorollaryINTEGRAL}
Let $f$ be a measure-preserving automorphism of a probability space $(\XX,\mathfrak{A},\mu)$ and $F:A\to\XX$, $A\in\mathfrak{A}$,  a measurable induced map with induced time $R$.
Suppose that $f$ is bimeasurable and injective, $\mu$ is  $f$ ergodic and $F$-liftable. If $F$ is orbit coherent then $\mu(\ca_F)>0$ and $\nu:=\frac{1}{\mu(\ca_F)}\mu|_{\ca_F}$ is $F$-ergodic and it is the unique $F$-lift of $\mu$. 
\end{Corollary}
\begin{proof} 
Suppose $\nu$ is a $F$-lift of $\mu$. So, 
we have that $\nu\ll\mu$ and it is $F$-invariant. 
By the $F$ invariance of $\nu$, we get that $\nu(\ca_F)=1$ and so, as $\nu\ll\mu$, $\mu(\ca_F)>0$.
Thus, it follows from Theorem~\ref{HalfLifting} that $\nu:=\frac{1}{\mu(\ca_{F})}\mu|_{\ca_{F}}$ is a $F$-invariant probability.
As $\nu(\ca_F)=1$ and $\nu\ll\mu$, we get that 
 $\nu\ll\frac{1}{\mu(\ca_F)}\mu|_{\ca_F}$.
 By  Proposition~\ref{PropositionErGOS} $\mu$ is $F$-ergodic and so, by Lemma~\ref{LemmaSingSing}, $\frac{1}{\mu(\ca_F)}\mu|_{\ca_F}$ is $F$ ergodic. Hence, we must have $\nu=\frac{1}{\mu(\ca_F)}\mu|_{\ca_F}$.
\end{proof}

As the natural extension will be needed in the proof of Theorem~\ref{TheoremLift} and \ref{TheoremFiftErgDec}, let us set its notation.

\subsection{Natural extension}\label{SectionNatExt} Let $(\XX,\mathfrak{A})$ be a measurable space.
Let $\XX^{\infty}$ be the set of all maps $\overline{x}:\{0,1,2,3,\cdots\}\to\XX$. 
Let $\pi_n:\XX^\infty\to\XX$, $n\ge0$, be the projection $\pi_n(\overline{x})=\overline{x}(n)$ and define the {\bf\em natural projection} $\pi:\XX_f\to\XX$ by $\pi=\pi_0|_{\XX_f}$.
The {\bf\em domain of the natural extension of $f$} is the set  $$\XX_f=\{\overline{x}:\{0,1,2,3,\cdots\}\to\XX\,;\,f(\overline{x}(j+1))=\overline{x}(j),\,\forall\,j\ge0\}\subset\XX^\infty.$$
As a consequence of the definition of $\XX_f$, if $A\subset\XX_f$ then
$$f^j(\pi_{n+j}(A))=\pi_n(A)\text{ for every }n,j\ge0.$$
In particular,
$$\pi_{n+j}(A)\subset f^{-j}(\pi_n(A))\text{  for every }n,j\ge0.$$

Define the {\em cylinder} on $\XX_f$ generated by measurable sets $A_0,\cdots,A_n\in\mathfrak{A}$, $n\ge0$, as the set
$$[A_0,\cdots,A_n]:=\{\overline{x}\in\XX_f\,;\,(\overline{x}(0),\cdots,\overline{x}(n))\in A_0\times\cdots\times A_n\}.$$

Denote the set of all cylinders of $\XX_f$ by $\cy(\XX_f)$. Let $\overline{\mathfrak{A}}$ be the $\sigma$-algebra of subsets of $\XX_f$ generated by $\cy(\XX_f)$. The pair $(\XX_f,\overline{\mathfrak{A}})$ is a measurable space.  Note that \begin{equation}\label{Eqesfr4}
  \pi_{n+j}([A_0,\cdots,A_n])=f^{-j}(A_n)\;\;\forall\,j\ge0
\end{equation}
 and that  
\begin{equation}\label{Eghju765r654}
[A_0,A_1,\cdots,A_n]=\bigg[A_0,f^{-1}(A_0)\cap A_1, f^{-2}(A_0)\cap f^{-1}(A_1)\cap A_2,\cdots,\bigcap_{j=0}^{n-1}f^{-(n-j)}(A_j)\bigg]
\end{equation}

The {\bf\em natural extension of $f$} is the map $\overline{f}:\XX_f\to\XX_f$ given by $$\overline{f}((\overline{x}(0),\overline{x}(1),\overline{x}(2),\cdots))=(f(\overline{x}(0)),\overline{x}(0),\overline{x}(1),\overline{x}(2),\cdots).$$
It is easy to check that $\overline{f}$ is injective and that $f\circ \pi=\pi\circ\overline{f}$.
Furthermore, if $f$ is $\mathfrak{A}$-measurable then $\overline{f}$ is $\overline{\mathfrak{A}}$-measurable and $\pi$ is $(\mathfrak{A},\overline{\mathfrak{A}})$-measurable (i.e., $\pi^{-1}(A)\in\overline{\mathfrak{A}}$ whenever $A\in\mathfrak{A}$). Moreover, if $f$ is bimeasurable (with respect to $\mathfrak{A}$) then $\overline{f}$ is bimeasurable (with respect to $\overline{\mathfrak{A}}$).

We give a proof of Rokhlin result (Proposition~\ref{PropRokhlin} below) about ``lifting'' an invariant measure to $\overline{f}$ in Appendix. 

\begin{Proposition}[Rokhlin]\label{PropRokhlin}
		If $\mu$ is a $f$-invariant probability, then the probability $\overline{\mu}$ on $\XX_f$ defined by $\overline{\mu}(U)=\lim_{n\to\infty}\mu(\pi_n(U))$, $\forall\,U\subset\XX_f$ measurable, is the unique $\overline{f}$-invariant probability $\overline{\mu}$ such that $\mu=\pi_*\overline{\mu}$. Furthermore, if $\mu$ is ergodic then $\mu$ is also ergodic.
	\end{Proposition}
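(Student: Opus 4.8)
The plan is to define $\overline{\mu}$ on cylinders and then extend. First I would check that the formula $\overline{\mu}(U) = \lim_{n\to\infty}\mu(\pi_n(U))$ is well-defined on cylinders: for $U = [A_0,\dots,A_n]$, by equation~(\ref{Eqesfr4}) we have $\pi_{n+j}([A_0,\dots,A_n]) = f^{-j}(A_n)$ for $j\ge 0$, but we must use the ``reduced'' form~(\ref{Eghju765r654}) so that $\pi_n([A_0,\dots,A_n]) = \bigcap_{j=0}^{n}f^{-(n-j)}(A_j)$, and then $\mu(\pi_{n+k}([A_0,\dots,A_n]))$ is the $\mu$-measure of $f^{-k}$ of that set, which equals $\mu(\pi_n([A_0,\dots,A_n]))$ by $f$-invariance of $\mu$. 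So the limit exists and in fact stabilizes. Thus $\overline{\mu}([A_0,\dots,A_n]) := \mu\big(\bigcap_{j=0}^{n}f^{-(n-j)}(A_j)\big)$. The main obstacle is the next step: showing this set function is countably additive on the algebra generated by cylinders, so that Carath\'eodory's extension theorem applies. Finite additivity is a routine check; for $\sigma$-additivity on the algebra it suffices (since a finitely additive measure on an algebra is $\sigma$-additive iff it is continuous from above at $\emptyset$) to verify that if $\{C_k\}$ is a decreasing sequence of finite unions of cylinders with $\bigcap_k C_k = \emptyset$ then $\overline{\mu}(C_k)\to 0$. Here I would invoke a compactness/consistency argument: the $\overline{\mu}$-values of cylinders are a consistent family of measures under the projections $\pi_n$ (each $\pi_n$-pushforward is $\mu$), so by Kolmogorov's extension theorem (or directly, since $\XX_f$ is the inverse limit of the system $(\XX,\mu,f)$) the premeasure extends to a genuine probability on $\overline{\mathfrak{A}}$. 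If the paper wants to avoid Kolmogorov, one can instead note that $\pi_n(C_k)$ is a decreasing sequence in $n$ for fixed $k$, exploit that a nonempty cylinder has a point, and use a diagonal argument to produce a point of $\bigcap C_k$ when all measures stay bounded below — this is the genuinely technical part.

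Next I would verify $\pi_*\overline{\mu} = \mu$: for $A\in\mathfrak{A}$, $\pi^{-1}(A) = [A]$ is a $1$-cylinder and $\overline{\mu}([A]) = \mu(A)$ directly from the definition. Then $\overline{f}$-invariance: it suffices to check $\overline{\mu}(\overline{f}^{-1}(U)) = \overline{\mu}(U)$ on cylinders. Since $\overline{f}$ shifts coordinates, $\overline{f}^{-1}([A_0,A_1,\dots,A_n]) = [\,\XX \cap f^{-1}(A_0), A_0\cap A_1? \dots\,]$ — more precisely $\overline{x}\in\overline{f}^{-1}([A_0,\dots,A_n])$ means $\overline{f}(\overline{x})\in[A_0,\dots,A_n]$, i.e. $f(\overline{x}(0))\in A_0$ and $\overline{x}(j)\in A_{j+1}$ for $0\le j\le n-1$; so $\overline{f}^{-1}([A_0,\dots,A_n]) = [A_1\cap f^{-1}(A_0),\, A_2,\dots,A_n]$ after reusing~(\ref{Eghju765r654}). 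Computing $\overline{\mu}$ of both sides via the reduced-intersection formula and using $f$-invariance of $\mu$ gives equality; I expect this to be a short computation once the bookkeeping of~(\ref{Eghju765r654}) is set up.

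For uniqueness: any $\overline{f}$-invariant probability $\nu$ with $\pi_*\nu = \mu$ must satisfy $\nu([A_0,\dots,A_n]) = \nu\big(\bigcap_{j=0}^n \overline{f}^{-j}(\pi^{-1}(A_{n-j}))\big)$; using $\pi\circ\overline{f}^j = f^j\circ\pi$ together with $\overline{f}$-invariance and $\pi_*\nu=\mu$, this reduces to $\mu\big(\bigcap_{j=0}^n f^{-(n-j)}(A_j)\big)$, which is exactly $\overline{\mu}([A_0,\dots,A_n])$; since cylinders generate $\overline{\mathfrak{A}}$ and form a $\pi$-system, $\nu = \overline{\mu}$. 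Finally, for ergodicity: suppose $\overline{V}\subset\XX_f$ is $\overline{f}$-invariant with $0<\overline{\mu}(\overline{V})<1$. Since $\overline{f}$ is invertible, $\overline{V}$ is both forward and backward invariant. I would show $\overline{V}$ agrees mod $\overline{\mu}$ with $\pi^{-1}(V)$ for some $f$-invariant $V\subset\XX$: set $V := \bigcap_m \pi(\overline{f}^{-m}(\overline{V}))$ or use that $\overline{V}$ is measurable with respect to the tail $\sigma$-algebra $\bigcap_n \overline{f}^{-n}(\overline{\mathfrak{A}})$ and each coordinate function is a pullback, so $\overline{V}$ coincides mod $\overline{\mu}$ with a set depending on a single coordinate; $\overline{f}$-invariance then forces that single-coordinate set to be $f$-invariant, and $\mu(V) = \overline{\mu}(\pi^{-1}(V)) = \overline{\mu}(\overline{V})\in(0,1)$ contradicts ergodicity of $\mu$. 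The converse direction (if $\mu$ ergodic then $\overline{\mu}$ ergodic) is what is asserted; the reverse implication is automatic since $\mu = \pi_*\overline{\mu}$ is a factor. I expect the measurability argument identifying $\overline{f}$-invariant sets with pullbacks of $f$-invariant sets to require a little care with approximation of $\overline{V}$ by cylinders, but it is standard.
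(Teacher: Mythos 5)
Your construction of $\overline{\mu}$ on cylinders, the verification of $\pi_*\overline{\mu}=\mu$, the check of $\overline{f}$-invariance via the formula for $\overline{f}^{-1}$ of a cylinder, and the uniqueness argument all run along the same lines as the paper's proof. You are more explicit than the paper about the one genuinely delicate step: the paper writes only that ``similarly one can show that $\overline{\mu}$ is $\sigma$-additive,'' whereas you correctly identify that countable additivity of $U\mapsto\lim_n\mu(\pi_n(U))$ is not a purely formal variant of finite additivity (disjoint sets in $\XX_f$ can have overlapping projections), and requires either an inverse-limit/Kolmogorov-type extension argument or a compactness-plus-diagonalization argument. Where you genuinely part ways with the paper is the ergodicity step. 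You propose to show that any $\overline{f}$-invariant $\overline{V}$ agrees mod $\overline{\mu}$ with $\pi^{-1}(V)$ for some $f$-invariant $V$, by approximating $\overline{V}$ by sets depending on finitely many coordinates and pulling the approximations back along $\overline{f}$. The paper argues directly: from $\overline{f}^{-1}(A)=A$ it deduces $\pi_n(\overline{f}^{-1}(A))=\pi_{n+1}(A)$, hence $\pi_n(A)=\pi_{n+1}(A)$ for all $n$ and $\pi(A)=f(\pi(A))$; then $U:=\bigcup_{j\ge0}f^{-j}(\pi(A))$ is $f$-invariant, and since the $f^{-j}(\pi(A))$ increase with constant measure $\mu(\pi(A))$, one gets $\mu(U)=\mu(\pi(A))=\overline{\mu}(A)$, so ergodicity of $\mu$ forces $\overline{\mu}(A)\in\{0,1\}$. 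Your route is the standard textbook approximation argument and, as you acknowledge, needs some care to carry out; the paper's is shorter and bypasses approximation entirely, at the cost of relying on measurability of the images $\pi_n(A)$, which the paper's setup implicitly assumes throughout.
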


\subsection{Proofs of Theorem~\ref{TheoremLift} and \ref{TheoremFiftErgDec}}
\begin{proof}[Proof of Theorem~\ref{TheoremLift}]
(1)$\implies$(2) Suppose that $\nu$ is a $F$-lift of $\mu$. In this case, $\nu$ is $F$-invariant and $\int Rd\nu<+\infty$. By Birkhoff Theorem, the limit $\lim_n\frac{1}{n}\sum_{j=0}^{n-1}R\circ F^j(x)$ exists and belongs to $[1,+\infty)$, for $\nu$ almost every $x\in A_0$.
Thus, it follows from Lemma~\ref{LemmaCalculoBOM}, that $\theta_F(x)=\big(\lim_n\frac{1}{n}\sum_{j=0}^{n-1}R\circ F^j(x)\big)^{-1}>0$ for $\nu$ almost every $x\in A_0$. As $\nu\ll\mu$, we get (2).

(2)$\implies$(3) This implication follows from Lemma~\ref{LemmaCalculoBOM}.

(3)$\implies$(4) As $\mu(\{x\in A_0\,;\, \liminf_{n}\frac{1}{n}\sum_{j=0}^{n-1}R\circ F^{j}(x)<+\infty\big\})>0$, there is $1\le\gamma<+\infty$ such that $\mu(A(\gamma))>0$, where $$A(\gamma)=\bigg\{x\in A_0\,;\,\liminf_{n}\frac{1}{n}\sum_{j=0}^{n-1}R\circ F^{j}(x)\le\gamma\bigg\}.$$

Let $\overline{f}:\XX_f\to\XX_f$ be the natural extension of $f$ and define $\overline{F}(\overline{x})=\overline{f}^{\overline{R}(\overline{x})}(\overline{x})$, where $\overline{R}(\overline{x})=R(\pi(\overline{x}))$ and $\pi$ is that natural projection, see Section~\ref{SectionNatExt} above for more details.
	By Rokhlin (see Proposition~\ref{PropRokhlin}), there is a $\overline{f}$ invariant probability $\overline{\mu}$ such that $\pi_*\overline{\mu}=\mu$.
	As $f$ is bimeasurable, $\overline{f}$ is bimeasurable and injective.       

Note that $\overline{A}_0:=\bigcap_{n\ge0}\overline{F}^{-n}(\XX_f)=\pi^{-1}(A_0)$ and $$\overline{A}(\gamma):=\bigg\{\overline{x}\in \overline{A}_0\,;\,\liminf_{n}\frac{1}{n}\sum_{j=0}^{n-1}\overline{R}\circ \overline{F}^{j}(\overline{x})\le\gamma\bigg\}=\pi^{-1}(A(\gamma)).$$

For each $\overline{x}\in\overline{A}(\gamma)$, let $\NN(\overline{x})=\{n\ge0\,;\,\frac{1}{n}\sum_{j=0}^{n-1}\overline{R}\circ\overline{F}^j(\overline{x})\le2\gamma\}$ and $r_{\overline{x}}(n)=\sum_{j=0}^{n-1}\overline{R}\circ\overline{F}^j(\overline{x})$.
Thus, if $n\in\NN(\overline{x})$ and $V\subset\XX_f$, we get  
$$\bigg(\frac{1}{n}\sum_{j=0}^{n-1}\delta_{\overline{F}^j(\overline{x})}\bigg)(V)=\frac{r_{\overline{x}}(n)}{n}\;\;\frac{\#\{0\le j<n\,;\,\overline{F}^j(\overline{x})\in V\}}{r_{\overline{x}}(n)}\le$$
$$\le2\gamma\frac{\#\{0\le j<n\,;\,\overline{F}^j(\overline{x})\in V\}}{r_{\overline{x}}(n)}=2\gamma \frac{\#\{0\le j<r_{\overline{x}}(n)\,;\,\overline{f}^j(\overline{x})\in\co_{\overline{F}}^+(\overline{x})\cap V\}}{r_{\overline{x}}(n)}\le$$
$$\le2\gamma \frac{\#\{0\le j<r_{\overline{x}}(n)\,;\,\overline{f}^j(\overline{x})\in V\}}{r_{\overline{x}}(n)}=2\gamma\bigg(\frac{1}{r_{\overline{x}}(n)}\sum_{j=0}^{r_{\overline{x}}(n)-1}\delta_{\overline{f}^j(\overline{x})}\bigg)(V).$$
Hence, as $\overline{F}(\overline{A}(\gamma))\subset\overline{A}(\gamma)$, we get that 
$\lim_{k\to\infty}(\frac{1}{k}\sum_{j=0}^{k-1}\delta_{\overline{F}^j(\overline{x})})\big(\,\overline{F}^\ell\big(\,\overline{A}(\gamma)\big)\big)=1$ $\forall\,\ell\ge0$. So, it follows from Birkhoff that 
\begin{equation}\label{Equationkuytjhgfu}
\overline{\mu}(\overline{F}^{\ell}(\overline{A}(\gamma)))\ge\frac{1}{2\gamma}\;\;\forall\,\ell\ge0\text{ and $\overline{\mu}$ almost every }\overline{x}\in\overline{A}(\gamma).
\end{equation}

Thus, taking $\ca_{\overline{F}}=\bigcap_{n\ge0}\overline{F}^n\big(\,\overline{A}_0\big)$, we have by Lemma~\ref{LemmadeMEDIDA} of Appendix that 
$$\overline{\mu}(\ca_{\overline{F}})\ge\overline{\mu}(\ca^*)\ge\frac{1}{2\gamma},$$ where $\ca^*:=\bigcap_{n\ge0}\overline{F}^n(\overline{A}(\gamma)\big)$.
Therefore, it follows from Theorem~\ref{HalfLifting} that $\overline{\nu}_0
:=\frac{1}{\overline{\mu}(\ca_{\overline{F}})}\overline{\mu}|_{\ca_{\overline{F}}}
$ is a $\overline{F}$-invariant probability.
As $\overline{F}(\ca^*)\subset \ca^*$, we get that $$\overline{\nu}=\frac{1}{\overline{\nu}_0(\ca^*)}\overline{\nu}_0|_{\ca^*}=\frac{1}{\overline{\mu}(\ca^*)}\overline{\mu}|_{\ca^*}$$
is a $\overline{F}$-invariant probability with $\overline{\nu}\le C\overline{\mu}$, where $C=1/\overline{\mu}(\ca^*)$.
As, by Birkhoff, $\int\overline{R} d\overline{\nu}\le C\gamma<+\infty$, we get that $\overline{\nu}$ is a $\overline{F}$-lift of $\overline{\mu}$ and so, $\nu:=\pi_*\overline{\nu}=\overline{\nu}\circ\pi^{-1}$ is a $F$-lift of $\mu$ with $\nu\le C\mu$.
Furthermore, if $F$ is orbit-coherent then $\overline{F}$ is also orbit-coherent and, by Corollary~\ref{CorollaryINTEGRAL},  $\overline{\nu}$ is $\overline{F}$-ergodic and the unique $\overline{F}$-lift of $\overline{\mu}$.
This implies the same to $\nu$, i.e., $\nu$ is the unique $F$-lift of $\mu$ and it is $F$-ergodic. 

As (4)$\implies$(1) is immediate, we conclude the proof of the theorem.
\end{proof}

The result below (Theorem~\ref{TheoremZWEIM000000}) is closely related with Zweimuller's result \cite{Zw}.
Indeed, in \cite{Zw}, Zweimuller consider induced times  $\tau:A\to\NN\cup\{+\infty\}$ and maps $F:A^*\to A\subset\XX$, where $F(x)=f^{\tau(x)}(x)$ and $A^*=\{\tau<+\infty\}$.
Note that, $\int \tau d\mu<+\infty$ implies that $0<\int_{A_0}\tau d\mu<+\infty$, where $A_0=\bigcap_{n\ge0}F^{-n}(\XX)$. That is, if the map $f$ is bimeasurable, Theorems~1.1 of \cite{Zw} is a Corollary of Theorem~\ref{TheoremZWEIM000000}.

\begin{T}\label{TheoremZWEIM000000}
Let $(\XX,\mathfrak{A})$ be a measurable space, $f:\XX\to\XX$ a bimeasurable map and $F:A\to\XX$ a measurable $f$ induced map with induced time $R:A\to\NN$.
Let $\mathfrak{A}\ni V\subset A_0:=\bigcap_{n\ge0}F^{-n}(\XX)$ be a $F$-forward invariant set. 
If $\mu$ is an ergodic $f$-invariant probability and  $0<\int_{V} R d\mu<+\infty$, then $\mu$ is $F$-liftable.
Moreover, there exist $C>0$ and a finite collection of ergodic $F$-invariant probabilities $\nu_1,\cdots,\nu_n$
such that $\nu_j\le C\mu|_V$ $\forall\,j$ and each $F$-lift $\nu$ with $\mu(V)=1$ can be written as
$$\nu=\sum_{j=1}^n a_j\nu_j$$
for some $a_1,\cdots,a_n\in[0,1]$ \end{T}
\begin{proof}
First assume that $f$ is injective. 
Let $\cw$ be the collection of all  $F$-forward invariant Borel subset of $V$.
\begin{Claim}\label{Claimlkjuyfidutyerytiuu}
	There exists $\delta_0>0$ such that $\mu(U)=0$ or $\mu(U)\ge\delta_0$ for every $U\in\cw$.
\end{Claim}
\begin{proof}[Proof of the claim]
It follows from Lemma~\ref{LemmaGenInd1} that $f(\widetilde{U})\subset\widetilde{U}=\bigcup_{n\ge1}\bigcup_{j=0}^{n-1}f^j\big(U\cap \{R=n\}\big)$ whenever $F(U)\subset U\subset A$.
So, if $U\in\cw$ has $\mu(U)>0$, it follows from the ergodicity of $\mu$ that $\mu(\widetilde{U})=1$. Nevertheless, as $\mu(f(B))=\mu(B)$ for every Borel set $B$, we get $$1=\mu(\widetilde{U})\le\bigcup_{n\ge1}\bigcup_{j=0}^{n-1}\mu\big(f^j\big(U\cap \{R=n\}\big)\big)=\sum_{n\ge1}\sum_{j=0}^{n-1}\mu(U\cap \{R=n\}\big)=\int_{U}R d\mu.$$
Thus, as $\int_{V}R d\mu<+\infty$ by hypothesis, there exists $\delta_0>0$ such that $\mu(U)\ge\delta_0$ for every $U\in\cw$ with $\mu(U)>0$.
\end{proof}

The first consequence of Claim~\ref{Claimlkjuyfidutyerytiuu} is that  $\mu(\cv_F)>0$, where $\cv_F=\bigcap_{n\ge0}F^n(V)$. Indeed,  since $\int_{V}R d\mu>0$, we get that $\mu(V)>0$. 
As $V\supset F(V)\supset F^2(V)\supset \cdots\supset F^n(V)\nearrow \bigcap_{j\ge0}F^j(V)=\cv_F$, if $\mu(\cv_F)=0$ then, by Lemma~\ref{LemmadeMEDIDA} of Appendix, there exists $n\ge1$ big enough so that $\mu(F^n(V))<\delta_0/2$.
As $F$ is $\mu$ non-singular (Lemma~\ref{LemmaSingSing}), we also have $\mu(F^n(V))>0$ and this contradicts the claim above, as $F^n(V)\in\cw$.

As $\mu(\ca_F))\ge\mu(\cv_F)>0$, where $\ca_F=\bigcap_{n\ge0}F^n(A_0)$, 
it follows from Theorem~\ref{HalfLifting} that $\frac{1}{\mu(\ca_F)}\mu|_{\ca_F}$ is a $F$-invariant probability. Since $\mu(\cv_F)>0$ and $F(\cv_F)\subset\cv_F\subset\ca_F$, $\frac{1}{\mu(\cv_F)}\mu|_{\cv_F}$ is also a $F$-invariant probability.

A second consequence of Claim~\ref{Claimlkjuyfidutyerytiuu} is that $\mu|_{\cv_F}$ can have at most $1/\delta_0$ $F$-ergodic components.
That is, there exist $1\le n\le1/\delta_0$ and measurable sets $U_1,\cdots,U_n\subset\cv_F$ such that $F(U_j)=U_j\mod\mu$, $\mu(U_j)>0$, $\mu|_{U_j}$ is $F$-ergodic and $\mu|_{\cv_F}=\sum_{j=1}^n \mu|_{U_j}$ (see, for instance, Proposition~3.12 in \cite{Pi11}).

Taking $\nu_j:=\frac{1}{\mu(U_j)}\mu_j|_{U_j}$, we get that is an ergodic $F$-invariant probability and $\int R d\nu_j=\frac{1}{\mu(U_j)}\int_{U_j}R d\mu\le \int_{V}R d\mu/\mu(U_j)<+\infty$.
That is, each $\nu_j$ is a $F$-lift of $\mu$.
On the other hand, if $\nu$ is a $F$-lift of $\mu$ and $\mu(V)=1$, then $\nu(\cv_F)=1$ (Lemma~\ref{LemmaTeoErg876y7ui} of Appendix).
Thus, $\nu\ll\mu|_{\cv_F}$ and $\nu(\cv_F)=\sum_{j=1}^n\nu(U_j)=1$.
Hence, $\frac{1}{\nu(U_j)}\nu|_{U_j}$ is $F$-ergodic probability wherever $\nu(U_j)>0$.
As $\nu|_{U_j}\ll\mu|_{U_j}$, it follows from the ergodicity (and $F$-invariance) that
$\frac{1}{\nu(U_j)}\nu|_{U_j}=\frac{1}{\mu(U_j)}\mu|_{U_j}$
whenever $\nu(U_j)>0$.
Thus, $$\nu=\sum_{j=1}^{n}\nu|_{U_j}=\sum_{j=1}^{n}a_j \frac{1}{\mu(U_j)}\mu|_{U_j},$$
where $a_j=\nu(U_j)$, concluding the proof of the theorem when $f$ is injective.

The case when $f$ is non injective follows straightforward from the injective case applied to the natural extension $\overline{f}:\XX_f\to\XX_f$ of $f$ and, as in the proof of Proof of Theorem~\ref{TheoremLift},  the induced map is given by $\overline{F}(\overline{x})=\overline{f}^{\overline{R}(\overline{x})}(\overline{x})$, where $\overline{R}(\overline{x})=R(\pi(\overline{x}))$ and $\pi$ is the natural projection. \end{proof}

\begin{proof}[Proof of Theorem~\ref{TheoremFiftErgDec}]
As in the proof of Theorem~\ref{TheoremZWEIM000000}, we may assume that $f$ is injective, since the result when  $f$ is not injective follows from the result for the natural extension  of $f$.
Hence, suppose that $f$ is infective and $\mu$ is $F$-liftble.  Let $\nu$ be  a $F$-lift of $\mu$.
So, $\nu$ is $F$-invariant, $\nu\ll\mu$ and $\int R d\nu<+\infty$.
As $\nu$ is $F$-invariant, it follows from Lemma~\ref{LemmaTeoErg876y7ui} of Appendix that $\nu(\ca_F)=1$, where $\ca_F=\bigcap_{n\ge0}F^n(A_0)$ and $A_0=\bigcap_{j\ge0}F^{-j}(\XX)$.
Moreover, as $\int R d\nu<+\infty$, we get by Birkhoff that $\lim_n\frac{1}{n}\sum_{j=0}^{n-1}R\circ F^j(x)<+\infty$ for $\nu$ almost every $x$.
Hence, $\nu(\ca_F^*)=1$, with $A_F^*:=\big\{x\in \ca_F\,;\, \liminf_{n}\frac{1}{n}\sum_{j=0}^{n-1}R\circ F^{j}(x)<+\infty\big\}$.
As a consequence,  $\mu(\ca_F^*)>0$, as $\nu\ll\mu$.

Because $\ca_F^*=\bigcup_{\ell\in\NN}V_\ell$, where $$V_\ell=\bigg\{x\in\ca_F^*\,;\,\liminf_{i}\frac{1}{i}\sum_{j=0}^{i-1}R\circ F^j(x)\in[\ell, \ell+1)\bigg\},$$
we get that $\cn=\{\ell\in\NN\,;\,\mu(V_{\ell})>0\}$ is a nonempty set.

As we are assuming that $f$ is injective and $\mu$ $F$-liftble, it follows from Theorem~\ref{HalfLifting} and Birkhoff that $1\le\ell\le \int_{V_{\ell}}R d\mu\le\ell+1<+\infty$. 

Since $F(V_\ell)\subset V_\ell$, if $\ell\in\cn$ then $\nu_{\ell}=\frac{1}{\nu(V_{\ell})}\nu|_{V_{\ell}}$ is a $F$-invariant probability and $\nu=\sum_{\ell\in\cn}\nu(V_{\ell})\nu_{\ell}$.

As $V_{\ell}$ is $F$-forward invariant and $0<\int_{V_\ell}R d\mu<+\infty$ when $\ell\in\cn$, it follows from Theorem~\ref{TheoremZWEIM000000} that, if $\ell\in\cn$ then there exists a finite collection of ergodic $F$-invariant probabilities $\eta_{\ell,1},\cdots,\eta_{\ell,n_{\ell}}$ and a constant $0<c_{\ell}<+\infty$ such that $\eta_{\ell,j}\le c_{\ell}\,\mu|_{V_{\ell}}$.
Moreover, every $F$-lift $\eta$ of $\mu$ with $\eta(V_{\ell})=1$ can be uniquely written as a convex combination of $\eta_{\ell,1},\cdots,\eta_{\ell,n_{\ell}}$.
In particular, there exists $0\le c_{\ell,j}\le 1$, with $\sum_{j=1}^{n_{\ell}}c_{\ell,j}=1$, such that $\nu_{\ell}=\sum_{j=1}^{n_{\ell}}c_{\ell,j}\eta_{\ell,j}$.
Therefore, we can write
\begin{equation}\label{Equation7675dfjuju8uy}
  \nu=\sum_{\ell\in\cn}\nu(V_{\ell})\,\nu_{\ell}=\sum_{\ell\in\cn}\nu(V_{\ell})\,\bigg(\sum_{j=1}^{n_{\ell}}c_{\ell,j}\eta_{\ell,j}\bigg)=\sum_{\ell\in\cn}\sum_{j=1}^{n_{\ell}}\nu(V_{\ell})\,c_{\ell,j}\,\eta_{\ell,j}.
\end{equation}

As $1=\nu(\XX)=\sum_{\ell\in\cn}\sum_{j=1}^{n_{\ell}}\nu(V_{\ell})\,c_{\ell,j}\,\eta_{\ell,j}(\XX)=\sum_{\ell\in\cn}\sum_{j=1}^{n_{\ell}}\nu(V_{\ell})\,c_{\ell,j}$, we get that (\ref{Equation7675dfjuju8uy}) is a convex combination of the countable collection of $F$-ergodic probabilities $\{\eta_{\ell,j}\}_{\ell,j}$. As $\eta_{\ell_1,j_1}$ and $\eta_{\ell_2,j_2}$ are mutual singular when $(\ell_1,j_1)\ne(\ell_2,j_2)$, we get that the convex combination is unique.

Finally, if $\int R d\mu<+\infty$ then we can apply Theorem~\ref{TheoremZWEIM000000} to $V=A_0$ and get the finiteness of the collection $\{\eta_{\ell,j}\}_{\ell,j}$. \end{proof}

\section{Coherent schedules and Pliss times}\label{SecPlissESy}

In all this section, $(\XX,\mathfrak{A})$ is a measurable space and $f:\XX\to\XX$ is a measurable map. For ease of reading, we rewrite here some of the definitions already presented in the introduction (Section~\ref{IntroductionAndStatementsOfMainResults}).
Let $2^{\NN}$ be the {\em power set} of the natural numbers, i.e., the set of all subsets of $\NN=\{1,2,3,\cdots\}$. Consider $2^{\NN}$ with the following metric $$\dist(U,V)=\begin{cases}
	2^{-\min(U\triangle V)} & \text{ if }A\ne B\\
	0 & \text{ if }A=B
\end{cases}$$
where $A\triangle B=(\NN\setminus A)\cap(\NN\setminus B)$ is the symmetric difference of the sets $A$ and $B$.  
Note that $i:2^{\NN}\to\Sigma_2^+=\{0,1\}^{\NN}$ given by 
\begin{equation}\label{EquationItinerario}
  i(U)(n)=
\begin{cases}
	1 &\text{ if }n\in U\\
	0 &\text{ if }n\notin U
\end{cases}
\end{equation}
is an isomorphism between $(2^{\NN},\dist)$ and $(\Sigma_2^+,\delta)$, where the distance $\delta$ on $\Sigma_2^+$ is the usual one, that is, $\delta(x,y)=2^{-\min\{j\ge1\,;\,x(j)\ne y(j)\}}$. In particular, $(2^{\NN},\dist)$ is a compact metric space. Consider $2^{\NN}$ with the structure of the measurable space $(2^{\NN},\mathfrak{B})$, where $\mathfrak{B}$ is the $\sigma$-algebra of the Borel sets of $(2^{\NN},\dist)$.
We define the {\bf\em shift} of a set $U\in 2^{\NN}$ as $$\sigma U=(U-1)\cap\NN=\{j-1\,;\,2\le j\in U\}.$$

Given $U\subset\NN=\{1,2,3,\cdots\}$, the {\bf\em upper density of $U$} is defined as $$\dd_{\NN}^+(U)=\limsup_{n}\frac{1}{n}\#\{1\le j\le n\,;\,j\in U\}.$$
The {\bf\em lower density} of $U$ is $$\dd_{\NN}^-(U)=\liminf_{n}\frac{1}{n}\#\{1\le j\le n\,;\,j\in U\}.$$

If $\lim_{n}\frac{1}{n}\#\{1\le j\le n\,;\,j\in U\}$ exists, then we say that $$\dd_{\NN}(U):=\lim_{n}\frac{1}{n}\#\{1\le j\le n\,;\,j\in U\}$$
is the {\bf\em natural density} of $U$.

\begin{Definition}[Schedule of events]\label{Defineschedule}
A {\bf\em schedule of events} (or, for short, a {\bf\em schedule}) on $\XX$ is a measurable map  $\cu:\XX\to 2^{\NN}$. A schedule $\cu$ is called $f$ {\bf\em asymptotically invariant} if 
for each $x\in\XX$, $\exists\,a_x> b_x\ge0$ such that  $\sigma^{a_x}\cu(x)=\sigma^{b_x}\cu(f(x))$. 	An element $\ell$ of $\cu(x)$ is called a {\bf\em $\cu$-event} or a {\bf\em $\cu$-time} for $x$. 
\end{Definition}

One can ask many questions about the behavior of the map $f$ at  ``$\cu$-times'', i.e., $f^n(x)$ with $n\in\cu(x)$.
In particular, to ask about the existence of attractors and statistical attractors in $\cu$-times.
For instance, in the presence of an ergodic probability $\mu$ (not necessarily an invariant one), there is a compact set $\AA\subset\XX$, the attractor at $\cu$-times, such that $\AA=\omega_{\cu}(x)$ for $\mu$-almost every $x\in\XX$, where $\omega_{\cu}(x)$ is the set of accumulating points of $\co_{\cu}(x):=\{f^n(x)\,;\,n\in\cu(x)\}$.
In \cite{Pi11}, $\co_{\cu}(x)$ is called 
an {\em asymptotically invariant collection} and one can find the proof of the existence of those attractors in Lemma~3.9 of \cite{Pi11}. 
Here, we are more concerned with the problem of synchronizing two schedules, as defined below.

\begin{Definition}[Synchronization]
	Given two schedules $\cu_0$ and $\cu_1$ on $\XX$, we say that $\cu_0$ and $\cu_1$ are (statistically) {\bf\em  synchronizable} at a point $x\in\XX$ if there is $\ell\ge0$ such that $$\dd_{\NN}^+(\{j\in\NN\,;\,(j,j+\ell)\in\cu_0(x)\times\cu_1(x)\})=\dd_{\NN}^+(\cu_0(x)\cap\sigma^{\ell}\cu_1(x))>0.$$
	If there exist such a $x\in\XX$ and $\ell$, we say that $\cu_0$ and $\cu_1$ are {\bf\em $\ell$-synchronized at $x$}. 
\end{Definition}

If $\dd_{\NN}^-(\cu_0(x))+\dd_{\NN}^-(\cu_1(x))>1$, we always have that $\cu_0(x)$ and $\cu_1(x)$ are $\ell$-synchronized for every $\ell\ge0$.
On the other hand,  in general, we cannot expect that $\cu_0(x)$ and $\cu_1(x)$ are synchronizable when $\dd_{\NN}^+(\cu_0(x))+\dd_{\NN}^+(\cu_1(x))\le1$ or even when $\dd_{\NN}^-(\cu_0(x))+\dd_{\NN}^-(\cu_1(x))=1$.
In Example~\ref{ExampleNonSyn} below we have two continuous  schedules $\cu_0$ and $\cu_1$ that are not synchronizable at any point and such that $\dd_{\NN}(\cu_0(x))=\dd_{\NN}(\cu_1(x))=1/2$ for every $x\in\XX\setminus\{0\}$.

\begin{Example}\label{ExampleNonSyn}
	Let $\XX=\{2^{-n}\,;\,n\in\NN\}\cup\{0,1\}$ and set $f(x)=x/2$.
	  Set $x_0$ and $y_0\in\Sigma_2^+$ by
$$x_0=(1,0,
\underbrace{1,1}_{2\text{ times}},
\underbrace{0,0}_{2\text{ times}},
\underbrace{1,1,1}_{3\text{ times}},
\underbrace{0,0,0}_{3\text{ times}},
\underbrace{1,1,1,1}_{4\text{ times}},
\underbrace{0,0,0,0}_{4\text{ times}},
\underbrace{1,1,1,1,1}_{5\text{ times}},
\underbrace{0,0,0,0,0}_{5\text{ times}},
\cdots)$$
and
$$y_0=(0,1,
\underbrace{0,0}_{2\text{ times}},
\underbrace{1,1}_{2\text{ times}},
\underbrace{0,0,0}_{3\text{ times}},
\underbrace{1,1,1}_{3\text{ times}},
\underbrace{0,0,0,0}_{4\text{ times}},
\underbrace{1,1,1,1}_{4\text{ times}},
\underbrace{0,0,0,0,0}_{5\text{ times}},
\underbrace{1,1,1,1,1}_{5\text{ times}},
\cdots)$$
Let, for $\ell\ge1$, $x_{\ell}\in\Sigma_2^+$ be given by $i(\ell+i^{-1}(x_0))$, where $\ell+U=\{u+\ell\,;\,u\in U\}\in2^{\NN}$ for any $U\in2^{\NN}$. That is, 
$$x_{\ell}(n)=
\begin{cases}
	0 & \text{ if }n\le\ell\\
	x_0(n-\ell) & \text{ if }n>\ell
\end{cases}.$$
Set also $y_{\ell}=i(\ell+i^{-1}(y_0))$ for every $\ell\ge1$. 
Set $\cu_0(0)=\emptyset$, $\cu_0(1)=i^{-1}(x_0)$ and $\cu_0(2^{-\ell})=\ell+i^{-1}(x_0)=i^{-1}(x_{\ell})$ for any $\ell\in\NN$. Similarly, let $\cu_1(0)=\emptyset$, $\cu_1(1)=i^{-1}(y_0)$ and $\cu_1(2^{-\ell})=\ell+i^{-1}(y_0)=i^{-1}(y_{\ell})$ for any $\ell\in\NN$. 

 It is not difficult to check that $\dd_{\NN}(\cu_0(p))=\dd_{\NN}(\cu_1(p))=\frac{1}{2}$ for every $p\ne0$. Nevertheless, 
 $\dd_{\NN}^+(\cu_0(p)\cap\sigma^{\ell}\cu_1(p))=0$ for every $\ell\ge1$ and $p\in\XX$.
 Indeed, $\dd_{\NN}^+(\cu_0(0)\cap\sigma^j\cu_1(0))=0$, as $\cu_0(0)=\emptyset$.
 So assume that $p\ne0$.
 Define the product $\cdot:\Sigma_2^+\times\Sigma_2^+\to\Sigma_2^+$ by $(x\cdot y)(n)=x(n)\cdot y(n)$.
 Note that $i(U\cap V)=i(U)\cdot i(V)$ for every $U,V\in2^{\NN}$.
 As, for $n>\ell$, we have $$z:=x_0\cdot \sigma^{\ell}y_0=\big(z(1),\cdots,z(n(n-1)),$$
 $$\overbrace{\,\underbrace{0,\cdots,0}_{n-\ell\text{ times}},\underbrace{1,\cdots,1}_{\ell\text{ times}},\underbrace{0,\cdots,0}_{n\text{ times}}\,}^{2n\text{ times}},\overbrace{\underbrace{0,\cdots,0}_{n+1-\ell\text{ times}}\underbrace{1,\cdots,1}_{\ell\text{ times}},
 \underbrace{0,\cdots,0}_{n+1\text{ times}}\;\,}^{2(n+1)\text{ times}},
\overbrace{\underbrace{0,\cdots,0}_{n+2-\ell\text{ times}}\underbrace{1,\cdots,1}_{\ell\text{ times}},
 \underbrace{0,\cdots,0}_{n+2\text{ times}}\;\,}^{2(n+2)\text{ times}},\cdots\big),
 $$
we can conclude that $\dd_{\NN}^+(\cu_0(1)\cap\sigma^{\ell}\cu_1(1))=\limsup_{n}\frac{1}{n}\sum_{j=1}^n z(j)=0$ for every $\ell\ge1$.
Thus, $\dd_{\NN}^+(\cu_0(2^{-j})\cap\sigma^{\ell}\cu_1(2^{-j}))=\dd_{\NN}^+(\sigma^j(\cu_0(1)\cap\sigma^{\ell}\cu_1(1)))=\dd_{\NN}^+(\cu_0(1)\cap\sigma^{\ell}\cu_1(1))=0$, proving that $\dd_{\NN}^+(\cu_0(p)\cap\sigma^{\ell}\cu_1(p))=0$ for every $\ell\ge1$ and $p\in\XX$.
\end{Example}

As $(2^\NN,\dist)$ is a (perfect) totally disconnected compact metric space, if $\XX$ is a connect topological space, then any continuous schedule of events on $\XX$ must be a constant. A large class of schedules having a mild continuity is the class of partially continuous schedules of events defined below. 

\begin{Definition}
We say that a schedule of events $\cu$ on a metric space $\XX$ is {\bf\em partially continuous} if
$$\ell\in\cu(x_n)\;\;\forall\,n\implies\ell\in\cu(\lim_nx_n),$$
for any convergent sequence $x_n\in\XX$ and $\ell\in\NN$.
\end{Definition}
Note that any continuous schedule is partially continuous.  
One way to produces partially continuous schedules, whenever $f$ is continuous, is by using Birkhoff's averages. 
 Given a continuous function $\varphi:\XX\to\RR$ and $\gamma\in\RR$, let
\begin{equation}\label{Equationnh7gh}
  \cu(x)=\bigg\{n\ge1\,;\,\frac{1}{n}\sum_{j=0}^{n-1}\varphi\circ f^j(x)\ge\gamma\bigg\}.
\end{equation}
Note that $\cu$ is partially continuous and if $f$ has some complexity (i.e., positive topological entropy), one can find $\varphi$ and $\gamma$ to obtain a non constant schedule $\cu$ using (\ref{Equationnh7gh}).

\begin{Definition}[Coherent schedule]\label{DefPlissschedule}
A schedule of events $\cu$ on $\XX$ is called {\bf\em $f$-coherent} if
\begin{enumerate}
	\item[(P1)] $n\in\cu(x)\implies n-j\in\cu(f^j(x))$ for every $x\in\XX$ and $n>j\ge0$;
	\item [(P2)] $n\in\cu(x)$ and $m\in\cu(f^n(x))$ $\implies$  $n+m\in\cu(x)$ for every $x\in\XX$ and $n,m\ge1$.
\end{enumerate} 
\end{Definition}

\begin{Lemma}\label{Lemmakiuyghjko76}
If $\cu$ is a $f$-coherent schedule of events then $\sigma^{a_x}\cu(x)=\sigma^{a_x-1}\cu(f(x))$ for every $x\in\XX$ with $\cu(x)\ne\emptyset$, where $a_x=\min\cu(x)$.
In particular, $$\dd_{\NN}^-(\cu(x))=\dd_{\NN}^-(\cu(f(x)))\;\text{ and }\;\dd_{\NN}^+(\cu(x))=\dd_{\NN}^+(\cu(f(x)))$$
for every $x\in\XX$ with $\cu(x)\ne\emptyset$.
\end{Lemma}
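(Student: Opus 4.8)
The plan is to first establish the set identity $\sigma^{a_x}\cu(x)=\sigma^{a_x-1}\cu(f(x))$ and then read off the two density equalities from it. Throughout I will use that, for $U\in 2^{\NN}$ and $k\ge 0$, the relation $t\in\sigma^k U$ is equivalent to $t\ge 1$ together with $t+k\in U$.

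For the inclusion $\sigma^{a_x}\cu(x)\subseteq\sigma^{a_x-1}\cu(f(x))$ I would take $t$ in the left-hand side, so that $n:=t+a_x\in\cu(x)$ with $n>1$, and apply (P1) at the point $x$ with the single shift $j=1$; this yields $n-1=t+a_x-1\in\cu(f(x))$, which, since $t\ge 1$, is precisely the statement $t\in\sigma^{a_x-1}\cu(f(x))$.

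The reverse inclusion is the step I expect to be the only real obstacle, precisely because (P1) has no converse: one cannot in general ``prepend'' a step to climb from $\cu(f(x))$ back up to $\cu(x)$. The trick is to glue at the \emph{minimal} $\cu$-time $a_x$. Given $t\in\sigma^{a_x-1}\cu(f(x))$, so that $m:=t+a_x-1\in\cu(f(x))$ with $t\ge 1$, I would apply (P1) this time at the point $f(x)$ with shift $j=a_x-1$ (legitimate since $m>a_x-1\ge 0$), obtaining $t=m-(a_x-1)\in\cu(f^{a_x-1}(f(x)))=\cu(f^{a_x}(x))$. Now $a_x=\min\cu(x)\in\cu(x)$ and $t\in\cu(f^{a_x}(x))$, so (P2) concatenates them into $a_x+t\in\cu(x)$, i.e. $t\in\sigma^{a_x}\cu(x)$, closing the identity. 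The one edge case to keep in mind is $a_x=1$, where $\sigma^{a_x-1}=\mathrm{id}$ and $\cu(f(x))$ may be empty (e.g. $\cu(x)=\{1\}$); the argument above goes through verbatim and both sides are then empty.

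Finally, for the ``in particular'' clause I would invoke the elementary fact that $\sigma$ preserves upper and lower natural density: the bijection $t\mapsto t+k$ identifies $\{1,\dots,n\}\cap\sigma^k U$ with $\{k+1,\dots,n+k\}\cap U$, so the two counting functions differ by at most $2k$, a quantity that disappears after dividing by $n$ and letting $n\to\infty$; hence $\dd_{\NN}^{+}(\sigma^k U)=\dd_{\NN}^{+}(U)$ and $\dd_{\NN}^{-}(\sigma^k U)=\dd_{\NN}^{-}(U)$ for every $k\ge 0$. Applying this with $k=a_x$ on the left and $k=a_x-1$ on the right of the identity just proved gives $\dd_{\NN}^{-}(\cu(x))=\dd_{\NN}^{-}(\cu(f(x)))$ and $\dd_{\NN}^{+}(\cu(x))=\dd_{\NN}^{+}(\cu(f(x)))$, as desired.
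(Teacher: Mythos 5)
Your proof is correct and follows essentially the same route as the paper's: both inclusions are established by applying (P1) with shift $j=1$ for the forward direction, and (P1) with shift $j=a_x-1$ followed by (P2) glued at the minimal time $a_x$ for the reverse direction. Your additional remarks (the $a_x=1$ edge case and the explicit shift-invariance of natural density, which the paper leaves implicit) are sound but do not change the argument.
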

\begin{proof}
Suppose that $\cu(x)\ne\emptyset$ for some $x\in\XX$ and let $a_x=\min\cu(x)$. First, let us show that $\sigma^{a_x}\cu(x)\subset\sigma^{a_x-1}\cu(f(x))$. If $n\in\sigma^{a_x}\cu(x)$ then $n+a_x\in\cu(x)$ and so, it follows from (P1) that $a_x+n-1\in\cu(f(x))$. As a consequence, $n\in\sigma^{a_x-1}\cu(f(x))$. Conversely, if $n\in\sigma^{a_x-1}\cu(f(x))$ then $n+a_x-1\in\cu(f(x))$. Thus, by (P1), $n=n+a_x-1-(a_x-1)\in\cu(f^{a_x-1}(f(x)))=\cu(f^{a_x}(x))$. Finally, as $a_x\in\cu(x)$ and $n\in\cu(f^{a_x}(x))$, it follows from (P2) that $a_x+n\in\cu(x)$ and so, $n\in\sigma^{a_x}\cu(x)$.
\end{proof}

There are many examples of coherent schedules. For instance, if $B\subset\XX$ is measurable and $A=\{x\in\XX\,;\,\co_f^+(f(x))\in B\}$, then
$$\cu(x)=
\begin{cases}
	\emptyset & \text{ if }x\notin A\\
	\{j\ge1\,;\,f^j(x)\in B\} & \text{ if }x\in A
\end{cases}
$$ is a $f$-coherent schedule.
It is easy to check that the intersection of two coherent schedules satisfies  (P1) and (P2). that is, if $\cu_1$ and $\cu_2$ are $f$-coherent schedules, then $\cu_1\cap\cu_2$ is a $f$-coherent schedule, where $(\cu_1\cap\cu_2)(x)=\cu_1(x)\cap\cu_2(x)$.
The translation to the left $\sigma^{\ell}\cu$ of a coherent schedule $\cu$ is a coherent schedule.
In general the union $\cu_1\cup\cu_2$ of two coherent schedules is not  coherent, where $(\cu_1\cup\cu_2)(x)=\cu_1(x)\cup\cu_2(x)$. Nevertheless, the union of a finite number of translations to the left of a coherent schedule is coherent, as one can see in Lemma~\ref{LemmaUniauyg6} below.

\begin{Lemma}\label{LemmaUniauyg6}
	If $\cu$ is a $f$-coherent schedule then $\sigma^{\ell_1}\cu\cup\cdots\cup\sigma^{\ell_m}\cu$ is coherent for any $0\le\ell_1,\cdots,\ell_m\in\ZZ$. 
\end{Lemma}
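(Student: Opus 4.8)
The plan is to check directly that $\cv:=\sigma^{\ell_1}\cu\cup\dots\cup\sigma^{\ell_m}\cu$ is measurable and satisfies axioms (P1) and (P2) of Definition~\ref{DefPlissschedule}. Measurability is routine, since $\sigma$ is $2$-Lipschitz on $(2^{\NN},\dist)$ and finite union is continuous on $2^{\NN}$. Because a left translate of a coherent schedule is coherent (as noted just above) and $\cv=\sigma^{\ell_*}\big(\bigcup_i\sigma^{\ell_i-\ell_*}\cu\big)$ with $\ell_*:=\min_i\ell_i$, I may assume $0\in L:=\{\ell_1,\dots,\ell_m\}$, so that $\cu(x)\subseteq\cv(x)$ for every $x$. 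Everywhere below I use that $n\in\cv(x)$ if and only if $(n+L)\cap\cu(x)\ne\emptyset$, where $n+L:=\{n+\ell\,;\,\ell\in L\}$.

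Property (P1) is immediate: if $n\in\cv(x)$ and $n>j\ge 0$, pick $p\in(n+L)\cap\cu(x)$; then $p\ge n>j$, so (P1) for $\cu$ gives $p-j\in\cu(f^{j}(x))$, and $p-j\in(n-j)+L$ shows $n-j\in\cv(f^{j}(x))$.

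For (P2), suppose $n\in\cv(x)$ and $s\in\cv(f^{n}(x))$ with $n,s\ge1$; I must produce an element of $(n+s+L)\cap\cu(x)$. Fix witnesses $p\in(n+L)\cap\cu(x)$ and $q\in(s+L)\cap\cu(f^{n}(x))$. If $p=n$, then $n\in\cu(x)$ and (P2) for $\cu$ gives $n+q\in\cu(x)$, which lies in $(n+s)+L$ since $q\in s+L$. If $p>n$, then (P1) for $\cu$ puts $p-n$ into $\cu(f^{n}(x))$, so that set contains both $q$ and $p-n$; and if $q\ge p-n$ I am done as well: when $q=p-n$ we have $n+q=p\in\cu(x)\cap(n+s+L)$, and when $q>p-n$, applying (P1) for $\cu$ to $q\in\cu(f^{n}(x))$ with shift $p-n$ gives $q-(p-n)\in\cu(f^{p}(x))$, whence (P2) for $\cu$ from base point $p$ yields $n+q\in\cu(x)\cap(n+s+L)$.

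The remaining subcase $0<q<p-n$ is the crux and the step I expect to be the main obstacle: here $n+q$ need not lie in $\cu(x)$, and all elements of $\cu$ that the hypotheses expose beyond time $n$ sit at time $\le p$, which (since $p-n>q\ge s$ forces $p>n+s$) is too early to feed into (P2) for $\cu$ from base point $n$ or $p$. When $L$ is an interval $\{0,1,\dots,k\}$ — which is the form the displacement set takes in the applications, e.g.\ when one intersects $\cu_0$ with $\sigma^{0}\cu_1\cup\dots\cup\sigma^{k}\cu_1$ in the proof of Theorem~\ref{TheoremSynINV} — this subcase closes at once: $p-n\in\{0,\dots,k\}$ together with $p-n>q\ge s\ge1$ gives $p-n-s\in\{1,\dots,k\}\subseteq L$, so $p\in(n+s+L)\cap\cu(x)$. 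For a general finite $L$ one must work to ensure that among the available witnesses $p$ and $q$ there is a choice for which $p-n-s$, or some difference of the displacements, lands in $L$; making this selection and tracking it through the (P1)/(P2) manipulations is where the main effort of the argument lies.
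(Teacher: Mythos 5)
Your reduction to $\min L=0$ and your treatment of (P1) and of the (P2) subcases $p=n$, $q=p-n$, $q>p-n$ are all correct, and you have put your finger on exactly the right spot: the subcase $0<q<p-n$ is not just "the main effort" — it cannot be closed, because the lemma as stated is false for general finite $L$. Take $\XX=\ZZ$, $f(x)=x+1$, and $\cu(0)=\{3\}$, $\cu(1)=\{1,2\}$, $\cu(2)=\{1\}$, $\cu(x)=\emptyset$ otherwise; a direct check of all instances shows $\cu$ satisfies (P1) and (P2). With $L=\{0,2\}$, i.e.\ $\cw=\cu\cup\sigma^{2}\cu$, one gets $\cw(0)=\{1,3\}$ and $\cw(1)=\{1,2\}$, so $1\in\cw(0)$ and $1\in\cw(f(0))$ while $1+1=2\notin\cw(0)$: (P2) fails. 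In your notation this is $n=s=1$ with unique witnesses $p=3$ (so $p-n=2>q$) and $q=1$, and $(n+s+L)\cap\cu(0)=\{2,4\}\cap\{3\}=\emptyset$, so no choice of witnesses can help. The structural reason is the one implicit in your analysis: coherence only gives the inclusion $\sigma^{j}\cu(x)\subset\cu(f^{j}(x))$, and the events of $\cu(f^{n}(x))$ that do not descend from $\cu(x)$ are invisible to (P2) at $x$ once they are displaced by an $\ell\in L$ that is not matched by another element of $L$.

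For comparison, the paper's own proof of the preliminary claim ($\cv\cup\sigma^{\ell}\cv$ coherent) handles three of the four witness configurations and disposes of the fourth — $j\in\sigma^{\ell}\cv(x)$ and $t\in\cv(f^{j}(x))$, which is precisely your crux subcase — with "similarly, one can show"; that step actually needs $t\ge\ell$, which is exactly what fails in the example above ($t=1<\ell=2$). Your two escape routes are the correct ones: the statement does hold when $L$ is an interval $\{0,1,\dots,k\}$ (your argument there is complete, since $p-n-s=\ell_p-s\in\{1,\dots,k-1\}\subset L$), and it holds for schedules of hitting-time type $\cu(x)=\{j\ge1\,;\,f^{j}(x)\in B\}$, for which $\cu(f(x))=\sigma\cu(x)$ exactly and the union of translates is again the hitting-time schedule of $\bigcup_i f^{-\ell_i}(B)$. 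Since the lemma is not invoked anywhere else in the paper, restricting it to one of these settings would be the honest fix; as stated, neither your argument nor the paper's can be completed.
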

\begin{proof}
First we claim that if $\cv$ is a coherent schedule then $\cw:=\cv\cup\sigma^{\ell}\cv$ is coherent for any $\ell\ge1$.
Indeed, 
suppose that  $j\in\cw(x)$ and $t\in\cw(f^j(x))$.
If $j\in\cv(x)$ and $t\in\cv(f^{j}(x))$ we get $j+t\in\cv(x)\subset\cw(x)$ from the coherence of $\cv$.
For the same reasoning, $j+t\in\sigma^{\ell}\cv(x)\subset\cw(x)$ if $j\in\sigma^{\ell}\cv(x)$ and $t\in\sigma^{\ell}\cv(f^{j}(x))$.
If $j\in\cv(x)$ and $t\in\sigma^{\ell}\cv(f^{j}(x))$, we have that $j\in\cv(x)$ and $t+\ell\in\cv(f^j(x))$ and so, by the coherence of $\cv$, $j+t+\ell\in\cv(x)$.
Hence, $j+t\in\sigma^{\ell}\cv(x)\subset\cw(x)$.
Similarly, one can show that $j+t\in\sigma^{\ell}\cv(x)\subset\cw(x)$ when $j\in\sigma^{\ell}\cv(x)$ and $t\in\cv(f^{j}(x))$, proving that $\cw$ satisfies (P2).
Let $n\in\cw(x)$ and $0\le j<n$.
If $n\in\cv(x)$, by the coherence of $\cv$, we have that $n-j\in\cv(f^j(x))\subset\cw(f^j(x))$.
If $n\in\sigma^{\ell}\cv(x)$, then $n+\ell\in\cv(x)$ and again, by the coherence of $\cv$, $n+\ell-j\in\cv(f^j(x))$.
Thus, $n-j\in\sigma^{\ell}(f^j(x))\subset\cw(f^j(x))$, proving that $\cw$ satisfies (P1) and concluding the proof of the claim.

{\em Proof that $\cw:=\sigma^{\ell_1}\cu\cup\cdots\cup\sigma^{\ell_m}\cu$ satisfies (P2)}. 
If $j\in\cw(x)$ and $t\in\cw(f^j(x))$ then 
there are $k$ and $i\in\{1,\cdots,m\}$ such that $j\in\sigma^{\ell_k}\cu(x)$ and $t\in\sigma^{\ell_i}\cu(f^j(x))$.
Suppose that $\ell_k\le\ell_i$, the proof for the other case is similar.
In this case writing $\cv:=\sigma^{\ell_k}\cu$ and $\ell=\ell_i-\ell_k$, we get that $j\in\cv(x)\subset \cv\cup\sigma^{\ell}\cv(x)$ and $t\in\sigma^{\ell}\cv(f^j(x))\subset \cv\cup\sigma^{\ell}\cv(f^j(x))$.
As, by the claim, $\cv\cup\sigma^{\ell}\cv$ is coherent, we get that $j+t\in\cv\cup\sigma^{\ell}\cv(x)=\sigma^{\ell_k}\cu(x)\cup\sigma^{\ell_i}\cu(x)\subset\cw(x)$. Thus, $\cw$ satisfies (P2). 

{\em Proof that $\cw$ satisfies (P1)}. Let $n\in\cw(x)$ and $0\le j<n$. We have that $n\in\sigma^{\ell_k}\cu(x)$ for some $1\le k\le m$. The proof of $n-j\in\sigma^{\ell_k}\cu\subset\cw(x)$ is similar to the proof of (P1) in the claim.
\end{proof}

Lemma~\ref{PlissLemma} below, known as Pliss's Lemma (see \cite{Pl}), nevertheless of simple proof, turns out to be a useful tool in many problems in Dynamics.
In particular, one can use it to give examples of coherent schedules. 

A sequence of real numbers $a_n$ is called {\bf\em subadditive} if $a_{n+m}\le a_n+a_m$ for every $n$ and $m\ge1$. One can find a proof of Lemma~\ref{PlissLemma} below in Appendix.

\begin{Lemma}[(Subadditive) Pliss Lemma]\label{PlissLemma}
	Given $0<c_0<c_1\le C$, let $0<\theta=\frac{c_1-c_0}{C-c_0}<1$.
	Let $a_1,\cdots,a_{n}\in(-\infty, C]$ be a subadditive sequence of real numbers, i.e., $a_{i+j}\le a_i+a_j$.
	If $\frac{1}{n}a_n\ge c_1$, then there is $\ell\ge\theta n$ and a sequence $1<n_1<n_2<\cdots<a_{\ell}$ such that $\frac{1}{n_j-k}\,a_{n_j-k}\ge c_0$ for every $1\le j\le\ell$ and $0\le k<n_j$.
\end{Lemma}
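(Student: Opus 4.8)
The plan is to reduce to the classical ``record time'' counting argument by a linear renormalization. First I would extend the sequence by declaring $a_0=0$ (this preserves subadditivity) and replace $(a_m)$ by $b_m:=a_m-c_0m$ for $0\le m\le n$. Subtracting the linear term keeps subadditivity, since $b_{i+j}=a_{i+j}-c_0(i+j)\le a_i+a_j-c_0(i+j)=b_i+b_j$, and it has the effect that the sought conclusion $\frac1{n_j-k}a_{n_j-k}\ge c_0$ becomes literally $b_{n_j-k}\ge0$. Note also $b_0=0$, that $b_n=a_n-c_0n\ge(c_1-c_0)n$, and that $b_m\le C-c_0m\le C-c_0$ for every $m\ge1$ (using $a_m\le C$).

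Next I would introduce the running maximum $M_m:=\max_{0\le k\le m}b_k$ and take for the set of good indices the set of weak record times $G:=\{\,m\in\{1,\dots,n\}\,;\,b_m=M_m\,\}$. I claim every $m\in G$ works: for $0\le k<m$, subadditivity of $(b_m)$ gives $b_{m-k}\ge b_m-b_k$, while $b_m=M_m\ge b_k$ since $k\le m$; hence $b_{m-k}\ge0$, that is $a_{m-k}\ge c_0(m-k)$, which is exactly the required inequality. So the whole content of the lemma is the lower bound $\#G\ge\theta n$.

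For the count I would observe that $(M_m)$ is non-decreasing, is constant at every step $m\notin G$ (there $b_m<M_{m-1}$, so $M_m=M_{m-1}$), and at a step $m\in G$ increases by at most $C-c_0$: writing $M_{m-1}=b_{k_0}$ with $0\le k_0\le m-1$, subadditivity gives $b_m\le b_{k_0}+b_{m-k_0}$, so $M_m-M_{m-1}=b_m-b_{k_0}\le b_{m-k_0}\le C-c_0$ (and if $k_0=0$ one has directly $M_m-M_{m-1}=b_m\le C-c_0$). Telescoping then yields
\[
(c_1-c_0)\,n\le b_n\le M_n-M_0=\sum_{m\in G}\bigl(M_m-M_{m-1}\bigr)\le\#G\,(C-c_0),
\]
so $\#G\ge\frac{c_1-c_0}{C-c_0}\,n=\theta n$. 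Enumerating $G=\{n_1<\cdots<n_\ell\}\subset\{1,\dots,n\}$ with $\ell=\#G\ge\theta n$ then gives the statement.

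I do not expect a serious obstacle here. The only step that uses more than bookkeeping is the claim that a record time raises the running maximum by at most $C-c_0$, and even that is immediate once $(b_m)$ is in hand, combining its subadditivity with the uniform bound $b_i\le C-c_0$. The genuinely non-obvious part is purely strategic — choosing the renormalization $b_m=a_m-c_0m$ and the record-time set $G$ — after which the whole estimate telescopes. (One may note the harmless ambiguity at the index $1$: $1\in G$ precisely when $a_1\ge c_0$, so the produced indices lie in $\{1,\dots,n\}$ and the smallest one may equal $1$.)
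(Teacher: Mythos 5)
Your proposal is correct and follows the same route as the paper's proof: the paper also renormalizes via $s(j)=a_j-jc_0$, selects the weak record times of $s$ as the set of good indices, verifies them by subadditivity, and bounds their count by telescoping the increase of the running maximum against the per-step bound $C-c_0$. The only cosmetic difference is in how the per-step bound is obtained: the paper always hops through $s(n_i-1)$ using $a_{n_i}\le a_{n_i-1}+a_1$ and $a_1\le C$, while you split at the previous argmax $k_0$ and invoke $a_{m-k_0}\le C$; both are covered by the hypotheses and yield the same estimate.
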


\begin{Definition}[Pliss times]\label{DefinitionPlissTime}
	Given $\gamma\in\RR$ and a map $\varphi:\NN\times\XX\to\RR$,  we say that $n\ge1$ is a {\bf\em $(\gamma,\varphi)$-Pliss time} for $x\in\XX$, with respect to $f$, if
\begin{equation}\label{EqPlissTime}
  \frac{1}{n-k}\varphi(n-k,f^k(x))\ge\gamma\text{ for every }0\le k<n.
\end{equation}
\end{Definition}

\begin{Lemma}
Suppose that $\XX$ is a metric space. Let $\gamma>0$ and $\varphi:\NN\times\XX\to\RR$ a measurable map. Given $x\in\XX$, let $\cu(x)\subset2^{\NN}$ be the set of all $(\gamma,\varphi)$-Pliss times for $x$. If $f$ and $\varphi$ are continuous then $\cu:\XX\to2^{\NN}$ is a partially continuous schedule of events. 
\end{Lemma}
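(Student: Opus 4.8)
The plan is to verify the two defining properties of a partially continuous schedule of events directly: measurability of $\cu$ and the partial continuity condition. For measurability, I would observe that $\cu(x)$ is the set of $n\ge 1$ satisfying the countable family of inequalities $\varphi(n-k,f^k(x))\ge\gamma(n-k)$ for $0\le k<n$. Via the isomorphism $i:2^{\NN}\to\Sigma_2^+$ of \eqref{EquationItinerario}, it suffices to check that for each fixed $n$, the set $\{x\in\XX\,;\,n\in\cu(x)\}$ is measurable; but this set is $\bigcap_{k=0}^{n-1}\{x\,;\,\varphi(n-k,f^k(x))\ge\gamma(n-k)\}$, a finite intersection of measurable sets since $\varphi$ and $f$ are measurable (so $x\mapsto\varphi(m,f^k(x))$ is measurable for each fixed $m,k$). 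This gives that $x\mapsto i(\cu(x))(n)$ is measurable for every $n$, hence $\cu$ is measurable into $(2^{\NN},\mathfrak B)$.

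For partial continuity, suppose $x_j\to x$ in $\XX$ and $\ell\in\cu(x_j)$ for every $j$; I must show $\ell\in\cu(x)$. By definition of Pliss time this means that for each $0\le k<\ell$ we have $\varphi(\ell-k,f^k(x_j))\ge\gamma(\ell-k)$ for all $j$. Since $f$ is continuous, $f^k(x_j)\to f^k(x)$ for each fixed $k$; and since $\varphi(\ell-k,\cdot)$ is continuous (with the first argument fixed at the constant $\ell-k\in\NN$), passing to the limit in $j$ gives $\varphi(\ell-k,f^k(x))\ge\gamma(\ell-k)$. As this holds for every $0\le k<\ell$, we conclude $\ell\in\cu(x)$, which is exactly the partial continuity requirement.

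The argument is essentially routine; there is no serious obstacle. The only point needing mild care is the interplay between the discrete first argument and the continuous second argument of $\varphi$: one should note that continuity of $\varphi:\NN\times\XX\to\RR$ means continuity in the product topology with $\NN$ discrete, so that $x\mapsto\varphi(m,x)$ is continuous for each fixed $m$, which is all that is used. It is also worth remarking explicitly that nothing forces $\cu(x)$ to be nonempty, and the partial continuity condition is vacuously consistent with that, so no extra hypothesis (such as positive density of Pliss times) is needed here.
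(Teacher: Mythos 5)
Your argument is correct and takes essentially the same route as the paper: fix $\ell$ and $0\le k<\ell$, note that $x\mapsto\varphi(\ell-k,f^k(x))$ is continuous, and pass to the limit in the defining inequality $\varphi(\ell-k,f^k(x_j))\ge\gamma(\ell-k)$. (The paper phrases this as a proof by contradiction, you argue directly; that is a cosmetic difference.) One small plus on your side: the paper's proof checks only the partial-continuity implication and silently takes measurability of $\cu$ for granted, whereas you explicitly verify that $\{x\,;\,n\in\cu(x)\}=\bigcap_{k=0}^{n-1}\{x\,;\,\varphi(n-k,f^k(x))\ge\gamma(n-k)\}$ is measurable for each $n$, which is indeed needed since ``partially continuous schedule of events'' presupposes that $\cu$ is a measurable map into $(2^{\NN},\mathfrak B)$.
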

\begin{proof}
Let $p_{\ell}\in\XX$ be a sequence converging to $p\in\XX$ and suppose that $n\in\cu(p_{\ell})$ for every $\ell\ge1$. 
Suppose by contradiction that $n\notin\cu(p)$. So, there exists $0\le k<n$ such that $\frac{1}{n-k}\varphi(n-k,f^k(p))<\gamma$.
If $f$ and $\varphi$ are continuous, $\XX\ni x\mapsto\varphi(n-k,f^k(x))\in\RR$ is continuous and so,
\begin{equation}\label{Equationkjhghjk}
  \lim_{\ell\to\infty}\varphi(n-k,f^k(p_{\ell}))=\varphi(n-k,f^k(p))<\gamma.
\end{equation}
Nevertheless, as $n\in\cu(p_{\ell})$ for every $\ell\ge1$, we get $\varphi(n-k,f^k(p_{\ell}))\ge\gamma$ $\forall\ell\ge1$, which implies that $\lim_{\ell\to\infty}\varphi(n-k,f^k(p_{\ell}))\ge\gamma$ and leads to  a contradiction with (\ref{Equationkjhghjk}).\end{proof}

A measurable function $\varphi:\NN\times\XX\to\RR$ is called a {\bf\em $f$ subadditive cocycle}, a {\bf\em $f$ additive cocycle} or a {\bf\em $f$ sup-additive cocycle} if it satisfies, respectively, (1), (2) or (3) below.
\begin{enumerate}
	\item $\varphi(n+m,x)\le\varphi(n,x)+\varphi(m,f^n(x))$ for every $n,m\in\NN$ and $x\in\XX$.
	\item $\varphi(n+m,x)=\varphi(n,x)+\varphi(m,f^n(x))$ for every $n,m\in\NN$ and $x\in\XX$.
	\item $\varphi(n+m,x)\ge\varphi(n,x)+\varphi(m,f^n(x))$ for every $n,m\in\NN$ and $x\in\XX$.
\end{enumerate}

\begin{Lemma}\label{Lemma67ygtgh}
	Let $\varphi:\NN\times\XX\to\RR$ be a sup-additive cocycle.
	If $\gamma>0$ and $\cu(x)$ is the set of all $(\gamma,\varphi)$-Pliss times to $x$, then $\cu:\XX\to2^{\NN}$ is a $f$-coherent schedule of events.
\end{Lemma}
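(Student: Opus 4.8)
The plan is to check directly the three requirements of a coherent schedule: that $\cu$ is measurable, that it satisfies (P1), and that it satisfies (P2). All three are short once the indices are arranged correctly, and I expect the only mildly delicate point to be keeping track of the base point in the sup-additivity step of (P2).

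For measurability, I would note that since the cylinders generate the Borel $\sigma$-algebra of $(2^{\NN},\dist)$ (via the isomorphism $i$ of (\ref{EquationItinerario}) with $\Sigma_2^+$), it suffices to show that $\{x\in\XX\,;\,n\in\cu(x)\}$ is measurable for every $n\ge1$. But by the definition of a Pliss time,
\[
\{x\in\XX\,;\,n\in\cu(x)\}=\bigcap_{k=0}^{n-1}\{x\in\XX\,;\,\varphi(n-k,f^k(x))\ge\gamma(n-k)\},
\]
and each set on the right is measurable because $f$ and $\varphi$ are measurable; a finite intersection of measurable sets is measurable, so $\cu$ is measurable.

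For (P1), take $n\in\cu(x)$ and $0\le j<n$. Given $0\le k<n-j$ one has $0\le j+k<n$, so applying the hypothesis $n\in\cu(x)$ at the index $j+k$ gives $\frac{1}{n-(j+k)}\varphi(n-(j+k),f^{j+k}(x))\ge\gamma$; since $f^{j+k}(x)=f^k(f^j(x))$ and $n-(j+k)=(n-j)-k$, this is exactly the Pliss inequality certifying $n-j\in\cu(f^j(x))$. For (P2), take $n\in\cu(x)$ and $m\in\cu(f^n(x))$ with $n,m\ge1$; I must show $\varphi(n+m-k,f^k(x))\ge\gamma(n+m-k)$ for all $0\le k<n+m$, splitting into two cases. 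If $n\le k<n+m$, write $k=n+k'$ with $0\le k'<m$; then $\varphi(n+m-k,f^k(x))=\varphi(m-k',f^{k'}(f^n(x)))\ge\gamma(m-k')=\gamma(n+m-k)$, using $m\in\cu(f^n(x))$. If $0\le k<n$, then $n-k\ge1$ and sup-additivity applied with the splitting $n+m-k=(n-k)+m$ gives
\[
\varphi(n+m-k,f^k(x))\ge\varphi(n-k,f^k(x))+\varphi\bigl(m,f^{n-k}(f^k(x))\bigr)=\varphi(n-k,f^k(x))+\varphi(m,f^n(x));
\]
now $n\in\cu(x)$ yields $\varphi(n-k,f^k(x))\ge\gamma(n-k)$ and $m\in\cu(f^n(x))$ (at $k'=0$) yields $\varphi(m,f^n(x))\ge\gamma m$, so the sum is $\ge\gamma(n-k)+\gamma m=\gamma(n+m-k)$.

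The only step that needs care is arranging the sup-additive inequality in (P2) so that the second summand lands precisely at the point $f^n(x)$ where the hypothesis on $m$ is available; everything else is routine bookkeeping, so I do not anticipate a genuine obstacle.
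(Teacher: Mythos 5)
Your proof is correct and follows essentially the same route as the paper's: split (P2) into the cases $k<n$ and $n\le k<n+m$, use sup-additivity with the split $n+m-k=(n-k)+m$ in the first case, and note that (P1) is a direct index translation of the Pliss condition. You are merely more explicit than the paper (which leaves the measurability of $\cu$ implicit, dismisses (P1) as "straightforward," and disposes of the second case of (P2) with "similarly"), but there is no difference in substance.
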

\begin{proof}
	Suppose that $n\in\cu(x)$ and $m\in\cu(f^n(x))$. Let $0\le k<n+m$.
If $k<n$ then, as $n\in\cu(x)$, it follows from (\ref{EqPlissTime}) that $\varphi(n-k,f^k(x))\ge(n-k)\gamma$.
As $m\in\cu(f^n(x))$, we also get from (\ref{EqPlissTime}) that $\varphi(m,f^n(x))\ge m\gamma$.
Hence, as $\varphi$ is a sup-additive cocycle, $$\varphi(m+n-k,f^k(x))\ge\varphi(n-k,f^k(x))+\varphi((m+n-k)-(n-k),f^{n-k}(f^k(x)))=$$
$$=\varphi(n-k,f^k(x))+\varphi(m,f^n(x))\ge(n+m-k)\gamma.$$
Similarly, we get that $\varphi(m+n-k,f^k(x))\ge(n+m-k)\gamma$ when $n\le k<n+m$.
Thus, $\cu(x)$ satisfies $(P2)$. 
As the proof of (P1) follows straightforward from (\ref{EqPlissTime}), we conclude that $\cu$ is a $f$-coherent schedule.

\end{proof}

\begin{Lemma}\label{Lemma67ygtghXXXX}
	Let $\varphi:\NN\times\XX\to\RR$ be a subadditive cocycle with $\sup\varphi<+\infty$.
	If $\gamma>0$ and $\cu(x)$ is the set of all $(\gamma/2,\varphi)$-Pliss times to $x$, then 
	\begin{enumerate}
		\item $\limsup\frac{1}{n}\varphi(n,x)\ge\gamma$ $\implies$ $\dd_{\NN}^+(\cu(x))\ge\frac{\gamma}{2(\sup\varphi)-\gamma}>0$;
		\item $\liminf\frac{1}{n}\varphi(n,x)\ge\gamma$ $\implies$ $\dd_{\NN}^-(\cu(x))\ge\frac{\gamma}{2(\sup\varphi)-\gamma}>0$.
	\end{enumerate}
\end{Lemma}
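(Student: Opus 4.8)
The plan is to read off both density bounds from the (subadditive) Pliss Lemma, Lemma~\ref{PlissLemma}, applied along the orbit of a fixed $x\in\XX$. Write $C:=\sup\varphi<+\infty$. I would first record two elementary consequences of subadditivity of the cocycle. Iterating $\varphi(n+m,\cdot)\le\varphi(n,\cdot)+\varphi(m,f^n(\cdot))$ gives $\varphi(n,x)\le\sum_{j=0}^{n-1}\varphi(1,f^j(x))\le nC$ for every $n\ge1$, so that $\limsup_n\tfrac1n\varphi(n,x)\le C$; hence in either case of the statement we automatically have $0<\gamma\le C$, which makes the constants below admissible. Also, setting $\varphi(0,x):=0$, the increments of the finite sequence $a_i:=\varphi(i,x)$ satisfy $a_i-a_{i-1}=\varphi(i,x)-\varphi(i-1,x)\le\varphi(1,f^{i-1}(x))\le C$ for every $i\ge1$.

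The heart of the argument is the following claim: \emph{if $N\ge1$ satisfies $\tfrac1N\varphi(N,x)\ge c_1$ for some $c_1\in(\gamma/2,\,C)$, then, with $c_0:=\gamma/2$ and $\theta:=\frac{c_1-c_0}{C-c_0}\in(0,1)$, one has $\#\big(\cu(x)\cap\{1,\dots,N\}\big)\ge\theta N$.} To prove it I would apply Lemma~\ref{PlissLemma} to $a_1,\dots,a_N$ with the triple $c_0<c_1\le C$: the increments of $(a_i)$ are bounded above by $C$ and $\tfrac1N a_N\ge c_1$, so the lemma produces $\ell\ge\theta N$ indices $1\le n_1<\cdots<n_\ell\le N$ with $\varphi(n_j,x)-\varphi(k,x)\ge(n_j-k)c_0$ for all $0\le k<n_j$. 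Subadditivity of $\varphi$ then gives $\varphi(n_j-k,f^k(x))\ge\varphi(n_j,x)-\varphi(k,x)\ge(n_j-k)\tfrac{\gamma}{2}$ for all $0\le k<n_j$; that is, each $n_j$ is a $(\gamma/2,\varphi)$-Pliss time for $x$, so $n_j\in\cu(x)$, and the claim follows.

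It then remains to feed the hypotheses into the claim and pass to a limit. For (1): if $\limsup_n\tfrac1n\varphi(n,x)\ge\gamma$, then for each $\varepsilon\in(0,\gamma/2)$ the value $c_1:=\gamma-\varepsilon$ satisfies $\gamma/2<c_1<\gamma\le C$ and $\tfrac1N\varphi(N,x)\ge c_1$ for infinitely many $N$; with $\theta_\varepsilon:=\frac{\gamma/2-\varepsilon}{C-\gamma/2}$ the claim gives $\tfrac1N\#(\cu(x)\cap\{1,\dots,N\})\ge\theta_\varepsilon$ along those $N$, hence $\dd_{\NN}^+(\cu(x))\ge\theta_\varepsilon$, and letting $\varepsilon\downarrow0$ yields $\dd_{\NN}^+(\cu(x))\ge\frac{\gamma/2}{C-\gamma/2}=\frac{\gamma}{2(\sup\varphi)-\gamma}>0$. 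For (2): if $\liminf_n\tfrac1n\varphi(n,x)\ge\gamma$, then for each such $\varepsilon$ the inequality $\tfrac1N\varphi(N,x)\ge\gamma-\varepsilon$ holds for all large $N$, so the claim gives $\tfrac1N\#(\cu(x)\cap\{1,\dots,N\})\ge\theta_\varepsilon$ for all large $N$, i.e.\ $\dd_{\NN}^-(\cu(x))\ge\theta_\varepsilon$; again $\varepsilon\downarrow0$ gives $\dd_{\NN}^-(\cu(x))\ge\frac{\gamma}{2(\sup\varphi)-\gamma}>0$.

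The only delicate point is the use of Lemma~\ref{PlissLemma}: what is actually needed is the record-type estimate $\varphi(n_j,x)-\varphi(k,x)\ge(n_j-k)c_0$, which depends only on the increments of $(\varphi(i,x))_i$ being $\le C$ together with $\tfrac1N\varphi(N,x)\ge c_1$ (this is precisely the content of the Pliss Lemma for the partial-sum sequence $a_i=\varphi(i,x)$), and the sharp value $\theta=\frac{c_1-c_0}{C-c_0}$ is exactly what makes the asserted constant $\frac{\gamma}{2(\sup\varphi)-\gamma}$ appear after $\varepsilon\downarrow0$. Everything else is routine bookkeeping with densities.
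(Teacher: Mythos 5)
Your proof is correct and follows the paper's route: apply the Pliss Lemma along the orbit with $c_0=\gamma/2$ and $C=\sup\varphi$, then read off the density. The paper disposes of this in one line by taking $c_1=\gamma$ directly, and you are right that this is not quite enough: the hypothesis $\limsup_n\frac{1}{n}\varphi(n,x)\ge\gamma$ alone does not produce even a single $n$ with $\frac{1}{n}\varphi(n,x)\ge\gamma$, and in the boundary case $\gamma=\sup\varphi$ one would have $\theta=1$, which is excluded in the statement of Lemma~\ref{PlissLemma}; your use of $c_1=\gamma-\varepsilon$ followed by $\varepsilon\downarrow0$ fills exactly that gap in both items. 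You are also right to point out that the sequence $a_i=\varphi(i,x)$ is generally \emph{not} subadditive in the sense of Lemma~\ref{PlissLemma} (the cocycle inequality gives $\varphi(i+j,x)\le\varphi(i,x)+\varphi(j,f^i(x))$, not $\varphi(i,x)+\varphi(j,x)$), and that the stated conclusion $a_{n_j-k}\ge(n_j-k)c_0$ is not what one wants anyway; what matters is the intermediate record inequality $\varphi(n_j,x)-\varphi(k,x)\ge(n_j-k)c_0$ from inside the proof of Lemma~\ref{PlissLemma}, which indeed only needs the increments of $(a_i)$ to be bounded above by $C$, and which together with cocycle subadditivity produces $\varphi(n_j-k,f^k(x))\ge(n_j-k)\gamma/2$, i.e.\ the $(\gamma/2,\varphi)$-Pliss time condition. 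So your argument matches the paper's intent but supplies the missing steps that make the one-line proof rigorous.
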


\begin{proof}

Taking $0<c_0:=\gamma/2<c_1:=\gamma<C:=\sup\varphi$, we get that $\theta=\frac{c_1-c_0}{C-c_0}=\frac{\gamma-\gamma/2}{\sup\varphi-\gamma/2}=\frac{\gamma}{2(\sup\varphi)-\gamma}>0$ and so, items (1) and (2) follow directly from the subadditive Pliss Lemma.
\end{proof}

\begin{Definition}\label{Definition654yuikj}
	Given a schedule of events $\cu:\XX\to2^{\NN}$, define the {\bf\em first $\cu$-time} $R_{\cu}:\XX\to\NN\cup\{+\infty\}$ by
$$R_{\cu}(x)=
\begin{cases}
\min\cu(x) & \text{ if }\cu(x)\ne\emptyset\\
+\infty & \text{ if }\cu(x)=\emptyset

\end{cases}.
$$
\end{Definition}

\begin{Lemma}\label{LemmaPArtuhgdj}
 If $\XX$ is a metric space and $\cu$ a partially continuous schedule of events to $f$, then $R_{\cu}:\XX\to\NN\cup\{+\infty\}$ is lower semicontinuous, i.e., $\liminf_{x\to p}R_{\cu}(x)\ge R_{\cu}(p)$ for every $p\in\XX$.
\end{Lemma}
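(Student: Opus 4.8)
The plan is to unwind the definition of $\liminf_{x\to p}$ along a suitable sequence and then apply partial continuity with a single fixed integer $\ell$. First I would dispose of the trivial case: if $\liminf_{x\to p}R_{\cu}(x)=+\infty$ (which happens in particular when $p$ is isolated), the inequality $\liminf_{x\to p}R_{\cu}(x)\ge R_{\cu}(p)$ holds automatically. So I may assume that $\liminf_{x\to p}R_{\cu}(x)=\ell_0$ for some $\ell_0\in\NN$.

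Next I would choose a sequence $x_n\to p$ in $\XX$ realizing this $\liminf$, i.e.\ with $R_{\cu}(x_n)\to\ell_0$. The key elementary observation is that $\big(R_{\cu}(x_n)\big)_n$ is a sequence in $\NN\cup\{+\infty\}$ converging to the \emph{finite} natural number $\ell_0$, hence it is eventually constant equal to $\ell_0$; after passing to a tail I may assume $R_{\cu}(x_n)=\ell_0$ for every $n$. In particular $\cu(x_n)\ne\emptyset$ and $\ell_0=\min\cu(x_n)\in\cu(x_n)$ for every $n$.

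Now I would invoke the partial continuity of $\cu$ with the convergent sequence $x_n\to p$ and the fixed integer $\ell=\ell_0$: since $\ell_0\in\cu(x_n)$ for all $n$, it follows that $\ell_0\in\cu(p)$. Therefore $\cu(p)\ne\emptyset$ and $R_{\cu}(p)=\min\cu(p)\le\ell_0=\liminf_{x\to p}R_{\cu}(x)$, which is exactly the asserted lower semicontinuity.

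The only mildly delicate point — the ``main obstacle'', such as it is — is the discreteness step: turning ``$R_{\cu}(x_n)\to\ell_0$'' into ``$\ell_0\in\cu(x_n)$ for all large $n$'', which uses that a sequence of extended naturals converging to a finite value is eventually constant. This is what makes it legitimate to feed partial continuity a single fixed $\ell$ rather than a moving one. Everything else is just bookkeeping with the definitions of $R_{\cu}$ and of a partially continuous schedule.
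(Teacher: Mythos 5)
Your proof is correct and is essentially the same argument as the paper's: both reduce to a sequence $x_n\to p$ whose $R_{\cu}$-values converge to a finite $\ell_0$, use discreteness of $\NN\cup\{+\infty\}$ to get $\ell_0\in\cu(x_n)$ eventually, and apply partial continuity to conclude $\ell_0\in\cu(p)$. The paper merely phrases it as a proof by contradiction, while you argue directly; the content is identical.
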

\begin{proof} 
If $\liminf_{x\to p}R_{\cu}(x)<R_{\cu}(p)$ for some $p\in\XX$, then there is a sequence $p_{\ell}\in\XX$ with $\lim_{\ell}p_{\ell}=p$ such that $\lim_{\ell\to\infty}R_{\cu}(p_{\ell})<R_{\cu}(p)$.
As a consequence, $\lim_{\ell\to\infty}R_{\cu}(p_{\ell})=n<+\infty$, even if $R_{\cu}(p)=+\infty$.
Taking $\ell_0\ge1$ so that  $|R_{\cu}(p_{\ell})-n|<1/2$, we get that $R_{\cu}(p_{\ell})=n$ for every $\ell\ge\ell_0$. This implies, 
as $\cu$ is partially continuous, that $\cu(p)\ni n<R_{\cu}(p)=\min\cu(p)$ which is a contradiction.
\end{proof}

Given a $f$-coherent schedule $\cu:\XX\to2^{\NN}$ and  $j\in\NN\cup\{\infty\}$, denote
 $$\XX_{j}(\cu)=\{x\in\XX\,;\,\#\cu(x)\ge j\}.$$

The Lemmas below connect coherent induced times (Definition~\ref{DefPlissTime}) and  coherent schedules (Definition~\ref{DefPlissschedule}).

\begin{Lemma}\label{LemmaCoerenteToCoerente00}
	If $\cu:\XX\to 2^{\NN}$ is $f$-coherent, then $F(\XX_{\infty}(\cu))\subset f(\XX_{\infty}(\cu))\subset\XX_{\infty}(\cu)=\bigcap_{n\ge0}F^{-n}(\XX)$, where $F(x)=f^{R_{\cu}(x)}(x)$ is the first $\cu$-time map.
\end{Lemma}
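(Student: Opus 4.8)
The plan is to unwind the definitions carefully, using the two coherence axioms (P1) and (P2) together with the translation identity of Lemma~\ref{Lemmakiuyghjko76}. Recall $\XX_\infty(\cu)=\{x\in\XX:\#\cu(x)=\infty\}$ and $F(x)=f^{R_\cu(x)}(x)$ with $R_\cu(x)=\min\cu(x)$ whenever $\cu(x)\ne\emptyset$. Since every point of $\XX_\infty(\cu)$ has $\cu(x)\ne\emptyset$, the map $F$ is defined on $\XX_\infty(\cu)$, and I will show each of the three inclusions (together with the final equality) in turn.

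First I would prove $f(\XX_\infty(\cu))\subset\XX_\infty(\cu)$. Let $x\in\XX_\infty(\cu)$ and write $a_x=\min\cu(x)$. By Lemma~\ref{Lemmakiuyghjko76}, $\sigma^{a_x}\cu(x)=\sigma^{a_x-1}\cu(f(x))$; in particular $\#\cu(f(x))\ge\#\sigma^{a_x-1}\cu(f(x))=\#\sigma^{a_x}\cu(x)=\infty$, so $f(x)\in\XX_\infty(\cu)$. (Concretely: if $a_x=1$ then $\cu(f(x))=\sigma\cu(x)$ is infinite; if $a_x>1$ then by (P1), $n\in\cu(x)$ with $n>a_x>1$ gives $n-1\in\cu(f(x))$, and there are infinitely many such $n$.) Next, $F(\XX_\infty(\cu))\subset f(\XX_\infty(\cu))$: for $x\in\XX_\infty(\cu)$, iterating the previous inclusion gives $f^{j}(x)\in\XX_\infty(\cu)$ for all $j\ge0$, hence $F(x)=f^{R_\cu(x)}(x)=f\big(f^{R_\cu(x)-1}(x)\big)\in f(\XX_\infty(\cu))$ since $f^{R_\cu(x)-1}(x)\in\XX_\infty(\cu)$.

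For the equality $\XX_\infty(\cu)=\bigcap_{n\ge0}F^{-n}(\XX)$, the inclusion $\subset$ follows from the fact, already established, that $F$ maps $\XX_\infty(\cu)$ into itself (indeed into $\XX$), so every $x\in\XX_\infty(\cu)$ has all forward $F$-iterates defined, i.e.\ $x\in F^{-n}(\XX)$ for every $n$. For the reverse inclusion, suppose $x\in\bigcap_{n\ge0}F^{-n}(\XX)$, so $F^n(x)$ is defined for all $n\ge0$; write $R_n(x)=\sum_{j=0}^{n-1}R_\cu\circ F^j(x)$, so that $F^n(x)=f^{R_n(x)}(x)$ and $R_n(x)\ge n\to\infty$. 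I claim $R_n(x)\in\cu(x)$ for every $n\ge1$, which forces $\#\cu(x)=\infty$. This is proved by induction on $n$ using (P2): $R_1(x)=R_\cu(x)=\min\cu(x)\in\cu(x)$; and if $R_n(x)\in\cu(x)$, then since $R_\cu(F^n(x))\in\cu(F^n(x))=\cu(f^{R_n(x)}(x))$, axiom (P2) with the pair $(R_n(x),R_\cu(F^n(x)))$ yields $R_{n+1}(x)=R_n(x)+R_\cu(F^n(x))\in\cu(x)$. Thus $x\in\XX_\infty(\cu)$.

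The argument is essentially bookkeeping; the only point requiring a little care is the $a_x>1$ case in the first inclusion, where one must combine (P1) (to pull events of $\cu(x)$ back to $\cu(f(x))$) with the observation — or directly with Lemma~\ref{Lemmakiuyghjko76} — that no events are lost, so that infinitude is preserved. I expect that step to be the only place where a reader might want more than one line, and invoking Lemma~\ref{Lemmakiuyghjko76} makes it immediate.
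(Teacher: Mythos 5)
Your proof is correct and follows essentially the same route as the paper: use (P1) to get forward $f$-invariance of $\XX_\infty(\cu)$, deduce forward $F$-invariance and hence $\XX_\infty(\cu)\subset\bigcap_n F^{-n}(\XX)$, and for the reverse inclusion show inductively that the partial sums $\sum_{j=0}^{n-1}R_\cu\circ F^j(x)$ lie in $\cu(x)$. One small improvement on your side: you correctly invoke (P2) for that last inductive step, whereas the paper's text says ``(P1)'' there, which is a slip.
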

\begin{proof}
	Let $x\in\XX_{\infty}(\cu)$.
	It follows from (P1) that $\cu(f(x))\supset\{n-1\,;\,2\le n\in\cu(x)\}$.
	So, $\#(\cu(f(x)))\ge\#\cu(x)-1=\infty$, proving that $f(\XX_{\infty}(\cu))\subset\XX_{\infty}(\cu)$.
	As $f^n(\XX_{\infty}(\cu))\subset\XX_{\infty}(\cu)$ $\forall\,n\ge1$, if $x\in\XX_{\infty}(\cu)$ then $F(x)=f^{R_{\cu}(x)}(x)\in\XX_{\infty}(\cu)$. 
	Thus, $F(\XX_{\infty}(\cu))\subset\XX_{\infty}(\cu)$ and so, $\XX_{\infty}(\cu)\subset F^{-1}(F(\XX_{\infty}(\cu)))\subset F^{-1}(\XX_{\infty}(\cu))$.
	Inductively, we get that $\XX_{\infty}(\cu)\subset F^{-n}(\XX_{\infty}(\cu))$ $\forall\,n\ge0$, showing that $\XX_{\infty}(\cu)\subset\bigcap_{n\ge0}F^{-n}(\XX_{\infty}(\cu))$.
	On the other hand, if $x\in\bigcap_{n\ge0}F^{-n}(\XX_{\infty}(\cu))$, then $F^n(x)=f^{\sum_{j=0}^{n-1}R_{\cu}\circ F^j(x)}(x)$ is well defined for every $n\ge1$.
	Furthermore, using (P1) inductively, we get that $\sum_{j=0}^{n-1}R_{\cu}\circ F^j(x)\in\cu(x)$ $\forall\,n\ge1$ which proves that $\#\cu(x)=\infty$, i.e., that $x\in\XX_{\infty}(\cu)$. 
	\end{proof}

\begin{Lemma}\label{LemmaCoerenteToCoerente}
	If $\cu:\XX\to 2^{\NN}$ is $f$-coherent, then $R_{\cu}$ is a coherent induced time. 
	Furthermore,  $\cu(x)=\{j\in\NN\,;\,f^j(x)\in\co_F^+(x)\}$ $\forall x\in\XX_1(\cu)$ such that $f|_{\co_f^+(x)}$ is injective, where $F(x)=f^{R_{\cu}(x)}(x)$.
	Conversely, if $R:A\to\NN$ is a coherent induced time  defined on a measurable set $A$, then the map $\cu_R:\XX\to2^{\NN}$ given by 
\begin{equation}\label{EquationtVilma}
  \cu_R(x)=
	\bigg\{\sum_{j=0}^n \overline{R}\circ F^j(x)\,;\,j\ge0\bigg\}
\end{equation} is a coherent schedule of events such that $R(x)=R_{\cu_R}(x)$ for every $x\in A$, 
where $F(x)=F^{\overline{R}(x)}(x)$ and $\overline{R}:\XX\to\NN$ is the extension of $R$ to $\XX$ (i.e., $\overline{R}|_{A}=R$) given by $\overline{R}(x)=1$ when $x\in\XX\setminus A$.
\end{Lemma}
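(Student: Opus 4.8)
The plan is to establish the three assertions in turn, with the converse at the end absorbing most of the bookkeeping.

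\emph{$R_{\cu}$ is a coherent induced time.} I would get this directly from (P1). With $A=\XX_1(\cu)$ and $n=R_{\cu}(x)=\min\cu(x)$ for $x\in A$: if $0\le j<n$ then $n>j$, so (P1) gives $n-j\in\cu(f^j(x))$, whence $f^j(x)\in A$ and $R_{\cu}(f^j(x))=\min\cu(f^j(x))\le n-j$, i.e. $R_{\cu}(f^j(x))+j\le R_{\cu}(x)$. Measurability of $R_{\cu}$ follows from $\{R_{\cu}=n\}=\{x:n\in\cu(x)\}\setminus\bigcup_{k<n}\{x:k\in\cu(x)\}$.

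\emph{The identity $\cu(x)=\{j\in\NN:f^j(x)\in\co_F^+(x)\}$.} Fix $x\in\XX_1(\cu)$. As long as $F^{n-1}(x)\in\XX_1(\cu)$ I set $s_0=0$ and $s_n=s_{n-1}+R_{\cu}(F^{n-1}(x))$, so that $F^n(x)=f^{s_n}(x)$ and $s_0<s_1<\cdots$. An induction using (P2) (applied to $s_n\in\cu(x)$ and $R_{\cu}(F^n(x))\in\cu(f^{s_n}(x))$) gives $s_n\in\cu(x)$ for all $n\ge1$, so $f^{s_n}(x)\in\co_F^+(x)$; conversely every $m\in\cu(x)$ is some $s_n$, since if $n^{*}$ is maximal with $F^{n^{*}}(x)$ defined and $s_{n^{*}}\le m$, then $s_{n^{*}}<m$ would force, via (P1) with $j=s_{n^{*}}$, that $F^{n^{*}}(x)\in\XX_1(\cu)$ and $s_{n^{*}+1}\le m$, contradicting maximality. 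Thus $\cu(x)=\{s_n:n\ge1\}\subseteq\{j\in\NN:f^j(x)\in\co_F^+(x)\}$ with no injectivity needed. For the reverse inclusion I take $j\in\NN$ with $f^j(x)=F^n(x)=f^{s_n}(x)$: if the points $f^k(x)$, $k\ge0$, are pairwise distinct, then $j=s_n$ with $n\ge1$, so $j\in\cu(x)$; otherwise injectivity of $f|_{\co_f^+(x)}$ forces $x$ to be periodic with $\co_f^+(x)$ a cycle of period $p$, and then $f^j(x)\in\co_F^+(x)$ only says $j\equiv s_n\pmod p$ for some $n$. In this case I would show that $\cu(x)$ is a union of \emph{full} residue classes mod $p$: (P1) shows each residue class meeting $\cu(x)$ is closed under subtracting $p$, and iterating (P1) and (P2) around the cycle shows that the $F$-orbit of $x$ returns to $x$ after exactly $p$ steps of $f$, giving $\cu(x)=\{j\in\NN:j\equiv s_i\pmod p \text{ for some }i\}=\{j\in\NN:f^j(x)\in\co_F^+(x)\}$.

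\emph{The converse.} For a coherent induced time $R:A\to\NN$ I would write $r_n(x)=\sum_{j=0}^{n-1}\overline R(F^j(x))$, so that $F^n(x)=f^{r_n(x)}(x)$, the $r_n(x)$ are strictly increasing, $\cu_R(x)=\{r_n(x):n\ge1\}$, and hence $R_{\cu_R}(x)=\min\cu_R(x)=r_1(x)=\overline R(x)$, equal to $R(x)$ when $x\in A$; measurability of $\cu_R$ comes from $\{x:n\in\cu_R(x)\}=\bigcup_{k=1}^{n}\{x:r_k(x)=n\}$. Property (P2) is easy: if $n=r_k(x)$ then $f^n(x)=F^k(x)$ and $\cu_R(F^k(x))=\{r_{k+l}(x)-r_k(x):l\ge1\}$, so any $m\in\cu_R(f^n(x))$ has $n+m=r_{k+l}(x)\in\cu_R(x)$. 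For (P1), given $n=r_k(x)$ and $0\le j<n$ I write $j=r_m(x)+a$ with $0\le m<k$ and $0\le a<\overline R(F^m(x))$, put $y=F^m(x)$ so $f^j(x)=f^a(y)$ and $n-j=r_{k-m}(y)-a$ with $r_{k-m}(y)\in\cu_R(y)$: if $a=0$ this already gives $n-j\in\cu_R(y)=\cu_R(f^j(x))$; if $a\ge1$ then $\overline R(y)\ge2$, so $y\in A$, $R(y)=\overline R(y)$, $0<a<R(y)$, and here the coherence of $R$ enters --- whether or not the intermediate iterates $f^b(y)$ lie in $A$, the successive $\overline R$-jumps starting at $a$ never overshoot $R(y)$, hence reach it exactly, after which the $F$-orbit of $f^a(y)$ coincides with that of $F(y)$; this yields $\cu_R(f^a(y))\supseteq\cu_R(y)-a$, so $n-j=r_{k-m}(y)-a\in\cu_R(f^j(x))$. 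Hence $\cu_R$ is a coherent schedule with $R_{\cu_R}|_A=R$.

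The hard part, I expect, will be the periodic sub-case of the middle assertion: once $\co_f^+(x)$ is a finite cycle of period $p$, the condition $f^j(x)\in\co_F^+(x)$ holds for every $j$ in each residue class mod $p$ that $\co_F^+(x)$ meets, so the real content is deducing from (P1) and (P2) that $\cu(x)$ is precisely the union of those full classes --- equivalently, that the first $F$-return of $x$ to itself takes exactly $p$ forward $f$-steps. This is where the injectivity hypothesis is genuinely used, together with some nondegeneracy of the $F$-orbit of $x$ (e.g. $x\in\XX_\infty(\cu)$), which is needed to rule out the $F$-orbit of $x$ terminating before it has swept the relevant part of the cycle.
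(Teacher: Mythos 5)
Your treatment of the first assertion and of the converse agrees with the paper's and is correct. In the converse your explicit verification of (P1) is slightly more complete than the paper's: you reduce to $j=r_m(x)+a$ with $0\le a<\overline{R}(F^m(x))$ and then invoke the coherence of $\overline{R}$ (exactly the content of Lemma~\ref{LemmaCanCoh1}) to show that the partial $\overline{R}$-sums starting at $f^a(y)$ land precisely on $R(y)$, giving $\cu_R(y)-a\subset\cu_R(f^a(y))$; the paper proves (P1) only for $n=\overline{R}(x)$ and leaves the extension to general $n\in\cu_R(x)$ (via (P2)) implicit.

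For the middle identity your non-periodic argument coincides with the paper's (the $\subset$ inclusion by (P2)-induction, the $\supset$ inclusion by the maximality argument on partial sums; the paper phrases the latter through a bijection $\NN\ni n\mapsto f^n(x)$ onto $\co_f^+(f(x))$). You are right to flag the periodic sub-case as the place where the paper's bijectivity appeal collapses. However, your proposed repair is incorrect, and in fact the identity as stated is false for periodic orbits even assuming $x\in\XX_\infty(\cu)$. Take $\XX=\{a,b\}$, $f(a)=b$, $f(b)=a$ (so $p=2$) and $\cu(a)=\{1,3,5,\dots\}$, $\cu(b)=\{2,4,6,\dots\}$. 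A parity check verifies (P1)--(P2): if $n\in\cu(a)$ is odd and $j<n$, then $n-j$ is odd when $j$ is even (and $f^j(a)=a$) and even when $j$ is odd (and $f^j(a)=b$), and symmetrically for $\cu(b)$; for (P2), $n$ odd and $m\in\cu(f^n(a))=\cu(b)$ even give $n+m$ odd, and $n,m$ even give $n+m$ even. So $\cu$ is $f$-coherent, $a\in\XX_\infty(\cu)$, and $f|_{\co_f^+(a)}$ is a bijection of the $2$-cycle. Yet $R_\cu(a)=1$, $R_\cu(b)=2$, hence $F(a)=b$ and $F(b)=b$: the $F$-orbit of $a$ is $a,b,b,b,\dots$ and \emph{never returns to} $a$, contradicting your claimed consequence of iterating (P1)--(P2). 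Moreover $\co_F^+(a)=\{a,b\}$, so $\{j\in\NN:f^j(a)\in\co_F^+(a)\}=\NN$, whereas $\cu(a)$ is only the odd integers. The obstruction is structural: $x=F^0(x)$ always lies in $\co_F^+(x)$, so for periodic $x$ of period $p$ every multiple of $p$ belongs to $\{j:f^j(x)\in\co_F^+(x)\}$, but (P1)--(P2) do not force $p\NN\subset\cu(x)$ unless the $F$-orbit actually closes up through $x$. (Your auxiliary claim that $\cu(x)$ is a union of full residue classes mod $p$ also fails: on the $4$-cycle $a\to b\to c\to d\to a$, the assignment $\cu(a)=\{1,2,6,10,\dots\}$, $\cu(b)=\{1,5,9,\dots\}$, $\cu(c)=\{4,8,12,\dots\}$, $\cu(d)=\{3,7,11,\dots\}$ is $f$-coherent, yet $\cu(a)$ meets the class $1\bmod 4$ only in $\{1\}$.) The identity is therefore valid only for non-periodic $x$, or for periodic $x$ whose $F$-orbit re-enters $x$ --- in particular for $x$ in the coherent block $B_\cu$, where $F$ is a first-return map and the equality is immediate from the definition of $B_\cu$ --- and those are exactly the cases in which the paper subsequently uses it (in Theorem~\ref{TheoremPlissBlock} the equality is applied via Birkhoff at $\nu$-typical points of $B_\cu$, and in Corollary~\ref{CorollaryPlissBlock} the periodic case is handled separately).
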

\begin{proof}
	Suppose that $x$ and $f^j(x)\in \XX_1(\cu)$ and that $n:=R_{\cu}(x)>j\ge0$.
	By the coherence of $\cu$, $n-j\in\cu(f^j(x))$ and so, $R_{\cu}(f^j(x))=\min\cu(f^j(x))\le n-j$, proving that $R$ is a coherent induced time.
	
	Let $x\in\XX_1(\cu)$ such that $f|_{\co_f^+(x)}$. If $f^j(x)\in\co_F^+(x)$, for $j\ge1$, then
	$f^j(x)=$ $F^s(x)=$ $f^{\sum_{i=0}^{s-1}R_{\cu}\circ F^i(x)}(x)$ for some $s\ge1$.
	As $r_n=\sum_{j=0}^{n-j}R_{\cu}\circ F^j(x)$ is a strictly increasing sequence of natural numbers and $\varphi:\NN\ni n\mapsto f^n(x)\in\co_f^+(f(x))$ is a bijection, we get that $j=\varphi^{-1}(F^s(x))=r_s$. This implies, using (P2), that $j\in\cu(x)$.
	On the other hand, if $m\in\cu(x)$, set $s=\max\{j\ge1\,;\,\sum_{i=0}^j R\circ F^j(x)\le m\}$ and $r=\sum_{i=0}^s R\circ F^j(x)$.
	If $r=m$, we get that $f^r(x)=F^s(x)\in\co_F^+(x)$.
	So, suppose that $m\ne s$. In this case, setting $y=F^s(x)$, we must have $n-r<R_{\cu}(y)=\min\cu(y)$.
	Thus, it follows from P1 that $m-r\in\cu(y)$ and this implies that $m-r\ge\min\cu(y)=R_{\cu}(y)$, a contradiction, proving that $r=m$ and so, $f^r(x)\in\co_F^+(x)$.

Now assume that $R:A\to\NN$ is a coherent induced time and that $\cu_R$ is defined by (\ref{EquationtVilma}) and note that the extension $\overline{R}$ is also a coherent induced time. Furthermore,  $A_0:=\bigcap_{j\ge0}F^{-j}(\XX)=\XX$. Thus, it follows from Lemma~\ref{LemmaCanCoh1} that,
if $0\le j<\overline{R}(x)$ for some $x\in\XX$, then $\overline{R}(x)=j+\sum_{k=0}^b\overline{R}\circ {F}^k(f^j(x))$, for some $b\ge1$.
Thus, $\overline{R}(x)-j=\sum_{k=0}^b\overline{R}\circ {F}^k(f^j(x))\in\cu_{R}(f^j(x))$, proving P1 to $\cu_{R}$.
If $n\in\cu_{R}(x)$ and $m\in\cu_{R}(f^n(x))$ then $n=\sum_{j=0}^a \overline{R}\circ{F}^j(x)$ and $m=\sum_{j=0}^b\overline{R}\circ{F}^j(f^n(x))=\sum_{j=0}^b\overline{R}\circ{F}^j({F}^a(x))$.
Thus, $n+m=\sum_{j=0}^{a+b}\overline{R}\circ{F}^j(x)\in\cu_{R}(x)$, proving P2. Finally, it follows from the definition of $\cu_{R}(x)$ that $R_{\cu_{R}}(x)=\min\cu_{R}(x)=\overline{R}(x)=R(x)$ for every $x\in A$.
\end{proof}

\section{Coherent blocks and synchronization results}

In all this section, $(\XX,\mathfrak{A})$ is a measurable space and $f:\XX\to\XX$ is a measurable map.

Theorem~\ref{TheoremPlissBlock} below is in the core of this paper and it relates the natural density of a coherent schedule of events with the measure of the {\em coherent block} associated to it. The concept of coherent blocks  (Definition~\ref{CoerBlo}) was inspired by the idea of {\em Pesin sets and Hyperbolic Blocks} of Pesin theory (see, for instance, \cite{PuS}) and also by the properties of the set that survives after the ``redundancy elimination algorithm'' in the work of Castro \cite{Cas}.
The main property of a coherent block $B_{\cu}$ associated to a coherent schedule $\cu$ is that $$f^j(x)\in B_{\cu} \implies j\in\cu(x)$$ for every $j\ge1$ and $x\in\XX$.

\begin{Definition}[Coherent block]\label{CoerBlo}
If $f:\XX\to\XX$ and injective map and $\cu$ a $f$-coherent schedule on $\XX$ then define the {\bf\em  $f$-coherent block for $\cu$} or, for short, the {\bf\em $\cu$-block}, as $$B_{\cu}=\bigg\{x\in\bigcap_{n\ge0}f^{n}(\XX)\,;\,j\in\cu(f^{-j}(x))\,\forall\,j\ge1\bigg\}.$$

On the other hand, if $f:\XX\to\XX$ is a non injective, and $\cu$ a $f$-coherent schedule on $\XX$, 
it is easy to check that $\overline{\cu}:\XX_f\to\NN$, given by $\overline{\cu}(\overline{x})=\cu(\pi(\overline{x}))$, is a $\overline{f}$-coherent schedule on $\XX_f$, where $\overline{f}:\XX_f\to\XX_f$ is the natural extension of $f$ and $\pi$ is the natural  projection (see Section~\ref{SectionNatExt}). Therefore, when $f$ is non-injective, define  the {\bf\em $\cu$-block} as $B_{\cu}=\pi(B_{\overline{\cu}})$.

\end{Definition}

By Purves' result \cite{Pu}, if one assumes that $\XX$ is a complete separable metric space then, in Theorem~\ref{TheoremPlissBlock}, $f$ can be taken only as being injective and measurable.

Given any bimeasurable map $f:\XX\to\XX$ and $f$-coherent schedule $\cu$, define the {\bf\em $\cu$-absorbing set} as $$\ca_{\cu}:=\ca_F=\bigcap_{n\ge0}F^n\bigg(\bigcap_{j\ge0}F^{-j}(\XX)\bigg),$$
where $F(x)=f^{\min\cu(x)}(x)$ is the first $\cu$-time map. Using the absorbing set instead of the coherent block, we can extend the result above to non-injective maps.

\begin{Lemma}\label{LemmaAbsorbing}
If $f$ is injective and $\cu$ a $f$-coherent schedule, then $\ca_{\cu}\subset B_{\cu}$.
\end{Lemma}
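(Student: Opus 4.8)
The plan is to unwind the definition $\ca_{\cu} = \ca_F = \bigcap_{n\ge 0}F^n(A_0)$, where $F(x) = f^{R_{\cu}(x)}(x)$ is the first $\cu$-time map and $A_0 := \bigcap_{j\ge 0}F^{-j}(\XX)$, and then to check directly that every $x \in \ca_{\cu}$ satisfies the two defining requirements of $B_{\cu}$: that $x \in \bigcap_{n\ge 0}f^n(\XX)$ and that $j \in \cu(f^{-j}(x))$ for every $j \ge 1$.

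First I would record two consequences of Lemma~\ref{LemmaCoerenteToCoerente00} and its proof: (a) $A_0 = \XX_{\infty}(\cu)$; and (b) for every $z \in A_0$ and $n \ge 1$, writing $R_n(z) := \sum_{i=0}^{n-1}R_{\cu}(F^i(z))$, one has $F^n(z) = f^{R_n(z)}(z)$, $R_n(z) \ge n$ (each summand is $\ge 1$), and $R_n(z) \in \cu(z)$. Now fix $x \in \ca_{\cu}$. For each $n \ge 0$ choose $x_n \in A_0$ with $F^n(x_n) = x$; then $x = f^{R_n(x_n)}(x_n) \in f^{R_n(x_n)}(\XX) \subseteq f^n(\XX)$, so $x \in \bigcap_{n\ge 0}f^n(\XX)$, and by injectivity of $f$ the point $f^{-j}(x)$ is well defined for each $j \ge 1$. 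To verify the remaining condition, fix $j \ge 1$, choose $n \ge j$, and set $m := R_n(x_n) \ge n \ge j$. Injectivity of $f^j$ applied to $f^j(f^{m-j}(x_n)) = f^m(x_n) = x$ gives $f^{-j}(x) = f^{m-j}(x_n)$. Since $m \in \cu(x_n)$ by (b) and $m > m-j \ge 0$, property (P1) with $\ell = m-j$ yields $j = m - (m-j) \in \cu(f^{m-j}(x_n)) = \cu(f^{-j}(x))$. As $j \ge 1$ was arbitrary, $x \in B_{\cu}$, proving $\ca_{\cu} \subseteq B_{\cu}$. (In passing this also shows $f^{-j}(x) = f^{m-j}(x_n) \in f^{m-j}(A_0) \subseteq A_0$, i.e. the whole backward orbit of a point of $\ca_{\cu}$ stays in $A_0 = \XX_{\infty}(\cu)$.)

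There is no real obstacle here; the argument is essentially bookkeeping on top of Lemma~\ref{LemmaCoerenteToCoerente00}. The only place where some care is needed is the injectivity hypothesis on $f$: it is what makes $f^{-j}(x)$ unambiguous (so that $B_{\cu}$ is meaningful at all) and what lets us identify $f^{-j}(x)$ with the backward iterate $f^{m-j}(x_n)$ of the chosen $F$-preimage $x_n$, which is precisely the point at which property (P1) can be invoked.
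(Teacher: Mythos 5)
Your proof is correct and follows essentially the same route as the paper's: pick an $F$-preimage of $x$ in $A_0$ deep enough, note that the accumulated return time $R_n(x_n)$ lies in $\cu(x_n)$ (iterated (P2)), apply (P1) to slide down to the $j$-th backward iterate, and use injectivity of $f$ to identify that iterate with $f^{-j}(x)$. The only cosmetic difference is that the paper fixes $n=j+1$ (so $t=R_{j+1}(y)>j$ automatically) while you allow any $n\ge j$, which changes nothing of substance.
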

\begin{proof} Let $A_0:=\bigcap_{j\ge0}F^{-j}(\XX)$.
Consider any $x\in\ca_{\cu}$. Given $j\ge1$, choose $y\in A_0$ such that $F^{j+1}(y)=x$.
By (P2), $t:=\sum_{i=0}^{j}R_{\cu}\circ F^i(y)\in\cu(y)$.
As $t>j$, it follows from (P1) that $j\in\cu(f^{t-j}(y))$.
So, because $f$ is injective and $f^{j}(f^{t-j}(y))=f^t(y)=x$, we get that $f^{-j}(x)=f^{t-j}(y)$, proving that $f^{-j}(x)$ is well defined for every given $j\ge1$ and $j\in\cu(f^{t-j}(y))=\cu(f^{-j}(x))$. That is, $x\in  B_{\cu}$.
\end{proof}

\begin{Lemma}\label{LemmaPlissBlock} 
Suppose that $f$ is injective and $\cu$ a coherent schedule of events.
If $F:\XX_{1}(\cu)\to\XX$ is the first $\cu$-time map, then the following statements are true.
\begin{enumerate}
\item \label{ItemTheoremPlissBlock67876}
$F(B_{\cu}\cap\XX_1(\cu))\subset B_{\cu}$, $F|_{B_{\cu}\cap\XX_1(\cu)}$ is injective and it is the first return map, by $f$, to $B_{\cu}$.
\item\label{ItemTheoremPlissBlock456987}  If $f$ is bimeasurable then $B_{\cu}$ is a measurable set and $B_{\cu}=\ca_{\cu}\mod\mu$, for every ergodic $f$ invariant probability $\mu$.
\end{enumerate}
\end{Lemma}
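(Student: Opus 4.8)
The plan is to reduce both parts of the lemma to a single combinatorial identity describing how the $f$-orbit of a point of $B_{\cu}$ meets $B_{\cu}$, and then to read off the statements from this identity together with Lemma~\ref{LemmaAbsorbing} and the folklore results of Section~\ref{LiftRes}. The identity I would establish first is that for every $x\in B_{\cu}$,
$$\{n\ge1\,;\,f^n(x)\in B_{\cu}\}=\cu(x).$$
For "$\subseteq$" I would use that injectivity of $f$ makes $f^{-n}$ single-valued on $f^n(\XX)$, so that if $f^n(x)\in B_{\cu}$ then choosing $j=n$ in the defining condition of $B_{\cu}$ at the point $f^n(x)$ gives $n\in\cu(f^{-n}(f^n(x)))=\cu(x)$. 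For "$\supseteq$", given $n\in\cu(x)$ one first notes that $f^n(x)$ again lies in $\bigcap_{m\ge0}f^m(\XX)$ (since $f$ carries this set into itself), and then checks $j\in\cu(f^{-j}(f^n(x)))$ for all $j\ge1$ by splitting into the case $1\le j\le n$, where $f^{-j}(f^n(x))=f^{n-j}(x)$ and (P1) applies to $n\in\cu(x)$, and the case $j>n$, where one sets $y=f^{-(j-n)}(x)$ (available because $x\in B_{\cu}$), uses $j-n\in\cu(y)$ from the definition of $B_{\cu}$ together with $n\in\cu(x)=\cu(f^{j-n}(y))$, and concludes $j=(j-n)+n\in\cu(y)$ by (P2).

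Granting this identity, part (1) follows quickly. For $x\in B_{\cu}\cap\XX_1(\cu)$ the first return time of $x$ to $B_{\cu}$ is $\min\{n\ge1\,;\,f^n(x)\in B_{\cu}\}=\min\cu(x)=R_{\cu}(x)$, so $F(x)=f^{R_{\cu}(x)}(x)\in B_{\cu}$; and for $x\in B_{\cu}\setminus\XX_1(\cu)$ one has $\cu(x)=\emptyset$, hence the $f$-orbit of $x$ never returns to $B_{\cu}$. Thus $F(B_{\cu}\cap\XX_1(\cu))\subset B_{\cu}$, the domain of the first return map of $f$ to $B_{\cu}$ is exactly $B_{\cu}\cap\XX_1(\cu)$, and on it that return map is $F$. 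Injectivity of $F|_{B_{\cu}\cap\XX_1(\cu)}$ I would obtain directly from injectivity of $f$: if $F(x)=F(y)$ with, say, $R_{\cu}(x)\le R_{\cu}(y)$, cancelling $f^{R_{\cu}(x)}$ yields $x=f^{k}(y)$ with $k=R_{\cu}(y)-R_{\cu}(x)$, and $k\ge1$ is impossible because then $f^k(y)\in B_{\cu}$ with $1\le k<R_{\cu}(y)$, contradicting that $R_{\cu}(y)$ is the first return time of $y$ to $B_{\cu}$; hence $k=0$ and $x=y$.

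For part (2), measurability of $B_{\cu}$ comes from bimeasurability of $f$: each $f^j$ is injective and bimeasurable, so $D_j:=f^j(\XX)$ is measurable and the partial inverse $f^{-j}\colon D_j\to\XX$ is measurable (the preimage of a measurable $E$ is $f^j(E)$); since $\{U\in2^{\NN}\,;\,j\in U\}$ is clopen in $2^{\NN}$, each set $\{x\in D_j\,;\,j\in\cu(f^{-j}(x))\}$ is measurable, and $B_{\cu}=\bigcap_{j\ge1}\{x\in D_j\,;\,j\in\cu(f^{-j}(x))\}$ is a countable intersection of measurable sets (this intersection lying automatically inside $\bigcap_j D_j=\bigcap_{m\ge0}f^m(\XX)$). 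For the identity $B_{\cu}=\ca_{\cu}\mod\mu$: Lemma~\ref{LemmaAbsorbing} gives $\ca_{\cu}\subset B_{\cu}$, so it suffices to prove $\mu(B_{\cu}\setminus\ca_{\cu})=0$, which is trivial when $\mu(B_{\cu})=0$. If $\mu(B_{\cu})>0$, part (1) lets me apply Lemma~\ref{FolkloreResultA} with $U=B_{\cu}$, giving that $\nu:=\frac{1}{\mu(B_{\cu})}\mu|_{B_{\cu}}$ is an $F$-invariant (indeed $F$-ergodic) probability; since every $F$-invariant probability is concentrated on $\ca_F=\ca_{\cu}$ (Lemma~\ref{LemmaTeoErg876y7ui} of the Appendix, as invoked in the proof of Theorem~\ref{HalfLifting}), we get $\nu(\ca_{\cu})=1$, hence $\mu(B_{\cu})=\mu(B_{\cu}\cap\ca_{\cu})$ and $\mu(B_{\cu}\setminus\ca_{\cu})=0$.

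The hard part will be the $j>n$ branch of the inclusion "$\supseteq$" in the opening identity: it is the only step that genuinely exploits that $x$ itself — and not merely $f^n(x)$ — belongs to $B_{\cu}$, and it is where (P2) is needed in the form "$(j-n)\in\cu(y)$ and $n\in\cu(f^{j-n}(y))\Rightarrow j\in\cu(y)$". Everything else is careful bookkeeping with the partial inverses $f^{-j}$ (legitimate because $f$ is injective) plus direct citations of Lemma~\ref{LemmaAbsorbing}, Lemma~\ref{FolkloreResultA}, and the concentration-on-$\ca_F$ lemma for $F$-invariant probabilities.
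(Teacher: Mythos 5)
Your proof is correct and follows essentially the same route as the paper's: both hinge on the same three-way case analysis using (P1) and (P2) (with the $j>n$ branch being the one that genuinely uses (P2) and $x\in B_{\cu}$), the same citations of Lemma~\ref{LemmaAbsorbing} and Lemma~\ref{FolkloreResultA}, and the same concentration argument giving $\nu(\ca_{\cu})=1$. The only difference is organizational: you package the combinatorics into the single clean identity $\{n\ge1\,;\,f^n(x)\in B_{\cu}\}=\cu(x)$ for $x\in B_{\cu}$ and then read off both the forward invariance and the first-return-time characterization from it, whereas the paper performs these two verifications separately with isomorphic manipulations.
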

\begin{proof}
	
{\bf\em Item (\ref{ItemTheoremPlissBlock67876}).} 
Let $x\in B_{\cu}\cap\XX_1(\cu)$ and $j\ge1$. If $0<j<R_{\cu}(x)$, it follows from (P1) that $j=R_{\cu}(x)-(R_{\cu}(x)-j)\in\cu(f^{R_{\cu}(x)-j}(x))=\cu(f^{-j}(F(x)))$.
Of Course that $R_{\cu}(x)\in\cu(x)=\cu(f^{-R_{\cu}(x)}(F(x)))$.
If $j> R_{\cu}(x)$ set $y=f^{-j}(F(x))=f^{-(j-R_{\cu}(x))}(x)$. 
As $j-R_{\cu}(x)\in\cu(f^{-(j-R_{\cu}(x))}(x))$ $=$ $\cu(y)$ and $R_{\cu}(x)\in\cu(x)=\cu(f^{j-R_{\cu}(x)}(y))$, it follows from (P2) that $j=(j-R_{\cu}(x))+R_{\cu}(x)\in\cu(y)$.
Thus, we conclude that $F(B_{\cu}\cap\XX_1(\cu))\subset B_{\cu}$. 

As $f$ is injective, the injectiveness of $F|_{B_{\cu}}$ follows from the fact that $F|_{B_{\cu}\cap\XX_{1}(\cu)}$ is the first return map to $B_{\cu}$ by $f$.
So, to complete the prove of item (1), we only need to show this fact.
Therefore, suppose that $F|_{B_{\cu}\cap\XX_1(\cu)}$ is not the first return map to $B_{\cu}$.
So, there is $x\in B_{\cu}\cap\XX_1(\cu)$ and $0<j<R_{\cu}(x)$ such that $f^j(x)\in B_{\cu}$.
As $y:=f^j(x)\in B_{\cu}$, it follows from (P2) that $j\in\cu(f^{-j}(y))=\cu(x)$.
This implies that $R_{\cu}(x)=\min\cu(x)\le j$, which is a contradiction.

{\bf\em Item~(\ref{ItemTheoremPlissBlock456987}).} If $f$ is bimeasurable then $F$ is measurable map. This implies that $\bigcap_{j\ge0}F^{-j}(\XX)$ and $\bigcap_{j\ge0}f^{j}(\XX)$ are measurable.
On the other hand, note that $j\in\cu(f^{-j}(x))$ $\iff$ $x\in(\pi_j\circ i\circ \cu \circ f^{-j})^{-1}(1)$, where $\pi_j:\Sigma_2^+\to\{0,1\}$ is the projection $\pi_j(z)=z(j)$ and $i:2^n\to\Sigma_2^+$ is the bijection given by (\ref{EquationItinerario}).
As a composition of measurable maps is measurable, $\pi_j\circ i\circ \cu \circ f^{-j}$ is measurable and so, $$B_{\cu}=\bigg(\bigcap_{n\ge0}f^{n}(\XX)\bigg)\cap\bigg(\bigcap_{n\ge0}(\pi_j\circ i\circ \cu \circ f^{-j})^{-1}(1)\bigg)\text{ is 
 measurable.}$$

As $\ca_{\cu}\subset B_{\cu}$ (Lemma~\ref{LemmaAbsorbing}), we may assume that $\mu(B_{\cu})>0$, otherwise $\mu(B_{\cu})=\mu(\ca_{\cu})=0$. 
As $F|_{B_{\cu}\cap\XX_{1}(\cu)}$ is the first return map to $B_{\cu}$ by $f$ (see item~(\ref{ItemTheoremPlissBlock67876})), it follows from Lemma~\ref{FolkloreResultA} that $\nu:=\frac{1}{\mu(B_{\cu})}\mu|_{B_{\cu}}$ is an ergodic $F$ invariant probability.
Hence, it follows from Theorem~\ref{HalfLifting} that $\ca_{\cu}\subset B_{\cu}$ is a $F$ forward invariant set with $\nu(\ca_{\cu})=\mu(\ca_{\cu})/\mu(B_{\cu})>0$.
Thus, by the ergodicity and $F$ invariance of $\nu$, we get that $\nu(\ca_{\cu})=1$ and so, $\mu(B_{\cu}\triangle\ca_{\cu})=\mu(B_{\cu}\setminus\ca_{\cu})+\mu(\ca_{\cu}\setminus B_{\cu})=\mu(B_{\cu})-\mu(\ca_{\cu})=0$.

\end{proof}

\begin{T}
\label{TheoremPlissBlock} Let $(\XX,\mathfrak{A})$ be a measurable space, $f:\XX\to\XX$ a bimeasurable injective map and $\cu$ a $f$ coherent schedule of events.
If $\mu$ is an ergodic $f$-invariant probability then
$$\mu(B_{\cu})=\dd_{\NN}(\cu(x)):=\lim_{n\to\infty}\frac{1}{n}\#(\{1,2,3,\cdots,n\}\cap\cu(x))$$ for $\mu$-almost every $x\in\XX$. 
\end{T}

\begin{proof}
As $\mu(B_\cu)\le\dd_{\NN}^+(\cu(x))$, if $\mu(\XX_{\cu}^+)=0$ then $\mu(B_{\cu})=\dd_{\NN}(\cu(x))=\dd_{\NN}^+(\cu(x))=0$ for $\mu$ almost every $x$.
Therefore, we may assume that  $\mu(\XX_{\cu}^+)>0$.
It follows from  Lemma~\ref{Lemmakiuyghjko76} that $\dd_{\NN}^+(\cu(x))=\dd_{\NN}^+(\cu(f(x)))$ for every  $x\in\XX_{\cu}^+$.
Thus, as $\mu$ is ergodic and $f$ invariant, we get that $\dd_{\NN}^+(\cu(x))>0$ for $\mu$ almost every $x\in\XX$, that is, $\XX=\XX_{\cu}^+\mod\mu$.
As a consequence, it follows from Lemma~\ref{LemmaCoerenteToCoerente00} that $$\XX=\XX_{\cu}^+=\bigcap_{n\ge0}F^{-n}(\XX)\mod\mu.$$
As $f$ is injective, $f|_{\co_f^+(x)}$ is injective for every $x\in\XX$. 
Thus, it follows from Lemma~\ref{LemmaCoerenteToCoerente}, that $\cu(x)=\{j\in\NN\,;\,f^j(x)\in\co_F^+(x)\}$ for $\mu$ almost every $x\in\XX$.
So,
\begin{equation}\label{Equationjf45k}
\frac{1}{n}\#\{1\le j\le n\,;\,f^j(x)\in\co_F^+(x)\}=\frac{1}{n}\#\big(\{1,\cdots,n\}\cap\cu(x)\big)
\end{equation}
for $\mu$ almost every $x\in\XX$.
In particular,
$$  \theta_F(x)=\limsup_{n\to\infty}\frac{1}{n}\#\{0\le j<n\,;\,f^j(x)\in\co_F^+(x)\}=\dd_{\NN}^+(\cu(x))>0$$
for $\mu$ almost every $x\in\XX$.
Therefore, it follows from Lemma~\ref{LemmaCalculoBOM}, that $\liminf_{n}\frac{1}{n}\sum_{j=0}^{n-1}R_{\cu}\circ F^j(x)<+\infty$ for $\mu$ almost every $x\in\XX$.
Thus, it follows from Theorem~\ref{TheoremLift} that $\mu$ is $F$-liftable.
In particular, there exists a $F$ invariant probability $\nu_0\ll\mu$.
From Theorem~\ref{HalfLifting}, we get that $\mu(\ca_{\cu})>0$ and so, $\mu(B_{\cu})\ge\mu(\ca_{\cu})>0$.
As $F|_{\XX_1(\cu)\cap B_{\cu}}$ is the first return map to $B_{\cu}$ by $f$ (Lemma~\ref{LemmaPlissBlock}), we get that 
From Lemma~\ref{FolkloreResultA} that $\nu:=\frac{1}{\mu(B_{\cu})}\mu|_{B_{\cu}}$ is an ergodic $F$ invariant probability.

Hence, it follows from Kac's result (Theorem~\ref{KacResult}) that  $\int R_{\cu}|_{\XX_1(\cu)\cap B_{\cu}}d\mu=1$ and so, $\int R_{\cu} d\nu=\frac{1}{\mu(B_{\cu})}$.
 By Birkhoff, $\lim_n\frac{1}{n}\sum_{j=0}^{n-1}R_{\cu}\circ F^j(x)=\int R_{\cu} d\nu$ for $\mu$ almost every $x\in B_{\cu}$.
Thus, using  Lemma~\ref{LemmaCalculoBOM},  (\ref{Equationjf45k}) and the $f$ ergodicity of $\mu$, we get that 
 $$\mu(B_{\cu})=\frac{1}{\lim_n\frac{1}{n}\sum_{j=0}^{n-1}R_{\cu}\circ F^j(x)}=\lim_{n\to\infty}\frac{1}{n}\#\{0\le j<n\,;\,f^j(x)\in\co_F^+(x)\}=$$
 $$=\lim_{n\to\infty}\frac{1}{n}\#\big(\{1,\cdots,n\}\cap\cu(x)\big)=\dd_{\NN}(\cu(x))
=\dd_{\NN}^+(\cu(x))>0$$
 for $\mu$ almost every $x\in\XX$.  
\end{proof}

\begin{Remark} We may have that $F(B_{\cu}\cap\XX_{\infty}(\cu))\subsetneqq B_{\cu}\cap\XX_{\infty}(\cu)$, as one can see in the example illustrated by Figure~\ref{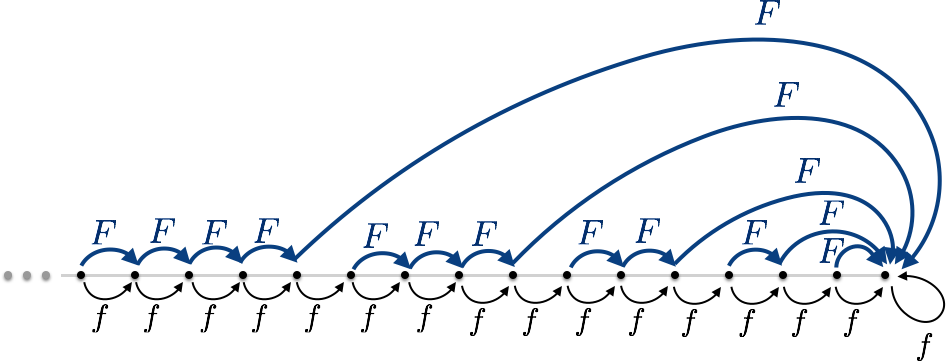}.
	
\begin{figure}
  \begin{center}\includegraphics[scale=.3]{NaoCadeia.png}\\
 \caption{In this picture, the black dots represent the (total) orbit of a point $p$. The arrows labeled with $f$ (or $F$) indicate how a point moves under the action of $f$ (or $F$). The induced map $F$ is the first $\cu$-time for a coherent schedule of events $\cu$. The dot, say $p$, at the extreme right is a fixed point to $f$. Although $p$  belongs to $B_{\cu}\cap\XX_{\infty}(\cu)$, we have that $F^{-1}(p)\cap B_{\cu}=\emptyset$.}\label{NaoCadeia.png}
  \end{center}
\end{figure}
\end{Remark}

\begin{Corollary}
\label{CorollaryPlissBlock} 
Let $(\XX,\mathfrak{A})$ be a measurable space, $f:\XX\to\XX$ a bimeasurable map and $\cu$ a $f$ coherent schedule of events.
If $\mu$ is an ergodic $f$-invariant probability, then $$\mu(B_{\cu})\ge\dd_{\NN}(\cu(x))=\lim_{n\to\infty}\frac{1}{n}\#(\{1,2,3,\cdots,n\}\cap\cu(x))$$ for $\mu$-almost every $x\in\XX$.
Moreover, $F(B_{\cu})\subset B_{\cu}$, where $F(x)=f^{\min\cu(x)}(x)$.
\end{Corollary}
\begin{proof}
Let $\overline{\mu}$ be the ergodic $\overline{f}$-invariant probability given by Proposition~\ref{PropRokhlin}.
Note that $\overline{f}$ is an injective bimeasurable map, as required by Theorem~\ref{TheoremPlissBlock}.
Furthermore, $\mu=\pi_*\overline{\mu}=\overline{\mu}\circ\pi^{-1}$ and $f\circ \pi=\pi\circ\overline{f}$.

Define
	$\overline{\cu}:\XX_f\to2^{\NN}$ by 
	$\overline{\cu}=\cu\circ \pi$.
Note that $R_{\overline{\cu}}(\overline{x}):=R_{\cu}(\pi(\overline{x}))=R_{\cu}(\overline{x}(0))$, where $R_{\overline{\cu}}$ is the first $\overline{\cu}$-time.
Note that $F\circ\pi=\pi\circ\overline{F}$, where $\overline{F}(\overline{x})=\overline{f}^{R_{\overline{\cu}}(\overline{x})}(\overline{x})$ and so, $F(B_{\cu})=F(\pi(B_{\overline{\cu}}))=\pi(\overline{F}(B_{\overline{\cu}}))\subset\pi(B_{\overline{\cu}})=B_{\cu}$.

As $\overline{f}$ is injective, and $\overline{\cu}$ is a  $\overline{f}$ is a coherent schedule of events it follows form Theorem~\ref{TheoremPlissBlock} that $\overline{\mu}(B_{\overline{\cu}})=\dd_{\NN}(\overline{\cu}(\overline{x}))$ for $\overline{\mu}$-almost every $\overline{x}\in\XX_f$.
Hence,  
as $\dd_{\NN}(\overline{\cu}(\overline{x}))=\dd_{\NN}(\pi(\overline{\cu}(\overline{x})))$ for every $\overline{x}\in\XX_f$, $\dd_{\NN}(\cu(x))$ exists for $\mu$-almost every $x\in\XX$ and $$\mu(B_{\cu})=\mu(\pi(B_{\overline{\cu}}))=\overline{\mu}(\pi^{-1}(\pi(B_{\overline{\cu}})))\ge \overline{\mu}(B_{\overline{\cu}})=\dd_{\NN}(\cu(x)),$$
for $\mu$-almost every $x\in\XX$.
\end{proof}

\subsection{Proof of Theorem~\ref{TheoremSynINV}}
\begin{proof}
For $j=0,\cdots,m$, write $R_j(x)=R_{\cu_j}(x)$ and $F_j(x)=f^{R_{j}(x)}(x)$ for every $x\in A_j:=\XX_1(\cu_j)=\{x;\#\cu_j(x)\ge1\}$. 

If $f$ is an injective bimeasurable map, the coherent block $B_{\cu_j}$ is well defined for each $j\in\{0,\cdots,m\}$. Write $B_j=B_{\cu_j}$ $\forall\,0\le j\le m$.
It follows from Theorem~\ref{TheoremPlissBlock} that $\mu(B_j)>0$ for every $0\le j\le m$. Define $\ell_0=0$ and, 
as $\mu$ is ergodic, define $\ell_1=\min\{j\ge n_1\,;\,\mu(B_0\cap f^{-j}(B_1))>0\}$ and inductively define, for every $2\le k\le m$, $$\ell_{k}=\min\{j\ge0\,;\,\mu(B_0\cap f^{-\ell_1}(B_1)\cap\cdots\cap f^{-\ell_{k-1}}(B_{k-1})\cap f^{-j}(B_k))>0\}.$$
Taking $\BB=B_0\cap f^{-\ell_1}(B_1)\cap\cdots\cap f^{-\ell_{m}}(B_m)$, we get that $\theta:=\mu(\BB)>0$.
As $\mu$ is ergodic $\lim_{n\to\infty}\frac{1}{n}\#\{j\ge0\,;\,f^j(x)\in\BB\}=\theta$ for $\mu$ almost every $x\in\XX$.
Furthermore, if $f^j(x)\in\BB\subset B_0$ then $f^{j+\ell_k}(x)\in B_k$ for every $0\le k\le m$.
Thus,  $(j,j+\ell_1,\cdots,j+\ell_m)\in\cu_0(x)\times\cdots\times\cu_m(x)$ for $\mu$ almost every $x$ and any $j\ge1$ such $f^j(x)\in\BB$, which concludes the proof of the theorem when $f$ is an injective map.

If $f$ is bimeasurable but not injective, consider the natural extension $\overline{f}:\XX_f\to\XX_f$ of $f$ and let $\overline{\mu}$ be the ergodic $\overline{f}$ invariant probability given by Proposition~\ref{PropRokhlin}.
	
Note that $\overline{f}$ is an injective bimeasurable map, as required by Theorem~\ref{TheoremPlissBlock}.
	Furthermore, $\mu=\pi_*\overline{\mu}=\overline{\mu}\circ\pi^{-1}$ and $f\circ \pi=\pi\circ\overline{f}$.

	Let
	$\overline{\cu}_k:\overline{A}_k\to2^{\NN}$ and $\overline{R}_k:\overline{A}_k\to\NN$ be given by 
	$\overline{\cu}_k=\cu_k\circ \pi$ and
	$\overline{R}_k=R_k\circ\pi$, where $\overline{A}_k=\pi^{-1}(A_k)$.
	Let $\overline{F}_k:\overline{A}\to\overline{A}$ be given by $\overline{F}_k(\overline{x})=\overline{f}^{\overline{R}_k(\overline{x})}(\overline{x})$. Note that $F_k\circ\pi=\pi\circ\overline{F}_k$.

As $\overline{f}$ is injective, let $\overline{B}_k$ be the coherent block for $\overline{\cu}_k$. 	
Thus, it follows from the injective case that there are $\ell_1,\cdots,\ell_m\ge0$ such that  $\overline{\BB}:=\overline{B}_0\cap \overline{f}^{\,-\ell_1}(\overline{B}_1)\cap\cdots\cap \overline{f}^{\,-\ell_{m}}(\overline{B}_m)$ has $\overline{\mu}$ positive measure and $(j,j+\ell_1,\cdots,j+\ell_m)\in\overline{\cu}_0(\overline{x})\times\cdots\times\overline{\cu}_m(\overline{x})$ whenever $\overline{f}^j(\overline{x})\in\overline{\BB}$. 

As $\mu=\overline{\mu}\circ\pi^{-1}$, for $\mu$ almost every $x\in\XX$ there is a $\overline{x}\in\pi^{-1}(x)$ such that $\lim_n\frac{1}{n}\#\{0\le j<n\,;\,\overline{f}^j(\overline{x})\in\overline{\BB}\}=\overline{\mu}(\overline{\BB})$. Moreover, as $\overline{f}^n(\overline{x})\in\overline{\BB}$ $\implies$ $(j,j+\ell_1,\cdots,j+\ell_m)\in\overline{\cu}_0(\overline{x})\times\cdots\times\overline{\cu}_m(\overline{x})$ $\iff$ $(j,j+\ell_1,\cdots,j+\ell_m)\in\cu_0(x)\times\cdots\times\cu_m(x)$, we get that 
$$\lim_n\frac{1}{n}\#\big\{0\le j<n\,;\,(j,j+\ell_1,\cdots,j+\ell_{m})\in\cu_0(x)\times\cdots\times\cu_m(x)\big\}=$$
$$=\lim_n\frac{1}{n}\#\big\{0\le j<n\,;\,(j,j+\ell_1,\cdots,j+\ell_{m})\in\overline{\cu}_0(\overline{x})\times\cdots\times\overline{\cu}_m(\overline{x})\big\}=\overline{\mu}(\overline{\BB})>0$$
for $\mu$ almost every $x\in\XX$. Thus, taking $\theta=\overline{\mu}(\BB)$, we conclude the proof.
\end{proof}

\section{Pointwise synchronization}\label{SectionPointwiseSynchronization}

In this section, unless otherwise noted, $(\XX,\mathfrak{A})$ is a measurable space and $f:\XX\to\XX$ a measurable map.

\begin{Lemma}\label{Lemmahdhhg620}
Let $\varphi:\NN\times\XX\to[-\infty,+\infty]$ be a $f$ sup-additive cocycle. 
If $\mu$ is a $f$ invariant probability, $\int_{x\in\XX}|\varphi(1,x)|d\mu<+\infty$ and $\liminf_n\frac{1}{n}\varphi(n,x)\ge\lambda\in(0,+\infty)$ for $\mu$ almost every $x\in\XX$, then there is a $\ell_0\ge1$ such that
$$\int_{x\in\XX}\varphi(\ell,x)d\mu\ge \frac{\lambda}{4}\ell$$
for all $\ell\ge\ell_0$. 
\end{Lemma}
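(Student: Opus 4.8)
The plan is to exploit the super-additivity together with Kingman's subadditive ergodic theorem applied to $-\varphi$ (equivalently, Birkhoff and Fekete-type arguments). First I would observe that by super-additivity the sequence $n\mapsto\int_{\XX}\varphi(n,x)\,d\mu$ is \emph{super-additive} as a sequence of (extended) real numbers: using $f$-invariance of $\mu$,
\[
\int_{\XX}\varphi(n+m,x)\,d\mu\ge\int_{\XX}\varphi(n,x)\,d\mu+\int_{\XX}\varphi(m,f^n(x))\,d\mu=\int_{\XX}\varphi(n,x)\,d\mu+\int_{\XX}\varphi(m,x)\,d\mu.
\]
Moreover $\varphi(n,x)\le\sum_{j=0}^{n-1}\varphi(1,f^j(x))$, so $\int_{\XX}\varphi(n,x)\,d\mu\le n\int_{\XX}\varphi(1,x)\,d\mu<+\infty$; in particular each integral is finite (it could a priori be $-\infty$, but super-additivity plus this upper bound, after noting $\int\varphi(1,\cdot)\,d\mu$ is finite, keeps things under control — one checks $\int\varphi(n,\cdot)d\mu\ge n\int\varphi(1,\cdot)d\mu$ is false in general, so instead I use that $\varphi(n,x)-\sum_{j<n}\varphi(1,f^j x)\le 0$ to get integrability of the negative part). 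So set $s_n:=\int_{\XX}\varphi(n,x)\,d\mu\in\RR$; it is a super-additive real sequence bounded above by $Cn$ with $C=\int|\varphi(1,\cdot)|d\mu$.

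Next I would invoke Fekete's lemma for the super-additive sequence $s_n$: the limit $L:=\lim_n s_n/n=\sup_n s_n/n$ exists in $(-\infty,+\infty]$, and here $L\le C<+\infty$. The key point is to identify $L$ with (a bound involving) $\lambda$. For this I apply Kingman's subadditive ergodic theorem to the subadditive cocycle $\psi:=-\varphi$: the limit $\lim_n\frac{1}{n}\psi(n,x)=:\widetilde\psi(x)$ exists $\mu$-a.e., is $f$-invariant, and $\int\widetilde\psi\,d\mu=\lim_n\frac1n\int\psi(n,\cdot)\,d\mu=-L$. Since $\liminf_n\frac1n\varphi(n,x)\ge\lambda$ a.e., we get $\widetilde\psi(x)=-\lim_n\frac1n\varphi(n,x)\le-\lambda$ a.e. (the $\limsup$ also equals the a.e.\ limit by Kingman), hence $-L=\int\widetilde\psi\,d\mu\le-\lambda$, i.e.\ $L\ge\lambda$.

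Finally, since $s_n/n\to L\ge\lambda>\lambda/4$, there is $\ell_0\ge1$ with $s_\ell/\ell\ge\lambda/4$, i.e.\ $\int_{\XX}\varphi(\ell,x)\,d\mu\ge\frac{\lambda}{4}\ell$, for all $\ell\ge\ell_0$, as claimed. (In fact one gets $\ge(\lambda-\varepsilon)\ell$ for any $\varepsilon>0$; the constant $1/4$ is just convenient slack for later use.) The main obstacle I anticipate is purely a matter of care with integrability and $\pm\infty$ values: one must justify that $\int\varphi(n,\cdot)\,d\mu$ is a well-defined real number and that Kingman's theorem applies — this needs the hypothesis $\int|\varphi(1,\cdot)|\,d\mu<+\infty$ to bound $\varphi(n,\cdot)$ above by an integrable function (so $\psi=-\varphi$ is bounded below by an integrable function, the standard hypothesis for Kingman). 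Everything else is Fekete plus Kingman plus the a.e.\ lower bound on the exponent.
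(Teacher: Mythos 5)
Your argument is correct in its overall structure but takes a genuinely different route from the paper. The paper does \emph{not} use Kingman or Fekete: following Lemma~3.5 of \cite{KO}, it splits $\int\frac{1}{\ell}\varphi(\ell,\cdot)\,d\mu$ over the sets $C_\ell=\{x\,;\,\frac1j\varphi(j,x)>\lambda/2\ \forall j\ge \ell\}$ and their complements, bounds the $C_\ell$-part directly by $\frac{\lambda}{2}\mu(C_\ell)$, bounds the complement-part from below by the Birkhoff average $\psi_\ell=\frac1\ell\sum_{i<\ell}\varphi(1,f^i\cdot)$ (using sup-additivity), and then invokes the $L^1$-convergence of $\psi_\ell$ together with $\mu(C_\ell)\to1$ to push the total above $\lambda/4$. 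Your approach instead reduces everything to two named theorems: Fekete for the super-additive scalar sequence $s_n=\int\varphi(n,\cdot)\,d\mu$ gives existence of $L=\lim s_n/n=\sup s_n/n$, and Kingman applied to $\psi=-\varphi$ identifies $-L=\int\widetilde\psi\,d\mu$ with an a.e.\ limit bounded above by $-\lambda$. This is cleaner and in fact gives the sharper conclusion that $s_\ell/\ell$ is eventually $\ge\lambda-\varepsilon$; the paper's more hands-on argument avoids Kingman (using only the $L^1$ Birkhoff corollary) and is more elementary.

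One thing to fix: your integrability discussion has the inequalities reversed. For a \emph{sup}-additive cocycle one has $\varphi(n,x)\ge\sum_{j=0}^{n-1}\varphi(1,f^j(x))$, not $\le$; so the claim $s_n\le n\int\varphi(1,\cdot)\,d\mu$ and hence $L\le C<+\infty$ is unjustified (and in general false — e.g.\ $\varphi(n,x)=n^2$ is sup-additive with $\int|\varphi(1,\cdot)|\,d\mu<\infty$, yet $s_n/n\to+\infty$). The correct statement, from the correct inequality, is that $\varphi(n,\cdot)^-\le\sum_{j<n}\varphi(1,f^j\cdot)^-$ is integrable, so each $s_n$ is well-defined in $(-\infty,+\infty]$; Fekete then gives $L\in(-\infty,+\infty]$, and the case $L=+\infty$ is harmless for the conclusion. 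Likewise for Kingman the hypothesis you actually use is $\psi(1,\cdot)^+=\varphi(1,\cdot)^-\in L^1$, not that $\varphi(n,\cdot)$ is bounded above by an integrable function (it generally isn't). With these sign corrections the proof is sound.
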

\begin{proof} The proof of this lemma was based on the proof of Lemma~3.5 in \cite{KO}.
Let $\XX'=\{x\in\XX\,;\,\liminf_n\frac{1}{n}\varphi(n,x)\ge\lambda>0\}$.

Define, for $x\in\XX'$, $n_x=\min\{n\ge1\,;\,\frac{1}{j}\varphi(j,x)>\lambda/2\;\forall j\ge n\}$ and let $C_j=\{x\in\XX'\,;\,n_x\le j\}$, for $j\ge1$. As $\varphi$ is sup-additive, $\varphi(j,x)\ge\varphi(1,x)+\varphi(j-1,f(x))\ge\varphi(1,x)+\varphi(1,f(x))+\varphi(j-2,f^2(x))$ $=$ $\cdots$ $=$ $\sum_{i=0}^{j-1}\varphi(1,f^i(x))$. Thus,
$$\int_{x\in\XX}\frac{1}{j}\varphi(j,x)d\mu=\int_{x\in C_j}\frac{1}{j}\varphi(j,x)d\mu+\int_{x\in\XX\setminus C_j}\frac{1}{j}\varphi(j,x)d\mu\ge$$
$$\ge\frac{\lambda}{2}\mu(C_j)+ \int_{x\in\XX\setminus C_j}\frac{1}{j}\varphi(j,x)d\mu\ge\frac{\lambda}{2}\mu(C_j)+\int_{x\in\XX\setminus C_j}\frac{1}{j}\sum_{i=0}^{j-1}\varphi(1,f^i(x))d\mu.$$
Let $\psi_n(x):=\frac{1}{n}\sum_{i=0}^{n-1}\varphi(1,f^i(x))$. As $\int_{x\in\XX}|\varphi(1,x)|d\mu<+\infty$, it follows from Birkhoff that $\psi\in\cl^1(\mu)$, where $\psi(x):=\lim_n\psi_n(x)$ for $\mu$ almost every $x\in\XX$.
By Birkhoff (indeed, by a corollary of it, see Corollary 1.14.1 in \cite{Wa}), $\lim_n\int_{\XX}|\psi_n-\psi|d\mu=0$.
So, 
given any $0<\varepsilon<\lambda/12$ there is a $m_0\ge1$ such that $\int_{\XX}|\psi_{\ell}-\psi|d\mu<\varepsilon/2$ for every $\ell\ge m_0$.
As $\int|\psi|d\mu<+\infty$ and $\lim_j\mu(\XX\setminus C_j)=0$, there is $\ell_0\ge m_0$ such that $\mu(C_{\ell})>2/3$ and $\int_{\XX\setminus C_{\ell}}|\psi| d\mu<\varepsilon/2$ for every $\ell\ge\ell_0$.
Thus, $$\int_{\XX\setminus C_{\ell}}|\psi_{\ell}|d\mu\le\int_{\XX\setminus C_{\ell}}|\psi_{\ell}-\psi|d\mu+\int_{\XX\setminus C_{\ell}}|\psi| d\mu\le\int_{\XX}|\psi_{\ell}-\psi|d\mu+\int_{\XX\setminus C_{\ell}}|\psi| d\mu<\varepsilon<\lambda/12,$$
for every $\ell\ge\ell_0$.
Therefore, 
$$\int_{x\in\XX}\frac{1}{\ell}\varphi(\ell,x)d\mu>\frac{\lambda}{2}\frac{2}{3}-\frac{\lambda}{12}=\lambda/4,$$
for every $\ell\ge\ell_0$.
\end{proof}

\begin{Lemma}\label{LemmaDynInt17}
If $\varphi:\XX\to\RR$ is a measurable function and $\mu$ is an ergodic $f$ invariant probability then
$$
  \int|\varphi|d\mu=\int_0^{\infty}\dd_{\NN}^+\big(\{j\ge0\,;\,|\varphi\circ f^j(x)|\ge r\}\big)dr
$$
 for $\mu$ almost every $x$.	
\end{Lemma}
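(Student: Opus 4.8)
The plan is to reduce the statement to the one-dimensional ``layer cake'' identity combined with Birkhoff's ergodic theorem applied to the indicator functions $\mathbf 1_{\{|\varphi|\ge r\}}$. Write $g(r):=\mu(\{|\varphi|\ge r\})$ for $r\ge 0$. Since $\mu$ is a probability, $g$ is non-increasing, hence has an at most countable set $D$ of discontinuity points; moreover, as the sets $\{|\varphi|=r\}$ are pairwise disjoint, $\mu(\{|\varphi|=r\})=0$ for all but countably many $r$, so the layer cake formula gives $\int|\varphi|\,d\mu=\int_0^\infty\mu(\{|\varphi|>r\})\,dr=\int_0^\infty g(r)\,dr$ (equality in $[0,+\infty]$).

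For each fixed $r>0$ the function $\mathbf 1_{\{|\varphi|\ge r\}}$ is bounded, hence in $\cl^1(\mu)$ regardless of whether $\varphi\in\cl^1(\mu)$, so Birkhoff's theorem applies; by ergodicity of $\mu$ there is a set $Z_r$ with $\mu(Z_r)=1$ such that for every $x\in Z_r$ the limit $\lim_n\frac1n\#\{1\le j\le n\,;\,|\varphi\circ f^j(x)|\ge r\}$ exists and equals $g(r)$, and in particular $\dd_\NN^+(\{j\ge0\,;\,|\varphi\circ f^j(x)|\ge r\})=g(r)$. Set $Z:=\bigcap_{r\in\QQ\cap(0,\infty)}Z_r$, so that $\mu(Z)=1$.

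It remains to upgrade this from rational $r$ to all $r$ at once. Fix $x$ and put $h_x(r):=\dd_\NN^+(\{j\ge0\,;\,|\varphi\circ f^j(x)|\ge r\})$. Since $\{j\,;\,|\varphi\circ f^j(x)|\ge r\}$ decreases as $r$ increases and $\dd_\NN^+$ is monotone with respect to inclusion, $h_x$ is non-increasing in $r$, hence measurable. Now let $x\in Z$ and $r\notin D$. Choosing rationals $r'\uparrow r$ and $r''\downarrow r$ and using monotonicity we get $g(r'')=h_x(r'')\le h_x(r)\le h_x(r')=g(r')$ for all such $r',r''$; letting $r'\uparrow r$ and $r''\downarrow r$, continuity of $g$ at $r$ forces $h_x(r)=g(r)$. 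Thus $h_x=g$ Lebesgue-almost everywhere on $(0,\infty)$, and therefore $\int_0^\infty h_x(r)\,dr=\int_0^\infty g(r)\,dr=\int|\varphi|\,d\mu$ for every $x\in Z$, which is the assertion.

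The only point requiring care is this last interchange of ``for $\mu$-almost every $x$'' with ``for every $r$''; it is handled not by any uniform version of Birkhoff's theorem but simply by the joint monotonicity of $h_x$ and $g$ in $r$ together with the countability of the discontinuity set $D$. Since the argument never used integrability of $\varphi$, the identity holds verbatim in $[0,+\infty]$, so the case $\int|\varphi|\,d\mu=+\infty$ is included.
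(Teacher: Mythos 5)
Your argument is correct and follows the same two-step structure as the paper's proof: the layer-cake identity $\int|\varphi|\,d\mu=\int_0^\infty\mu(\{|\varphi|\ge r\})\,dr$, and Birkhoff's theorem applied to the indicators $\mathbf 1_{\{|\varphi|\ge r\}}$. The genuine addition in your write-up is the careful handling of the uncountability of $r$: Birkhoff produces, for each fixed $r$, a full-measure set $Z_r$ of $x$'s, and one cannot simply intersect over all $r\in(0,\infty)$. The paper's proof states the Birkhoff identity for each $r$ and for $\mu$-a.e.\ $x$ and then integrates, leaving this interchange implicit; your proof closes the gap explicitly by restricting to rational $r$, using the joint monotonicity of $h_x$ and $g$, and then exploiting that the monotone function $g$ has at most countably many (hence Lebesgue-null) discontinuities to extend to all $r$. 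The paper's identification of the discontinuity set of $g$ with $W=\{r\,;\,\mu(\{|\varphi|=r\})>0\}$ serves the same purpose structurally (to guarantee Riemann integrability of $\psi=g$), but it is your monotonicity-plus-dense-rationals argument that makes the ``a.e.\ in $x$ simultaneously for a.e.\ $r$'' conclusion fully rigorous. Both versions also correctly cover the case $\int|\varphi|\,d\mu=+\infty$, since the layer-cake and Birkhoff steps require no integrability of $\varphi$ itself.
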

\begin{proof}
Consider $\psi:[0,+\infty)\to[0,1]$ given by $\psi(r)=\mu(\{|\varphi|\ge r\})$ and let $W=\{r\ge0\,;\,\mu(\{\varphi=r\})>0\}$. 
We claim that $\varphi$ is continuous on $[0,+\infty)\setminus W$. Indeed, given $r\in [0,+\infty)\setminus W$, let  $U_{\varepsilon}=\{|\varphi|< r+\varepsilon\}$.
Given $0<a_n\searrow0$, we get that $U_{a_1}\supset U_{a_2}\supset U_{a_n}\supset\cdots$ and that $\bigcap_{n}U_{a_n}=\{|\varphi|\le r\}$.
Thus, by Lemma~\ref{LemmadeMEDIDA} of Appendix, $\lim_n(1-\psi(r+a_n))=\lim_n\mu(U_{a_n})=\mu(\{|\varphi|\le r\})=1-\mu(\{|\varphi|>r\})=1-\psi(r)$ (note that, as $r\notin W$, $\mu(\{|\varphi|>r\})=\mu(\{|\varphi|\ge r\})=\psi(r)$).
That is, $\lim_{\varepsilon\downarrow0}\varphi(r+\varepsilon)=\varphi(r)$, the right-hand limit is equal to $\varphi(r)$.
Similarly, taking $V_{\varepsilon}=\{|\varphi|\ge r-\varepsilon\}$ and a sequence $0<a_n\searrow0$,
we get that $V_{a_1}\supset V_{a_2}\supset V_{a_3}\supset\cdots$ and $\bigcap_n V_{a_n}=\{|\varphi|\ge r\}$.
So, by Lemma~\ref{LemmadeMEDIDA} of Appendix, $\lim_n\psi(r-a_n)=\lim_n\mu(V_{a_n})=\mu(\{|\varphi|\ge r\})=\psi(r)$,
proving that $\lim_{\varepsilon\uparrow0}\varphi(r+\varepsilon)=\varphi(r)=\lim_{\varepsilon\downarrow0}\varphi(r+\varepsilon)$ which implies the continuity of $\psi$ for $r\notin W$.

As $W$ must be countable, we get that its Riemann integral $\int_{r=0}^{r=a}\psi(r)dr=\int_{r=0}^{r=a}\mu(\{|\varphi|\ge r\})dr$ is well defined.
Hence $$\int_{r=0}^{r=\infty}\mu(\{x\in\XX\,;|\varphi(x)|\ge r\})dr=\int_{r\in[0,+\infty)}\mu(\{x\in\XX\,;\,|\varphi(x)|\ge r\})d\leb=$$
$$=(\mu\times\leb)(\{(x,r)\in\XX\times[0,+\infty)\,;\,\varphi(x)\ge r\})=$$
$$=\int_{x\in\XX}\underbrace{\leb(\{r\in[0,+\infty)\,;\,|\varphi(x)|\ge r\})}_{|\varphi(x)|}d\mu=\int|\varphi|d\mu.$$
That is, $\int_0^{\infty}\psi(r)dr=\int|\varphi|d\mu$ and so,
using Birkhoff, we get that $$\dd_{\NN}^+(\{j\ge0\,;\,|\varphi\circ f^j(x)|\ge r\})=\lim\frac{1}{n}\#\{0\le j<n\,;\,f^j(x)\in\{|\varphi|\ge r\}\}=$$
$$=\mu(\{|\varphi|\ge r\})=\psi(r),$$
showing that $\int _0^{\infty}\dd_{\NN}^+(\{j\ge0\,;\,|\varphi\circ f^j(x)|\ge r\})=\int|\varphi|d\mu$.
\end{proof}

Motivated by Lemma~\ref{LemmaDynInt17} above, we consider the following definition.
\begin{Definition}
	The {\bf\em $f$-tail sum} of a function $R:\XX\to\NN\cup\{+\infty\}$ at a point $x\in\XX$ is defined as $$I_f(R)(x):=\sum_{n=1}^{+\infty} \underbrace{\dd_{\NN}^+\big(\{j\ge0\,;\,R\circ f^j(x)\ge n\}\big)}_{a_n}.$$
	If $I_f(R)(x)<+\infty$, then we say that {\bf\em $R$ has a summable $f$-tail at $x$}. As $a_n\in[0,1]$ is a decreasing and bounded sequence, $\lim_{n}a_n$ always exists. Thus, define the {\bf\em $R$-residue} at the orbit of $x$ as
	$$\Res_f(R,x):=\lim_{n\to\infty}\dd_{\NN}^+\big(\{j\ge0\,;\,R\circ f^j(x)\ge n\}\big).$$
\end{Definition}

In order to study the synchronization of coherent schedule with respect to a non invariant probability, whenever $\XX$ is a metric space, define the {\bf\em weak-omega set} of $x$, $\mho_f(x)$, as the set of all accumulating points (on the weak* topology) of $\frac{1}{n}\sum_{j=0}^{n-1}\delta_{f^j(x)}$.

\begin{Lemma}\label{Lemma87gnd2cw}
If $\XX$ is a metric space, $f:\XX\to\XX$ is measurable and 
$R:\XX\to\NN\cup\{+\infty\}$ is lower semicontinuous and $x\in\XX$, then $$\mu(\{R\ge n\})\le\dd_{\NN}^+(\{j\ge0\,;\,R\circ f^j(x)\ge n\})$$ for every $\mu\in\mho_f(x)$ and $n\ge1$.
In particular, $\int_{\XX} R d\mu\le I_f(R)(x)$ for every $\mu\in\mho_f(x)$.
Furthermore,
 if $\Res_f(R,x)=0$ then $\mu(\{R=+\infty\})=0$ $ \forall\mu\in\mho_f(x)$.
\end{Lemma}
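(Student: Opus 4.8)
The plan is to exploit the lower semicontinuity of $R$ to make the sublevel (complement) sets open, hence their indicators upper semicontinuous, so that weak* convergence of empirical measures gives the required one-sided inequality. First I would fix $n\ge1$ and set $G_n=\{R<n\}=\{R\le n-1\}$. Since $R$ is lower semicontinuous, $G_n$ is open, so $\mathbb{1}_{G_n}$ is upper semicontinuous (equivalently, $\mathbb{1}_{\{R\ge n\}}=\mathbb{1}_{\XX\setminus G_n}$ is lower semicontinuous). Given $\mu\in\mho_f(x)$, choose a sequence $n_k\to\infty$ with $\frac{1}{n_k}\sum_{j=0}^{n_k-1}\delta_{f^j(x)}\to\mu$ weakly*. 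By the portmanteau theorem (the open-set half), $\mu(G_n)\le\liminf_k\frac{1}{n_k}\#\{0\le j<n_k\,;\,f^j(x)\in G_n\}$, i.e. $\mu(\{R<n\})\le\liminf_k\frac{1}{n_k}\#\{0\le j<n_k\,;\,R\circ f^j(x)<n\}$. Taking complements, $\mu(\{R\ge n\})\ge\limsup_k\frac{1}{n_k}\#\{0\le j<n_k\,;\,R\circ f^j(x)\ge n\}$ — but that is the wrong direction. So instead I apply portmanteau to the \emph{closed} set: $\{R\ge n\}$ need not be closed, but its complement $G_n$ is open, and I actually want an \emph{upper} bound on $\mu(\{R\ge n\})$; so I use that for the lower semicontinuous function $g=\mathbb{1}_{\{R\ge n\}}$ one has $\int g\,d\mu\le\liminf$ of the averages — wait, lower semicontinuous functions give $\liminf_k\int g\,d\nu_k\ge\int g\,d\mu$ only when bounded below, which gives $\mu(\{R\ge n\})\le\liminf_k(\text{averages})\le\limsup_k(\text{averages})=\dd_{\NN}^+(\{j\ge0\,;\,R\circ f^j(x)\ge n\})$ \emph{provided} $\dd^+_\NN$ is read along the full sequence; since $\dd^+_\NN$ is a $\limsup$ over \emph{all} $n$, it dominates the $\limsup$ along the subsequence $n_k$, which in turn dominates the $\liminf$ along $n_k$. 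This chain yields $\mu(\{R\ge n\})\le\dd_{\NN}^+(\{j\ge0\,;\,R\circ f^j(x)\ge n\})$, the first claim.

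For the ``in particular'' statement I would sum over $n$: by monotone convergence (layer-cake for integer-valued functions), $\int_\XX R\,d\mu=\sum_{n=1}^{\infty}\mu(\{R\ge n\})$ when $R$ is $\NN\cup\{+\infty\}$-valued — treating $+\infty$ correctly, so that $\mu(\{R=+\infty\})>0$ forces $\int R\,d\mu=+\infty$. Summing the pointwise inequality just proved gives $\int_\XX R\,d\mu\le\sum_{n=1}^{\infty}\dd_{\NN}^+(\{j\ge0\,;\,R\circ f^j(x)\ge n\})=I_f(R)(x)$. For the last assertion, suppose $\Res_f(R,x)=0$, i.e. $\lim_n\dd_{\NN}^+(\{j\ge0\,;\,R\circ f^j(x)\ge n\})=0$. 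Then for each $\mu\in\mho_f(x)$ and every $n$, $\mu(\{R=+\infty\})\le\mu(\{R\ge n\})\le\dd_{\NN}^+(\{j\ge0\,;\,R\circ f^j(x)\ge n\})$, and letting $n\to\infty$ the right side tends to $0$, whence $\mu(\{R=+\infty\})=0$.

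The only delicate point is the direction of the portmanteau inequality: one must be careful that lower semicontinuity of $R$ gives \emph{openness} of $\{R<n\}$ (hence an \emph{upper} bound on its complement's measure via $\liminf$ of the empirical frequencies of $\{R\ge n\}$), and then that this $\liminf$ along the chosen subsequence is dominated by the full-sequence $\limsup$ defining $\dd_\NN^+$. No completeness, separability, or compactness of $\XX$ is needed beyond what makes $\mho_f(x)$ a set of Borel probabilities; if $\XX$ is not compact and $\mho_f(x)=\emptyset$ all statements are vacuous. I expect the main (minor) obstacle to be bookkeeping the inequality directions and the $\NN\cup\{+\infty\}$-valued layer-cake identity rather than any real difficulty.
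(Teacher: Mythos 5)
Your final conclusions are correct and the approach is essentially the paper's, but your topological bookkeeping is inverted in a way that happens to cancel out. You claim ``Since $R$ is lower semicontinuous, $G_n=\{R<n\}=\{R\le n-1\}$ is open.'' This is false: for a lower semicontinuous function the \emph{superlevel} sets $\{R>c\}$ are open and the sublevel sets $\{R\le c\}$ are \emph{closed}. The set that is open here is $\{R\ge n\}$ itself, because $R$ is $(\NN\cup\{+\infty\})$-valued so $\{R\ge n\}=\{R>n-1\}$, and this is precisely the observation the paper makes. You then compound the error by asserting that openness of $G_n$ makes $\mathbb{1}_{G_n}$ upper semicontinuous (openness gives \emph{lower} semicontinuity of the indicator). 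These two reversals cancel, so you do land on the true statement that $\mathbb{1}_{\{R\ge n\}}$ is lower semicontinuous, after which the portmanteau estimate $\mu(\{R\ge n\})\le\liminf_k\mu_{n_k}(\{R\ge n\})\le\dd_{\NN}^+(\{j\,;\,R\circ f^j(x)\ge n\})$, the layer-cake summation $\int R\,d\mu=\sum_{n\ge1}\mu(\{R\ge n\})$, and the monotone limit $\mu(\{R=+\infty\})=\lim_n\mu(\{R\ge n\})\le\Res_f(R,x)=0$ are all handled correctly and exactly as in the paper (which invokes its Lemma~\ref{LemmaMedTeoOLD} for the open-set portmanteau bound and Lemma~\ref{LemmadeMEDIDA} for the monotone limit). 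The upshot: fix the single claim to ``$\{R\ge n\}=\{R>n-1\}$ is open because $R$ is lower semicontinuous and integer-valued'' and drop the detour through $G_n$; the rest stands.
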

\begin{proof}
Given $\mu\in\mho_f(x)$, let $n_i\to\infty$ be such that $\mu=\lim_i\mu_i$, where $\mu_i=\frac{1}{n_i}\sum_{j=0}^{n_i-1}\delta_{f^j(x)}$.
As $R$ is lower semicontinuous, $\{R\ge n\}=\{R>n-1\}$ is an open subset of $\XX$.
Applying Lemma~\ref{LemmaMedTeoOLD} of  Appendix, we get that $$\mu(\{R\ge n\})\le\liminf_i\mu_i(\{R>n\})=$$
$$=\liminf_i\frac{1}{n_i}\#\{0\le j<n_i\,;\,f^j(x)\in\{R\ge n\}\}\le\dd_{\NN}^+(\{j\ge0\,;\,R\circ f^j(x)\ge n\}).$$
Thus, as $\int_{\XX}R d\mu=\sum_{n=1}^{\infty}\mu(\{R\ge n\})$,
we get that $\int_{\XX} R d\mu\le I_f(R)(x)$.

Suppose that $\mu\in\mho_f(x)$ and $\Res_f(R,x)=0$.
By Lemma~\ref{LemmadeMEDIDA} of Appendix, $$\mu(\{R=\infty\})=\mu\bigg(\bigcap_{n\ge1}\{R\ge n\}\bigg)=\lim_n\mu(\{R\ge n\})\le\lim_n\dd_{\NN}^+(\{j\ge0\,;\,R\circ f^j(x)\ge n\})=0,$$
concluding the proof.

\end{proof}

\begin{Lemma}\label{LemmaDSAJNSTFS2I}
	Let $\mu$ be a $f$-nonsingular probability (not necessarily an invariant one), $\gamma\in\RR$, $\varphi:\NN\times\XX\to[-\infty,+\infty]$ sup-additive cocycle, $\cu(x)=\{n\in\NN\,;\,\varphi(n,x)\ge n\gamma\}$ and
	$$R(x)=
	\begin{cases}
	\min\cu(x) & \text{if }\,\cu(x)\ne\emptyset\\
	+\infty & \text{if }\,\cu(x)=\emptyset 
	\end{cases}.$$
If $\mu(\{R=+\infty\})=0$, then $\limsup_n\frac{1}{n}\varphi(n,x)\ge\gamma$ for $\mu$ almost every $x\in\XX$. 
\end{Lemma}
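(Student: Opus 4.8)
The plan is to exploit the $f$-nonsingularity of $\mu$ to upgrade ``$\mu(\{R=+\infty\})=0$'' to the simultaneous statement ``for $\mu$-almost every $x$, $R(f^{n}(x))<+\infty$ for every $n\ge0$'', and then to concatenate the finite blocks of sup-additivity along the orbit of such an $x$. First I would set $B:=\bigcup_{n\ge0}f^{-n}(\{R=+\infty\})$. Since $\mu$ is non-singular we have $\mu\circ f^{-1}\ll\mu$, hence iterating, $\mu(f^{-n}(\{R=+\infty\}))=0$ for every $n\ge0$; being a countable union of $\mu$-null sets, $\mu(B)=0$. Thus for every $x\in\XX\setminus B$ one has $R(f^{n}(x))<+\infty$ for all $n\ge0$. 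I expect this to be the only delicate point of the argument: it is exactly here that non-singularity (rather than $f$-invariance) is used, so this step should be carried out carefully.

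Next, fix $x\in\XX\setminus B$ and define recursively $m_0=0$ and $m_{k+1}=m_k+R(f^{m_k}(x))$. Each step is legitimate because $R(f^{m_k}(x))<+\infty$, and since $R\ge1$ everywhere the sequence $(m_k)_{k\ge0}$ is strictly increasing, so $m_k\to+\infty$.

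Then I would prove by induction on $k\ge1$ that $\varphi(m_k,x)\ge m_k\gamma$. For $k=1$ we have $m_1=R(x)=\min\cu(x)\in\cu(x)$, so $\varphi(m_1,x)\ge m_1\gamma$ by the definition of $\cu$. For the inductive step write $r:=R(f^{m_k}(x))$, so that $m_{k+1}=m_k+r$ and $r=\min\cu(f^{m_k}(x))\in\cu(f^{m_k}(x))$, i.e. $\varphi(r,f^{m_k}(x))\ge r\gamma$; combining this with the inductive hypothesis $\varphi(m_k,x)\ge m_k\gamma$ and the sup-additivity of $\varphi$ gives
$$\varphi(m_{k+1},x)=\varphi(m_k+r,x)\ge\varphi(m_k,x)+\varphi(r,f^{m_k}(x))\ge m_k\gamma+r\gamma=m_{k+1}\gamma.$$
(The two summands on the right are $\ge m_k\gamma$ and $\ge r\gamma$ respectively, hence $>-\infty$, so the sum is well defined in $[-\infty,+\infty]$.)

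Finally, dividing by $m_k$ gives $\frac{1}{m_k}\varphi(m_k,x)\ge\gamma$ for every $k\ge1$, and since $m_k\to+\infty$ this forces $\limsup_{n\to\infty}\frac{1}{n}\varphi(n,x)\ge\gamma$. As this holds for every $x\in\XX\setminus B$ and $\mu(B)=0$, the conclusion follows. (Alternatively, one may phrase Steps 1--2 through the schedule language of Section~\ref{SecPlissESy}: $\cu$ satisfies (P2) by sup-additivity, $\{R=+\infty\}=\XX\setminus\XX_1(\cu)$, and the inclusion $\XX\setminus\XX_{\infty}(\cu)\subseteq\bigcup_{n\ge0}f^{-n}(\{R=+\infty\})$ together with non-singularity yields $\mu(\XX_{\infty}(\cu))=1$; but the direct iteration above avoids invoking (P1), which need not hold for this $\cu$.)
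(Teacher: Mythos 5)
Your proof is correct and matches the paper's argument in all essentials: a null union of preimages of $\{R=+\infty\}$ gives a full-measure set on which the induced return times along the orbit are all finite, and sup-additivity is then used to concatenate the finite-block lower bounds $\varphi(R(y),y)\ge\gamma R(y)$ along those times, yielding $\varphi(m_k,x)\ge m_k\gamma$ and hence the $\limsup$ bound. The only cosmetic difference is that you iterate $f$-nonsingularity directly on $f^{-n}(\{R=+\infty\})$, whereas the paper passes through Lemma~\ref{LemmaSingSing} to get $F$-nonsingularity and works with $\bigcup_{j\ge0}F^{-j}(\{R=+\infty\})$; both give a $\mu$-full set and the same computation.
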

\begin{proof}
Suppose that $\mu(\{R=+\infty\})=0$. 
As $\mu$ is $f$-nonsingular, it follows from Lemma~\ref{LemmaSingSing} that $\mu$ is also $F$-nonsingular, where $F(x):=f^{R(x)}(x)$ for every $x\in U$. Thus, $\bigcup_{j\ge0}F^{-j}(\{R=+\infty\})=0$.
Hence, $\mu(U)=1$, where $U=\bigcap_{j\ge0}F^{-j}(\{R<+\infty\})$.
Given $n\ge1$, $j\ge0$ and $x\in U$, let $a_j=R(F^j(x))$ and $s(n)=a_0+\cdots+a_{n-1}=\sum_{j=0}^{n-1}R(F^j(x))$. As $\varphi$ is sup-additive,
$$\varphi(s(n),x)\ge\varphi(a_0,x)+\varphi(a_1,f^{a_0}(x))+\cdots+\varphi(a_{n-1},f^{a_0,+\cdots+a_{n-1}}(x))=$$
$$=\varphi(R(x),x)+\varphi(R(F(x)),F(x))+\cdots+\varphi(R(F^{n-1}(x)),F^{n-1}(x))=$$
$$=\sum_{j=0}^{n-1}\varphi(R(F^j(x)),F^j(x)).$$
By the definition of $R$, $\varphi(R(y),y)\ge\gamma R(y)$ for every $y\in\{R<+\infty\}$.
Thus, $$\varphi(s(n),x)\ge\gamma\sum_{j=0}^{n-1}R(F^j(x))=\gamma s(n)$$
for every $n\ge1$. Hence $\limsup_n\frac{1}{n}\varphi(n,x)\ge\gamma$ for every $x\in U$.
\end{proof}


\begin{T}[Pointwise synchronization]\label{TheoremPointWiseSyn}
Let $\XX$ be a compact metric space,  $f:\XX\to\XX$ a continuous map, $\lambda>0$ and $\varphi:\NN\times\XX\to\RR$ a continuous sup-additive $f$-cocycle.
Given $x\in\XX$, let $\cu(x)=\{n\in\NN\,;\,\varphi(n,x)\ge n\lambda\}$ and
	$$R(x)=
	\begin{cases}
	\min\cu(x) & \text{if }\,\cu(x)\ne\emptyset\\
	+\infty & \text{if }\,\cu(x)=\emptyset 
	\end{cases}.$$
	If $\Res_f(R,p)=0$ for some $p\in\XX$, then there is a $\ell_0\ge1$ such that
	$$\liminf_{n\to\infty}\frac{1}{n}\sum_{j=0}^{n-1}\varphi(2^\ell,f^{j}(p))\ge\frac{\lambda}{5}2^\ell\;\;\;\forall\,\ell\ge\ell_0.$$
\end{T}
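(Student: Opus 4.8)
The plan is to combine the convergence results established earlier with a careful choice of limit measure in $\mho_f(p)$. First I would observe that $R$ as defined here is precisely $R_{\cu}$ for the coherent schedule $\cu(x)=\{n\in\NN\,;\,\varphi(n,x)\ge n\lambda\}$ associated to the sup-additive cocycle $\varphi$ via Lemma~\ref{Lemma67ygtgh}; since $f$ and $\varphi$ are continuous, $\cu$ is partially continuous, hence $R=R_{\cu}$ is lower semicontinuous by Lemma~\ref{LemmaPArtuhgdj}. Because $\XX$ is compact, $\mho_f(p)$ is nonempty (the sequence $\frac1n\sum_{j=0}^{n-1}\delta_{f^j(p)}$ has weak* accumulation points), and every $\mu\in\mho_f(p)$ is $f$-invariant. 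The hypothesis $\Res_f(R,p)=0$ then feeds into Lemma~\ref{Lemma87gnd2cw}: for every $\mu\in\mho_f(p)$ we get $\mu(\{R=+\infty\})=0$.

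Next I would apply Lemma~\ref{LemmaDSAJNSTFS2I} (taking the probability $\mu$ in question, which is $f$-invariant hence certainly $f$-nonsingular): since $\mu(\{R=+\infty\})=0$, we conclude $\limsup_n\frac1n\varphi(n,x)\ge\lambda$ for $\mu$-almost every $x$. However, for Lemma~\ref{Lemmahdhhg620} I need the stronger statement $\liminf_n\frac1n\varphi(n,x)\ge\lambda$ $\mu$-a.e. This upgrade is the main obstacle. To get it I would pass to an ergodic component of $\mu$ (via the ergodic decomposition theorem) — on each ergodic component, $\frac1n\varphi(n,x)$ converges $\mu$-a.e.\ to a constant $\bar\lambda$ by Kingman's subadditive ergodic theorem applied to $-\varphi$ (a subadditive cocycle), and the $\limsup\ge\lambda$ statement forces $\bar\lambda\ge\lambda$, whence the $\liminf$ equals $\bar\lambda\ge\lambda$ a.e.\ on that component. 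Since this holds on each ergodic component, $\liminf_n\frac1n\varphi(n,x)\ge\lambda$ $\mu$-a.e. Here I also need $\int|\varphi(1,x)|\,d\mu<+\infty$, which holds because $\varphi$ is continuous on the compact space $\XX$, hence bounded.

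Then Lemma~\ref{Lemmahdhhg620} applies to $\mu$: there is $\ell_0(\mu)\ge1$ with $\int_{\XX}\varphi(\ell,x)\,d\mu\ge\frac{\lambda}{4}\ell$ for all $\ell\ge\ell_0(\mu)$. I would specialize to $\ell=2^k$. The remaining task is to remove the dependence of $\ell_0$ on $\mu$ and transfer the integral inequality back to a $\liminf$ along the orbit of $p$. For the uniformity, I would inspect the proof of Lemma~\ref{Lemmahdhhg620}: the threshold $\ell_0$ is governed by $L^1$-convergence of Birkhoff averages of $\varphi(1,\cdot)$ and by $\mu(C_\ell)>2/3$ where $C_\ell=\{n_x\le\ell\}$; a compactness/uniform-integrability argument over $\mho_f(p)$ (which is weak* compact) should yield a single $\ell_0$ working for all $\mu\in\mho_f(p)$, at the cost of replacing $\lambda/4$ by $\lambda/5$. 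Finally, for fixed $\ell=2^k$, the function $x\mapsto\varphi(2^k,x)$ is continuous on the compact $\XX$, so by the definition of $\mho_f(p)$ and the fact that $\int\varphi(2^k,\cdot)\,d\mu\ge\frac{\lambda}{5}2^k$ for every $\mu\in\mho_f(p)$, taking $\mu$ to be an accumulation point of $\frac1n\sum_{j=0}^{n-1}\delta_{f^j(p)}$ realizing the $\liminf$ of $\frac1n\sum_{j=0}^{n-1}\varphi(2^k,f^j(p))$ gives
$$
\liminf_{n\to\infty}\frac1n\sum_{j=0}^{n-1}\varphi(2^k,f^j(p))=\int_{\XX}\varphi(2^k,x)\,d\mu\ge\frac{\lambda}{5}2^k,
$$
valid for all $2^k\ge\ell_0$, i.e.\ for all $k\ge\ell_0'$. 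I expect the uniformity of $\ell_0$ over $\mho_f(p)$ — making the slack $\lambda/4\to\lambda/5$ — to be the one genuinely delicate point; everything else is assembling cited lemmas.
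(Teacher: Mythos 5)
Your plan tracks the paper's early steps faithfully: partial continuity of $\cu$, lower semicontinuity of $R$ via Lemma~\ref{LemmaPArtuhgdj}, $\mho_f(p)\ne\emptyset$ by compactness, $\mu(\{R=+\infty\})=0$ from Lemma~\ref{Lemma87gnd2cw} with the hypothesis $\Res_f(R,p)=0$, and then $\limsup_n\frac1n\varphi(n,x)\ge\lambda$ $\mu$-a.e.\ from Lemma~\ref{LemmaDSAJNSTFS2I}. Your upgrade to a $\liminf$ is correct, though heavier than needed: you do not have to invoke ergodic decomposition, since Kingman applied to $-\varphi$ already gives a.e.\ pointwise existence of $\lim_n\frac1n\varphi(n,x)$, and this limit must coincide with the $\limsup$, hence be $\ge\lambda$; the paper does exactly that. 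Applying Lemma~\ref{Lemmahdhhg620} to get a threshold $\ell_0(\mu)$ with $\int\varphi(\ell,\cdot)\,d\mu\ge\frac{\lambda}{4}\ell$ for $\ell\ge\ell_0(\mu)$ is also right.

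The genuine gap is in the uniformization step, which you leave as a heuristic (``compactness/uniform-integrability should yield a single $\ell_0$''). Uniform integrability is not the mechanism, and compactness alone is not enough: $\mho_f(p)$ being covered by the increasing family of closed sets
$\{\mu\,;\,\int\varphi(\ell,\cdot)d\mu\ge\frac{\lambda}{5}\ell\;\forall\ell\ge n\}$
does not force one of them to cover $\mho_f(p)$. The missing ingredient is the doubling inequality: since $\varphi$ is sup-additive and $\mu$ is $f$-invariant,
$$
\int_{\XX}\varphi(2^{n+1},x)\,d\mu\;\ge\;\int_{\XX}\varphi(2^{n},x)\,d\mu+\int_{\XX}\varphi(2^n,f^{2^n}(x))\,d\mu\;=\;2\int_{\XX}\varphi(2^{n},x)\,d\mu.
$$
This implies that $\int\varphi(2^n,\cdot)\,d\mu\ge\frac{\lambda}{5}2^n$ at one dyadic level propagates to all higher dyadic levels, so the sharp threshold $n_0(\mu)$ is well defined and $\{\mu\,;\,n_0(\mu)\le n\}=\{\mu\,;\,\int\varphi(2^n,\cdot)\,d\mu\ge\frac{\lambda}{5}2^n\}$ is closed (continuity of $x\mapsto\varphi(2^n,x)$). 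The paper then argues by contradiction: if the claimed $\liminf$ inequality fails at infinitely many dyadic exponents $a\in N$, one extracts $\eta_a\in\mho_f(p)$ with $\int\varphi(2^a,\cdot)\,d\eta_a<\frac{\lambda}{5}2^a$, hence $n_0(\eta_a)>a$; passing to a weak* limit $\eta$ of a subsequence $\eta_{a_i}$ and choosing $\ell$ with $\int\varphi(2^\ell,\cdot)\,d\eta\ge\frac{\lambda}{4}2^\ell$ (Lemma~\ref{Lemmahdhhg620}), continuity plus the $\lambda/4>\lambda/5$ gap give $\int\varphi(2^\ell,\cdot)\,d\eta_{a_i}>\frac{\lambda}{5}2^\ell$ for large $i$, i.e.\ $n_0(\eta_{a_i})\le\ell$, contradicting $n_0(\eta_{a_i})>a_i\to\infty$. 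Without the doubling inequality you cannot convert ``the integral is large at a single level $\ell$'' into ``$n_0(\mu)\le\ell$'', and the compactness argument does not close. Once that piece is added, the rest of your plan (including the final reduction from the integral estimate on $\mho_f(p)$ to the $\liminf$ along the orbit of $p$) is sound.
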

\begin{proof}
As $f$ is continuous and $\XX$ compact, $\cm_f^1(\XX)\supset \mho_f(p)\ne\emptyset$, where $\cm_f^1(\XX)$ is the set of all $f$ invariant Borel probabilities.
Given any $\mu\in\mho_f(p)$, consider a sequence $n_i\nearrow\infty$ such that $\mu:=\lim_i\mu_i\in\mho_f(p)$, where $\mu_i:=\frac{1}{n_i}\sum_{j=0}^{n_i-1}\delta_{f^j(p)}$.

It follows from the continuity of $f$ and $\varphi$ that $\cu$ is partially continuous and so, by Lemma~\ref{LemmaPArtuhgdj}, $R$ is lower semicontinuous.
Using Lemma~\ref{Lemma87gnd2cw}, we get that $\mu(\{R=+\infty\})=0$.
Thus, by Lemma~\ref{LemmaDSAJNSTFS2I}, $\limsup_n\frac{1}{n}\varphi(n,x)\ge\lambda$ for $\mu$ almost every $x\in\XX$.

As $\psi:=-\varphi$ is a subadditive $f$-cocycle and $\int_{x\in\XX}|\psi(1,x)|d\mu=\int_{x\in\XX}|\varphi(1,x)|d\mu<+\infty$, it follows from Kingman's Subadditive Ergodic Theorem that $\lim_n\frac{1}{n}\psi(n,x)$ exists for $\mu$ almost every $x$.
Thus, $\lim_n\frac{1}{n}\varphi(n,x)=-\lim_n\frac{1}{n}\psi(n,x)$ also exists for $\mu$ almost every $x\in\XX$.
As $$-\lim_n\frac{1}{n}\varphi(n,x)=\lim_n\frac{1}{n}\psi(n,x)=\liminf_n\frac{1}{n}\psi(n,x)=-\limsup_n\frac{1}{n}\varphi(n,x)\le-\lambda,$$
we get that
\begin{equation}\label{Equationhbiwqe2}
  \lim_n\frac{1}{n}\varphi(n,x)\ge\lambda\;\text{ 
for }\mu\text{ almost every }x\in\XX.
\end{equation}

As  $\int_{x\in\XX}|\varphi(1,x)|d\mu$ $<$ $+\infty$, it follows from Lemma~\ref{Lemmahdhhg620} that there is a $\ell_0\ge1$ such that
\begin{equation}\label{Equationnnnshgf3}
\int_{x\in\XX}\varphi(\ell,x)d\mu\ge \frac{\lambda}{4}\ell\text{ 
for all }\ell\ge\ell_0.
\end{equation}

Note that, as $\mu$ is $f$-invariant 
and $\varphi(2^{n+1},x)\ge\varphi(2^{n},f^{2^{n}}(x))+\varphi(2^{n},x)$ $\forall\,n\ge0$, 
 $$\int_{x\in\XX}\varphi(2^{n+1},x)d\mu\ge\int_{x\in\XX}\varphi(2^{n},f^{2^{n}}(x))d\mu+\int_{x\in\XX}\varphi(2^{n},x)d\mu=$$
 $$=2\int_{x\in\XX}\varphi(2^{n},x)d\mu$$ for every $n\ge0$.
Thus,
\begin{equation}\label{Equation76tyhFFFyfde32}
  \int_{x\in\XX}\varphi(2^{n},x)d\mu\ge\frac{\lambda}{5}2^n\implies\int_{x\in\XX}\varphi(2^{\ell},x)d\mu\ge\frac{\lambda}{5}2^\ell,
\end{equation}
for every $\ell\ge n$.
As a consequence, it follows from (\ref{Equationnnnshgf3}) and (\ref{Equation76tyhFFFyfde32}) that, for each $\mu\in\mho_f(p)$, there exists a unique $n_0(\mu)\in\{0,1,2,3,\cdots\}$ such that $$\int_{x\in\XX}\varphi(2^{n},x)d\mu\;
\begin{cases}
<\frac{\lambda}{5}2^n & \text{ if }n<n_0(\mu)\\
\ge\frac{\lambda}{5}2^n	& \text{ if }n\ge n_0(\mu)
\end{cases}
$$

Suppose that $N\subset\NN$ with $\#N=+\infty$ such that $\liminf\frac{1}{n}\sum_{j=0}^{n-1}\varphi(2^a,f^j(p))<\lambda2^a/5$ for every $a\in N$.
In this case, for each $a\in N$, there is a sequence $m_i=m_i(a)\to\infty$ such that $\lim\frac{1}{m_i}\sum_{j=0}^{m_i-1}\varphi(2^a,f^j(p))<\lambda2^a/5$.
Taking a subsequence if necessary, we may assume that $\lim_i\frac{1}{m_i}\sum_{j=0}^{m_i-1}\delta_{f^j(p)}=\eta_{a}$, for some $\eta_{a}\in\mho_{f}(p)$.
As $\XX\ni x\mapsto\varphi(2^a,x)$ is continuous, we get that  $$\int_{x\in\XX}\varphi(2^a,x) d\eta_a=\lim_i\int_{x\in\XX}\varphi(2^a,x) d\bigg(\frac{1}{m_i}\sum_{j=0}^{m_i-1}\delta_{f^j(p)}\bigg)=$$
$$=\lim\frac{1}{m_i}\sum_{j=0}^{m_i-1}\varphi(2^a,f^j(p))<\lambda2^a/5.$$
Therefore,
\begin{equation}\label{Equationhgfhg2125678}
	n_0(\eta_a)>a\text{ for every }a\ge1
\end{equation}
As $\mho_f(p)$ is compact, there is a sequence $N\ni a_i\to\infty$ and $\eta\in\mho_f(p)$ such that $\eta=\lim_i\eta_{a_i}$.
Using (\ref{Equationnnnshgf3}), we can choose $\ell\ge1$ such that $ \int_{x\in\XX}\varphi(2^\ell,x)d\eta\ge\frac{\lambda}{4}2^\ell$. 
As $\XX\ni x\mapsto\varphi(2^\ell,x)$ is continuous, $\lim_i\int_{x\in\XX}\varphi(2^\ell,x) d\eta_{a_i}=\int_{x\in\XX}\varphi(2^\ell,x) d\eta\ge\lambda2^{\ell}/4$.
Thus, $\int_{x\in\XX}\varphi(2^\ell,x) d\eta_{a_i}>\frac{\lambda}{5}2^\ell$ for any big $i$ and so $n_0(\eta_{a_i})\le\ell$ for every $i\ge1$ big enough, contradicting (\ref{Equationhgfhg2125678}). 
\end{proof}

\subsection{Proofs of Theorems~\ref{MainTHEOVianaConjDiffeo} and \ref{MainTheoremPartial}}

The statement and notations of Theorems~\ref{MainTHEOVianaConjDiffeo} and \ref{MainTheoremPartial} can be found in Section~\ref{IntroductionAndStatementsOfMainResults}.

\begin{proof}[\bf Proof of Theorem~\ref{MainTHEOVianaConjDiffeo}]
	The proof of Theorem~\ref{MainTHEOVianaConjDiffeo} is a direct consequence of Theorem~\ref{Theoremkjgjbkljhl} below and the fact already observed in Section~\ref{IntroductionAndStatementsOfMainResults} that, under the hypothesis of Lebesgue almost every point having only positive Lyapunov exponents, the residue to be zero on a set of positive Lebesgue measure is a necessary condition for the existence of an absolutely continuous invariant measure.
\end{proof}

\begin{T}\label{Theoremkjgjbkljhl}
	Let $f:M\to M$ be a $C^{1+}$ local diffeomorphism. 
	If $H\subset M$ is a measurable set such that every $x\in H$ has only positive Lyapunov exponents and zero Lyapunov residue, that is, 
	\begin{equation}\label{Equationtauuuuu}
  \Res(x)=0<\limsup_{n\to+\infty}\frac{1}{n}\log\|(Df^n(x))^{-1}\|^{-1}\;\;\forall\,x\in H,
\end{equation}
then Lebesgue almost every $x\in H$ belongs to the basin of some ergodic absolute continuous invariant measure. In particular, $f$ admits a SRB measure.
\end{T}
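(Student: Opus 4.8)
The plan is to reduce the statement to an application of the synchronization/lift machinery developed in the paper, applied to the skew-product $F_{\mathrm{sk}}(x,v)=(f(x),Df(x)v/|Df(x)v|)$ on the unit tangent bundle $T^1M$ together with the Pliss-time schedule associated to the additive cocycle $\psi(n,(x,v))=\log|Df^n(x)v|$. First I would fix $\lambda>0$ and restrict attention to the subset $H_\lambda\subset H$ on which $\limsup_n\frac1n\log\|(Df^n(x))^{-1}\|^{-1}>\lambda$ and $\Res(x)=0$; since $H=\bigcup_{k}H_{1/k}$ up to a Lebesgue null set, it suffices to produce an absolutely continuous invariant measure whose basin covers Lebesgue-a.e.\ point of each $H_\lambda$ of positive measure. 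On $T^1M$, let $\varphi=\psi$ and $R(x,v)=\min\{n\ge1:\psi(n,(x,v))\ge n\lambda/2\}$ as in Theorem~\ref{TheoremPointWiseSyn}; the key point is that, because $\psi(n,(x,v))\ge\Psi(n,x)$ with $\Psi$ as in~(\ref{Equationfgyuygh}) is \emph{not} available (that was the NUE situation we are avoiding), one must instead feed in the hypothesis $\Res(x)=0$ directly. I would show that the Lyapunov residue of $x$ controls $\Res_f(R,(x,v))$ uniformly in $v\in T^1_xM$: the truncated hitting time $R(x)$ of~(\ref{EquationLyaExpDef}) dominates the first Pliss time along every direction (via the continuity/compactness argument quoted in the introduction giving $\limsup\frac1n\log\|(Df^n(x))^{-1}\|^{-1}>\lambda\Rightarrow$ all directions have $\limsup\frac1n\psi\ge\lambda$), so $\Res(x)=0$ forces $\Res_{F_{\mathrm{sk}}}(R,(x,v))=0$ for every $v$.

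Next I would invoke Theorem~\ref{TheoremPointWiseSyn} (pointwise synchronization): for Lebesgue-a.e.\ $x\in H_\lambda$ and every accumulation point $\mu\in\mho_f(x)$ of the empirical averages, $\mu$ is $f$-invariant, $\mu(\{R=\infty\})=0$, and $\liminf_n\frac1n\sum_{j=0}^{n-1}\varphi(2^\ell,f^j(x))\ge\frac{\lambda}{5}2^\ell$ for large $\ell$ — hence $\mu$ is a synchronized expanding measure in the sense of the definition preceding Proposition~\ref{Propositionmytfgyu}. Since $f$ is a $C^{1+}$ local diffeomorphism there is no critical/singular set, so the slow approximation condition is vacuous and $\mu$ is automatically a geometric expanding measure. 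The main work here is the bookkeeping that allows one to pass from "empirical measures of $x$ accumulate on expanding measures" to "the induced Markov structure of \cite{Pi11} / Theorem~\ref{Theoremkjhiuyt567} applies", i.e.\ that the hyperbolic pre-balls exist with positive \emph{Lebesgue} frequency along the orbit of $x$, not merely with positive $\mu$-measure; this is exactly what $\Res(x)=0$ buys, through Lemma~\ref{Lemma87gnd2cw} and Lemma~\ref{LemmaDSAJNSTFS2I}, and is the step I expect to be the principal obstacle.

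Finally, with a full-return induced Markov map $F$ built over a ball $B$ as in Theorem~\ref{Theoremkjhiuyt567} and its bounded-distortion property, I would apply the Lebesgue-density/absolute-continuity argument: the set of points whose orbit visits $B$ with hyperbolic pre-balls infinitely often and with positive Lebesgue lower density has positive Lebesgue measure inside $H_\lambda$ (this is where $\Res(x)=0$ is used a second time, to guarantee that the induced return time is Lebesgue-a.e.\ finite and that the spread $\widetilde{\{R>j\}}$ has summable Lebesgue mass, i.e.\ $I_f(R)<\infty$ Lebesgue-a.e.\ on a positive-measure set). Standard Pesin-theoretic/Pinheiro-type arguments then yield an $F$-invariant density, and pushing it forward by Lemma~\ref{FolkloreResultB} produces an ergodic absolutely continuous $f$-invariant measure whose basin, by the bounded distortion and the Lebesgue-density point argument, contains Lebesgue-a.e.\ point of a positive-measure subset of $H_\lambda$. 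Exhausting $H$ by countably many such pieces, discarding at each stage the (open mod $0$) basin already covered, gives an at most countable family of ergodic a.c.i.m.'s covering Lebesgue-a.e.\ point of $H$; in particular $f$ has an SRB measure. The delicate points throughout are (i) the uniform-in-$v$ residue estimate on $T^1M$ and (ii) converting $\mu$-a.e.\ statements about expanding measures into Lebesgue-a.e.\ statements about orbits, both of which are handled by the residue hypothesis via Lemmas~\ref{LemmaDynInt17}, \ref{Lemma87gnd2cw}, and \ref{LemmaDSAJNSTFS2I}.
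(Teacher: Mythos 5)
Your overall strategy matches the paper's in spirit: decompose $H$ into the pieces $H_n$ where the Lyapunov exponent limsup is at least $2/n$, use $\Res(x)=0$ to invoke Theorem~\ref{TheoremPointWiseSyn}, pigeonhole to get a lower-bounded NUE-type condition for a power $f^{2^\ell}$ at a shifted basepoint, and then quote the existing expanding-measure machinery (\cite{Pi11} or \cite{ABV}) to produce the absolutely continuous invariant measures. That is the skeleton of the paper's proof, and you correctly identify $\Res(x)=0$ and Lemmas~\ref{Lemma87gnd2cw}, \ref{LemmaDSAJNSTFS2I} as the mechanism feeding Theorem~\ref{TheoremPointWiseSyn}.

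However, the detour through $T^1M$ and the cocycle $\psi(n,(x,v))=\log|Df^n(x)v|$ is a wrong turn, and would be a genuine obstruction if carried out literally. The paper applies Theorem~\ref{TheoremPointWiseSyn} directly to the sup-additive cocycle $\varphi(n,x)=\log\|(Df^n(x))^{-1}\|^{-1}$ on $M$, which is precisely $\inf_{|v|=1}\psi(n,(x,v))$ and is exactly what both $\Res(x)$ and the hypotheses of Theorem~A of \cite{Pi11} / Theorem~C of \cite{ABV} are phrased in. If instead you obtain the conclusion of Theorem~\ref{TheoremPointWiseSyn} for each $(x,v)$ separately, then both the threshold $\ell_0$ and the residue class $\alpha$ in the pigeonhole step depend on $v$, and there is no way to interchange the $\liminf$ over $n$ with the infimum over $v$ to recover a lower bound on $\|(Df^{2^\ell}\circ f^{jm})^{-1}\|^{-1}$; the "uniform-in-$v$ residue estimate" you flag as the principal obstacle is indeed an obstacle, and the fix is simply not to go to $T^1M$ at all. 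Relatedly, your middle paragraph silently switches back to writing $\varphi(2^\ell,f^j(x))$ on $M$ — which is the right object — so the $T^1M$ layer in the first paragraph is not actually used and should be removed.

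Two further points. First, you gloss over the step that converts $\liminf_i\frac{1}{i}\sum_{j=0}^{i-1}\varphi(2^\ell,f^j(x))\ge\frac{\lambda}{5}2^\ell$ into the $f^{2^\ell}$-NUE condition $\liminf_i\frac{1}{i}\sum_{j=0}^{i-1}\log\|(Df^{2^\ell}(f^{j2^\ell+\alpha(x)}(x)))^{-1}\|^{-1}\ge\frac{\lambda}{5}$ for some $0\le\alpha(x)<2^\ell$; this pigeonhole over the $2^\ell$ residue classes is where the partition $H_n(\ell,j)$ comes from and is a necessary step. Second, you propose to rebuild the induced Markov structure via Theorem~\ref{Theoremkjhiuyt567} and to pass through $\mho_f(x)$ and the notion of geometric expanding measure; neither is needed, since once the $f^m$-NUE condition holds on a forward-invariant subset $H_n(\ell,j)$, Theorem~A of \cite{Pi11} (or Theorem~C of \cite{ABV} in the $C^2$ case) already says that Lebesgue-a.e.\ point there lies in the basin of an ergodic a.c.i.m.\ for $f^m$, which is then averaged over $f,\ldots,f^{m-1}$ to give an a.c.i.m.\ for $f$.
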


\begin{proof} As (\ref{Equationtauuuuu}) is an asymptotic condition, taking $\bigcup_{n\ge0} f^n(H)$ instead of $H$ if necessary, we may assume that $f(H)\subset H$.
Let $$H_n=\{x\in H\,;\,\limsup_{j\to+\infty}\frac{1}{j}\log\|(Df^j(x))^{-1}\|^{-1}\ge2/n\}.$$
	Set $\varphi:\NN\times M\to[-\infty,+\infty)$ as $\varphi(j,x)=\log\|(Df^j(x))^{-1}\|^{-1}$.
	Given $n\ge1$, let $\cu_n(x)=\{j\in\NN\,;\,\varphi(j,x)\ge\frac{1}{j\,n}\}\subset2^{\NN}$ and
	$$R_n(x)=
	\begin{cases}
		\min\,\cu_n(x) & \text{ if }\cu_n(x)\ne\emptyset\\
		+\infty  & \text{ if }\cu_n(x)=\emptyset
	\end{cases}
	$$
As $R_n(x)\le \Res(x)$ for every $x\in H_n$, we get that $\{j\ge0\,;\,R_n\circ f^j(x)\ge k\}$ $\subset$ $\{j\ge0\,;\,\Res\circ f^j(x)\ge k\},$ proving that  $$0\le \lim_{k\to+\infty}\dd_{\NN}^+(\{j\ge0\,;\, R_n\circ f^j(x)\ge k\})\le \Res(x)=0$$ for every $x\in H_n$.

It follows from Theorem~\ref{TheoremPointWiseSyn} that, for each $n\ge1$ and $x\in H_n$ there is $\ell_0(n,x)\ge1$ such that
$$
  	\liminf_{i\to\infty}\frac{1}{i}\sum_{j=0}^{i-1}\varphi(2^\ell,f^{j}(x))\ge\frac{1/n}{5}2^\ell\;\;\;\forall\,\ell\ge\ell_0(n,x).
$$
Given $n,\ell\ge1$, let $H_n(\ell)=\{x\in H_n\,;\,\ell_0(n,x)=\ell\}$.
Thus, for each $x\in H_n(\ell)$ there is at least one $0\le \alpha(x)<2^{\ell}$ such that
\begin{equation}\label{Equationiugjijhg6}
  	\liminf_{i\to\infty}\frac{1}{i}\sum_{j=0}^{i-1}\varphi(2^\ell,f^{j2^\ell}(f^{\alpha(x)}(x)))\ge\frac{1/n}{5}.
\end{equation}

Taking $m:=2^{\ell}$ and $H_n(\ell,j)=\{f^j(x)\,;\,x\in H_n(\ell)$ and $\alpha(x)=j\}$, we get that $H_n(\ell)\subset\bigcup_{j=0}^{m-1}f^{-j}(H_n(\ell,j))$ with $f^m(H_n(\ell,j))\subset H_n(\ell,j)$ and  
$$\liminf_{i\to\infty}\frac{1}{i}\sum_{j=0}^{i-1}\log\|(Df^m\circ f^{mj}(x))^{-1}\|^{-1}\ge\frac{1}{5n}$$ for every $x\in H_n(\ell,j)$. 
Thus, it follows from Theorem~A of \cite{Pi11}  that Lebesgue almost every $x\in H_n(\ell,j)$ belongs to the basin of some ergodic absolute continuous $f^m$-invariant measure (if $f$ is $C^2$ one can use Theorem~C of \cite{ABV} instead of \cite{Pi11}).
As $\widetilde{\mu}:=\frac{1}{m}\sum_{i=0}^{m-1}\mu\circ f^{-i}$ is an ergodic absolute continuous $f$-invariant measure whenever $\mu$ is an ergodic absolute continuous $f^m$-invariant measure, we get that Lebesgue almost every $x\in H_n(\ell,j)$ belongs to the basin of some ergodic absolute continuous $f$-invariant measure.
Finally, as $H\subset\bigcup_{n\ge1}\bigcup_{\ell\ge1}\bigcup_{j=0}^{m-1}f^{-j}(H_n(\ell,j))$, we conclude the proof.
\end{proof}

\begin{proof}[\bf Proof of Theorem~\ref{MainTheoremPartial}]

As $T_{U}M=\EE^{cu}\oplus\EE^{cs}$ is a dominated splitting, it is continuous and extends uniquely and continuously to a splitting of  $T_{\overline{U}}M$ to $\overline{U}$ (see for instance Lemma~14 of \cite{COP}). 
Thus, we get that $\varphi^u$ and $\varphi^s:\NN\times \overline{U}\to\RR$ given respectively by $\varphi^u(n,x)=\log\|(Df^n|_{\EE^{cu}(x)})^{-1}\|^{-1}$ and $\varphi^s(n,x)=-\log\|Df^n|_{\EE^{cs}(x)}\|$ are continuous sup-additive $f|_{\overline{U}}$\,-\,cocycles. 
By compactness of $\overline{U}$, we get that $\limsup_n\frac{1}{n}\sum_{j=0}^{n-1}|\varphi^{u,s}(1,f^j(x))|$ $\le$ $\sup_{p\in\overline{U}}|\varphi^{u,s}(1,p)|<+\infty$ for every $x\in\overline{U}$.

Define $H_n$ as the set of all $x\in H$ such that 
$$\liminf_{j\to+\infty}\frac{1}{j}\varphi^{u}(j,x)\;\text{ and }\;\liminf_{j\to+\infty}\frac{1}{j}\varphi^{s}(j,x)\ge2/n.$$

Setting $\Phi(i,x)=\min\{\varphi^u(i,x),\varphi^s(i,x)\}$, we get that $\Phi:\NN\times\overline{U}\to\RR$ is also a continuous sup-additive $f|_{\overline{U}}$\,-\,cocycle. Let $\cu_n(x)=\{n\in\NN\,;\,\Phi(i,x)\ge i/n\}$ and 
$$R_n(x)=
	\begin{cases}
		\min\,\cu_n(x) & \text{ if }\cu_n(x)\ne\emptyset\\
		+\infty  & \text{ if }\cu_n(x)=\emptyset
	\end{cases},$$ for any  $x\in H_n$.
As $R_n(x)\le\max\{\Res^{u}(x),\Res^s(x)\}$ and $\Res^{u}(x)=\Res^s(x)=0$ for every $x\in H_n$, we get that $$\lim_{k\to+\infty}\dd_{\NN}^+(\{j\ge0\,;\, R_n\circ f^j(x)\ge k\})=0$$ for every $x\in H_n$.

It follows from Theorem~\ref{TheoremPointWiseSyn} that, for each $n\ge1$ and $x\in H_n$, there is $\ell_0(n,x)\ge1$ such that
$$
  	\liminf_{i\to\infty}\frac{1}{i}\sum_{j=0}^{i-1}\Phi(2^\ell,f^{j}(x))\ge\frac{1/n}{5}2^\ell\;\;\;\forall\,\ell\ge\ell_0(n,x).
$$
Given $n,\ell\ge1$, let $H_n(\ell)=\{x\in H_n\,;\,\ell_0(n,x)=\ell\}$.
Thus, for each $x\in H_n(\ell)$ there is at least one $0\le \alpha(x)<2^{\ell}$ such that
$$
  	\liminf_{i\to\infty}\frac{1}{i}\sum_{j=0}^{i-1}\Phi(2^\ell,f^{j2^\ell}(f^{\alpha(x)}(x)))\ge\frac{1}{5n}.
$$

Letting $H_n(\ell,j)=\{f^j(x)\,;\,x\in H_n(\ell)$ and $\alpha(x)=j\}$ and writing $m:=2^{\ell}$, we get that $H_n(\ell)=\bigcup_{j=0}^{m-1}f^{-j}(H_n(\ell,j))$ with $f^m(H_n(\ell,j))\subset H_n(\ell,j)$ and  
\begin{equation}\label{Equationiuyghj33hg6}
  	\liminf_{i\to\infty}\frac{1}{i}\sum_{j=0}^{i-1}\Phi(m,f^{jm}(x))\ge\frac{1}{5n}.
\end{equation} for every $x\in H_n(\ell,j)$.
It follows from Pliss Lemma that the additive $f^m$ cocycle $\varphi(i,x):=\sum_{j=0}^{i-1}\Phi(m,f^{jm}(x))$ has positive lower natural density of Pliss times.
That is, taking $\cv(x)$ as the set of all $n\in\NN$ such that $n$ is a $(1/(10n),\varphi)$-Pliss time to $x$ (with respect to $f^m$), then $\dd_{\NN}^-(\cv(x))\ge1/(10 n\sup\varphi-1)>0$ (see Lemma~\ref{Lemma67ygtghXXXX}).
As $\Phi(i,x)=\min\{\varphi^u(i,x),\varphi^s(i,x)\}$, we have that every $n\in\cv(x)$ is a simultaneous hyperbolic time as asked in Proposition~6.4 of \cite{ABV}, we get that Lebesgue almost every $x\in H_n(\ell,j)$ belongs to the basin of some SRB measure for $f^m$ with support contained in $\bigcap_{j=0}^{+\infty}f^{mj}\big(\overline{U}\big)\subset\bigcap_{j=0}^{+\infty}f^{j}\big(\overline{U}\big)$.
Furthermore, as $H=\bigcup_{n\ge1}\bigcup_{\ell\ge1}\bigcup_{j=0}^{m-1}f^{-j}(H_n(\ell,j))$ and $\widetilde{\mu}:=\frac{1}{m}\sum_{i=0}^{m-1}\mu\circ f^{-i}$ is a SRB measure measure for $f$ whenever $\mu$ is a SRB measure for $f^m$, we can conclude that $\leb$ almost every $x\in H$ belongs to the basin of some SRB measure for $f$ with supported on  $\bigcap_{j=0}^{+\infty}f^j\big(\overline{U}\big)$, which finish the proof.
\end{proof}

\appendix

\section{Synchronizing Lyapunov Exponents to produce SRB}
\label{SecHYpTIm}

In this section we give an example of a class of dynamics having only non zero Lyapunov exponents for Lebesgue almost every point.
In this class,  we can use  Theorem~\ref{TheoremSynINV} for synchronizing Lyapunov exponents to produce a non uniformly expanding/hyperbolic dynamics (in the sense of \cite{ABV}) and so, to prove the existence of absolutely continuous invariant probability. We want to emphasize that, in this class, we don't know  a priori if (\ref{EquationSynNUEDEForte}) is satisfied by the map $f$ (or some iterate of it).
Hence, we can't use  \cite{ABV} technology (directly) to produce an absolutely continuous invariant probability.

\begin{Definition}[Geometric hyperbolic times]\label{DefinitionGeomHyTimes}
We say that $n\in\NN$ is a $(\lambda,\delta)$-geometric hyperbolic time for a point $x\in M$ with respect to a map $f:M\to M$ when there 
is an open neighborhood $V_n(x)$ of $x$ such that
\begin{enumerate}
	\item $f^n$ sends diffeomorphically $\overline{V_n(x)}$ to $\overline{B_{\delta}(f^n(x))}$; 
	\item $\|(Df^{n-j}(f^j(y)))^{-1}\|^{-1}>\lambda^{n-j}$ for every $y\in\overline{V_n(x)}$ and $0\le j<n$.
\end{enumerate}
\end{Definition}

The set $V_n(x)$ in the definition above is called a {\bf\em $(\lambda,\delta)$-hyperbolic $f$-pre-ball of center $x$ and order $n$}.

In Lemma~\ref{LemmaHyperbolicPreBalls} below, consider $M$ a compact Riemannian manifold, $\cc\subset M$ a compact set and $f:M\setminus\cc\to M$ a $C^1$ local diffeomorphism with $\|Df(x)\|\le L$ for every $x\in M\setminus\cc$.
Let $\psi:[0,+\infty)\to[0,+\infty)$ and $\varphi:\NN\times M\to [0,+\infty)$ be measurable functions with
$\lim_{t\to0}\psi(t)=\psi(0)=0$.

Suppose that $\|(Df^{n}(x))^{-1}\|^{-1}\ge\varphi(n,x)$ for every $x\in M\setminus\bigcup_{j=0}^{n-1}f^{-j}(\cc)$ and  $n\in\NN$.
Consider some fixed $\delta, r >0$ and $\lambda>1$ and, for any given $x\in M$, define $\cv(x)$ as the  set of all $n\in\NN$ such that $\dist(f^{n-k}(x),\cc)\ge\delta\lambda^{-(k-1)/2}$ for every $0< k\le n$ and $\varphi(n-k,f^k(x))>r\lambda^{n-k}$ for every $0\le k<n$.

Lemma~\ref{LemmaHyperbolicPreBalls} generalizes Lemma~5.2 at \cite{ABV}.
Indeed, Lemma~5.2 follows from the lemma below by taking $r=1=\ell$, $\varphi(n,x)=\prod_{j=0}^{n-1}\|(Df\circ f^j(x))^{-1}\|^{-1}$ and $\psi(t)=Bt^{\beta}$ for some $B$ and $\beta>0$.

If $\Gamma:[0,1]\to M$ is a $C^1$ curve, let $\length(\Gamma)=\length(\im(\Gamma))=\int_{t\in[0,1]}|\Gamma'(t)|dt$ be the length of $\im(\Gamma)$.
Let $r_0>0$ small such that for every $x\in M$ and $y\in B_{r_0}(x)$ there exists a unique {\bf\em geodesic segment} contained in $B_{r_0}(x)$, beginning at $x$ and ending at $y$ (this geodesic segment is denoted by $[x,y]$, with $[x,y]:[0,1]\to  B_{r_0}(x)$, $[x,y](0)=x$ and $[x,y](1)=y$). 

\begin{Lemma}[Hyperbolic pre-balls]
\label{LemmaHyperbolicPreBalls}
Take $\ell\ge1$ such that $r\lambda^{\ell/2}\ge\sqrt{\lambda}$ and suppose that
\begin{equation}\label{Equationhgfivo66}
 \varphi(n-k,f^k(y))\ge\varphi(n-k,f^k(x))\, e^{-\sum_{j=k}^{n-1}\psi\big(\frac{\length(f^j(\Gamma))}{\dist(f^j(\Gamma),\cc)}\big)}
\end{equation}
for every $x,y\in M\setminus\bigcup_{j=0}^{n-1}f^{-j}(\cc)$, $n\in\cv(x)$ and $0\le k<n$ and any $C^1$ curve $\Gamma:[0,1]\to M\setminus\bigcup_{j=0}^{n-1}f^{-j}(\cc)$ with $\Gamma(0)=p$ and $\Gamma(1)=q$.

If we choose any $0<\delta_1<\frac{\delta/2}{(L\sqrt{\lambda})^{\ell-1}}$ satisfying  $\psi(t)\le\frac{\log\lambda}{2\ell}(1-1/\sqrt{\lambda})$ for every $t\in[0,2\frac{\delta_1}{\delta}(L\sqrt{\lambda})^{\ell-1}]$, then for every $n\in\cv(x)$, with $n\ge\ell$, there 
is an open neighborhood $V_n^0(x)$ of $x$ such that
\begin{enumerate}
	\item $f^n$ sends diffeomorphically $\overline{V_n^0(x)}$ to $\overline{B_{\delta_1}(f^n(x))}$; 
	\item $\|(Df^{n-j}(f^j(y)))^{-1}\|^{-1}>r\lambda^{j/2}$ for every $y\in\overline{V_n^0(x)}$ and $\ell\le j<n$.
\end{enumerate}
In particular, if we define $\cu(y)$ as the set of all $(\sqrt{\lambda},\delta_1)$-hyperbolic time for a given  $y\in M$ with respect to $f^{\ell}$, then for each $x\in M$ there is a $0\le s<\ell$ such that $$\#\big(\cu(f^s(x))\cap\{1,\cdots,n\}\big)\ge\frac{1}{\ell}\#\big(\cv(x)\cap\{1,\cdots,n\}\big),\text{ for every }n\ge\ell.$$
\end{Lemma}

\begin{proof} 
Consider any $n\in\cv(x)$ with $n\ge\ell$. By hypothesis $\varphi(\ell,f^{n-\ell}(x))>r\lambda^{\ell}=r\lambda^{\ell/2}\lambda^{\ell/2}\ge \lambda$ and $\dist(f^{n-j}(x),\cc)\ge\delta_0:=\delta/(L\sqrt{\lambda})^{\ell-1}$ for every $0<j\le\ell$. 
Hence, if $y\in\overline{B_{\delta_1}(f^{n-\ell}(x))}$, then $f^j(y)\notin\cc$ for every $0\le j<\ell$.
Moreover, $$\dist(f^j(y),\cc)\ge\dist(f^j(f^{n-\ell}(x)),\cc)-\dist(f^j(y),f^j(f^{n-\ell}(x)))>$$
$$>\delta\lambda^{-(\ell-1)/2}-\delta_1 L^{\ell-1}>\delta\lambda^{-(\ell-1)/2}/2$$
and $$\length(f^j([y,f^{n-\ell}(x)]))\le L^{j}\delta_1\le L^{\ell-1}\delta_1$$
for every $0<j\le\ell$ and $y\in\overline{B_{\delta_1}(f^{n-\ell}(x))}$.
So, $$\frac{\length(f^j([y,f^{n-\ell}(x)]))}{\dist(f^{j}([f^{n-\ell}(x)),y]),\cc)}<\frac{L^{\ell-1}\delta_1}{\delta\lambda^{-(\ell-1)/2}/2}=2\frac{\delta_1}{\delta}(L\sqrt{\lambda})^{\ell-1},$$
for every $0<j\le\ell$ and $y\in\overline{B_{\delta_1}(f^{n-\ell}(x))}$.
As a consequence, 
$$\varphi(\ell,y)\ge  \varphi(\ell,f^{n-\ell}(x)) e^{-\sum_{j=0}^{\ell-1}\psi\big(\frac{\length(f^j([y,f^{n-\ell}(x)])}{\dist(f^j([y,f^{n-\ell}(x)]),\cc)}\big)}\ge $$

$$>r\lambda^{\ell}\big(e^{-\frac{\log\lambda}{2\ell}(1-1/\sqrt{\lambda})}\big)^{\ell}=r\lambda^{\ell}\lambda^{-1/2}\ge r\lambda^{\ell/2}\ge\sqrt{\lambda}.$$
Thus, $\|(Df^{\ell}(y))^{-1}\|^{-1}>\sqrt{\lambda}$ for every $y\in\overline{B_{\delta_1}(f^{n-1}(x))}$.
In particular, $$f^{\ell}(\overline{B_{\delta_1}(f^{n-\ell}(x)})\supset\overline{B_{\delta_1}(f^{n}(x))}.$$
Therefore, taking $$V_{\ell}^0(f^{n-\ell}(x))=(f^{\ell}|_{B_{\delta_1}(f^{n-\ell}(x))})^{-1}(B_{\delta_1}(f^{n}(x))),$$
we have that $f^{\ell}$ sends diffeomorphically $\overline{V_{\ell}^0(f^{n-\ell}(x))}$ to $\overline{B_{\delta_1}(f^n(x))}$ and $(f^{\ell}|_{\overline{V_{\ell}^0(f^{n-\ell}(x))}})^{-1}$ is a $\lambda^{-1/2}$-contraction. Define, for $0\le j<\ell$,
$$V_{j}^0(f^{n-j}(x))=f^{\ell-j}(V_{\ell}^0(f^{n-\ell}(x))).$$

 Now, let $\ell\le s\le n$ and suppose by induction that there is an open set $V_{s}^0(f^{n-s}(x))$ containing  $f^{n-s}(x)$ such that $f^{s}$ sends $\overline{V_{s}^0(f^{n-s}(x))}$ diffeomorphically to $\overline{B_{\delta_1}(f^{n}(x))}$ and $(f^s|_{\overline{B_{\delta_1}(f^{n}(x))}})^{-1}$ is a $\lambda^{-(s+1-\ell)/2}$-contraction.

Given $y\in\overline{B_{\delta_1}(f^n(x))}$, and $0\le j\le s$, let $\Gamma_{j,y}:[0,1]\to\overline{V_{\ell}^0(f^{n-j}(x))}$ be the $C^1$ curve defined by  $$\Gamma_{j,y}(a)=(f^{j}|_{\overline{V_{j}^0(f^{n-j}(x))}})^{-1}\circ[y,f^n(x)](a).$$

\begin{Claim}\label{Claimohihy7890}
$\frac{\length(\Gamma_{j,y})}{\dist(\Gamma_{j,y},\,\cc)}$ $<$ $2\frac{\delta_1}{\delta}(L\sqrt{\lambda})^{\ell-1}$
 for every $y\in\overline{B_{\delta_1}(f^n(x))}$ and $0<j\le s$.
\end{Claim}
\begin{proof}[Proof of the claim]
As $(f^j|_{\overline{B_{\delta_1}(f^{n}(x))}})^{-1}$ is a $\lambda^{-(j+1-\ell)/2}$-contraction for every $\ell\le j\le s$, we get that $\length(\Gamma_{j,y})\le\frac{\delta_1}{2}\lambda^{-(j+1-\ell)/2}$.
On the other hand, as $\dist(f^{n-j}(x),\cc)\ge\delta\lambda^{-(j-1)/2}$ and $\im(\Gamma_{j,y})\subset B_{\delta_1}(f^{n-j}(x))$, we get that
$$\dist(\Gamma_{j,y},\cc)\ge\delta\lambda^{-(j-1)/2}-\frac{\delta_1}{2}\lambda^{-(j+1-\ell)/2}>\delta\lambda^{-(j-1)/2}-\frac{1}{2}\frac{\delta/2}{(L\sqrt{\lambda})^{\ell-1}}\lambda^{-(j+1-\ell)/2}>\frac{\delta}{2}\lambda^{-(j-1)/2}.$$
and so, $$\frac{\length(\Gamma_{j,y})}{\dist(\Gamma_{j,y},\cc)}<\frac{(\delta_1/2)\lambda^{-(s+1-\ell)/2}}{(\delta/2)\lambda^{-s/2}}=\frac{\delta_1}{\delta}\lambda^{(\ell-1)/2}<2\frac{\delta_1}{\delta}(L\sqrt{\lambda})^{\ell-1}.$$
\end{proof}

 Let $U$ be the connected component of $f^{-1}(\overline{V_{s}^0(f^{n-s}(x))})\cap\overline{B_{\delta_1 \lambda^{-(s+2-\ell)/2}}(f^{n-(s+1)}(x))}$ containing $f^{n-(s+1)}(x)$.

\begin{Claim}\label{Claimiuytr5678nhb}
If $u\in U$ then $\im(\Gamma_{s,f^{s+1}(u)})\subset U$  and $\|(Df^{s+1})^{-1}(y)\|^{-1}>r\lambda^{(s+1)/2}$ for every $y\in (f|_{U})^{-1}(\im(\Gamma_{s,f^{s+1}(u)}))$.	
\end{Claim}
\begin{proof}[Proof of the claim]
If $u\in U$ then $f^{s+1}(u)\in B_{\delta_1}(f^n(x))$ and, as $(f^{s}|_{V_{s}^0(f^{n-s}(x)})^{-1}$ is a $\lambda^{-(s-\ell+1)/2}$-contraction, $\length(\Gamma_{s,f^{s+1}(u)})<\delta_1\lambda^{-(s+1-\ell)/2}$.
Let $$a=\max\{b\in[0,1]\,;\,\Gamma_{s,f^{s+1}(u)}([0,b])\subset U\}$$ and write $\Gamma(b)=(f|_U)^{-1}\circ\Gamma_{s,f^{s+1}(u)}(b)$ for $b\in[0,a]$.

As $\dist(\Gamma(b),f^{n-(s+1)}(x))<\delta_1\lambda^{-(s+2-\ell)/2}$, we have that 
$$\dist(\Gamma(b),\cc)\ge\dist(f^{n-(s+1)}(x),\cc)-\dist(y(b),f^{n-(s+1)}(x))>$$
$$>\delta\lambda^{-(s+1-1)/2}-\delta_1 \lambda^{(-(s+2)+\ell)/2}>\delta\lambda^{-s/2}-\frac{\delta/2}{(L\sqrt{\lambda})^{\ell-1}}\; \lambda^{(-(s+2)+\ell)/2}=$$
$$=\delta\lambda^{-s/2}-\frac{\delta}{2}\frac{1}{(L)^{\ell-1}}\; \lambda^{-(s+1)/2}\ge\delta\lambda^{-s/2}(1-\frac{1}{2L^{\ell-1}})>\delta\lambda^{-s/2}/2$$
and so, $$\frac{\dist(\Gamma(b),f^{n-(s+1)}(x))}{\dist(\{\Gamma(b),f^{n-(s+1)}(x)\},\cc)}<\frac{\delta_1\lambda^{-(s+1-\ell)/2}}{\delta\lambda^{-s/2}/2}<2\frac{\delta_1}{\delta}(L\sqrt{\lambda})^{\ell-1}.$$
This implies that 
\begin{equation}\label{Equatioioio87tr}
  \frac{\length(\Gamma([0,b]))}{\dist(\Gamma([0,b]),\cc)}<2\frac{\delta_1}{\delta}(L\sqrt{\lambda})^{\ell-1},\,\;\forall\,b\in[0,a].
\end{equation}

As, for $1\le j\le s+1$,  $f^j\circ\Gamma(b)=\Gamma_{s+1-j,f^{s+1}(u)}(b)$, it follows from (\ref{Equatioioio87tr}) and Claim~\ref{Claimohihy7890} that $$\varphi(s+1,\Gamma(b))\ge\varphi(s+1,f^{n-(s+1)}(x))\exp\bigg(-\sum_{j=0}^s\psi\bigg(\frac{\length(\Gamma_{s+1-j,y})}{\dist(\Gamma_{s+1-j,y},\,\cc)}\bigg)\bigg)>$$
$$> r\lambda^{s+1}e^{-(s+1)(\log\lambda)(1-1/\sqrt{\lambda})/2}>r\lambda^{s+1}(\sqrt{\lambda})^{-(s+1)}=r\lambda^{(s+1)/2},$$
for every $b\in[0,a]$.
This implies that $\|(Df^{s+1})^{-1}(\Gamma(b))\|^{-1}>\lambda^{(s+1-\ell)/2}$ for every $b\in[0,a]$.
If $a<1$ then $\gamma(a)\in\partial B_{\delta_1\lambda^{-(s+1-\ell)/2}}(f^{n-(s+1)}(x))$ and $f(\Gamma(a))=\Gamma_{s,f^{s+1}(x)}(a)\in\partial V_s^0(f^{n-s}(x))$ and so, $f^{s+1}(\Gamma(a))\in\partial B_{\delta_1}(f^n(x))$, but this is a contradiction.
Indeed, it implies that $\dist(\Gamma(a),f^{n-(s+1)}(x))<\delta_1\lambda^{-(s+1-\ell)/2}$.
Hence $a=1$ and so, $\im(\Gamma_{s,f^{s+1}(u)})$.
\end{proof}

As $f^s(\overline{V_s^0(f^{n-s}(x))})=\overline{B_{\delta_1}(f^n(x))}$, it follows from Claim~\ref{Claimiuytr5678nhb} above that $\Gamma_{s+1,y}:=(f|_U)^{-1}\circ\Gamma_{s,y}$ is a $C^1$ curve and $(f^{s+1}|_U)^{-1}:\overline{B_{\delta_1}(f^n(x))}\to U\subset\overline{B_{\delta_1\lambda^{-(s+1-\ell)/2}}(f^{n-(s+1)}(x))}$
is given by $$(f^{s+1}|_U)^{-1}(y)=\Gamma_{s+1,y}(1).$$
In particular, $\|D(f^{s+1}|_U)^{-1})(y)\|<(r\lambda^{(s+1)/2})^{-1}$ for every $y\in\overline{B_{\delta_1}(f^n(x))}$.
Hence, taking $$V_{s+1}^0\big(f^{n-(s+1)}(x)\big)=(f^{s+1}|_U)^{-1}(B_{\delta_1}(f^n(x))=$$
$$=\bigg(f\big|_{B_{\delta_1\lambda^{-(s+1-\ell)/2}}\big(f^{n-(s+1)}(x)\big)}\bigg)^{-1}\big(V_{s}^0(f^{n-s}(x))\big),$$
we get that $\overline{V_{s+1}^0(f^{n-(s+1)}(x))}$ is sent diffeomorphically by $f^{s+1}$ to $\overline{B_{\delta_1}(f^n(x))}$ and  $$\|(Df^{s+1})^{-1}(y)\|^{-1}>r\lambda^{(s+1)/2}$$ for every $y\in V_{s+1}^0(f^{n-(s+1)}(x))$.
Thus, by induction, we conclude the proof of the lemma.\end{proof}

	A set $\Lambda\subset M$ has {\bf\em slow recurrence to the critical/singular region} (or  {\bf\em satisfies the slow approximation condition}) if
for each $\varepsilon>0$ there is a $\delta>0$
such that
$$
\limsup_{n\to+\infty}
\frac{1}{n} \sum_{j=0}^{n-1}-\log \mbox{dist}_{\delta}(f^j(x),\cc)
\le\varepsilon
$$
for every $x\in\Lambda$,
where $\dist_{\delta}(x,\cc)$ denotes the $\delta$-{\bf\em truncated distance} from $x$ to $\cc$ defined as
$\dist_{\delta}(x,\cc)=\dist(x,\cc)$ if $\dist(x,\cc) \leq \delta$
and $\dist_{\delta}(x,\cc) =1$ otherwise.

A  measure $\mu$ {\bf\em has slow recurrence to the critical/singular region} (or {\bf\em satisfies slow approximation condition}) if there is a full measure set $\Lambda$ (i.e., $\mu(M\setminus\Lambda)=0$) with slow recurrence to the critical/singular region.

\subsection{Example of synchronization of an endomorphism with critical/singular region}

Let $g:[0,1]\to[0,1]$ be a quadratic map $g(x)=4t_0x(1-x)$ for some $t_0\in(0,1)$ such that there exists a $g$-invariant probability $\mu\ll\leb_{[0,1]}$.
Let $S^1=\RR/\ZZ$ and $h_0:S^1\to S^1$ given by $h_0([x])=[2 x]$, where $[x]=\{y\in\RR\,;\,x-y\in\ZZ\}$ is the class of $x\in\RR$.
Let $h_1:S^1\to S^1$ be a $C^2$ preserving orientation diffeomorphism such that $\sigma=\min\{h_1'([x])\,;\,x\in[0,1]\text{ and }j\in\{0,1\}\}\le1$ (for instance, we may consider $h_1$ as in Figure~\ref{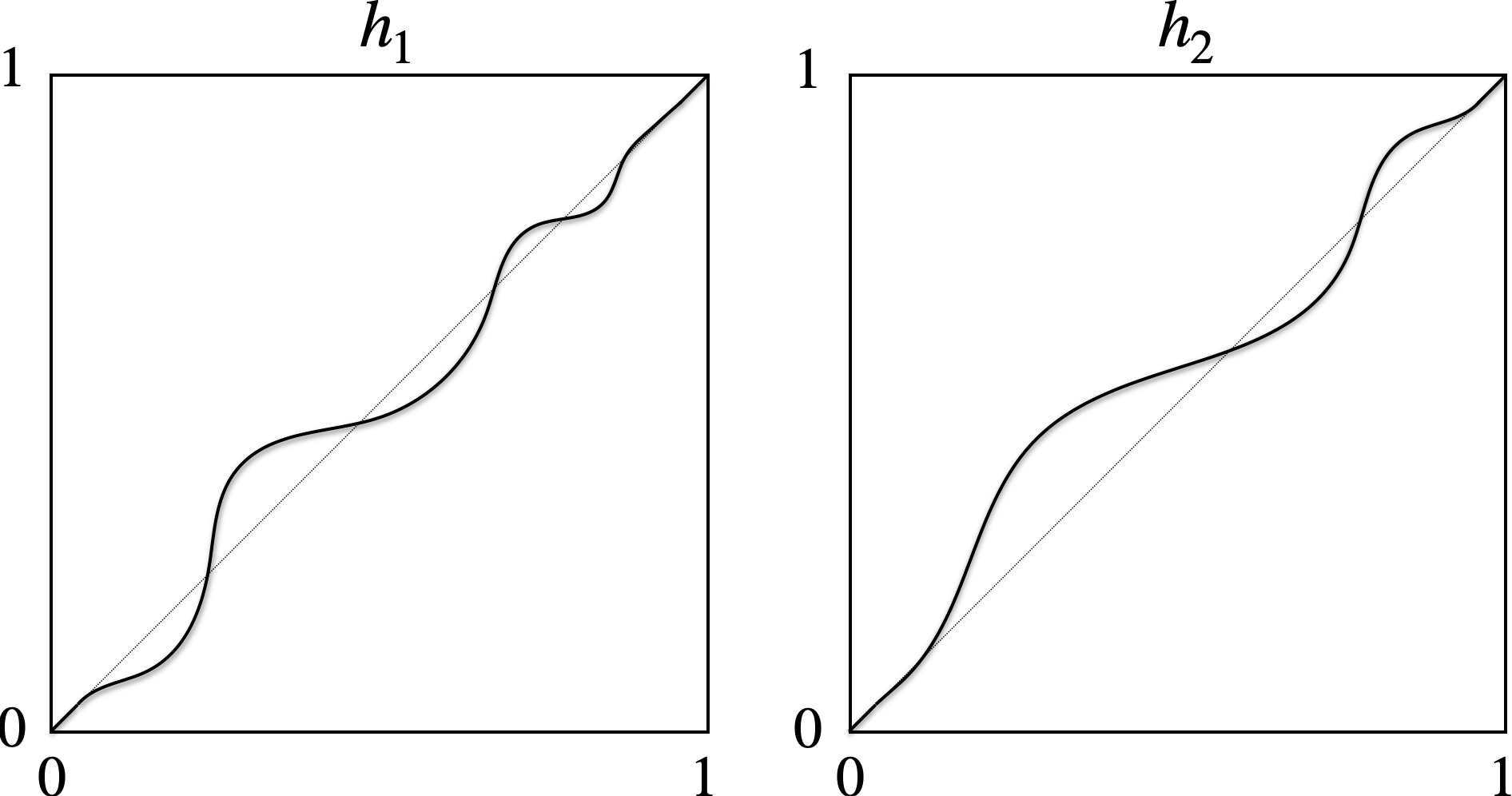}). 

\begin{Lemma}\label{Lemmakytrdcvbhu65e}If $\mu\ll\leb_{[0,1]}$ then 
	$\int_{x\in[0,1]}|\log|x-r|| d\mu<+\infty$ for $\mu$-almost every $\rho\in[0,1]$.
\end{Lemma}
\begin{proof}
As $\mu\ll\leb$, it follows from Radon-Nikodym 
theorem that for $\mu$ almost every $x\in[0.1]$ there is  $K_x>0$ such that $\mu(B_{\delta}(x))\le K_x\leb(B_{\delta}(x))$. So, setting $R(x)=|\log|x-\rho||$,  we get that 
$\int_{x\in[0,1]}|\log|x-\rho|| d\mu$ $\le$ $\sum_{j=1}^{\infty}(n+1)\mu(R^{-1}([n,n+1)))\le\sum_{j=1}^{\infty}(n+1)\mu(\{R$ $\ge$ $n\}))$ $=$
$\sum_{j=1}^{\infty}(n+1)\mu(B_{e^{-n}}(\rho))\le2K_r\sum_{j=1}^{\infty}(n+1)e^{-n}<+\infty.$
\end{proof}

It is well known that there exists $C>0$ such that $\mu\ge C\leb|_{\supp\mu}$ and that $B_{\delta_0}(1/2)\subset\supp\mu$ for some $\delta_0>0$.
Hence, it follows form Lemma~\ref{Lemmakytrdcvbhu65e} that $U_{\mu}=\{\rho\in(0,\delta_0)\,;\,\int|\log|x-(c\pm\rho)||d\mu<+\infty\}$ contains $\leb_{[0,1]}$ almost every point of $(0,\delta_0)$.
Consider $\rho\in U_{\mu}$ such that $\lambda:=2^{\mu(I_0)}\sigma^{\mu(I_1)}>1$, where $I_0=[0,1]\setminus I_1$ and $I_1=(1/2-\rho,1/2+\rho)$.
Let $M:=[0,1]\times S^1$ and $\ii:[0,1]\to\{0,1\}$ be given by
$$\ii(x)=\chi_{_{I_1}}(x)=
\begin{cases}
0 & \text{ if }x\in I_0\\
1 & \text{ if }x\in I_1
\end{cases}.$$

\begin{figure}
  \begin{center}\includegraphics[scale=.15]{Figure2Difeos.png}\\
 \caption{Examples of $C^2$ preserving orientation diffeomorphisms of the interval that induces $C^2$ diffeomorphisms of the circle $S^1=\RR/\ZZ$ as asked in Theorem~\ref{TheoremExemploSyEnd}.}\label{Figure2Difeos.png}
  \end{center}
\end{figure}

\begin{figure}
  \begin{center}\includegraphics[scale=.2]{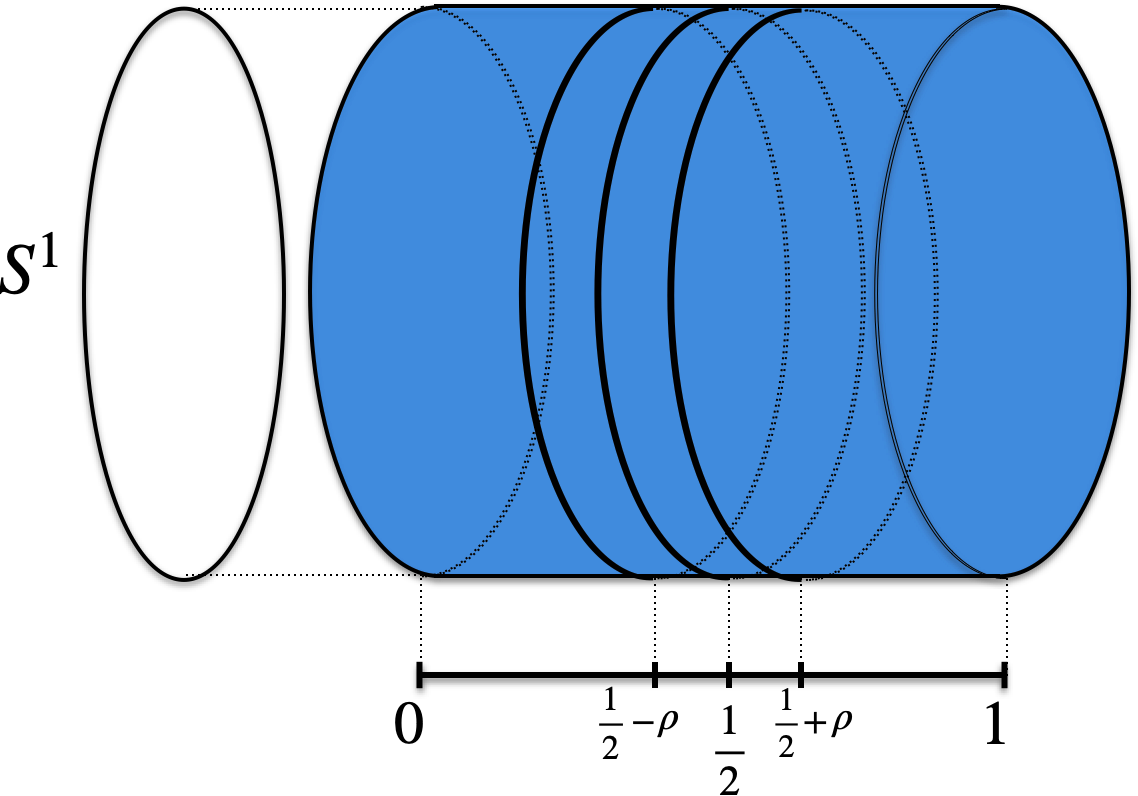}\\
 \caption{In this picture we see the manifold with boundary $M=[0,1]\times S^1$ and the critical/singular region $\cc_f=\{1/2-\rho,1/2,1/2+\rho\}\times S^1$ for the map $f$ in Theorem~\ref{TheoremExemploSyEnd}.}\label{Cilindro.png}
  \end{center}
\end{figure}

\begin{T}\label{TheoremExemploSyEnd} If $f:M\mapsto  M$ is given by $f(x,[y])=\big(g(x),h_{\ii(x)}([y])\big)$ then 
Lebesgue almost every point $p\in M$ has only positive Lyapunov exponents and there is a finite collection $\{\nu_1,\cdots,\nu_k\}$ of ergodic absolutely continuous $f$-invariant probabilities such that $\leb(M\setminus \bigcup_{j=1}^k\beta_f(\nu_j))=0$, where $\beta_f(\nu_j)$ is the basin of attraction \footnote{ The definition is given by equation (\ref{DefBasinOfAttraction}).} of $\nu_j$.
\end{T}
\begin{proof}

The critical/singular region of $f$ is the union of three circles. Precisely, $\cc_f=\{1/2\}\times S^1$ $\cup$ $\{1/2-\rho\}\times S^1$ $\cup$ $\{1/2+\rho\}\times S^1$.
Note that $f$ is discontinuous on the circles $\{1/2\pm\rho\}\times S^1$. Although $f$ is $C^2$ in a neighborhood of $\{1/2\}\times S^1$, $\{1/2\}\times S^1$ is a critical region in the sense that $\det Df(p)=0$ for every $p\in \{1/2\}\times S^1$. Outside of $\cc_f$ $f$  is a $C^{2}$ local diffeomorphism.

Given $p=(x,[y])\in M\setminus\bigcup_{j=0}^{n-1}f^{-j}(\cc_f)$, write $(g^j(x),[y_j],)=f^j(p)$.
As $$Df(p)=\left(\begin{matrix}
 g'(x) & 0 \\
 0 & h_{\ii([x])}'([y]) 
\end{matrix}\right),$$
we have that $$Df^n(p)=\left(\begin{matrix}
 (g^n)'(x) & 0\\
0 & \prod_{j=0}^{n-1} h_{\ii(g^j([x]))}'([y_j])
\end{matrix}\right).$$

Moreover,  $$\prod_{j=0}^{n-1} h_{\ii(g^j([x]))}'[y_j]\ge 2^{\tau(x,n)}\sigma^{n-\tau(x,n)},$$ where $\tau(x,n)$ $=$ $\#\{0\le j<n\,;\,g^j(x)\in I_0\}$.
It follows from Birkhoff that $\lim_n\frac{1}{n}\log\tau(x,n)=\mu(I_0)$ for $\mu$ (and also for $\leb_{[0,1]}$) almost every $x\in[0,1]$ and so, $$\lim_n\frac{1}{n}\log\prod_{j=0}^{n-1} h_{\ii(g^j(x))}'[y_j]\ge\log\lambda>0$$ for Lebesgue almost every $p=(x,[y])\in M$.
This means that Lebesgue almost every point $p\in M$ has one Lyapunov exponent equal to $\int\log|g'|d\mu$ and the other one bigger or equal to $\log\lambda>0$.

Let $\dist$ and $\dist_{[0,1]}$ be the distance on respectively $M$ and $[0,1]$ given by the Riemannian metric on $M$ and the usual distance on the interval. Let $\dist_{\delta}$ and $\dist_{[0,1],\delta}$ be the $\delta$-truncated distance on respectively $M$ and $[0,1]$.
Let $\cc_0=\{1/2-\rho,1/2,1/2+\rho\}$.
As $\dist_{\delta}((x,[y]),\cc_f)=\dist_{[0,1],\delta}(x,\cc_0)$, it follows from  Birkhoff and from $\leb_{[0,1]}(\beta_g(\mu))=1$ that exists $\Lambda_0\subset[0,1]$, with $\leb_{[0,1]}(\Lambda_0)=1$, such that 
\begin{equation}
\label{Equationkjhog6trfv}
\lim_n\frac{1}{n}\sum_{j=0}^{n-1}|\log\dist_{\delta}(f^j(p),\cc_f)|\le\sum_{c\in\cc_0}\int_{x\in B_{\delta}(c)}|\log|x-c||d\mu
\end{equation}
for every $p\in\Lambda\times S^1$.

It follows from $\rho\in U_{\mu}$ that 
$\int_{x\in[0,1]}|\log|x-(1/2\pm\rho)||d\mu<+\infty$ and so, $\int_{x\in B_{\delta}(c)}|\log|x-c||d\mu\to0$ when $\delta\to0$ and $c=1/2\pm\rho$. 
On the other hand, as $\log|g'(x)|=\log(8t)+\log|x-1/2|$ and $\int|\log|g'||d\mu<+\infty$, we get that $\int|\log|x-1/2||d\mu<+\infty$ and so, $\lim_{\delta\to0}\int_{x\in B_{\delta}(1/2)}|\log|x-1/2||d\mu=0$.
Thus, it follows from (\ref{Equationkjhog6trfv}) that $\Lambda_0\times S^1$  has slow recurrence to $\cc_f$ and, as $\leb(\Lambda_0\times S^1)=1$, we have that $\leb$ has slow recurrence to the critical/singular region.

Therefore, there exists a set $\Lambda\subset M$ with full Lebesgue measure, slow recurrence to $\cc_f$ and all Lyapunov exponents bigger or equal to $\log\lambda>0$ for every $p\in\Lambda$.

Let $\cu_0(x)$ be the set of all $(\frac{1}{2}\log\lambda,\Phi_1)$-Pliss times to $x\in[0,1]$ with respect to $g$ and, given $[y]\in S^1$, let 
 $\cu_{2,[y]}(x)$ be the set of all $(\frac{1}{2}\log\lambda,\Phi_{2,[y]})$-Pliss times to $x\in[0,1]$ with respect to $g$ (Definition~\ref{DefinitionPlissTime}), where $\Phi_1,\Phi_{2,[y]}:\NN\times [0,1]\to\RR$ are the $g$-cocycles given by
 $$\Phi_1(n,x)=\log|(g^n)'(x)|$$
and
$$\Phi_{2,[y]}(n,x)=\sum_{j=0}^{n-1}\log h_{\ii\circ g^j(x)}([y_j])|$$

It follows from the Synchronization  Theorem (Theorem~\ref{TheoremSynINV}) that exists $\ell_0\ge0$ and $\theta_0>0$ such that $$\lim\frac{1}{n}\#\big\{0\le j<n\,;\,j\in\cv_{1,[y]}(x))\big\}=\theta_0$$
for Lebesgue almost every $[x]\in S^1$, where  $$\cv_{1,[y]}(x)=\{j\in\NN\,;\,j\in\cu_1(x)\text{ and }j+\ell_0\in\cu_{2,[y]}(x)\}.$$
Given $0<\delta<1$, let $\cv_{\delta,[y]}(x)$ be the set of all  $n\in\NN$ such that $\dist(f^{n-j}((x,[y])),\cc_f)\ge\delta\lambda^{-(j-1)/2}$ for all $0\le j<n$.
As $\leb$ has slow recurrence to $\cc_f$, given any $0<\gamma<1$, one can choose $\delta>0$ small so that $\lim_n\frac{1}{n}\#\{1\le j\le n\,;\,f^j((x,[y]))\in\cv_{\delta,[y]}(x)\}\ge\gamma$ for $\leb$ almost every $(x,[y])\in M$ (\footnote{ This result comes from the Pliss Lemma and it appears in the proof of the existence of hyperbolic times.
See for instance, the proof of Lemma~5.4 of \cite{ABV}.}).
Hence, taking a small $\delta>0$, we have for $\leb$ almost every $(x,[y])\in M$ that 
$$\liminf\frac{1}{m}\#\big\{1\le n\le m\,;\,n\in\cv_{[y]}(x)\big\}\ge\theta_1$$
for some $0<\theta_1\le \theta_0$, where $\cv_{[y]}(x)=\cv_{1,[y]}(x)\cap\cv_{\delta,[y]}(x)$.

Given $n\in\NN $ and $p=(x,[y])$ and $q=(a,[b])\in M$, define $[y_j]$ and $[b_j]$ by $(g^j(x),[y_j])=f^j(p)$ and $(g^j(a),[b_j])=f^j(q)$.
Write $\alpha_0(n,p)=|(g^n)'(x)|$, $\alpha_1(n,p)=\prod_{j=0}^{n-1} h_{\ii(g^j(x))}'([y_j])$ and let $$\varphi(n,p):=\|(Df^n(p)^{-1}\|^{-1}=\min\{\alpha_0(n,p),\alpha_1(n,p)\}.$$
Note that if $n\in\cv_{[y]}(x)$ then
$$\varphi(n-j,f^j((x,[y]))\ge r\lambda^{j/2}$$ 
for every $0\le j<n$, where $r=\sigma^{\ell_0}$.

Note that
$$
  \frac{\alpha_0(n,p)}{\alpha_0(n,q)}=\prod_{j=0}^{n-1}\frac{|g^j(x)-1/2|}{|g^j(a)-1/2|}\ge e^{-\sum_{j=0}^{n-1}\frac{|g^j(x)-g^j(a)|}{\dist(\{g^j(x),g^j(a)\},1/2)}}\ge e^{-\sum_{j=0}^{n-1}\frac{\dist(f^j(p),f^j(q))}{\dist(\{f^j(p),f^j(q)\},\cc_f)}}
$$
and, as $h_1$ is $C^2$ diffeomorphism with $h'_1\ge\sigma>0$, there exists $\gamma\ge1$ such that 
$h'_1([a])\ge h'_1([b])e^{-\gamma\dist_{S^1}([a],[b])}$.
This implies that
\begin{equation}\label{Equationhftiopo}
  \frac{\alpha_1(n,p)}{\alpha_1(n,q)}\ge  e^{-\gamma\sum_{j=0}^{n-1}\dist(f^j(p),f^j(q))}
\end{equation}
for every $p,q\in M$ and $j\in\{0,1\}$.

Given $\Gamma:[0,1]\to M$ be a $C^1$ curve such that $\Gamma(0)=p$ and $\Gamma(1)=q$.
Let $\length(\Gamma)=\length(\im(\Gamma))=\int_{t\in[0,1]}|\Gamma'(t)|dt$ be the length of $\im(\Gamma)$.

\begin{Claim}\label{Claimjnihvud5kjhvy5555} If $\Gamma:[0,1]\to M\setminus\bigcup_{j=0}^{n-1}f^{-j}(\cc_f)$ is a $C^1$ curve with $\Gamma(0)=p$ and $\Gamma(1)=q$ then 
$\varphi(n,q)\ge\varphi(n,p)\,e^{-\gamma\sum_{k=0}^{n-1}\length(f^j(\Gamma))}$.
\end{Claim}
\begin{proof}
Let $${U_0}=\{p\in M\,;\,\varphi(n,p)=\alpha_0(n,p)\}$$ and 
$${U_1}=\{p\in M\,;\,\varphi(n,p)=\alpha_1(n,p)<\alpha_0(n,p)\}.$$

Note that $\{U_0,U_1\}$ is a partition of $M\setminus\bigcup_{j=0}^{n-1}f^{-j}(\cc_f)$ and 
$$(\star) \begin{cases}
\varphi(n,p)=\alpha_0(n,p) & \text{ if }p\in\overline{U_0}\\
\varphi(n,p)=\alpha_1(n,p) & \text{ if }p\in\overline{U_1}
\end{cases}.
$$

Thus, if $p,q\in\overline{{U_j}}\cap\overline{U_k}$ then $$\frac{\varphi(n,q)}{\varphi(n,p)}=\frac{\alpha_j(n,q)}{\alpha_j(n,p)}=\frac{\alpha_k(n,q)}{\alpha_k(n,p)}\ge e^{-\gamma\sum_{j=0}^{n-1}\frac{\dist(f^j(p),f^j(q))}{\dist(\{f^j(p),f^j(q)\},\cc_f)}}.$$

Suppose that $p\in\overline{U_j}$ and $q\in\overline{U_k}$ with $\overline{U_j}\cap\overline{U_k}=\emptyset$.
In this case, there exists $u\in\Gamma\cap \overline{U_j}\cap\overline{U_k}$.
Thus, $$\frac{\varphi(n,q)}{\varphi(n,p)}=\frac{\varphi(n,q)}{\varphi(n,u)}\frac{\varphi(n,u)}{\varphi(n,q)}=\frac{\alpha_k(n,q)}{\alpha_k(n,u)}\frac{\alpha_j(n,u)}{\alpha_j(n,p)}\ge$$
$$\ge e^{-\gamma\sum_{k=0}^{n-1}\frac{\dist(f^k(q),f^k(u))}{\dist(\{f^k(q),f^k(u)\},\cc_f)}+\frac{\dist(f^k(u),f^k(p))}{\dist(\{f^k(u),f^k(p)\},\cc_f)}}\ge e^{-\gamma\sum_{k=0}^{n-1}\frac{\length(f^k(\Gamma))}{\dist(f^k(\Gamma),\cc_f)}}.$$
\end{proof}

Taking $\psi(t)=-\gamma t$, it follows from Claim~\ref{Claimjnihvud5kjhvy5555} that $\varphi$ satisfies the equation (\ref{Equationhgfivo66}) and so, we can apply Lemma~\ref{LemmaHyperbolicPreBalls}.
Therefore, there exists $\ell\ge1$, $0<\delta_1<\delta$, $\theta>0$ and $U\subset M$, with $\leb(M)=1$, such that for each $p\in U$ there is $0\le s(p)<\ell$ satisfying $$\liminf\frac{1}{n}\#\{1\le j\le n\,;\,j\in\cu(f^{s(p)}(p))\ge\theta,$$
where $\cu(x)$ is the set of all $(\sqrt{\lambda},\delta_1)$-hyperbolic times for $x\in M$ with respect to $f^{\ell}$.

Hence, taking $\mu=\leb|_{U_0}$, where $U_0=\{f^{s(p)}(p)\,;\,p\in U\}$, we can use Theorem~C of \cite{Pi11} (page 916) to conclude that there is a finite collection $\mu_1,\cdots,\mu_k$ of ergodic $\mu$ absolutely continuous $f^{\ell}$-invariant probability such that $\mu$ almost every $p\in U_0$ belongs to the basin of one of these probabilities.
This concludes the proof as it implies that $\leb(\bigcup_{j=1}^k\beta_f(\nu_j))=1$, where $\nu_j:=\sum_{i=0}^{\ell-1}f^i_*\mu_j\ll\leb$ is an ergodic $f$-invariant probability.
\end{proof}
 
\subsection{Example of synchronization of a partial hyperbolic diffeomorphism}\label{AppendixPHYsyn}

Let $A:\RR^2\to\RR^2$ be the linear map $A(x,y)=(2x+y,y+x)$,   
 $\TT^2=\RR^2/\ZZ^2$ and $g:\TT^2\to \TT^2$ the linear Anosov map given by $g([(x,y)])=[A(x,y)]$, where $[(x,y)]=\{(p,q)\in\RR^2\,;\,(x-p,y-q)\in\ZZ^2\}$ is the class of $(x,y)\in\RR^2$.
As the same, let $S^1=\RR/\ZZ$ and  $[x]=\{y\in\RR\,;\,x-y\in\ZZ\}$ is the class of $x\in\RR$.
Recall that $a=(3+\sqrt{5})/2\sim2.61803...>2$ and $b=(3-\sqrt{5})/2\sim0.38197...<1/2$ are the eigenvalues of $Dg([(x,y)])=A$.
Let $\vec{\alpha}=\frac{1}{|\vec{\alpha_0}|}\vec{\alpha_0}$,  $\vec{\beta}=\frac{1}{|\vec{\beta_0}|}\vec{\beta_0}$, where $\vec{\alpha}_0=((3+\sqrt{5})/2-1,1)\in\RR^2$ and $\vec{\beta}_0=((3-\sqrt{5})/2-1,1)\in\RR^2$ are  eigenvectors of $A$.
Note that  $A\vec{\alpha}=a\vec{\alpha}$, $A\vec{\beta}=b\vec{\beta}$, $|\vec{\alpha}|=|\vec{\beta}|=1$ and $\langle\vec{\alpha},\vec{\beta}\rangle=0$.

Consider a $C^2$ map
 $v:[0,1]^2\to[0,1]$ such that 
\begin{enumerate}
	\item $v(t,x)=x$ when $(t,x)\in B_{1/5}(\partial([0,1]^2))$,
	\item for any given $t\in[0,1]$, $[0,1]\ni x\mapsto v(t,x)\in[0,1]$ is a diffeomorphism and
	\item $\frac{b}{a}<\frac{1}{2a}\le\partial_xv(t,x)\le\frac{2}{b}<\frac{a}{b}$ for every $(t,x)\in [0,1]^2$.
\end{enumerate}

Write $v_t(x)=v(t,x)$, let $0<r<1/6$, $U_j=\big[B_{r}((j/2,j/2))\big]\subset\TT^2$, $M=\TT^2\times\TT^2$ and define $f:M\to M$ by

$$f([(x,y)],[z\vec{\alpha}+w\vec{\beta}])=
\begin{cases}
(\,g([(x,y)],\,g([z\vec{\alpha}+w\vec{\beta}])\,) & \text{ if }[(x,y)]\notin U_0\cup U_1\\
(\,g([(x,y)],\,g([v_{w}(z)\vec{\alpha}+w\vec{\beta}])\,)	 & \text{ if }[(x,y)]\in U_0\\
(\,g([(x,y)],\,g([z\vec{\alpha}+v_{z}(w)\vec{\beta}])\,)	 & \text{ if }[(x,y)]\in U_1
\end{cases},
$$
where $z$ and $w$ above are taking in $[0,1]$.

Note that the conditions (1), (2) and (3) assure that $\EE^{cu}\oplus\EE^{cs}$ is a dominating splitting of $TM$, where $\EE^{cu}=\{(x\vec{\alpha},y\vec{\alpha})\in\RR^2\times\RR^2\,;\,x,y\in\RR\}$ and $\EE^{cs}=\{(x\vec{\beta},y\vec{\beta})\in\RR^2\times\RR^2\,;\,x,y\in\RR\}$.
Taking $r>0$ small enough, so that
$\lambda_1:=a^{1-\leb(U_0)}2^{-\leb(U_0)}$ $>$ $1$ $>$ $\lambda_2:=b^{1-\leb(U_1)}2^{\leb(U_1)}$, we get that 
$\lim_{n\to+\infty}\frac{1}{n}\log\|(Df^n(p,q)|_{\EE^{cu}})^{-1}\|^{-1}\ge\lambda_1$ and $\lim_{n\to+\infty}\frac{1}{n}\log\|Df^n(p,q)|_{\EE^{cs}}\|\le\lambda_2$  for $\leb|_{\TT^2}$ almost every $p\in\TT^2$ and every $q\in\TT^2$.
Thus, using Theorem~\ref{TheoremSynINV} applied to the $g$-invariant probability $\leb|_{\TT^2}$ and to synchronize the Lyapunov exponents. Changing the metric (for instance, as in Theoremm~\ref{TheoremHipBlock}) or taking an iterated, one can 
Use Proposition 6.4 of \cite{ABV} to conclude the following result. 

\begin{Proposition}
\label{TheoremNUHDSyn}
The map $f$ is a $C^2$ diffeomorphism admitting a dominated splitting $\EE^{cu}\oplus\EE^{cs}$ such that $$\limsup_{n\to+\infty}\frac{1}{n}\log\|Df^{n}(x)|_{\EE^{cs}}\|<0<\liminf_{n\to+\infty}\frac{1}{n}\log\|(Df^n(x)|_{\EE^{cu}})^{-1}\|^{-1}$$
for Lebesgue almost every $x\in T^2\times M$. Furthermore, Lebesgue 
 almost every point of $M=\TT^2\times\TT^2$ belongs to the basin of attraction of some SRB measure.
\end{Proposition}
\section{Theoretical applications}
\label{SectionExamplesOfTheoApplications}

\subsection{Expanding/hyperbolic invariant measures}
 
In this section we will apply  Theorem~\ref{TheoremLift} and \ref{TheoremFiftErgDec}, Theorem~\ref{TheoremPlissBlock} and Corollary~\ref{CorollaryPlissBlock} to refine some results of the non uniform hyperbolic dynamics.
In Theorem~\ref{Theoremkjhiuyt567} we show several additional properties for the full induced Markov maps/Young Towers that appear in \cite{ALP1,ALP,Pi11}.
In Theorem~\ref{TheoremHipBlock} we construct Hyperbolic Block of Pesin theory using the global continuous metric $\langle u,v\rangle_\ell=\langle  Df^{\ell-1}(x)u, Df^{\ell-1}(x)v\rangle$ for some fixed $\ell\in\NN$.

Let $M$ be a Riemannian manifold. We say that $f:M\to M$ is a $C^{1+}$ {\bf\em
non-flat map} map with {\bf\em critical/singular rigion} $\cc\subset {M}$ if $f$ is a local
$C^{1+}$ (i.e., $C^{1+\alpha}$ with $\alpha>0$ ) diffeomorphism in the whole manifold  except  in $\cc$ and 
$\exists\beta,B>0$ such that
the following conditions holds.
\begin{enumerate}
\item[(C.1)]
\quad $\displaystyle{(1/B)\dist(x,\cc)^{\beta }|v|\le|Df(x)v|\le B\,\dist(x,\cc)^{-\beta }}|v|$ for all $v\in T_x {M}$.
\end{enumerate}
For every $x,y\in {M}\setminus\cc$ with
$\dist(x,y)<\dist(x,\cc)/2$ we have
\begin{enumerate}
\item[(C.2)]\quad $\displaystyle{\left|\log\|Df(x)^{-1}\|-
\log\|Df(y)^{-1}\|\:\right|\le
(B/\dist(x,\cc)^{\beta })\dist(x,y)}$.
\item[(C.3)]\quad $\displaystyle{\left|\log|\det Df(x)|-
\log|\det Df(y)|\:\right|\le
(B/\dist(x,\cc)^{\beta })\dist(x,y)}$
\end{enumerate}

The critical/singular set $\cc$ is called {\bf\em purely critical} if $$\lim_{x\to p}|\det Df(x)|=0$$ for every $p\in\cc$.
On the other hand, if $$\lim_{x\to p}|\det Df(x)|=+\infty$$ for every $p\in\cc$, we say that $\cc$ is {\bf\em purely singular}.

\begin{Lemma}\label{LemmaNonFlatToSlowRecurrence}
Let $M$ be a Riemannian manifold and $f:M\to M$ a $C^{1+}$ non-flat map with critical/singular set  $\cc\subset {M}$. Suppose that $\cc$ is either purely critical or purely singular.
If $\mu$ is a $f$-invariant
ergodic probability with all of its Lyapunov exponent finite, i.e., $\lim_n\frac{1}{n}\log|Df^n(x)v|\ne\pm\infty$ for $\mu$ almost every $x$ and every $v\in T_xM\setminus\{0\}$,
then $x\mapsto\log\dist(x,\cc)$ and $x\mapsto\log\|(Df(x))^{-1}\|$ are $\mu$-integrable. In particular, $\mu$ has slow recurrence to the critical/singular region \footnote{Recall the definition of slow recurrence in Appendix~\ref{SecHYpTIm}.}.
\end{Lemma}
\begin{proof}
Consider the function $\varphi:M\to[0,+\infty)$ defined as
$$\varphi(x)=
\begin{cases}
	0 & \text{ if }x\in\cc\\
	\det Df(x) & \text{ if }x\notin\cc \text{ and $\cc$ is purely critical}\\
	\frac{1}{\det Df(x)} & \text{ if }x\notin\cc \text{ and $\cc$ is purely singular}
\end{cases}
$$
As $f$ is $C^{1+}$, we get that $\varphi$ is a Hölder function and so, $\cc=\varphi^{-1}(0)$ is a compact subset of $M$.
We may assume
that $\cc\ne\emptyset$.
As $\varphi$ is Holder, $\exists\,k_0,k_1>0$ such that  $|\varphi(x) - \varphi(y)|$ $\le$ $k_0\dist(x,y)^{k_1}$ $\forall\,x,y\in M$.
Given $x\in M$ there is $y_x\in\cc$ such that $\dist(x,y_x)=\dist(x,\cc)$.
Thus, we get $|\varphi(x)|$ $=$ $|\varphi(x) - \varphi(y_x)|$ $\le$ $k_0\dist(x,y_x)^{k_1}$ $=$ $k_0\dist(x,\cc)^{k_1}$.
That is,
\begin{equation}\label{EquationJHGJKJH7}
  \log|\varphi(x)|\le\log k_0+k_1\log\dist(x,\cc)\;\,\forall\,x\in M.
\end{equation}

Let $m=dimension(M)$ and note that $\|A^{-1}\|^{-m}\le|\det A|\le\|A\|^{m}$ for every $A\in GL(m,\RR)$.
That is, $$m\log\big(\|A^{-1}\|^{-1}\big)\le\log|\det A|\;\;\text{ and }\;\;\log\|A\|\ge-\frac{1}{m}\log\bigg|\frac{1}{\det A}\bigg|.$$
Thus, 
if $\int\log|\varphi|\,d\mu=-\infty$, it follows from Birkhoff that either $$\limsup_{n\to\infty}\frac{1}{n}\log\|(Df^{n}(x))^{-1}\|^{-1}\le\frac{1}{m}\limsup_{n\to\infty}\frac{1}{n}\log|\det Df^n(x)|=$$
$$=\frac{1}{m}\,\lim\frac{1}{n}\sum_{j=0}^{n-1}\log|\det Df(f^j(x))|=\frac{1}{m}\,\lim\frac{1}{n}\sum_{j=0}^{n-1}\log|\varphi\circ f^j(x)|=-\infty$$
for $\mu$-almost every $x$ (when $\cc$ is purely critical) or, when $\cc$ is purely singular, 
$$\limsup_{n\to\infty}\frac{1}{n}\log\|(Df^{n}(x))\|\ge-\frac{1}{m}\liminf_{n\to\infty}\frac{1}{n}\log\bigg|\frac{1}{\det Df^n(x)}\bigg|=$$
$$=-\frac{1}{m}\,\lim\frac{1}{n}\sum_{j=0}^{n-1}\log\bigg|\frac{1}{\det Df(f^j(x))}\bigg|=-\frac{1}{m}\,\lim\frac{1}{n}\sum_{j=0}^{n-1}\log|\varphi\circ f^j(x)|=+\infty$$
for $\mu$-almost every $x$. In any case, we have a contradiction to our hypothesis.
So, $\int\log|\varphi|d\mu>-\infty$ and, by (\ref{EquationJHGJKJH7}), we get that  $$-\infty<\int\log|\varphi|\,d\mu-\log k_0\le k_1\int_{x\in M}\log\dist(x,\cc)\,d\mu\le k_1\log\diameter(M),$$  proving that the logarithm of the distance to the critical set is integrable. As a consequence, $$\int\log\dist_{e^{-n}}(x,\cc)\,d\mu(x)=
 \int_{\{x\,;\,\log\dist(x,\cc)<-n\}}\log\dist(x,\cc)\,d\mu\rightarrow0$$
 when $n\rightarrow\infty$.
and this implies (by Birkhoff)  the slow approximation condition. Furthermore, as $\mu(\cc)=0$, it follows from the condition (C1) on the definition of a non-degenerated critical/singular set that $$-\log B+\beta\log\dist(x,\cc)\le-\log\|(Df(x))^{-1}\|\le\log B-\beta\log\dist(x,\cc).$$
Thus, the integrability of $\log\|Df^{-1}\|$ follows from the integrability of $x\mapsto\log\dist(x,\cc)$.
\end{proof}

\begin{Definition}
	We say that an ergodic $f$ invariant probability $\mu$ is a {\bf\em synchronized expanding measure} if there exists $\ell\ge1$ such that  
\begin{equation}\label{EquationExpanding}
\limsup_{n\to\infty}\frac{1}{n}\sum_{i=0}^{n-1}
\log \|(Df^{\ell}(f^{i\ell}(x)))^{-1}\|^{-1}\ge\lambda,
\end{equation}
holds for $\mu$ almost every $x\in M$.
If a synchronized expanding measure $\mu$ satisfies the slow approximation condition, then $\mu$ is called a  {\bf\em geometric expanding measure}.
\end{Definition}

The main property of a geometric expanding measure $\mu$ is the existence, for $\mu$ almost every $x$, 
of a positive frequency of geometric hyperbolic times (see Definition~\ref{DefinitionGeomHyTimes}), that are useful in many applications, in particular, in the construction of induced Markov maps (see Lemma~2.7 in \cite{ABV} and Lemma~2.1 in \cite{ALP}).

\begin{Proposition}\label{Propositionmytfgyu}
Let $M$ be a Riemannian manifold and $f:M\to M$ a $C^{1+}$ non-flat map with critical/singular set  $\cc\subset {M}$.
Suppose that $\cc$ is either purely critical or purely singular.  If $\mu$ is a $f$-invariant
ergodic probability having only positive and finite Lyapunov exponent, i.e., $$0<\lim_{n}\frac{1}{n}\log\|(Df^{n}(x))^{-1}\|^{-1}\le\lim_{n}\frac{1}{n}\log\|Df^{n}(x)\|<+\infty$$ for $\mu$-almost every $x$,
then $\mu$ is a geometric expanding measure.
\end{Proposition}
\begin{proof} As $\cc$ is either purely critical or purely singular, we get that either $$\sup\log\|(Df)^{-1}\|^{-1}\le\sup\log\|Df\|<+\infty$$
or
$$-\infty<\inf\log\|(Df)^{-1}\|^{-1}\le\inf\log\|Df\|.$$ 
Thus, it follows from Furstenberg-Kesten Theorem \cite{FK}, together with the ergodicity of $\mu$ and the hypothesis of the exponents being positive and finite, there exists $0<\lambda<+\infty$ such that $$\lim_{n\to+\infty}\frac{1}{n}\log\|(Df^{n}(x))^{-1}\|^{-1}=\lambda$$ for $\mu$ almost every $x$.
So, as $\int|\log\|(Df(x))^{-1}\||d\mu<+\infty$ (Lemma~\ref{LemmaNonFlatToSlowRecurrence}), we can apply Lemma~\ref{Lemmahdhhg620} and get that there exists $\ell\ge1$ such that
$$\int\log\|(Df^{\ell})^{-1}\|^{-1} d\mu\ge\frac{\lambda}{4}\ell.$$
As $\mu$ is $f^{\ell}$-invariant and $\mu$ has at most $\ell$ ergodic components, it follows from Birkhoff's Theorem that there exists $\ch_0$ with $\mu(\ch_0)=1$ and $f^{-\ell}(\ch_0)=\ch_0\mo\mu$ such that every $x\in\ch_0$ satisfies (\ref{EquationExpanding}).
As $\mu(\cc)=0$ and $\mu(\bigcup_{j=0}^{\ell-1}f^j(\ch_0))=1$, we get that there exists $\ch_1$, with $\mu(\ch_1)=1$, such that (\ref{EquationExpanding}) is true for every $x\in\ch_1$.
From Lemma~\ref{LemmaNonFlatToSlowRecurrence}, we get also that there exists $\ch_2$, with $\mu(\ch_2)=1$, such that $\ch_2$ satisfies the slow approximation condition.
Hence, $\ch=\ch_1\cap\ch_2$ is an expanding set with $\mu(\ch)=1$, proving that $\mu$ is an expanding measure.
\end{proof}

All induced maps constructed in \cite{Pi11} are orbit-coherent. In particular, in the theorem about lift in \cite{Pi11} (Theorem~1), it was assumed explicitly the hypothesis of orbit-coherence and this result is used to lift the invariant probabilities in all induced maps there.
Below we give examples of results that can be obtained mixing the results in \cite{Pi11} with the those in the present paper.

\begin{T}\label{Theoremkjhiuyt567}
Let $M$ be a Riemannian manifold and $f:M\to M$ a $C^{1+}$ non-flat map with critical/singular set  $\cc\subset {M}$.
Suppose that $\cc$ is either purely critical or purely singular. If $\mu$ is a $f$-invariant
ergodic probability having only positive and finite Lyapunov exponent, then 
there are open sets $B_j\subset B\subset M$, $j\in\NN$, and an induced map $F:A:=\bigcup_{j}B_j\to B$ such that
\begin{enumerate}
	\item $B_j\cap B_k=\emptyset$ whenever $j\ne k$;
	\item $R$ is constant on each $B_j$, where $R$ is the induced time of $F$;
	\item for every $j\ge1$, $F|_{B_j}$ is a diffeomorphism between $B_j$ and $B$;
	\item there is $\lambda>1$ such that $\|(DF(x))^{-1}\|^{-1}\ge\lambda$ for every $x\in\bigcup_j B_j$;
	\item $\mu(\bigcup_j B_j)=\mu(B)>0$;
	\item $F$ is orbit-coherent;
	\item there exists one and only one $F$-lift $\nu$ of $\mu$;
	\item $\nu$ is $F$-ergodic and $\nu\le C\mu$ for some $C>0$.
\end{enumerate}
\end{T}
\begin{proof}
Using Proposition~\ref{Propositionmytfgyu}, we get that $\mu$ is an expanding measure.
Thus, by Theorem~B in \cite{Pi11}, we get that there exists an induced map $F$ satisfying all the first six items of the Theorem~\ref{Theoremkjhiuyt567}.
Furthermore, $\mu$ is $F$-liftable and $F$ orbit coherent, items (7) and (8) follows from Theorem~\ref{TheoremLift}.
\end{proof}

Given a $C^1$ diffeomorphism $f:M\to M$ defined on a compact Riemannian manifold $M$ and $\ell\ge1$, define the equivalent Riemannian metric $\langle .,.\rangle_\ell$ by
$$\langle u,v\rangle_\ell=\langle  Df^{\ell-1}(x)u, Df^{\ell-1}(x)v\rangle$$ for every $u,v\in T_xM$. Given a vector bundle morphism $A:TM\to TM$ and $x\in M$,  define $\|A(x)\|_{\ell}=\max\{|A(x)v|_{\ell}\,;\,v\in T_xM\text{ and }|v|_{\ell}=1\}$. In Theorem~\ref{TheoremHipBlock} below, given an ergodic invariant probability without zero Lyapunov exponents, we use the coherent blocks to produce the {\em Hyperbolic Blocks} of Pesin theory with $\mu$ positive measure. Nevertheless, here, we use the metric $\langle .\,,.\rangle_{\ell}$ instead of the {\em induced Finsler metric} (see \cite{PuS}). Because of that, we do not need the $C^{1+\alpha}$ regularity, it suffices $f$ to be $C^1$.

\begin{T}[Hyperbolic Blocks]\label{TheoremHipBlock}
Let $f:M\to M$ be a $C^{1}$ diffeomorphism. If $\mu$ is a $f$-invariant
ergodic probability without zero Lyapunov exponents,
then there are integers $\ell\ge1$, $m\ge0$, measurable sets $B^{u}$ and $B^{s}$ with $\mu(B^{u}),\mu(B^{s})>0$, $0<\sigma<1$, $C>0$ and a measurable $f$ invariant splitting $TM=E^{s}\oplus E^{u}$ such that
\begin{enumerate}
	\item $\|Df^{-n}|_{E^{u}(x)}\|_{\ell}\le\sigma^n$ for every $x\in B^{u}$ and $n\ge1$;
	\item $\|Df^n|_{E^{s}(x)}\|_{\ell}\le\sigma^{n}$ for every $x\in B^{s}$ and $n\ge1$;
	\item $\ch(\mu):=B^{u}\cap f^{-m}(B^{s})$ has $\mu$ positive measure and
	$$\|Df^{-n}|_{\EE^{u}(x)}\|\le\sigma^n\;\;\text{ and }\;\;\|Df^n|_{\EE^{s}(x))}\|_{\ell}\le C\sigma^n$$ for every $n\ge1$ and $x\in\ch(\mu)$.
\end{enumerate}
\end{T}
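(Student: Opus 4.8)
The plan is to set up, for a fixed auxiliary integer $\ell$, a supadditive cocycle measuring unstable contraction of $Df^{-1}$ and a subadditive cocycle measuring stable expansion of $Df$, associate a coherent schedule of Pliss times to each, pass to the corresponding coherent blocks, and then synchronize the two blocks using Theorem~\ref{TheoremSynINV}. First I would invoke Oseledets to produce the measurable $Df$-invariant splitting $TM=E^s\oplus E^u$ with $\mu$-a.e.\ Lyapunov exponents $0<\lambda$ on $E^u$ (in the reciprocal sense, i.e.\ $\lim_n\frac1n\log\|Df^{-n}|_{E^u(x)}\|^{-1}=\lambda>0$ after sign conventions) and $<-\lambda$ on $E^s$; shrinking $\lambda$ if necessary I may take the same $\lambda$ for both. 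The role of the metric $\langle\cdot,\cdot\rangle_\ell$ is purely to make $Df^\ell$ behave like a ``one-step'' contraction: since $f$ is only $C^1$, one cannot use the induced Finsler metric à la Pesin, but with $\|\cdot\|_\ell$ the quantities $\log\|Df^{-\ell}|_{E^u}\|_\ell$ and $\log\|Df^\ell|_{E^s}\|_\ell$ are genuine additive-over-blocks cocycles up to bounded error, and I would choose $\ell$ large (via Lemma~\ref{Lemmahdhhg620} applied to the relevant integrable sup/subadditive cocycle, exactly as in the proof of Proposition~\ref{Propositionmytfgyu}) so that their $\mu$-averages are bounded away from $0$.

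Next I would define, on the unit tangent bundle restricted to $E^u$ (or directly on $M$, using that $E^u$ is already a measurable function of $x$), the additive $f$-cocycle $\varphi^u(n,x)=-\log\|Df^{-n}|_{E^u(x)}\|_\ell$ (a supadditive cocycle in $n$, in fact additive up to the Oseledets correction) and, likewise, $\varphi^s(n,x)=-\log\|Df^n|_{E^s(x)}\|_\ell$. By Lemma~\ref{Lemma67ygtgh}, for $\gamma$ a small positive number (say $\gamma=\lambda/2$, adjusted by the $\ell$-metric comparison constant) the sets $\cu^u(x)$ of $(\gamma,\varphi^u)$-Pliss times and $\cu^s(x)$ of $(\gamma,\varphi^s)$-Pliss times are $f$-coherent schedules of events, and by (the subadditive version of) Pliss's Lemma~\ref{PlissLemma} together with the fact that $\liminf_n\frac1n\varphi^u(n,x)\ge\lambda$ and $\liminf_n\frac1n\varphi^s(n,x)\ge\lambda$ $\mu$-a.e., both have $\dd_{\NN}^-(\cu^u(x))>0$ and $\dd_{\NN}^-(\cu^s(x))>0$ $\mu$-a.e. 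I would then take $B^u=B_{\cu^u}$ and $B^s=B_{\cu^s}$ to be the coherent blocks (Definition~\ref{CoerBlo}); since $f$ is a diffeomorphism it is bimeasurable and injective, so Theorem~\ref{TheoremPlissBlock} gives $\mu(B^u)=\dd_{\NN}(\cu^u(x))>0$ and $\mu(B^s)=\dd_{\NN}(\cu^s(x))>0$. The defining property of a coherent block, $f^j(x)\in B_{\cu}\Rightarrow j\in\cu(x)$, combined with the Pliss condition at every intermediate time, yields items (1) and (2): for $x\in B^u$, running the Pliss estimate backward along the orbit gives $\|Df^{-n}|_{E^u(x)}\|_\ell\le e^{-\gamma n}$, and similarly forward for $B^s$, after renaming $\sigma=e^{-\gamma}\in(0,1)$.

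For item (3) I would apply Theorem~\ref{TheoremSynINV} to the two coherent schedules $\cu^u$ and $\cu^s$ (both of positive upper density $\mu$-a.e.): this produces $\ell_1=:m\ge0$ and $\theta>0$ with $\dd_{\NN}(\{j\,;\,(j,j+m)\in\cu^u(x)\times\cu^s(x)\})=\theta>0$ $\mu$-a.e.; translating back through Theorem~\ref{TheoremPlissBlock} one gets $\mu(B^u\cap f^{-m}(B^s))=\theta>0$, so $\ch(\mu):=B^u\cap f^{-m}(B^s)$ has positive measure. The unstable estimate on $\ch(\mu)$ is immediate since $\ch(\mu)\subset B^u$; for the stable estimate, a point $x\in\ch(\mu)$ has $f^m(x)\in B^s$, so $\|Df^n|_{E^s(f^m(x))}\|_\ell\le\sigma^n$, and pushing this back $m$ steps through $Df^m$ costs only a uniform constant $C=C(\ell,m,f)$, giving $\|Df^n|_{E^s(x)}\|_\ell\le C\sigma^n$. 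The main obstacle I anticipate is the bookkeeping around the $\ell$-metric: verifying that $\log\|Df^{-\ell n}|_{E^u}\|_\ell$ is honestly (super)additive rather than merely subadditive-with-bounded-defect, so that the Pliss machinery applies cleanly, and tracking how the comparison constants between $|\cdot|$ and $|\cdot|_\ell$ interact with the choice of $\gamma$ and with the final constant $C$; a secondary subtlety is that $\mu$ need not be $f^\ell$-ergodic, so one argues on $f^\ell$-ergodic components and then reassembles, exactly as in Proposition~\ref{Propositionmytfgyu}.
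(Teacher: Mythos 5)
Your overall blueprint---Oseledets splitting, choosing $\ell$ via Lemma~\ref{Lemmahdhhg620}, Pliss times to produce coherent schedules via Lemma~\ref{Lemma67ygtgh}, and then coherent blocks via Theorem~\ref{TheoremPlissBlock}---is exactly what the paper does, and your identification of the role of the $\langle\cdot,\cdot\rangle_\ell$ metric is on target. However, there are two concrete problems with the execution as written.

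First, the directions of your cocycles are flipped. You take $\varphi^u(n,x)=-\log\|Df^{-n}|_{E^u(x)}\|_\ell$ as an $f$-cocycle; but this is in fact a sup-additive cocycle for $f^{-1}$, not $f$, and more importantly, if you force it into an $f$-coherent schedule and then take the $f$-coherent block, the block property $x\in B_{\cu}\Rightarrow j\in\cu(f^{-j}(x))$ yields control of $\|Df^{-j}|_{E^u(\,f^{-j}(x)\,)}\|_\ell$---the derivative at the \emph{pre-orbit} point, not at $x$---so you do not get item~(1). The paper instead takes $\varphi(n,x)=\log\|(Df^n|_{\EE^u(x)})^{-1}\|^{-1}$ (minimum expansion of $Df^n$, a genuine sup-additive $f$-cocycle), builds from it the honest Birkhoff additive cocycle $\Phi(n,x)=\sum_{j=0}^{n-1}\varphi(\ell,f^j(x))$, and takes the $f$-coherent block; then $x\in B^u$ forces $n\in\cu^u(f^{-n}(x))$, whence $\Phi(n,f^{-n}(x))\ge n\lambda$, and the telescoping identity $\log|Df^n(f^{-n}(x))v|_\ell=\sum_j\log|Df^\ell(f^{j-n}(x))v_j|$ gives precisely $\|Df^{-n}|_{E^u(x)}\|_\ell\le\sigma^n$. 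Dually, the stable schedule must be built as an $f^{-1}$-coherent schedule (from $\psi(n,x)=\log\|(Df^{-n}|_{\EE^s(x)})^{-1}\|^{-1}$), and its $f^{-1}$-coherent block gives item~(2). So the stable and unstable blocks live over \emph{different} base dynamics ($f$ and $f^{-1}$). Your worry about the $\ell$-metric quantity being ``additive up to bounded error'' is a red herring: the cocycle actually fed to Pliss is the Birkhoff sum $\Phi$, which is exactly additive, and there is no passage to $f^\ell$, hence no $f^\ell$-ergodic-component subtlety.

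Second, because $\cu^u$ is $f$-coherent while $\cu^s$ is $f^{-1}$-coherent, you cannot invoke Theorem~\ref{TheoremSynINV}: that theorem requires a finite collection of coherent schedules all with respect to the same map $f$. Fortunately synchronization is not needed here. The paper's argument for item~(3) is much more elementary: once $\mu(B^u)>0$ and $\mu(B^s)>0$, ergodicity of $\mu$ directly gives an $m\ge0$ with $\mu(B^u\cap f^{-m}(B^s))>0$, and then the constant $C$ in the stable estimate on $\ch(\mu)=B^u\cap f^{-m}(B^s)$ comes, as you correctly anticipate, from pushing the $B^s$-estimate back $m$ steps using that $f$ is $C^1$ and $\langle\cdot,\cdot\rangle_\ell$ is equivalent to the Riemannian metric. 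If you fix the cocycle orientations so that the blocks control the derivative at the block points themselves, and replace the synchronization step by the one-line ergodicity argument, your plan matches the paper's proof.
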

\begin{proof}
As $\mu$ is ergodic, it follows from Oseledets Theorem that there exist $U\subset M$, $\lambda>0$ and a measurable splitting $\EE^{u}\oplus\EE^{s}$ such that $\mu(U)=1$, $$\lim\frac{1}{n}\log\|(Df^{n}|_{\EE^{u}(x)})^{-1}\|^{-1}\ge\lambda$$
and 
$$\lim\frac{1}{n}\log\|Df^n|_{\EE^{s}(x)}\|\le-\lambda$$
for every $x\in U$.
Let $\varphi,\psi:\NN\times M\to R$ given by
$$\varphi(n,x)=
\begin{cases}
	\log\|(Df^{n}|_{\EE^{u}(x)})^{-1}\|^{-1} & \text{ if }x\in U\\
	0 & \text{ if }x\notin U
\end{cases}
$$
and
$$\psi(n,x)=
\begin{cases}
	\log\|(Df^{-n}|_{\EE^{s}(x)})^{-1}\|^{-1} & \text{ if }x\in U\\
	0 & \text{ if }x\notin U
\end{cases}
$$
As $f$ is a diffeomorphism and $M$ is compact, we get that $\int_{x\in M}|\varphi(1,x)|d\mu$ and $\int_{x\in M}|\psi(1,x)|d\mu$ $<$ $+\infty$.
As $\varphi$ is a sup-additive cocycle for $f$, $\psi$ is a sup-additive cocycle for $f^{-1}$ and $\mu$ is invariant for both $f$ and $f^{-1}$, it follows from Lemma~\ref{Lemmahdhhg620} that there exists $\ell\ge1$ such that
$$\int_{x\in M}\varphi(\ell,x)d\mu\;\;\text{ and }\int_{x\in M}\psi(\ell,x)d\mu\ge \frac{\lambda}{4}\ell.$$
By Birkhoff, we can take $\ell\ge1$ so that $$\lim_n\frac{1}{n}\sum_{j=0}^{n-1}\varphi(\ell,f^j(x))\;\;\text{ and }\;\;\lim_n\frac{1}{n}\sum_{j=0}^{n-1}\psi(\ell,f^{-j}(x))\ge2\lambda$$
for $\mu$ almost every $x\in M$.

Let $\Phi,\Psi:\NN\times M\to \RR$ be given by $$\Phi(n,x)=\sum_{j=0}^{n-1}\varphi(\ell,f^j(x))\;\;\text{ and }\;\;\Psi(n,x)=\sum_{j=1}^{n}\psi(\ell,f^{\ell-j}(x)).$$
Hence, $\Phi$ is a $f$-additive cocycle and $\Psi$ is a $f^{-1}$-additive cocycle.

Given $x\in M$, let $\cu^u$ be the set of all $(\lambda,\Phi)$-Pliss time for $x$ with respect to $f$ and $\cu^s$ be the set of all $(\lambda,\Psi)$-Pliss time for $x$ with respect to $f^{-1}$.
It follows from Lemma~\ref{Lemma67ygtgh} that $\cu^u$ is a $f$-coherent schedule of events and 
$\cu^s$ is a $f^{-1}$-coherent schedule of events. By Lemma~\ref{Lemma67ygtghXXXX}, both $\cu^u$ and $\cu^s$ have positive upper density for $\mu$ almost every $x$. Indeed, $\dd_{\NN}^+(\cu^{u}(x))$ and $\dd_{\NN}^+(\cu^{s}(x))\ge\frac{1}{2\gamma}$ for $\mu$ almost every $x$, where $\gamma$ $=$ $\ell$ $\max\{|\log\|Df\||,$ $|\log\|Df^{-1}\||\}$.

Let $B^u=B_{\cu^u}$ be the $f$-coherent block for $\cu^u$ and $B^s=B_{\cu^s}$ be the $f^{-1}$-coherent block for $\cu^s$. It follows from Theorem~\ref{TheoremPlissBlock} that $\mu(B^s)$ and $\mu(B^u)\ge\frac{1}{2\gamma}>0$.
By the definition of $\cu^u$-block, given $x\in M$, $1\le n\in\cu^u(x)$ whenever $f^n(x)\in B^u$. Thus, if $f^n(x)\in B^u$ and $v\in\EE^u(x)$ with $|v|_{\ell}=1$, then 
$$
\log|Df^{n}(x)v|_{\ell}=
\sum_{j=0}^{n-1}\log|Df(f^j(x))v_j|_{\ell}=
\sum_{j=0}^{n-1}\log|Df^{\ell-1}(f^{j+1}(x))\,Df(f^j(x))v_j|=
$$
$$=\sum_{j=0}^{n-1}\log|Df^{\ell}(f^{j}(x))v_j|\ge\sum_{j=0}^{n-1}\log\|(Df^{\ell}|_{\EE^{u}(f^j(x))})^{-1}\|^{-1}=\sum_{j=0}^{n-1}\varphi(\ell,f^j(x))=\Phi(n,x)\ge n\lambda,
$$
where $v_j=\frac{Df^j(x)v}{|Df^j(x)v|_{\ell}}$. 

On the other hand, if $f^{-n}(x)\in B^s$ and $v\in\EE^s(f^{-n}(x))$ with $|v|_{\ell}=1$, we get that
$$
\log|Df^{n}(f^{-n}(x))v|_{\ell}=
\sum_{j=0}^{n-1}\log|Df(f^{j-n}(x))v_{j-n}|_{\ell}=$$
$$=
\sum_{j=0}^{n-1}\log|Df^{\ell-1}(f^{j-n+1}(x))\,Df(f^{j-n}(x))v_{j-n}|=
$$

$$=\sum_{j=0}^{n-1}\log|Df^{\ell}(f^{j-n}(x))v_{j-n}|\le\sum_{j=0}^{n-1}\log\|Df^{\ell}|_{\EE^{s}(f^{j-n}(x))}\|=\sum_{j=0}^{n-1}\log\|(Df^{-\ell}|_{\EE^{s}(f^{j-n+\ell}(x))})^{-1}\|=$$
$$=\sum_{j=1}^{n}\log\|(Df^{-\ell}|_{\EE^{s}((f^{\ell-j}(x))})^{-1}\|=-\sum_{j=1}^{n}\psi(\ell,f^{\ell-j}(x))=-\Psi(n,x)\le-n\lambda,
$$
where $v_{j-n}=\frac{Df^j(f^{-n}(x))v}{|Df^j(f^{-n}(x))v|_{\ell}}$.
Thus, taking $\sigma=e^{-\lambda}$, we get $0<\sigma<1$ and 
$$\|Df^{-n}|_{\EE^{u}(p)}\|\;\;\text{ and }\;\;\|Df^n|_{\EE^{s}(q))}\|_{\ell}\le \sigma^n$$
for every $n\ge1$, $p\in B^u$ and $q\in B^s$.
As $\mu$ is ergodic, there is $m\ge0$ such that $\mu(B^u\cap f^{-m}(B^s))>0$.
Finally, using that $\langle.,.\rangle_{\ell}$ is equivalent to the Riemannian metric and $f$ is $C^1$, there is $C>0$ such that $\|Df^n|_{\EE^s(p)}\|_{\ell}\le C\sigma^n$ for every $p\in\ch(\mu)=B^{u}\cap f^{-m}(B^{s})$.
\end{proof}

\section{Auxiliary results and proofs}
\label{AppendixAuxResAndPro}

\begin{Lemma}\label{LemmaExacRtoRR}
Let $f$ be a measure preserving automorphism on a probability space $(\XX,\mathfrak{A},\mu)$ and $F:A\subset\XX\to\XX$ a measurable induced map with induced time $R:A\to\NN$.
Suppose that $\mu$ is $f$ ergodic and that $\nu$ is a $F$-lift of $\mu$.
If $R$ is exact and $\mu(A)=1$ then 
$$\frac{1}{2}\int Rd\nu \int R d\mu\le\int (R)^2 d\nu \le 2\int Rd\nu\int R d\mu.$$
\end{Lemma}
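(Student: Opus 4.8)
The plan is to turn the hypothesis ``$\nu$ is an $F$-lift of $\mu$'' into a change-of-measure formula and then let exactness do the combinatorics. Recall that $\nu$ being an $F$-lift means $\widetilde\nu=\mu$, i.e. $\mu=\frac{1}{\int R\,d\nu}\sum_{j\ge0}f^{j}_{*}(\nu|_{\{R>j\}})$ by~(\ref{Eq78987yh}) and Lemma~\ref{FolkloreResultB}. Applying this to a nonnegative measurable $\varphi$ and interchanging sum and integral by Tonelli gives
$$\int\varphi\,d\mu=\frac{1}{\int R\,d\nu}\int\Bigg(\sum_{j=0}^{R(x)-1}\varphi(f^{j}(x))\Bigg)\,d\nu(x).$$
I would use this with $\varphi=R$ (extended measurably, and harmlessly, to $\XX\setminus A$, which is both $\mu$- and $\nu$-null since $\mu(A)=1$ and $F$-invariance of $\nu$ forces $\nu(A)=1$). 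The gain is that exactness of $R$ evaluates the inner sum: for $x\in A$ and $0\le j<R(x)$ one has $R\circ f^{j}(x)=R(x)-j$, hence $\sum_{j=0}^{R(x)-1}R\circ f^{j}(x)=\sum_{m=1}^{R(x)}m=\tfrac12 R(x)\big(R(x)+1\big)$.

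Substituting and writing $L=\int R\,d\nu\in[1,+\infty)$ (it is $\ge1$ because $R\ge1$ and $\nu(A)=1$, and finite by hypothesis) yields the exact identity
$$\int R\,d\mu=\frac{1}{2L}\int R(x)\big(R(x)+1\big)\,d\nu(x)=\frac{1}{2L}\Big(\int R^{2}\,d\nu+L\Big),$$
equivalently $\int R^{2}\,d\nu=\big(2\int R\,d\mu-1\big)\int R\,d\nu$, understood as an equality in $[0,+\infty]$. Now $\mu(A)=1$ and $R\ge1$ give $\int R\,d\mu\ge1$, so $\tfrac12\int R\,d\mu\le\int R\,d\mu\le 2\int R\,d\mu-1\le 2\int R\,d\mu$; multiplying through by $\int R\,d\nu\ge0$ gives exactly the claimed inequalities $\tfrac12\int R\,d\nu\int R\,d\mu\le\int R^{2}\,d\nu\le 2\int R\,d\nu\int R\,d\mu$. (In particular one even gets the sharper lower bound $\int R\,d\nu\int R\,d\mu\le\int R^{2}\,d\nu$.) The case $\int R\,d\mu=+\infty$ is covered at once, since then all three quantities are $+\infty$.

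The one point that needs care is making sure exactness really applies along the whole orbit segment $x,f(x),\dots,f^{R(x)-1}(x)$, i.e. that these iterates lie in the domain $A$ of $R$; this is how I read Definition~\ref{DefPlissTime} (the equality $R(x)=R\circ f^{j}(x)+j$ is asserted for every $x\in A$ and $0\le j<R(x)$, which presupposes $f^{j}(x)\in A$), and in any event $\nu\ll\mu$ — which holds because $\widetilde\nu=\mu$ dominates $\frac1L\nu|_{\{R>0\}}=\frac1L\nu$ — together with $\mu\big(\bigcap_{j\ge0}f^{-j}(A)\big)=1$ (as $f$ preserves $\mu$) shows the forward orbit of $\nu$-a.e. $x$ stays in $A$. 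Apart from this, the only things to check are the Tonelli interchange (legitimate, all terms nonnegative), measurability of $R$ viewed as a function on $\XX$, and $\nu(A)=1$; these are routine, so I do not anticipate a real obstacle — the content of the argument is entirely the exactness collapse $\sum_{j=0}^{R(x)-1}R\circ f^{j}(x)=\tfrac12R(x)(R(x)+1)$.
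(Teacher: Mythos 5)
Your proof is correct, and it takes a genuinely different route from the paper's. The paper proves the two-sided estimate by a pointwise Birkhoff-limit argument: it compares the Birkhoff sums $\sum_{j=0}^{n-1}(R\circ F^{j})^{2}$ and $\sum_{j=0}^{r_n(x)-1}R\circ f^{j}$ along orbits, uses the exactness collapse $\sum_{j=0}^{R(x)-1}R\circ f^{j}(x)=\tfrac12 R(x)(R(x)+1)$ to sandwich one in terms of the other, and then passes to the limit $n\to\infty$. To do the last step it needs the Birkhoff averages to converge $\nu$-a.e. to the right constants, which requires knowing that $\nu$ is $F$-ergodic; this in turn is obtained from the fact that $R$ exact $\Rightarrow$ coherent $\Rightarrow$ $F$ orbit-coherent, plus Proposition~\ref{PropositionErGOS}/Theorem~\ref{TheoremLift}. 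Your argument sidesteps all of that: you apply the change-of-measure identity $\int\varphi\,d\mu=\frac{1}{\int R\,d\nu}\int\sum_{j=0}^{R(x)-1}\varphi(f^{j}(x))\,d\nu(x)$ (which is just $\widetilde\nu=\mu$ unpacked) with $\varphi=R$, invoke the same exactness collapse at the level of the integrand, and read off the exact identity $\int R^{2}\,d\nu=\big(2\int R\,d\mu-1\big)\int R\,d\nu$, from which both inequalities follow immediately using $\int R\,d\mu\ge 1$. What this buys you: no appeal to the ergodicity of $\nu$, no almost-everywhere limit passages, and a stronger conclusion — an identity rather than a two-sided bound (in particular your sharper lower bound $\int R\,d\nu\int R\,d\mu\le\int R^{2}\,d\nu$). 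Your handling of the implicit domain requirement in exactness — that $f^{j}(x)\in A$ for $0\le j<R(x)$, which holds $\nu$-a.e. because $\nu\ll\mu$ and $\mu\big(\bigcap_{j\ge0}f^{-j}(A)\big)=1$ — and of the case $\int R\,d\mu=+\infty$ are both correct and appropriately careful.
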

\begin{proof} As $\mu(A)=1$, we get that $\mu(V)=1$, where $V=\bigcap_{n\ge0}f^{-n}(A)$. Write $r_n(x)=\sum_{j=0}^{n-1}R\circ F^j(x)$, for every $x\in V$ and $n\ge1$.
As $R$ is exact, $$\sum_{j=0}^{R(x)-1}R\circ f^j(x)=\sum_{j=0}^{R(x)-1}(R(x)-j)=\sum_{j=1}^{R(x)}j=R(x)(R(x)+1)/2\ge \frac{1}{2}(R(x))^2,$$
	for every $x\in V$. Hence,
$$\frac{1}{2}\sum_{j=0}^{n-1}(R\circ F^j(x))^2\le \sum_{j=0}^{r_n(x)-1}R\circ f^j(x)=\frac{1}{2}\sum_{j=0}^{n-1}R\circ F^j(x)\big(R\circ F^j(x)+1)\big)\le2\sum_{j=0}^{n-1}(R\circ F^j(x))^2.$$
or
\begin{equation}\label{Equationbh7r5f9v}
  \frac{1}{2}\frac{r_n(x)}{n}\frac{1}{r_n(x)}\sum_{j=0}^{r_n(x)-1}R\circ f^j(x)\le \frac{1}{n}\sum_{j=0}^{n-1}(R\circ F^j(x))^2\le2\frac{r_n(x)}{n}\frac{1}{r_n(x)}\sum_{j=0}^{r_n(x)-1}R\circ f^j(x),
\end{equation}
for every $x\in V$. As $\nu\ll\mu$ and $\nu(A_0)=1$, we get that $\mu(A_0)>0$, where $A_0=\bigcap_{n\ge0}F^{-n}(V)$. 
As $\mu$ is ergodic and $f$-inaviant, it follows from Birkhoff that exist $U_0\subset A_0$ with $\mu(U_0)=\mu(A_0$ and such that $\lim_n\frac{1}{r_n(x)}\sum_{j=0}^{r_n(x)-1}R\circ f^j(x)=\int R d\mu$ for every $x\in U_0$. As $R$ is coherent (because $R$ is  exact), it follows from Lemma~\ref{LemmaOrbCoh1} that $F$ is orbit coherent. Thus, $\nu$ is also ergodic (see Theorem~\ref{TheoremLift}).
Thus, using Birkhoff again, there is a measurable set $U\subset U_0$ with $\nu(U)=1$, such that $\lim_n\frac{r_n(x)}{n}=\int R d\nu$ and $\lim_n\frac{1}{n}\sum_{j=0}^{n-1}(R)^2\circ F^j(x)=\int (R)^2 d\nu$ for every $x\in U$.
Thus, taking any $x\in U_0$ and applying the limit on (\ref{Equationbh7r5f9v}) we conclude the proof. \end{proof}

\begin{proof}[\bf Proof of Lemma~\ref{Lemma222jhf76h3}]
Let $F:\Sigma_2^+\setminus\{\overline{0}\}\to\Sigma_2^+$ and $R:\Sigma_2^+\setminus\{\overline{0}\}\to\NN$ be the $\sigma$-induced map and time of Example~\ref{Examplejg6f7tv}.
Let $\cu:\Sigma_2^+\to2^{\NN}$ be the schedule of events given by $\cu(x)=\{j\ge1\,;\,x(j)=1\}$ for every $x=(x(1),x(2),\cdots)\in\Sigma_2^+$.
One can see that $R=R_{\cu}$ (i.e., $R(x)=\min\cu(x)$) and that $j\in\cu(x)$ $\iff$ $\sigma^{j-1}(x)\in C_+(1)$.
Thus, $$\dd_{\NN}^+(\cu(x))=\dd_{\NN}^+(\{j\in\NN\,;\,\sigma^j(x)\in C_+(1)\}).$$
As $\co_{\sigma}^+(x)\cap C_+(1)=\emptyset$ $\implies$
$x\in W^s(\overline{0})=\{y\in\Sigma_2^+\,;\,\lim_{n\to+\infty} d(\sigma^n(y),\overline{0})=0\}$, we get that $$\dd_{\NN}^+(\cu(x))=\mu(C_+(1))>0$$ for $\mu$ almost every $x\in\Sigma_2^+$ and every ergodic $\sigma$ invariant probability $\mu\ne\delta_{\overline{0}}$.

On the other hand, it is immediate that $R$ is an exact induced time and so, a coherent one.
Hence, $F(x)=\sigma^{R}(x)$ is orbit-coherent (Lemma~\ref{LemmaOrbCoh1}) and $\cu:\Sigma_2^+\to2^{\NN}$ is a coherent schedule of events by Lemma~\ref{LemmaCoerenteToCoerente} (we may assume that $\mu(\per(\sigma))=0$, since the case when $\mu=\frac{1}{n}\sum_{j=0}^{n-1}\delta_{\sigma^j(p)}$ is trivial for $\sigma^n(p)=p$).
 So, Lemma~\ref{Lemma535dcv} below follows straightforward form     Corollary~\ref{CorollaryPlissBlock}.

\begin{Lemma}\label{Lemma535dcv}  Let $F:\Sigma_2^+\setminus\{\overline{0}\}\to\Sigma_2^+$ be the induced map $F(x)=\sigma^{R(x)}(x)$, where $R$ is given by (\ref{EquatioInduchgf}).
	Every ergodic $\sigma$ invariant probability $\mu\ne\delta_{\overline{0}}$ is $F$-liftable.
\end{Lemma}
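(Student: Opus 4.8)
The plan is to check that the hypotheses of Corollary~\ref{CorollaryPlissBlock} are in force for $f=\sigma$, the schedule $\cu(x)=\{j\ge1\,;\,x(j)=1\}$, and an arbitrary ergodic $\sigma$-invariant probability $\mu\ne\delta_{\overline{0}}$, and then to read off $F$-liftability from the conclusion of that corollary together with Corollary~\ref{FolkloreResultC}. The preparatory facts have already been assembled in the discussion preceding the statement: $R$ is an exact, hence coherent, induced time, so by Lemma~\ref{LemmaCoerenteToCoerente} the map $\cu$ is a $\sigma$-coherent schedule of events with $R_\cu(x)=\min\cu(x)=R(x)$ for every $x\ne\overline{0}$, while Lemma~\ref{LemmaCoerenteToCoerente00} identifies the domain of the first $\cu$-time map $F$ with $\XX_1(\cu)=\Sigma_2^+\setminus\{\overline{0}\}$, which is exactly the domain in the statement. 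I would also note that $\sigma$ is bimeasurable: $\Sigma_2^+$ is a compact, hence complete separable, metric space and $\sigma$ is continuous and exactly two-to-one, so bimeasurability follows from Purves' theorem (Theorem~\ref{TheoremPurves}).

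Next I would verify that $\mu(C_+(1))>0$: if $\mu(C_+(1))=0$ then $\mu(\{x\,;\,x(1)=0\})=1$, and $\sigma$-invariance gives $\mu(\{x\,;\,x(n)=0\})=1$ for every $n\ge1$, so intersecting over $n$ yields $\mu(\{\overline{0}\})=1$, contradicting $\mu\ne\delta_{\overline{0}}$. Since $j\in\cu(x)\iff\sigma^{j-1}(x)\in C_+(1)$, Birkhoff's ergodic theorem shows that $\dd_{\NN}(\cu(x))$ exists and equals $\mu(C_+(1))>0$ for $\mu$-almost every $x$. Applying Corollary~\ref{CorollaryPlissBlock} to $\sigma$, $\cu$, $\mu$, I obtain $\mu(\ca_\cu)=\dd_{\NN}(\cu(x))=\mu(C_+(1))>0$, and then the ``Furthermore'' clause of that corollary gives $\int_{\ca_\cu}R_\cu\,d\mu=1$ together with the fact that $\nu:=\frac{1}{\mu(\ca_\cu)}\mu|_{\ca_\cu}$ is an ergodic $F$-invariant probability. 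Since $\nu\ll\mu$ and $\int R_\cu\,d\nu=\frac{1}{\mu(\ca_\cu)}\int_{\ca_\cu}R_\cu\,d\mu=\frac{1}{\mu(\ca_\cu)}<+\infty$, Corollary~\ref{FolkloreResultC} yields that $\mu$ is $F$-liftable, which is precisely the claim.

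I do not expect any genuine obstacle here; the whole weight of the argument rests on the already-proved Corollary~\ref{CorollaryPlissBlock} (and behind it Theorems~\ref{TheoremPlissBlock}, \ref{TheoremLift} and \ref{HalfLifting}). The only points requiring mild care are: (i) confirming that the $F$ of the lemma coincides with the first $\cu$-time map, i.e.\ the identifications $R=R_\cu$ and $\XX_1(\cu)=\Sigma_2^+\setminus\{\overline{0}\}$ noted above; and (ii) the degenerate periodic case $\mu=\frac1n\sum_{j=0}^{n-1}\delta_{\sigma^j(p)}$ with $\sigma^n(p)=p\ne\overline{0}$, where Corollary~\ref{CorollaryPlissBlock} still applies verbatim (it makes no assumption $\mu(\per(\sigma))=0$), or alternatively one observes directly that $\co_F^+(p)$ is finite and eventually $F$-periodic, producing a finitely supported $F$-invariant $\nu\ll\mu$ with $\int R_\cu\,d\nu<+\infty$ and invoking Corollary~\ref{FolkloreResultC} once more.
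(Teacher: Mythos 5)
Your proposal is correct and follows essentially the same route as the paper's proof: identify $\cu(x)=\{j\ge1\,;\,x(j)=1\}$ as a $\sigma$-coherent schedule with $R=R_{\cu}$, observe via Birkhoff that $\dd_{\NN}(\cu(x))=\mu(C_+(1))>0$ for $\mu$-a.e.\ $x$, and invoke Corollary~\ref{CorollaryPlissBlock} (with Corollary~\ref{FolkloreResultC} to finish). The paper simply states the final deduction is ``straightforward from Corollary~\ref{CorollaryPlissBlock}''; you spell out the chain through $\nu=\frac{1}{\mu(\ca_\cu)}\mu|_{\ca_\cu}$, $\int R_\cu\,d\nu<\infty$, the bimeasurability of $\sigma$ via Purves, and the periodic-measure case, all of which is sound and consistent with the paper's intended argument.
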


Let us recall the definition of full Markov map and mass distribution. Let $\XX$ be a metric space and $\{U_n\}_{n\in\NN}$ a countable collection  of two by two disjoint open sets.
A map $f:\bigcup_{n}U_n\to\XX$ is called a {\bf\em full Markov map} if
\begin{enumerate}
	\item $f|_{U_n}$ is a homeomorphism between $U_n$ and $\XX$ for every $n\ge1$;
	\item $\lim_n\diam(\cp_n(x))=0$ for every $x\in\bigcap_{j}f^{-j}(\XX)$,
\end{enumerate}
where $\cp=\{U_n\}$, $\cp_n=\bigvee_{j=0}^{n-1}f^{-j}\cp$ and $\cp_n(x)$ is the element of $\cp_n$ containing $x$.
A {\bf\em mass distribution} on $\{U_n\}_{n}$ is a map $m:\{U_n\}_{n}\to[0,1]$ such that $\sum_{n\in\NN}m(U_n)=1$.
The {\bf\em $f$-invariant probability $\mu$ generated by the mass distribution} $m$ is the ergodic $f$ invariant probability $\mu$ defined by $$\mu(U_{k_1}\cap f^{-1}(U_{k_2})\cap\cdots\cap f^{n-1}(U_{k_n}))=m(U_{k_1}) m(U_{k_2})\cdots m(U_{k_n}),$$ where $U_{k_1}\cap f^{-1}(U_{k_2})\cap\cdots\cap f^{n-1}(U_{k_n})\in\bigvee_{j=0}^{n-1}f^{-j}(\cp)$.

Proceeding with the proof of Lemma~\ref{Lemma222jhf76h3}, let  $\zeta(s)=\sum_{n=1}^{\infty}n^{-s}$ be the Riemann zeta function, $\alpha\in(0,1)$ and, for any given $\ell\in\NN$, consider the mass distribution $$m_{\ell}(P_j)=\begin{cases}
	2^{-j} & \text{ if }j\le\ell\\
	\frac{2^{-\ell}/\zeta(2+\alpha)}{(j-\ell)^{2+\alpha}} & \text{ if }j>\ell\end{cases},$$
where $P_1=C_+(1)$ and, for $j\ge2$, $P_j=C_+((\underbrace{0,\cdots,0}_{j-1\,times},1))$.
Note that $\sum_{j=1}^{+\infty}m_{\ell}(P_j)=1$, 
$$\sum_{j>\ell}j\,m_{\ell}(P_j)=\frac{1}{2^{\ell}\zeta(2+\alpha)}\sum_{j>\ell}\frac{j}{(j-\ell)^{2+\alpha}}=\frac{1}{2^{\ell}\zeta(2+\alpha)}\sum_{j=1}\frac{j+\ell}{j^{2+\alpha}}\le$$
$$\le\frac{\ell+1}{2^{\ell}\zeta(2+\alpha)}\sum_{j=1}\frac{1}{j^{1+\alpha}}=\frac{\ell+1}{2^{\ell}}\;\frac{\zeta(1+\alpha)}{\zeta(2+\alpha)}$$
and
$$\sum_{j>\ell}m_{\ell}(P_j)\log(1/m_{\ell}(P_j))=\sum_{j>\ell}\frac{\log(2^{\ell}\zeta(2+\alpha)(j-\ell)^{2+\alpha})}{2^{\ell}\zeta(2+\alpha)(j-\ell)^{2+\alpha}}=$$
$$=\frac{1}{2^{\ell}\zeta(2+\alpha)}\sum_{j=1}^{+\infty}\frac{\log(2^{\ell}\zeta(2+\alpha)j^{2+\alpha})}{j^{2+\alpha}}=$$
$$=\frac{\log(2^{\ell}\zeta(2+\alpha))}{2^{\ell}\zeta(2+\alpha)}\sum_{j=1}^{+\infty}\frac{1}{j^{2+\alpha}}+\frac{1}{2^{\ell}\zeta(2+\alpha)}\sum_{j=1}^{+\infty}\frac{(2+\alpha)\log(j)}{j^{2+\alpha}}=$$
$$=\frac{\log(2^{\ell}\zeta(2+\alpha))}{2^{\ell}}+\frac{2+\alpha}{2^{\ell}\zeta(2+\alpha)}\sum_{j=1}^{+\infty}\frac{\log(j)}{j^{2+\alpha}}\le\frac{\log(2^{\ell}\zeta(2+\alpha))}{2^{\ell}}+\frac{(2+\alpha)\zeta(1+\alpha)}{2^{\ell}\zeta(2+\alpha)}.$$
Hence, there is $C=C(\alpha)>0$ such that $$\sum_{j>\ell}j\, m_{\ell}(P_j)\;\;\text{ and }\;\;\sum_{j>\ell}m_{\ell}(P_j)\log(1/m_{\ell}(P_j))\;\le\; C\frac{\ell}{2^{\ell}}$$
In particular, $\sum_{j=1}^{+\infty}j m_{\ell}(P_j)\le C+2$ and $\sum_{j=1}^{+\infty}m_{\ell}(P_j)\log\frac{1}{m_{\ell}(P_j)}$ $\le$ $C$ $+$ $(\log2)\sum_{j=1}^{+\infty}\frac{j}{2^j}$ $=$ $C+2\log2$, $\forall\,\ell\ge 1$ (recall that $\sum_{j=1}^{\infty}j\,2^{-j}=2$).
Moreover, $$\lim_{\ell\to+\infty}\frac{\sum_{j=1}^{+\infty}m_{\ell}(P_j)\log(1/m_{\ell}(P_j))}{\sum_{j=1}^{+\infty}j\,m_{\ell}(P_j)}=\frac{\lim_{\ell}\sum_{j=1}^{+\infty}m_{\ell}(P_j)\log(1/m_{\ell}(P_j))}{\lim_{\ell}\sum_{j=1}^{+\infty}j\,m_{\ell}(P_j)}=\frac{2\log2}{2}=\log2.$$
Given $\varepsilon>0$, let $\ell\ge1$ be big enough so that $$\frac{\sum_{j=1}^{+\infty}m_{\ell}(P_j)\log(1/m_{\ell}(P_j))}{\sum_{j=1}^{+\infty}j\,m_{\ell}(P_j)}>\log2-\varepsilon.$$
Thus, taking $\nu_{\alpha,\ell}$ as the ergodic $F$-invariant probability generated by the mass distribution $m_{\ell}$, we get that $h_{\nu_{\alpha,\ell}}(F)=\sum_{j=1}^{+\infty}m_{\ell}(P_j)\log(1/m_{\ell}(P_j))\le C+2\log2$ and $\int R d\nu_{\alpha,\ell}=\sum_{j=1}^{+\infty}j m_{\ell}(P_j)\le C+2$ and so, 
$$\frac{h_{\nu_{\alpha,\ell}}(F)}{\int R d\nu_{\alpha,\ell}}>\log2-\varepsilon.$$
It follows from Lemma~\ref{FolkloreResultB} that $$\mu_{\alpha,\ell}=\frac{1}{\int R d\nu_{\alpha,\ell}}\sum_{j\ge0}\sigma^j_*(\nu_{\alpha,\ell}|_{\{R>j\}})$$ is an ergodic $\sigma$ invariant probability.
As $\nu_{\alpha,\ell}(P_j)>0$ for every $j\ge1$, we have that $\supp\nu_{\alpha,\ell}=\Sigma_2^+$ and, as a consequence, $\supp\mu_{\alpha,\ell}=\Sigma_2^+$.
As $\int R d\nu_{\alpha,\ell}<+\infty$, it follows from the ``generalized Abramov formula'' \cite{Zw} that $h_{\mu_{\alpha,\ell}}(\sigma)=\frac{h_{\nu_{\alpha,\ell}}(F)}{\int R d\nu_{\alpha,\ell}}>\log 2-\varepsilon$.
Finally, it follows from Lemma~\ref{LemmaExacRtoRR} that $$\int R d\mu_{\alpha,\ell}\ge\frac{1}{2(2+C)}\int (R)^2d\nu_{\alpha,\ell}>\frac{1}{2(2+C)}\sum_{j>\ell}j^2\,m_{\ell}(P_j)=$$
$$=\frac{1}{2^{\ell+1}(2+C)\zeta(2+\alpha)}\sum_{j>\ell}\frac{j^2}{(j-\ell)^{2+\alpha}}=$$ 
$$=\frac{1}{2^{\ell+1}(2+C)\zeta(2+\alpha)}\sum_{j=1}\frac{(j+\ell)^2}{j^{2+\alpha}}\ge\frac{1}{2^{\ell+1}(2+C)\zeta(2+\alpha)}\sum_{j=1}\frac{1}{j^{\alpha}}=+\infty.$$
Note that, if $0<\alpha_0<\alpha_1<1$, then $\nu_{\alpha_0,\ell}(P_j)\ne\nu_{\alpha_1,\ell}(P)j)$ for any $j>\ell$.
In particular, $\nu_{\alpha_0,\ell}\ne\nu_{\alpha_1,\ell}$.
As $F$ is orbit-coherent, it follows from Theorem~\ref{TheoremLift} that $\nu_{\alpha_{_0},\ell}$ is the unique $F$-lift of $\mu_{\alpha_{_0},\ell}$. Therefore, $\mu_{\alpha_{_0},\ell}\ne\mu_{\alpha_{_1},\ell}$. This implies that $M=\{\mu_{\alpha,\ell}\,;\,\alpha\in(0,1)\text{ and }\ell\ge1\}$ is uncountable and it concludes the proof,  since we also have that $\sup\{h_{\mu}(\sigma)\,;\,\mu\in M\}=\log 2$.
\end{proof}

\begin{Lemma}\label{LemmaMedTeoOLD}
Let $(\XX,d)$ be a metric space and $\cm(\XX)$ the set of all finite Borel measures on $\XX$. If $\mu_n\in\cm(\XX)$ converges to $\mu\in\cm(\XX)$, then $\mu(S)\le\liminf_{n\to\infty}\mu_n(S)$ for every open set $S$.
	\end{Lemma}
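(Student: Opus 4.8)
The plan is to use the standard approximation of the indicator of an open set from below by bounded continuous functions, and then pass to the limit using the defining property of convergence in $\cm(\XX)$, namely that $\int\varphi\,d\mu_n\to\int\varphi\,d\mu$ for every bounded continuous $\varphi:\XX\to\RR$.

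First I would dispose of the trivial case $S=\XX$: here $\mathbb{1}_S\equiv 1$ is itself bounded and continuous, so $\mu(\XX)=\lim_n\mu_n(\XX)\ge\liminf_n\mu_n(\XX)$. Assume now $\XX\setminus S\ne\emptyset$. For each $k\ge1$ define $g_k:\XX\to[0,1]$ by $g_k(x)=\min\{1,\,k\cdot\dist(x,\XX\setminus S)\}$. Each $g_k$ is continuous (it is the composition of the $1$-Lipschitz map $x\mapsto\dist(x,\XX\setminus S)$ with the continuous map $t\mapsto\min\{1,kt\}$), it is bounded by $1$, it vanishes on $\XX\setminus S$, and it satisfies $g_k\le\mathbb{1}_S$. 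Moreover, since $S$ is open, every $x\in S$ has $\dist(x,\XX\setminus S)>0$, so $g_k(x)=1$ for all $k$ large enough; hence $g_k\uparrow\mathbb{1}_S$ pointwise as $k\to\infty$.

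Next, fix $k$. Since $g_k$ is bounded and continuous and $\mu_n\to\mu$ in $\cm(\XX)$, we have $\int g_k\,d\mu=\lim_n\int g_k\,d\mu_n$. As $g_k\le\mathbb{1}_S$, we get $\int g_k\,d\mu_n\le\mu_n(S)$ for every $n$, and therefore $\int g_k\,d\mu=\lim_n\int g_k\,d\mu_n\le\liminf_n\mu_n(S)$. This holds for every $k\ge1$.

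Finally, letting $k\to\infty$ and applying the Monotone Convergence Theorem to the increasing sequence $0\le g_1\le g_2\le\cdots\uparrow\mathbb{1}_S$ with respect to the finite measure $\mu$, we obtain $\mu(S)=\lim_k\int g_k\,d\mu\le\liminf_n\mu_n(S)$, which is the desired inequality. There is no substantial obstacle here; the only points needing attention are the convention $\dist(\cdot,\emptyset)=+\infty$ (circumvented by treating $S=\XX$ separately) and the bookkeeping that convergence in $\cm(\XX)$ is tested against bounded continuous functions, so that the intermediate step with $g_k$ is legitimate.
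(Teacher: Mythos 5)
Your proof is correct, and it follows a slightly different route from the paper's. The paper first invokes regularity of Borel measures on metric spaces (quoted as a footnoted proposition) to produce a closed set $K_\varepsilon\subset S$ with $\mu(K_\varepsilon)>\mu(S)-\varepsilon$, then applies Urysohn's Lemma to separate $K_\varepsilon$ from $\XX\setminus S$ by a continuous $[0,1]$-valued function, and finally lets $\varepsilon\to 0$. You instead construct the approximating functions $g_k(x)=\min\{1,\,k\,\dist(x,\XX\setminus S)\}$ directly and pass to the limit via Monotone Convergence. Your version is more self-contained: it needs neither the regularity theorem nor Urysohn (you are, in effect, writing out the metric-space case of the Urysohn construction explicitly), and the monotone sequence $g_k\uparrow\mathbb{1}_S$ eliminates the auxiliary closed-set approximation. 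The paper's route is more modular and would generalize to settings where one already has regularity and Urysohn but no convenient distance function. One small slip in yours: in the $S=\XX$ case you wrote $\mu(\XX)=\lim_n\mu_n(\XX)\ge\liminf_n\mu_n(\XX)$, whereas the inequality needed is $\le$; of course equality holds there since the limit exists, so nothing is lost, but the sign as written points the wrong way.
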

	\begin{proof} As $\mu$ is a Borel finite measure on a  metric space, it follows from Proposition~\ref{PropositionAllBorelProbIsReg}\footnote{\begin{Proposition}[See Proposition~A.3.2. in \cite{OV}]\label{PropositionAllBorelProbIsReg}
If $(\XX,d)$ is a metric space then every Borel probability $\mu$ on $\XX$ is regular, i.e., 	given a Borel set $B$ and $\varepsilon>0$ there are a closed set $C\subset B$ and an open set $A\supset B$ such that $\mu(A\setminus C)<\varepsilon$. 
\end{Proposition}
} that $\mu$ is a regular measure.
	Thus, given an open set $S$ and $\varepsilon>0$, there is a closed set $K_{\varepsilon}\subset S$ such that $\mu(K_{\varepsilon})>\mu(S)-\varepsilon$.
By Urysohn' Lemma\footnote{\begin{Lemma}[Urysohn]
Suppose that $(\XX,d)$ is a metric space. If $A$ and $B$ are closed sets with $A\cap B=\emptyset$ then there is a continuous function $\phi:\XX\to[0,1]$ such that $\phi(A)=0$ and $\phi(B)=1$ and $\phi(\XX\setminus(A\cup B))\subset(0,1)$.
\end{Lemma}}, there is a continuous function $\varphi_{\varepsilon}:\XX\to[0,1]$ such that $\varphi_{\varepsilon}(K_{\varepsilon})=1$ and 
	$\varphi_{\varepsilon}(\XX\setminus S)=0$.
	As $\mu(K_{\varepsilon})\le\int \varphi_{\varepsilon}d\mu=\lim_{n}\int \varphi_{\varepsilon}d\mu_n\le\liminf_n\mu_n(S)$, we can take $\varepsilon\to0$ and so $\mu(S)\le\liminf_n\mu_n(S)$, proving the lemma.
\end{proof}

\begin{Lemma}[Continuity at the empty set]\label{LemmadeMEDIDA} Let $(\XX,\mathfrak{A},\mu)$ be a probability space.
	If $A_1\supset A_2\supset A_3\supset\cdots$ is a sequence of measurable sets, then $\mu(\bigcap_{n\ge1}A_n)=\lim_n\mu(A_n)$.
\end{Lemma}
\begin{proof} Set $r=\lim_n\mu(A_n)$,  $A_0=\emptyset$ and $L=\bigcap_{n\ge1}A_n$. Let $\triangle_n=A_n\setminus A_{n-1}$ and $a_n=\mu(\triangle_n)$ for $n\ge1$.
As $\triangle_i\cap\triangle_j=\emptyset$ for $i\ne j$, we get that 
 $\sum_{j=0}^{\infty}a_j\le1$ is a convergent series of nonnegative numbers. So, $\sum_{j=n}^{\infty}a_j\to0$ when $n\to\infty$.
 As $r\le \mu(L)+\sum_{j=n}^{\infty}a_j=\mu(A_n)\to r$, we get that $\mu(L)=r$.
\end{proof}

\begin{Lemma}
\label{LemmaTeoErg876y7ui} 
Let $f$ be a measure-preserving automorphism of a probability space $(\XX,\mathfrak{A},\mu)$. If $f$ is bimeasurable then $\mu\big(\bigcap_{n\ge0}f^j\big(\bigcap_{j\ge0}f^{-j}(\XX)\big)\big)=1$.
\end{Lemma}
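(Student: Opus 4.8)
The plan is to reduce the statement to the continuity of $\mu$ along a decreasing sequence of sets, via the elementary identity $f^{-n}\big(f^n(\XX)\big)=\XX$. First I would observe that, since $f\colon\XX\to\XX$ is defined on all of $\XX$, one has $f^{-j}(\XX)=\XX$ for every $j\ge0$; hence $\bigcap_{j\ge0}f^{-j}(\XX)=\XX$, and the set whose measure we must compute is simply $\ca:=\bigcap_{n\ge0}f^{\,n}(\XX)$.

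Next I would check two routine points. The sequence $\big(f^{\,n}(\XX)\big)_{n\ge0}$ is decreasing: $f(\XX)\subset\XX$ trivially, and applying $f$ repeatedly yields $\XX\supset f(\XX)\supset f^2(\XX)\supset\cdots$. Moreover each $f^{\,n}(\XX)$ is measurable, by induction on $n$ using the bimeasurability of $f$ (if $B\in\mathfrak{A}$ then $f(B)\in\mathfrak{A}$); consequently $\ca$ is measurable, being a countable intersection of measurable sets. This is the only place where the bimeasurability hypothesis is used.

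Finally I would carry out the main computation: $\mu\big(f^{\,n}(\XX)\big)=1$ for every $n\ge0$. Since $f$ preserves $\mu$, so does $f^{\,n}$ (a composition of measure-preserving maps), and the set identity $f^{-n}\big(f^{\,n}(\XX)\big)=\XX$ holds because every $x\in\XX$ satisfies $f^{\,n}(x)\in f^{\,n}(\XX)$; hence $\mu\big(f^{\,n}(\XX)\big)=\mu\big(f^{-n}(f^{\,n}(\XX))\big)=\mu(\XX)=1$. Since $f^{\,n}(\XX)\searrow\ca$, continuity of $\mu$ at the empty set (Lemma~\ref{LemmadeMEDIDA}) gives $\mu(\ca)=\lim_{n}\mu\big(f^{\,n}(\XX)\big)=1$, which is the assertion. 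I do not expect any genuine obstacle here; the only care needed is the measurability of the sets $f^{\,n}(\XX)$, and in fact the argument goes through verbatim for an arbitrary measure-preserving bimeasurable map $f\colon\XX\to\XX$, not only for automorphisms.
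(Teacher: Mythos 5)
Your proof is correct and rests on the same mechanism as the paper's: push a full-measure set forward using bimeasurability (to get measurability of the image) and $\mu$-invariance (to show the image still has full measure). Your packaging $\mu(f^n(\XX))=\mu\big(f^{-n}(f^n(\XX))\big)=\mu(\XX)=1$ is a compact equivalent of the paper's inductive step ``$\mu(U)=1\Rightarrow\mu(f(U))=1$'', and your appeal to Lemma~\ref{LemmadeMEDIDA} is a mild overkill for the final step: since each $f^n(\XX)$ has full measure, the countable intersection has full measure directly, no monotone convergence needed.

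One caution worth flagging about your opening reduction. Writing $\bigcap_{j\ge0}f^{-j}(\XX)=\XX$ is valid precisely because the statement assumes $f$ is defined on all of $\XX$, but the lemma is invoked in Theorems~\ref{HalfLifting} and~\ref{TheoremFiftErgDec} for the induced map $F:A\to\XX$ with $A$ a proper subset of $\XX$, where $\bigcap_{j\ge0}F^{-j}(\XX)$ is a genuinely smaller set $A_0$. The paper's proof never collapses $A_0$ to $\XX$; it only uses $\mu(A_0)=1$, which is the form of the hypothesis that survives in the induced-map setting (there $\nu(F^{-n}(\XX))=1$ comes from $F$-invariance of $\nu$). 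So your argument is complete for the lemma as literally stated, but the paper's phrasing, keeping $A_0$ general, is the one that transfers verbatim to the applications.
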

\begin{proof}
Let $A_0=\bigcap_{n\ge0}f^{-n}(\XX)$. As $\mu(f^{-n}(\XX))=1$ for every $n$, we get $\mu(A_0)=1$.
Moreover, if $U\in\mathfrak{A}$ has $\mu(U)=1$ then $f(U)\in\mathfrak{A}$ and so  
$0=\mu(\XX\setminus U)\ge\mu(\XX\setminus f^{-1}(f(U)))=\mu(f^{-1}(\XX\setminus f(U)))=\mu(\XX\setminus f(U))$. Thus, $\mu(f(U))=1$.
Therefore, as $\mu(A_0)=1$ and $A_0\in\mathfrak{A}$, we get that $\mu(f(A_0))=1$ and inductively that $\mu(f^j(A_0))=1$ for every $j\ge0$. 
So, $\mu(\bigcap_{j\ge0}f^j(A_0))=1$.
\end{proof}

\begin{proof}[\em\bf Proof of Lemma~\ref{FolkloreResultA}] Given $x\in U$, let $\cu(x)=\{j\ge1\,;\,f^j(x)\in U\}$ and set $R:\XX\to\NN\cup\{+\infty\}$ by
$$R(x)=
\begin{cases}
\min\cu(x) & \text{ if }\cu(x)\ne\emptyset\\
+\infty & \text{ if }\cu(x)=\emptyset
\end{cases}.$$
Note that $R$ is the first return time to $U$ and $R|_{A}$ is the induced time of $F$.

Given $V\subset U$, with $0<\mu(V)<\mu(U)$, let $V_0=V$ and $V_n:=f^{-1}(V_{n-1})\setminus U$ $\forall n\ge1$.
Note that $f(V_n)\subset V_{n-1}$ $\forall n\ge1$. Thus $f^{n-j}(V_n)\subset V_{n-j}$ $\forall0\le j\le n$.
Moreover, as $V_{j}\cap U=\emptyset$ $\forall\,j\ge1$ (because $V_j=f^{-1}(V_{j-1})\setminus U$), we get $n=\min\{j\ge0\,;\,f^j(x)\in U\}$ for every $x\in V_n$.
This implies that $$f^{-1}(V_{n-1})\cap U=f^{-n}(V)\cap\{R=n\}\,\,\forall n\ge1.$$
As $\mu$ is ergodic, we get that $\mu(\bigcap_{j=0}^{n-1}f^{-j}(\XX\setminus U))=1$ and so, $$\mu(V_n)\le\mu\bigg(\bigcap_{j=0}^{n-1}f^{-j}(\XX\setminus U)\bigg)\to0.$$

 On the other hand, $$\mu(V)=\mu(f^{-1}(V))=\mu(f^{-1}(V)\cap U)+\mu(f^{-1}(V)\setminus U)=$$ $$=\mu(f^{-1}(V)\cap\{R=1\})+\mu(V_1)=\mu(f^{-1}(V)\cap\{R=1\})+\mu(f^{-1}(V_1))=$$ $$=\mu(f^{-1}(V)\cap\{R=1\})+\mu(f^{-1}(V_1)\cap U)+\mu(f^{-1}(V_1)\setminus U)=$$ $$=\mu(f^{-1}(V)\cap\{R=1\})+\mu(f^{-2}(V)\cap\{R=2\})+\mu(V_2)=$$
 $$\cdots=\bigg(\sum_{j=1}^{n}\mu(f^{-j}(V)\cap\{R=j\})\bigg)+\mu(V_n).$$
 
As $f^{-j}(V)\cap\{R=j\}=\big(f^{j}|_{\{R=j\}}\big)^{-1}(V)=F|_{\{R=j\}}^{-1}(V)$ $\forall\,j$, $F^{-1}(V)=\sum_{n\ge1}F|_{\{R=n\}}^{-1}(V)$ and $\mu(\{R=+\infty\})=0$ (where $\sum_{j}U_j$ means a union of disjoints sets), we get
$$\mu(F^{-1}(V))=\mu\bigg(\sum_{j\ge1}F|_{\{R=j\}}^{-1}(V)\bigg)=\sum_{j=1}^{\infty}\mu\big(f^{-j}(V)\cap\{R=j\}\big)=$$
$$=\lim_{n\to\infty}\bigg(\bigg(\sum_{j=1}^{n}\mu\big(f^{-j}(V)\cap\{R=j\}\big)\bigg)+\mu(V_n)\bigg)=\mu(V),$$ proving that $\mu|_U$ is $F$ invariant.

As $R$ is a first return time, it is an exact induced time and so, a coherent one. Thus, by Lemma~\ref{LemmaOrbCoh1}, $F$ is orbit-coherent.
Finally, as $\mu$ is $f$-ergodic and $F$ is orbit coherent, it follows from Proposition~\ref{PropositionErGOS} that $\mu$ is $F$-ergodic and, 
by Lemma~\ref{LemmaSingSing}, we can conclude that $\mu|_{U}$, as well as $\frac{1}{\mu(U)}\mu|_{U}$, is $F$-ergodic.
\end{proof}

\begin{proof}[\em\bf Proof of Proposition~\ref{PropRokhlin}]
If $A\subset\XX_f$ then $f^{-1}(\pi_n(A))\supset\pi_{n+1}(A)$, $\forall\,n\ge0$, and so, $0\le\mu(\pi_n(A))\le1$ is a monotonous non-increasing sequence for every measurable $A$.
	Thus, $\overline{\mu}(A):=\lim_{n\to\infty}\mu(\pi_n(A))$ is well defined whenever $\pi_n(A)$ is a measurable subset of $\XX$ for every $n\ge0$.
	
	Let $A=[A_0,\cdots,A_n]$ and $B=[B_0,\cdots,B_{\ell}]$ be two cylinders such that $A\cap B=\emptyset$. 
In this case, $\pi_j(A)=f^{-(j-n)}(A_n)$ $\forall\,j\ge n$ and $\pi_j(B)=f^{-(j-\ell)}(B_\ell)$ $\forall\,j\ge\ell$. Thus, $\overline{\mu}(A)=\mu(A_n)$ and $\overline{\mu}(B)=\mu(B_\ell)$.
Without loss of generality, we may assume that $\ell\le n$.
As $A\cap B=\emptyset$, we get that $A_n\cap f^{-(n-\ell)}(B_\ell)=\emptyset$ and so, $$\lim_{j}\mu(\pi_j(A\cup B))=\lim_j\mu\bigg(f^{-(j-n)}(\pi_n(A))\cup f^{-(j-\ell)}(\pi_\ell(B))\bigg)=$$
	$$=\lim_j\mu\bigg(f^{-(j-n)}\bigg(\;\pi_n(A)\cup f^{-(n-\ell)}(\pi_\ell(B))\bigg)\bigg)=\mu\big(\pi_n(A)\cup f^{-(n-\ell)}(\pi_\ell(B))\big)=$$
	$$=\mu(\pi_n(A))+\mu(\pi_\ell(B))=\overline{\mu}(A)+\overline{\mu}(B).$$
As the set of all cylinders generates the $\sigma$-algebra of $\XX_f$, we conclude that $\mu$ is a well defined probability on $\XX_f$. Similarly one can show that $\overline{\mu}$ is $\sigma$-additive.
 
Note that, if $A=[A_0,\cdots,A_n]$ is a cylinder, then \begin{equation}\label{EqPreNatExt}
  \overline{f}^{-1}(A)=[f^{-1}(A_0)\cap A_1,A_2,\cdots,A_{n}].
\end{equation}
Indeed, $\overline{x}\in \overline{f}^{-1}([A_0,\cdots,A_n])$ $\iff$ $\overline{f}(\overline{x})=(f(\overline{x}(0)),\overline{x}(0),\overline{x}(1),\overline{x}(2),\cdots)$ $\in$ $[A_0,\cdots,A_n]$ $\iff$ $(f(\overline{x}(0)),\overline{x}(0),\overline{x}(1),\cdots,\overline{x}(n-1))$ $\in$ $ A_0\times A_1\times A_2\times \cdots\times A_{n}$ $\iff \overline{x}(0)\in f^{-1}(A_0)\cap A_1,\,\overline{x}(1)\in A_2,\,\overline{x}(2)\in A_3,\,\cdots,\,\overline{x}(n-1)\in A_n$ $\iff$
$\overline{x}\in[f^{-1}(A_0)\cap A_1,A_2,\cdots, A_n]$.

As a consequence of (\ref{Eqesfr4}), (\ref{EqPreNatExt}) and the invariance of $\mu$, we get that $\overline{\mu}\big(\overline{f}^{-1}(A)\big)=\mu(A_n)=\overline{\mu}(A)$ for every cylinder $A=[A_0,\cdots,A_n]$, proving that $\overline{\mu}$ is $\overline{f}$ invariant. Furthermore, it is easy to check that $\mu=\pi_*\overline{\mu}=\overline{\mu}\circ\pi^{-1}$.

Applying (\ref{EqPreNatExt}) recursively, we get that 
$$\overline{f}^{-n}([A_0,\cdots, A_n])=[f^{-n}(A_0)\cap f^{-(n-1)}(A_1)\cap\cdots\cap A_n]$$
and this implies the unicity of the lift. Indeed, if $\mu=\pi_*\overline{\nu}$ for a $\overline{f}$-invariant probability $\overline{\nu}$ then $$\overline{\nu}([A_0,\cdots,A_n])=\overline{\nu}(\overline{f}^{-n}([A_0,\cdots,A_n])=\overline{\nu}([f^{-n}(A_0)\cap\cdots\cap A_n])=$$
$$=\nu(\pi^{-1}(f^{-n}(A_0)\cap\cdots\cap A_n))=\mu(f^{-n}(A_0)\cap\cdots\cap A_n)=\overline{\mu}(\pi^{-1}(f^{-n}(A_0)\cap\cdots\cap A_n))=$$
$$=\overline{\mu}([f^{-n}(A_0)\cap\cdots\cap A_n])=\overline{\mu}(\overline{f}^{-n}([A_0,\cdots,A_n]))=\overline{\mu}([A_0,\cdots,A_n]).$$

Now we will verify that $\overline{\mu}$ is ergodic with respect to $\overline{f}$. 
Suppose that $A$ is a $\overline{f}$-invariant measurable set with $\overline{\mu}(A)>0$.
As $\overline{f}^{-1}(A)=A$, we get that $\pi_n(\overline{f}^{-1}(A))=\pi_n(A)$ $\forall\,n\ge0$. So, $\pi_n(A)=\pi_{n+1}(A)$ for every $n\ge0$,
as $$\overline{f}^{-1}(A)=\{\overline{x}\in\XX_f\,;\,\overline{x}(n)\in\pi_{n+1}(A)\;\forall\,n\ge0\}.$$
Thus, $\overline{\mu}(A)=\mu(\pi(A))=\mu(\pi_n(A))$ $\forall\,n\ge0$. Moreover, using that $\pi_0(A)=f(\pi_1(A))$, we have that $\pi(A)=f(\pi(A))$.
This implies that $U:=\cup_{j\ge0}f^{-j}(\pi(A))$ is an invariant set, i.e., $f^{-1}(U)=(U)$.
As $\mu(U)\ge\mu(\pi(A))=\overline{\mu}(A)>0$, it follows from the ergodicity of $\mu$ that $\mu(U)=1$.
On the other hand, as $\pi(A)\subset f^{-1}(\pi(A))\subset f^{-2}(\pi(A))\subset\cdots\subset f^{-j}(\pi(A))\nearrow U$ and $\mu(f^{-j}(\pi(A)))=\mu(\pi(A))$, we conclude that $\mu(U)=\mu(\pi(A))$.
Thus, $\overline{\mu}(A)=\mu(\pi(A))=1$, proving the ergodicity of $\overline{\mu}$. 
\end{proof}

\begin{proof}[\em\bf Proof of Lemma~\ref{PlissLemma}]
	The proof below follows the proof of Lemma~11.8 in \cite{ManeLivro} and  Lemma~3.1 in \cite{ABV}.
	
Let, for $1\le j\le n$, $s(j)=a_j-j c_0$ and set $s(0)=0$.
Let $1< n_1< n_2< \cdots n_{\ell}\le n$ be the maximal sequence such that $s(j)\le s(n_i)$ for every $0\le j<n_i$ and $1\le i\le \ell$.
As $s(n)=c_1 n>0=s(0)$, we get that $\ell\ge1$.
By definition, $s(n_i)\ge s(j)$ for $0\le j<n_i$ and so, $a_{n_i}-n_i c_0\ge a_j-j c_0$.
That is, $a_{n_i}-a_j\ge (n_i-j)c_0$ 
for every $0\le j<n_i$ and $1\le i\le \ell$. As $a_j$ is subadditive, we have that $a_{n_i}=a_{n_i-j+j}\le a_{n_i-j}+a_j$ and so, $a_{n_i}-a_j\le a_{n_i-j}$. Hence, $$ a_{n_i-j}\ge (n_i-j)c_0,$$
for every $0\le j<n_i$ and $1\le i\le \ell$.

Therefore, we only need to show that $\ell\ge\theta n$. It follows from the maximality of the sequence $n_1,\cdots,n_{\ell}$ that $s(n_i-1)<s(n_{i-1})$ for every $1\le i\le \ell$. Thus, $s(n_i)<s(n_{i-1})+(C-c_0)$ for every $1\le i\le \ell$. Indeed, $s(n_i)=a_{n_i}-n_i c_0\le a_{n_i-1}+a_1-n_i c_0=a_{n_i-1}-(n_i-1)c_0+(a_1-c_0)=s(n_i-1)+(a_1-c_0)<s(n_{i-1})+(a_1-c_0)\le s(n_{i-1})+(C-c_0)$.

By the maximality of $n_1,\cdots,n_{\ell}$, we get that $s(n_{\ell})\ge s(n)=a_n-n c_0\ge n c_1-n c_0=(c_1-c_0)n$. Hence, $n(c_1-c_0)\le s(n_{\ell})=(s(n_{\ell})-s(n_{\ell-1}))+(s(n_{\ell-1})-s(n_{\ell-2}))+\cdots((s(n_{2})-s(n_{1}))+s(n_1)$ $\le$ $(C-c_0)(\ell-1)+s(n_1)$. Noting that $s(n_1)=a_{n_1}-c_0
\begin{cases}
	\le C-c_0 & \text{ if }n_1=1\\
	<C-c_0 & \text{ if }n_1>1
\end{cases}
$,
we get that $$n(c_1-c_0)\le s(n_{\ell})\le(C-c_0)\ell.$$
That is, $\ell\ge\frac{c_1-c_0}{C-c_0}n$, which concludes the proof.
\end{proof}

\begin{Lemma}[Corollary of Pliss Lemma, see a proof in Appendix]\label{CorollaryOfPlissLemma}
	Given $-C\le c_0<c_1\le C$, let $0<\theta=\frac{c_1-c_0}{C-c_0}<1$.
	Let $a_1,\cdots,a_{n}\in[-C, C]$ be a subadditive sequence of real numbers.
	If $\frac{1}{n}a_n\ge c_1$, then there is $\ell\ge\theta n$ and a sequence $1<n_1<n_2<\cdots<a_{\ell}$ such that $\frac{1}{n_j-k}\,a_{n_j-k}\ge c_0$ for every $1\le j\le\ell$ and $0\le k<n_j$.
\end{Lemma}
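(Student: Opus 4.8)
The plan is to derive Lemma~\ref{CorollaryOfPlissLemma} by re-running the proof of the subadditive Pliss Lemma (Lemma~\ref{PlissLemma}): the sole essential difference in the hypotheses is that here $c_0$ is only required to satisfy $-C\le c_0<c_1$, so it may be negative or zero, and the point is that the ``ladder point'' counting underlying Pliss' Lemma never uses the sign of $c_0$ and therefore transfers verbatim. I would not attempt a formal reduction to Lemma~\ref{PlissLemma}: a translation $a_i\mapsto a_i+\beta i$ keeps subadditivity but destroys either the uniform upper bound (if $\beta>0$) or the value of $\theta$ (if $\beta\le0$), so reproving is cleaner.

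First I would set $a_0:=0$, $x_i:=a_i-a_{i-1}$ for $1\le i\le n$, so that $a_m=\sum_{i=1}^m x_i$, and note that subadditivity ($a_i\le a_{i-1}+a_1$) gives $x_i\le a_1\le C$. Then I would introduce $S_m:=a_m-c_0m$ (with $S_0=0$), its running maximum $M_m:=\max_{0\le k\le m}S_k$, and the set of strict records $G:=\{1\le m\le n\,;\,S_m>S_k\ \text{for all}\ 0\le k<m\}$. The first real step is the count $\#G\ge\theta n$: the sequence $M_m$ is nondecreasing with $M_0=0$, it strictly increases precisely at the indices belonging to $G$, and at such an $m$ one has $M_m=S_m$ and hence $M_m-M_{m-1}=S_m-M_{m-1}\le S_m-S_{m-1}=x_m-c_0\le C-c_0$; summing over $m\in G$ and using $M_n\ge S_n=a_n-c_0n\ge(c_1-c_0)n$ yields $(C-c_0)\,\#G\ge(c_1-c_0)n$, i.e. $\#G\ge\theta n$.

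The second step is to extract the conclusion. For $m\in G$ and $0\le k<m$ we have $S_m>S_k$, i.e. $a_m-a_k>c_0(m-k)$; subadditivity $a_m\le a_k+a_{m-k}$ then gives $a_{m-k}\ge a_m-a_k>c_0(m-k)$, that is $\frac{1}{m-k}a_{m-k}\ge c_0$. Enumerating the elements of $G$ as $n_1<n_2<\cdots$ and retaining the first $\ell:=\lceil\theta n\rceil\le\#G$ of them gives exactly the asserted sequence. The remaining bookkeeping is minor and I would dispose of it in a line: one may assume $n_1>1$ (drop $1$ from $G$ if it occurs, at the cost of a harmless $-1$ in the count), and the degenerate case $c_1=C$ (so $\theta=1$) forces $x_i=C$ for all $i$, hence $a_m=Cm\ge c_0m$ for every $m$, and is trivial. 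I do not foresee a genuine obstacle here; the only steps warranting care are the inequality $M_m-M_{m-1}\le C-c_0$ at record indices and the appeal to subadditivity, rather than to the additive telescoping of the classical formulation, to pass from $a_m-a_k\ge c_0(m-k)$ to the averaged inequality $\frac{1}{m-k}a_{m-k}\ge c_0$.
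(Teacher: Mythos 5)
Your proof is correct, and it takes a genuinely different route from the paper's. The paper reduces to Lemma~\ref{PlissLemma} as a black box after the affine change $b_j:=a_j+2Cj$ (with $\tilde c_0=c_0+2C$, $\tilde c_1=c_1+2C$, $\tilde C=3C$), whereas you reprove from scratch with a running-maximum/strict-records count. Your diagnosis of why the reduction looks dangerous is accurate as a criticism of the paper's own argument as written: $b_j=a_j+2Cj$ is subadditive and satisfies $b_1\le 3C$, but for $j\ge2$ one only has $b_j\le C+2Cj>3C$, so the stated hypothesis $b_j\in(-\infty,3C]$ of Lemma~\ref{PlissLemma} fails for the translated sequence. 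The reduction nevertheless works because the proof of Lemma~\ref{PlissLemma} invokes the upper bound $a_i\le C$ only through $a_1$ (in the telescoping estimate $s(n_i)\le s(n_{i-1})+(a_1-c_0)\le s(n_{i-1})+(C-c_0)$ and in bounding $s(n_1)$), and $b_1\le 3C$ does hold; your version is more self-contained and makes this independence explicit, while the paper's is shorter once one notices that the hypothesis of Lemma~\ref{PlissLemma} actually used in its proof is merely $a_1\le C$. One bookkeeping caveat, which the paper leaves equally loose: discarding $1$ from $G$ is not quite free, since in the boundary case $c_1=C$ (which forces $a_m=Cm$ for all $m$, hence $G=\{1,\dots,n\}$ and $\theta n=n$) there are only $n-1$ indices exceeding $1$; the constraint $n_1>1$ in the statement is in all likelihood a misprint for $n_1\ge1$, as the paper's proof of Lemma~\ref{PlissLemma} itself explicitly entertains $n_1=1$.
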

\begin{proof}
	Taking $b_j=a_j+2Cj$, we get that $\frac{1}{n}b_n=\frac{1}{n}a_n+2C\ge c_1+2C$.
	As $0<C\le c_0+2C<c_1+2C<3C$, it follows from Lemma~\ref{PlissLemma} that, taking $\theta=\frac{(c_1+2C)-(c_0+2C)}{3C-(c_0+2C)}=\frac{c_1-c_0}{C-c_0}$, there is $\ell\ge\theta n$ and a sequence $1<n_1<n_2<\cdots<a_{\ell}$ such that $\frac{1}{n_j-k}b_{n_j-k}\ge c_0+2C$ for every $1\le j\le\ell$ and $0\le k<n_j$.
	As $\frac{1}{n_j-k}b_{n_j-k}=2C+\frac{1}{n_j-k}a_{n_j-k}$, we get that $\frac{1}{n_j-k}a_{n_j-k}\ge c_0$ for every $1\le j\le\ell$ and $0\le k<n_j$.
\end{proof}


\end{document}